\newtheorem{theorem}{Theorem}[section]
\newtheorem{proposition}[theorem]{Proposition}
\newtheorem{lemma}[theorem]{Lemma}
\newtheorem{corollary}[theorem]{Corollary}
\newtheorem{conjecture}[theorem]{Conjecture}
\theoremstyle{remark}
\newtheorem{remark}[theorem]{Remark}
\numberwithin{equation}{section}
\newcommand{\abs}[1]{\lvert#1\rvert}
\begin{document}

\title[Discrete Harmonic Analysis on a Weyl Alcove]
{Discrete Harmonic Analysis on a Weyl Alcove}

\author{J.F.  van Diejen}

\author{E. Emsiz}

\address{
Facultad de Matem\'aticas, Pontificia Universidad Cat\'olica de Chile,
Casilla 306, Correo 22, Santiago, Chile}
\email{diejen@mat.puc.cl, eemsiz@mat.puc.cl}

\subjclass[2000]{Primary: 43A90; Secondary: 20C08, 33D52}
\keywords{spherical functions, affine Hecke algebras, quantum integrable systems, Bethe Ansatz,
orthogonal polynomials, root systems}

\thanks{Work was supported in part by the {\em Fondo Nacional de Desarrollo
Cient\'{\i}fico y Tecnol\'ogico (FONDECYT)} Grants \# 1090118 and  \# 11100315,
and by the {\em Anillo ACT56 `Reticulados y Simetr\'{\i}as'}
financed by the  {\em Comisi\'on Nacional de Investigaci\'on
Cient\'{\i}fica y Tecnol\'ogica (CONICYT)}}


\begin{abstract}
We introduce a representation of the double affine Hecke algebra at the critical level $q=1$ in terms of difference-reflection operators
and use it to construct an explicit integrable discrete Laplacian on the Weyl alcove corresponding to an element in the center.
The Laplacian in question is to be viewed as an integrable discretization of the conventional Laplace operator on Euclidian space perturbed by a delta-potential supported on the reflection hyperplanes of the affine Weyl group.
The Bethe Ansatz method is employed to show that our discrete
Laplacian and its commuting integrals are diagonalized by a finite-dimensional basis of periodic Macdonald spherical functions.
\end{abstract}

\maketitle

\section{Introduction}\label{sec1}
It is well-known that the spectral problem for the bosonic one-dimensional $n$-particle Schr\"odinger operator with pairwise delta-potential interactions (the Lieb-Liniger model) can be solved by means of the coordinate Bethe Ansatz method \cite{lie-lin:exact,mcg:study,bre-zin:probleme}. In this approach, the eigenfunctions of the model are expressed as a linear combination of plane waves with expansion coefficients that are determined from the eigenvalue equation.
In the repulsive coupling-parameter regime, the proof of the orthogonality and completeness of these Bethe eigenfunctions hinges on the Plancherel formula for the Fourier transform \cite{gau:closure,gau:orthogonality}.
In the case of periodic boundary conditions, i.e. with particles moving on a circle rather than along a straight line, the discretization of the spectrum is described through a system of algebraic equations for the spectral parameter that are commonly referred to as
the Bethe Ansatz equations. In the repulsive regime the solutions of these equations are characterized as minima of a family of strictly convex Morse functions \cite{yan-yan:thermodynamics} and the orthogonality and completeness of the associated eigenfunctions were demonstrated by means of the algebraic Bethe Ansatz formalism \cite{dor:orthogonality}.
Previously, the algebraic Bethe Ansatz formalism had already been applied successfully in the computation of the
quadratic norms of the Bethe eigenfunctions \cite{kor:calculations} therewith confirming a remarkable norm formula conjectured by Gaudin \cite{gau:fonction}. For an overview of the extensive literature regarding the Bethe Ansatz solution of the one-dimensional bosonic $n$-particle model with pairwise delta-potential interactions the reader is referred to the standard texts
\cite{gau:fonction,kor-bog-ize:quantum,mat:many-body,tak:thermodynamics,sut:beautiful}.
Among important recent developments stand out: the discovery of integrable quantum models with particles on the line
interacting pairwise through generalized point interactions involving combinations of
$\delta$ and $\delta^\prime$ potentials
\cite{alb-fei-kur:integrability,hal-lan-pau:generalized};
ab initio approaches for the construction of the time propagator for the Lieb-Liniger model (i.e.
the fundamental solution of the time-dependent Schr\"odinger equation) avoiding a priori knowledge concerning the
Bethe Ansatz eigenfunctions (and their completeness)
\cite{tra-wid:dynamics,pro-spo:propagator,bor-cor:macdonald};
novel methods providing an in-depth analysis of the orthogonality, normalization, and completeness of the Bethe wave functions (i.e. the Plancherel formula) for the Lieb-Liniger model on the line in the much harder attractive regime \cite{dot:replica,pro-spo:propagator} simplifying previous discussions in \cite{oxf:hamiltonian} and \cite{hec-opd:yang}.

A fundamental observation made by Gaudin revealed that
Lieb and Liniger's solution method via a Bethe Ansatz applies to a much wider class of
Schr\"odinger operators involving delta-potentials that are supported on the reflection hyperplanes of crystallographic Coxeter groups (i.e. the finite and affine Weyl groups) \cite{gau:boundary}. From this perspective, the original one-dimensional bosonic $n$-particle model considered by Lieb and Liniger corresponds to the special situation in which the Coxeter group amounts to the symmetric group in the non-periodic case and to the affine symmetric group in the case of periodic boundary conditions. An elegant construction in terms of integral-reflection operators due to Gutkin and
Sutherland led to an explicit intertwining operator relating these
Schr\"odinger operators with delta-potentials to the free Laplacian, therewith demonstrating the integrability of the quantum models in question \cite{gut-sut:completely,gut:integrable}.
It turns out that in the case of a finite Weyl group the integral-reflection operators of Gutkin and Sutherland give rise to a representation of a degenerate affine Hecke algebra, a viewpoint that proved to be useful for determining
the Plancherel formula for the corresponding eigenfunction transform both in the repulsive and in the attractive coupling parameter regime \cite{hec-opd:yang}. In the case of affine Weyl groups the integral-reflection operators give rise to a representation of a trigonometric degenerate double affine Hecke algebra at critical level, and
in both cases (finite and affine) the Schr\"odinger operator and its commuting quantum integrals can be interpreted as central elements of this degenerate (double) affine Hecke algebra represented alternatively in terms of Dunkl-type differential-reflection operators (that arise as images of the directional derivatives under the Gutkin-Sutherland intertwining operator) \cite{ems-opd-sto:periodic}.
Even though the completeness of the Bethe Ansatz eigenfunctions for the delta-potential models
associated with affine Weyl groups is known in the repulsive parameter regime
\cite{ems:completeness}, a complete proof establishing their orthogonality and closed norm formulae
generalizing the corresponding results of Dorlas \cite{dor:orthogonality} and Korepin \cite{kor:calculations}, respectively, in the case of the affine symmetric group is not yet available for general affine Weyl groups. In \cite{ems:completeness} a conjectural
Gaudin-type formula for the quadratic norms of the Bethe Ansatz eigenfunctions was formulated and in
\cite{bus-die-maz:norm} the correctness of this conjecture was confirmed on a case by case basis for small Weyl groups (up to rank $3$).  The idea of this case by case test of the norm conjecture is to reduce its proof to the verification of an elementary (but tedious) algebraic identity, which is then readily checked by brute force for small Weyl groups with the aid of symbolic computer algebra.

In \cite{hec-opd:yang} it was pointed out that the Bethe eigenfunctions of Gaudin's delta-potential models
with finite Weyl group symmetry are degenerations of the Macdonald spherical
functions that arise in the harmonic analysis of symmetric spaces of simple Lie groups over $p$-adic fields \cite{mac:spherical1,mac:spherical2,nel-ram:kostka}. This connection was explored further in \cite{die:plancherel}, where it was shown that the Macdonald spherical functions themselves can be interpreted as the Bethe Ansatz eigenfunctions of a discrete Laplacian associated with the finite Weyl group. The transition from the Macdonald spherical functions to the Bethe eigenfunctions of the delta-potential model corresponds in this respect to a continuum limit in which the difference step size of the discrete Laplacian tends to zero.
For the (from the point of view of mathematical physics most relevant) special case in which the Weyl group amounts to the symmetric group, the (finite-dimensional) spectral problem of the corresponding discrete Laplacian with periodic boundary conditions (thus passing from permutation-symmetry to affine permutation-symmetry) was studied in
\cite{die:diagonalization}. The periodic discrete quantum model in question constitutes an integrable discretization
of the original quantum $n$-particle delta-potential model on the circle introduced by Lieb and Liniger and can be solved in a similar way via the Bethe Ansatz method.
This model turns out to provide a concrete quantum-mechanical description of a class of integrable systems originating from (the crystal basis for) the quantum affine algebra $U_q\widehat{{\mathfrak sl}}(n)$ \cite[Sec.~7]{kor:cylindric}.
The orthogonality of the corresponding Bethe Ansatz wave functions amounts to a novel system of finite-dimensional discrete orthogonality relations for the Hall-Littlewood polynomials (as the Macdonald spherical functions are usually referred to when the Weyl group equals the symmetric group) \cite{die:finite-dimensional}.

Recently, an explicit unitary representation of the affine Hecke algebra in terms of difference-reflection operators was introduced for which the action of the center is diagonal on the basis of Macdonald spherical functions \cite{die-ems:unitary}. The
discrete Laplacian with finite Weyl-group symmetry studied in \cite{die:plancherel} corresponds in this representation to a specific central element of the affine Hecke algebra associated with a (quasi-)minuscule weight. In other words, in the representation of \cite{die-ems:unitary}
the center of the affine Hecke algebra provides a complete algebra of commuting quantum integrals for the discrete Laplacian in \cite{die:plancherel}. The affine Hecke algebra representations in \cite{die-ems:unitary} are the discrete counterparts of the degenerate affine Hecke algebra representations in \cite{hec-opd:yang,ems-opd-sto:periodic} containing Gaudin's delta-potential models with finite Weyl group symmetry.
The aim of the present paper is to lift the construction in \cite{die-ems:unitary} from finite Weyl groups to affine Weyl groups so as to enable dealing with periodic boundary conditions. In the very special case of the affine Weyl group of rank {\em one} such a construction was presented recently in \cite{die-ems:discrete}.

Specifically,
we introduce a representation of the double affine Hecke algebra at the critical level $q=1$ in terms of difference-reflection operators
and use it to construct explicit discrete Laplacians on the Weyl alcove corresponding to central elements associated with the (quasi-)minuscule weights. (This algebra is not to be confused with the trigonometric degenerate double affine Hecke algebra represented by the Dunkl operators for the
trigonometric Calogero-Sutherland system diagonalized by the Heckman-Opdam multivariate Jacobi polynomials \cite{opd:lecture,ols-per:quantum},
or with the $q\to 1$ degenerate double affine Hecke algebra represented by the
Dunkl-Cherednik type operators for the confined rational Ruijsenaars system with hyperoctahedral symmetry
diagonalized by the multivariate Wilson polynomials \cite{gro:multivariable,die:multivariable,die:properties}.)
The Bethe Ansatz method is used to show that our Laplacians are diagonalized by a finite-dimensional basis of periodic Macdonald spherical functions. When the affine Weyl group is equal to the affine symmetric group, this reproduces the discrete
Laplacians with periodic boundary conditions introduced in \cite{die:diagonalization} together with the Bethe Ansatz solution.
For arbitrary affine Weyl groups, the present construction provides
a discrete counterpart of the representations of the trigonometric degenerate double affine Hecke algebra at critical level in \cite{ems-opd-sto:periodic}
governing Gaudin's delta-potential models with periodic boundary conditions.

It is well-known that the
Macdonald spherical functions are limiting cases of the Macdonald polynomials \cite{mac:symmetric,mac:orthogonal}.
It is therefore expected that the difference-reflection representation of the affine Hecke algebra
in \cite{die-ems:unitary} interpolates between the
Dunkl-type differential-reflection representation of the
degenerate affine Hecke algebra in \cite{ems-opd-sto:periodic} and
Cherednik's basic representation of the double affine Hecke algebra containing Macdonald's $q$-difference operators diagonalized by the Macdonald polynomials \cite{che:double,mac:affine}.
Based on Ruijsenaars' results in the rank-one case \cite{rui:relativistic}, it is moreover plausible that
the model of Lieb and Liniger with periodic boundary conditions is a limiting case of the elliptic quantum Ruijsenaars-Schneider system introduced in \cite{rui:complete}. This points towards the expectation
that in the periodic situation
the difference-reflection representation of the double affine Hecke algebra at $q=1$ presented here
should interpolate between
the Dunkl-type differential-reflection representation of the
degenerate double affine Hecke algebra in \cite{ems-opd-sto:periodic} and
Cherednik's general double affine Hecke algebra representation related to Ruijsenaars' commuting difference operators with
elliptic coefficients and their Weyl-group generalizations
\cite{che:difference}.

The paper is organized as follows. Section \ref{sec2} first recalls some necessary preliminaries concerning (double) affine Hecke algebras and their Weyl groups. Next a fundamental multiplication relation in the double affine Hecke algebra at the critical level is formulated describing the multiplicative action of generators on basis elements at $q=1$. Sections \ref{sec3} and \ref{sec4} generalize, respectively, the difference-reflection representation and the integral-reflection representation of the affine Hecke algebra introduced in \cite{die-ems:unitary} to the level of the double affine Hecke algebra at $q=1$. For the construction of the double affine extension of the difference-reflection representation, the multiplication relation from Section \ref{sec2} turns out to be instrumental.
Both our representations of the double affine Hecke algebra at critical level act in
the space of complex functions over the weight lattice and are proven to be equivalent with the aid of an explicit intertwining operator in Section \ref{sec5}.
The action of central elements in the double affine Hecke algebra at $q=1$
under the difference-reflection representation gives rise to an integrable system of discrete Laplace operators described explicitly in Section \ref{sec6}.
Section \ref{sec7} employs the Bethe Ansatz method to diagonalize the commuting Laplacians at issue by means of a basis of periodic Macdonald spherical functions spanning the finite-dimensional subspace of periodic Weyl-group invariant functions over the weight lattice.
The paper ends in Section \ref{sec8} by discussing the unitarity of the
difference-reflection representation with respect to a suitable Hilbert space structure and consequent orthogonality relations for the periodic Macdonald spherical functions.
Some technicalities regarding the construction of the difference-reflection representation and the study of its unitarity are relegated to Appendices \ref{appA} and \ref{appB} at the end of the paper.

\section{The double Affine Hecke Algebra at critical level}\label{sec2}
The primary objective of this section is to set up notation and recall some basic facts concerning affine Weyl groups and their double affine Hecke algebras at the critical level $q=1$.  A more complete discussion with proofs can be found in the standard literature \cite{bou:groupes,hum:reflection,mac:affine,che:double}. Our presentation of these preliminaries (cf. Subsections \ref{sec2s1} and \ref{sec2s2}) closely follows Macdonald's discussion in \cite{mac:affine}.
In addition, we describe a (for our purposes) essential multiplication formula in the double affine Hecke algebra
at critical level between the basis elements of the affine Hecke algebra and a system of generators for the group algebra of the weight lattice (cf. Subsections \ref{sec2s3} and \ref{sec2s4}).

\subsection{Affine Weyl group}\label{sec2s1}
Let $R_0\subset V$ be an irreducible reduced {\em crystallographic root system} spanning a real finite-dimensional vector space $V$ with inner product  $\langle \cdot ,\cdot \rangle$,
and let $R_0^\vee:=\{ \alpha^\vee\mid \alpha\in R_0\}$ denote the dual root system of coroots
$\alpha^\vee := 2\alpha/\langle \alpha ,\alpha\rangle$.
Given a (fixed) positive constant\footnote{In most of the standard literature the constant $c$ is normalized to have unit value; for our purposes, however, it is more convenient to regard $c$ instead as a positive scale parameter.}  $c$, the nontwisted  {\em affine root system} associated with $R_0$ is
$R:= R_0^\vee + \mathbb{Z}c$.  Here an affine root $a=\alpha^\vee +rc\in R$ is regarded
as an affine linear function $a:V\to \mathbb{R}$ of the form
$a(x)=\langle x,\alpha^\vee\rangle+rc $ ($x\in V$, $\alpha\in R_0$, $r\in\mathbb{Z}$), which gives rise to
an affine reflection $s_a:V\to V$ across the hyperplane $V_a:=\{ x\in V\mid a(x)=0\}$ given by
\begin{equation}\label{sa}
s_a(x):=x-a(x)\alpha .
\end{equation}
The {\em affine Weyl group} $W_R$ is defined as the (infinite) Coxeter group generated by the affine reflections $s_a$, $a\in R$. The isotropy subgroup fixing the origin, which is generated by the orthogonal reflections
$s_\alpha:=s_{\alpha^\vee}$, $\alpha\in R_0$, is referred to as the {\em (finite) Weyl group} $W_0$ (associated with $R_0$).  For $y\in V$, let us denote by $t_y:V\to V$ the translation determined by the action $t_y(x):=x+y$.  Since $s_{\alpha^\vee}s_{\alpha^\vee+rc}=t_{rc\alpha}$, the elements of $W_R$ can be written as $vt_{c\lambda}$ with $v\in W_0$ and $\lambda$ in the {\em root lattice} $Q:=\text{Span}_{\mathbb{Z}}(R_0)$, i.e. $W_R= W_0 \ltimes t(cQ)$ (where $t(cQ)$ denotes the group of translations $t_{c\lambda}$, $\lambda\in Q$). By extending the lattice of translations from the root lattice $Q$ to the {\em weight lattice} $P:=\{\lambda\in V\mid \langle\lambda,\alpha^\vee\rangle\in\mathbb{Z},\,\forall\alpha\in R_0\}$, one arrives at the {\em extended affine Weyl group}
$W= W_0\ltimes t(cP)$ consisting of group elements of the form $vt_{c\lambda}$ with $v\in W_0$ and $\lambda\in P$.

A (fixed) choice of positive roots $R_0^+$, with a simple basis $\alpha_1,\ldots ,\alpha_n$, determines a set of positive affine roots
$R^+:=R_0^{\vee,+}\cup (R_0^\vee +\mathbb{N}c)$ generated by a
basis of simple affine roots  $a_0,\ldots ,a_n$ of the form
$a_0:=\alpha_0^\vee+c$ and $a_j:=\alpha_j^\vee$ for $j=1,\ldots ,n$. Here  $n$ denotes the rank  of $R_0$ ($=\text{dim}\, V$) and $\alpha_0:=-\vartheta$ with
$\vartheta\in R_0^+$ being the highest {\em short} root (so $\vartheta^\vee$ is the highest coroot of $R_0^\vee$).
The action of $w\in W$ on $V$ induces a dual action on the space $\mathcal{C}(V)$ of functions $f:V\to\mathbb{C}$ given by
\begin{equation}\label{W-action}
(wf)(x):=f(w^{-1}x)\qquad (w\in W,\, f\in \mathcal{C}(V),\, x\in V).
\end{equation}
 The affine root system $R\subset \mathcal{C}(V)$ is stable with respect to this dual action.
For $w\in W$ the length $\ell (w)$ is now defined as the cardinality $ \# R(w)$ of the (finite) set
\begin{equation}\label{Rw}
R(w):=R^+\cap w^{-1} (R^-),
\end{equation}
with $R^-:=-R^+=R\setminus R^+$. For $w\in W$ and $j\in \{ 0,\ldots ,n\}$ one has that
\begin{subequations}
\begin{align}
\ell (s_jw)=&\ell (w)+\text{sign}(w^{-1}a_j) , \label{ls1}\\
\ell (ws_j)=&\ell (w)+\text{sign}(w a_j) , \label{ls2}
\end{align}
\end{subequations}
where we have employed the short-hand  $s_j:=s_{a_j}$ (and with
the sign function on $R$ being defined as
$\text{sign}(a)=1$ for $a\in R^+$ and $\text{sign}(a)=-1$ for $a\in R^-$). It follows inductively
from Eqs. \eqref{ls1}, \eqref{ls2} that
any $w\in W$ can be decomposed (nonuniquely) as
\begin{equation}\label{red-exp}
w=u s_{j_1}\cdots s_{j_\ell},
\end{equation}
with $j_1,\ldots,j_\ell\in \{ 0,\ldots ,n\}$, $\ell=\ell (w)$, and $u\in\Omega:=\{ w\in W \mid \ell (w)=0\}$.
Such a decomposition (with $\ell=\ell(w)$) is called a {\em reduced expression} for $w$.
Given a reduced expression \eqref{red-exp}, one has that
\begin{subequations}
\begin{equation}
R(w)=\{ b_1,\ldots ,b_\ell\} \quad \text{and}\quad w=u s_{b_\ell}\cdots s_{b_1},
\end{equation}
where
\begin{equation}
 b_k:= s_{j_\ell}\cdots s_{j_{k+1}} a_{j_k}\quad  (k=1,\ldots ,\ell)
\end{equation}
\end{subequations}
(so $b_\ell=a_{j_\ell}$).

Since  $R^+$ is stable with respect to the action of $\Omega$ (as its group elements have length zero), it is clear that the $u\in\Omega$ act by permuting
the elements of the simple basis  $a_0,\ldots ,a_n$.
More specifically,
one has that  $ua_j=a_{u_j}$ (and thus $us_ju^{-1}=s_{u_j}$),  where $j\to u_j$ encodes the corresponding permutation of the indices
 $j=0,\ldots ,n$. The upshot is that $W=\Omega\ltimes W_R$ with $W_R$ being normal in $W$,
 whence $\Omega$ is a finite abelian subgroup: $\Omega\cong  W/W_R \cong P/Q$.
 The extended affine Weyl group $W$ can now be presented as the group generated by the commuting
elements from
$\Omega$ and the simple reflections $s_0,\ldots s_n$, subject to the relations
\begin{subequations}
\begin{equation}\label{W-rel1}
u s_j u^{-1}=s_{u_j}    \quad u\in \Omega ,\ j=0,\ldots ,n,
\end{equation}
and
\begin{equation}\label{W-rel2}
(s_js_k)^{m_{jk}}=1 \quad  j,k =0,\ldots ,n.
\end{equation}
\end{subequations}
Here $m_{jk}=1$ if $j=k$ and
$m_{jk}=\pi/\alpha_{jk}$ ($\in \{   2,3,4,6 \}$) with $\alpha_{jk}$ denoting the angle between $V_{a_j}$ and $V_{a_k}$ if $j\neq k$
(and the provision that for $n=1$ the order
$m_{10}=m_{01}=\infty$).

\begin{remark}
While here for technical simplicity it was assumed from the start that our underlying affine root system $R$ is nontwisted and reduced,
it is expected that---at the price of a fairly amount of additional notational overhead---in principle much of the construction below can be generalized so as to incorporate the cases of twisted and even nonreduced affine root systems.
\end{remark}

\subsection{Double affine Hecke algebra at critical level}\label{sec2s2}
A function $\tau:W\to \mathbb{C}\setminus \{ 0\}$ satisfying that $\text{(i)}$
$\tau_{w\tilde{w}}= \tau_w \tau_{\tilde{w}} $ if $\ell(w\tilde{w} )=\ell(w)+\ell(\tilde{w} ) $
and $\text{(ii)}$  $\tau_w=1$ if $ \ell(w)=0$ is called a {\em length multiplicative function}.
Such a function is completely determined by its values on the simple reflections and
consistency demands that
 $\tau_{s_j}=\tau_{s_k}$ if  $s_j$ and $s_k$ are conjugate in $W$ (where $j,k= 0,\ldots ,n$).
 Let us denote by $\tau_s$ and $\tau_l$ the values of $\tau$ on the reflections in the simple roots $a_j$ with $\alpha_j$ {\em short} and {\em long}, respectively (with the convention that all finite roots are short when $R_0$ is simply-laced).
The value of $\tau_w$ is thus completely determined by $\tau_s$ and the number of
reflections $s_{j_k}$ with $\alpha_{j_k}$ short and $\tau_l$ and the number of
reflections $s_{j_k}$ with $\alpha_{j_k}$ long appearing in a reduced expression \eqref{red-exp}.
Following customary habits, the {\em multiplicity function} will also be denoted by $\tau$.
This is a function $\tau: R\to \mathbb{C}\setminus \{ 0\}$ satisfying that $\tau_{wa}=\tau_{a}$ for all $w\in W$ and
$a\in R$, which means that its value is constant on the $W$-orbits of affine roots.
Upon compatibilizing both functions on the simple elements such that
 $\tau_j:=\tau_{a_j}=\tau_{s_j}$  for $j=0,\ldots ,n$, one can compute the
values of the length multiplicative function by means of the multiplicity function via the well-known formula
\begin{equation}\label{reconstruct}
\tau_w=\prod_{a\in R(w) } \tau_a .
\end{equation}
Throughout the paper we will always assume
that neither $\tau_s$ nor $\tau_\ell$ is a root of unity (unless explicitly stated otherwise).

The {\em double affine Hecke algebra at critical level}  $\mathbb{H}$ is now defined as the complex unital associative algebra
spanned by the basis
\begin{equation}\label{H-basis}
X^\lambda T_w, \qquad \lambda\in P,\ w\in W,
\end{equation}
with $X^\lambda=T_w=1$ if $\lambda=0$ and $w=1$,
such that the following multiplication relations are satisfied
(for all $u\in \Omega$, $j=0,\ldots ,n$, $w\in W$, and $\lambda,\mu\in P$):
\begin{subequations}
\begin{equation}\label{Tu}
T_uT_w=T_{uw},\qquad T_u X^\lambda = X^{u^\prime \lambda} T_u
\end{equation}
\begin{equation} \label{TjTw}
T_jT_w = T_{s_j  w} +\chi(w^{-1}a_j)(\tau_j-\tau_j^{-1})T_w
\end{equation}
\begin{equation}  \label{TjX}
T_jX^\lambda = X^{s_j^\prime \lambda }T_j+(\tau_j-\tau_j^{-1})\frac{X^\lambda-X^{s'_j\lambda}}{1-X^{-\alpha_j}} \end{equation}
\begin{equation}\label{X}
X^\lambda X^\mu = X^{\lambda+ \mu} .
\end{equation}
\end{subequations}
Here $\chi :R\to \{ 0,1\}$ denotes the characteristic function of $R^-$ and we have employed the short-hand
notations $\tau_j:=\tau_{s_j}$ and  $T_j:=T_{s_j}$. Furthermore,
the prime symbols refer to the derivative in the sense of calculus projecting $W$ onto $W_0$ and $R$ onto $R_0^\vee$. More specifically, for $v\in W_0$ and $\lambda\in P$ one has that
 $(vt_{c\lambda})^\prime=v$, whence for $a=\alpha^\vee +rc\in R$ this means in particular that
 $s_{a}^\prime=s_{a^\prime}$ with $a^\prime=(\alpha^\vee +rc)^\prime=\alpha^\vee $.
Alternatively, the relation in Eq. \eqref{TjTw} can be replaced by the equivalent relation (cf Eqs. \eqref{ls1}, \eqref{ls2})
\begin{equation} \label{TwTj}
T_wT_j = T_{ws_j} +\chi(w a_j)(\tau_j-\tau_j^{-1})T_w .
\end{equation}
 Throughout, fractions
of the type in Eq. \eqref{TjX}  involving (double) affine Hecke algebra elements or their images under a representation are to be interpreted in terms of their terminating geometric series:
 \begin{eqnarray}\label{term-geom}
\lefteqn{ \frac{X^\lambda-X^{s'_j\lambda}}{1-X^{-\alpha_j}}: =} && \\
&&   \begin{cases}
 X^\lambda+X^{\lambda-\alpha_j}+\cdots +X^{s_j^\prime\lambda+ \alpha_j}&\text{if}\ \langle \lambda,\alpha_j^\vee\rangle>0 , \\
0  &\text{if}\ \langle \lambda,\alpha_j^\vee\rangle =0 , \\
 -X^{\lambda+\alpha_j}-X^{\lambda+2\alpha_j}-\cdots -X^{s^\prime_j\lambda }&\text{if}\ \langle \lambda,\alpha_j^\vee\rangle<0 .
 \end{cases}\nonumber
 \end{eqnarray}

The double affine Hecke algebra $\mathbb{H}$ contains several important subalgebras.
The subalgebras $H_0\subset H_R\subseteq H\subset\mathbb{H}$ spanned by $T_w$ with
$w\in W_0$, $w\in W_R$, and $w\in W$, respectively,
are referred to as
the  {\em finite Hecke algebra}, the  {\em  affine Hecke algebra}, and the  {\em extended affine Hecke algebra} (i.e. the Hecke algebras of the finite Weyl group $W_0$, the affine Weyl group $W_R$, and the extended affine Weyl group $W$).
The defining relation in Eq. \eqref{TjTw} gives rise to the following multiplication rule for the basis elements of these subalgebras:
\begin{equation}
T_{w} T_{\tilde{w}}=T_{w\tilde{w}}\quad \text{if}\ \ell(w\tilde{w})=\ell(w)+\ell(\tilde{w})\qquad (w,\tilde{w}\in W).
\end{equation}
A second way in which an extended affine Hecke algebra appears inside $\mathbb{H}$ (explaining the name double affine Hecke algebra) is via its Bernstein-Zelevinsky presentation as the subalgebra
with basis $X^\lambda T_v$, $\lambda\in P$, $v\in W_0$.
This affine Hecke algebra contains a large commutative subalgebra
$\mathbb{C}[X]\subset \mathbb{H}$ spanned by $X^\lambda$, $\lambda\in P$ that is isomorphic to the group algebra
of the weight lattice. It is endowed with a natural $W_0$-action inherited from the action of the Weyl group on $ V$ restricted
to the stable lattice $P$:
$v(X^\lambda):=X^{v\lambda}$ ($v\in W_0$, $\lambda\in P$).
It follows from the relations in Eqs. \eqref{Tu}, \eqref{TjX}
that its $W_0$-invariant subalgebra
$\mathbb{C}[X]^{W_0}$  spanned by  the $W_0$-invariants
\begin{subequations}
\begin{equation}
m_\lambda (X):=\sum_{\mu\in W_0\lambda} X^\mu ,
\end{equation}
with $\lambda$ in the cone of dominant weights
\begin{equation}
P^+:=\{ \lambda\in P\mid \langle \lambda ,\alpha^\vee\rangle \geq 0,\, \forall \alpha\in R_0^+\} ,
\end{equation}
\end{subequations}
is contained in the center $\mathcal{Z}(\mathbb{H})$ of the double affine Hecke algebra at critical level:
\begin{equation}\label{center}
\mathbb{C}[X]^{W_0}\subset \mathcal{Z}(\mathbb{H}).
\end{equation}

It is immediate from the first relation in \eqref{Tu} and relation \eqref{TjTw} that for any  $w\in W$
the reduced expression in \eqref{red-exp} gives rise to the following decomposition
of the corresponding basis element $T_w$:
\begin{equation}\label{Tw-red-exp}
 T_w= T_u T_{j_1}\cdots T_{j_\ell} .
\end{equation}
In other words, the elements  $T_u$ ($u\in\Omega$) and $T_j$ ($j=0,\ldots ,n$)
generate the extended affine Hecke algebra $H$. To describe an appropriate system of generators for the group algebra
$\mathbb{C}[X]$, let us recall that a nonzero weight $\omega\in P$ is called {\em minuscule} if $0\leq \langle \omega,\alpha^\vee\rangle\leq 1$ for all $\alpha\in R_0^+$ and that it is called {\em quasi-minuscule} if
$0\leq \langle \omega,\alpha^\vee\rangle\leq 2$ for all $\alpha\in R_0^+$ with the upperbound $2$ being realized only {\em once}. Together with the zero weight the minuscule weights constitute a complete set of represententatives for $P/Q$, and the quasi-minuscule weight is unique and equal to the highest short root $\vartheta\in R_0$. Since the orbit of short roots $W_0\vartheta$ generates the root lattice $Q$, together with the minuscule weights they form a generating set for the weight lattice $P$.
Let
\begin{equation}\label{P-theta}
P_\vartheta:= \{ \nu\in P\mid  \  | \langle \nu,\alpha^\vee\rangle | \leq 1 ,\ \forall \alpha\in R_0\setminus \{ \nu\} \, \}
\end{equation}
(i.e., $P_\vartheta$ is the smallest saturated set containing all (quasi-)minuscule weights).
Then the commuting elements  $X^\nu$ ($\nu\in P_\vartheta$) generate  $\mathbb{C}[X]$ and
combined  with the above elements generating  $H$
this provides a (complete) system of generators for the double affine Hecke algebra
$\mathbb{H}$.

More specifically, the double affine Hecke algebra at critical level $\mathbb{H}$ can be
presented as the complex unital associative algebra generated by
$T_u$ ($u\in \Omega$), $T_j$  ($j=0,\ldots ,n$) and the commuting elements $X^\nu$ ($\nu\in P_\vartheta$)
(with $T_u=X^\nu=1$ if $u=1$ and $\nu =0$),
subject to the relations
\begin{subequations}
\begin{equation}\label{Tu-rel}
T_uT_{\tilde{u}}=T_{u\tilde{u}} \quad\text{and}\quad T_uT_j=T_{u_j}T_u  \qquad
(u,\tilde{u}\in\Omega ,\, 0\leq j\leq n),
\end{equation}
\begin{equation}\label{quadratic-rel}
(T_j-\tau_j)(T_j+\tau_j^{-1})=0\qquad (0\leq j\leq n),
\end{equation}
\begin{equation}\label{braid-rel}
\underbrace{T_jT_kT_j\cdots}_{m_{jk}\ {\rm factors}}
=\underbrace{T_kT_jT_k\cdots}_{m_{jk}\ {\rm factors}}\qquad (0\leq j\neq k\leq n),
\end{equation}
\begin{equation}\label{TuX-rel}
T_u X^\nu = X^{u^\prime\nu} T_u \qquad (u\in \Omega,\, \nu\in P_\vartheta),
\end{equation}
\begin{equation}  \label{TjX-rel}
T_jX^\nu = X^{s_j^\prime \nu }T_j+(\tau_j-\tau_j^{-1})\frac{X^\nu-X^{s'_j\nu}}{1-X^{-\alpha_j}} \qquad
(0\leq j\leq n,\,  \nu\in P_\vartheta)
\end{equation}
\begin{equation}\label{X-rel}
X^\nu X^{\tilde{\nu}} = X^{\nu+\tilde{ \nu}} \qquad ( \nu,\tilde{\nu}\in P_\vartheta\ \text{such that}\
 \nu+\tilde{\nu}\in P_\vartheta )
\end{equation}
\end{subequations}
(where it is understood that the number of factors $m_{jk}$ on both sides of the braid relation for $H$ in
\eqref{braid-rel} is the same as the order of the corresponding braid relation for $W$ in \eqref{W-rel2}).
Notice that
the second term on the RHS of Eq. \eqref{TjX-rel} amounts to  a linear expression in the generators because of Eq. \eqref{term-geom}
and the property that $P_\vartheta$ \eqref{P-theta} is saturated (so the $\alpha_j$-string connecting $\nu$ and $s^\prime_j\nu$ belongs to $P_\vartheta$).
The characterization of the group algebra $\mathbb{C}[X]$ as the
algebra generated by the commuting generators $X^\nu$, $\nu\in P_\vartheta$  \eqref{P-theta}  subject to the relations in Eq. \eqref{X-rel} hinges in turn on the fact that $P_\vartheta$ contains a linear basis for the weight lattice with respect to which $P_\vartheta$ is path-connected in the following sense:
for any $\nu\in P_\vartheta$ there exists a path $0\to\nu^{(1)}\to\cdots \to\nu^{(\ell )}=\nu$ of weights in $P_\vartheta$ such that two subsequent weights in the path differ by an element of the basis at issue. In other words, the commuting algebra generated by the $X^\nu$, $\nu\in P_\vartheta$  \eqref{P-theta}  subject to the relations in Eq. \eqref{X-rel} is freely generated by the generators corresponding to the basis elements and thus isomorphic to
$\mathbb{C}[X]$.
For classical root systems it is not difficult to single out a basis for $P$ with the above properties
by inspecting the tables in Bourbaki, whereas
for $|\Omega |=1$ (so $P_\vartheta=W_0\vartheta\cup \{ 0\}$) one may pick
any choice of simple basis for the simply-laced subsystem $W_0\vartheta$.  For the remaining exceptional types (viz. $E_6$ and $E_7$) a suitable basis is readily found with the aid of a small computer calculation.

\subsection{Multiplicative action of $\mathbb{C}[X]$ on $H$}\label{sec2s3}
The first three defining relations \eqref{Tu}--\eqref{TjX} for $\mathbb{H}$
encode the multiplicative action of (the generators of) $H$ on (the bases of)
$H$ and $\mathbb{C}[X]$; the last defining relation \eqref{X} describes
the commutative multiplication within  $\mathbb{C}[X]$. To complete the explicit description of the multiplicative
structure of $\mathbb{H}$
we provide a helpful formula describing the multiplicative action of (the generators of) $\mathbb{C}[X]$ on (the basis
of) $H$.

Let us recall for this purpose that the {\em Bruhat order} on $W_R$ is defined as the transitive closure of the
relations $w<ws_a\Leftrightarrow\ell (w)<\ell (ws_a)$ for $w\in W_R$ and $a\in R^+$ (cf. e.g. Ref. \cite[Sec. 5.9]{hum:reflection}). This partial order is extended to $W$ such that
elements belonging to different $W/W_R$ cosets are not comparable \cite[Sec. 2.3]{mac:affine}:
\begin{equation}
\tilde{u}\tilde{w}\leq uw \Longleftrightarrow \tilde{u}=u\ \text{and}\ \tilde{w}\leq w\qquad (\tilde{u},u\in\Omega,\, \tilde{w},w\in W_R).
\end{equation}
Some key properties of the Bruhat order that will be used frequently are (for $\tilde{w},w\in W$ and  $a\in R^+$):
\begin{subequations}
\begin{equation}\label{br-p1}
\tilde{w}\leq w\Longleftrightarrow \tilde{w}^{-1}\leq w^{-1} ,
\end{equation}
\begin{equation}\label{br-p2}
\tilde{w}\leq w \Longrightarrow  \tilde{w}s_a\leq ws_a \ \text{or}\   \tilde{w}s_a \leq w ,
\end{equation}
and
\begin{equation}\label{br-p3}
ws_a<w \Longleftrightarrow a\in R(w).
\end{equation}
\end{subequations}
Moreover, given a reduced decomposition \eqref{red-exp} for $w$, all elements of $W$ smaller than $w$ in the
Bruhat order are the ones
obtained by deleting simple reflections from this reduced expression.

\begin{theorem}\label{multiplication:thm}
For any $w\in W$ and $\nu\in P_\vartheta^\star:=P_\vartheta\setminus \{0\}$ \eqref{P-theta}, the product $T_wX^\nu$ expands in the basis \eqref{H-basis} as
\begin{subequations}
  \begin{equation}    \label{TwXnu}
T_w X^\nu=X^{w'\nu}T_w+\sum_{\substack{v\in W\\ v< w}} (B^\nu_{v,w}+\sum_{\eta\in W_0\nu}A^{\eta,\nu}_{v,w}X^\eta)T_v ,
  \end{equation}
with expansion coefficients $A_{v,w}^{\eta,\nu}$ and $B_{v,w}^\nu$ determined by the
following recurrence relations for $j\in \{0,\dots, n\}$ such that $s_jw<w$:
\begin{equation}  \label{recrel1}
  A^{\eta,\nu}_{v,w}=
\begin{cases}
  A^{s^\prime \eta, \nu}_{sv,sw}+q(   A^{\eta,\nu}_{v,sw} -   A^{s'\eta ,\nu}_{v,sw} )  & \text{if $sv>v$ and $\langle \eta,\alpha^\vee\rangle >0$},\\
 A^{s'\eta ,\nu}_{sv,sw} &    \text{if $sv>v$ and $\langle \eta,\alpha^\vee\rangle \le 0$},\\
  A^{s'\eta,\nu}_{sv,sw}+q  A^{\eta,\nu}_{v,sw}  & \text{if $sv<v$ and $\langle \eta,\alpha^\vee\rangle \ge 0$},\\
  A^{s'\eta,\nu}_{sv,sw}+q  A^{s'\eta,\nu}_{v,sw}  & \text{if $sv<v$ and $\langle \eta,\alpha^\vee\rangle <0$},
\end{cases}
\end{equation}
where we have employed the short-hand notation $s=s_j$, $\alpha=\alpha_j$ and $q=q_j:=\tau_j-\tau_j^{-1}$, and
\begin{equation} \label{recrel2}
B_{v,w}^\nu=q\sum_{\substack{\eta\in W_0\nu\\ \langle \eta,\alpha^\vee\rangle =2}} (A^{\eta ,\nu}_{v,sw}-A^{s'\eta,\nu}_{v,sw})+
\begin{cases}
   B_{sv,sw}^\nu & \text{if $sv>v$},\\
   B_{sv,sw}^\nu+q  B_{v,sw}^\nu  & \text{if $sv<v$},
\end{cases}
\end{equation}
subject to the boundary conditions
\begin{equation}\label{bc1}
A^{\eta,\nu}_{v,v}=
\begin{cases}
 1 & \text{if $\eta=v'\nu$},\\
  0 & \text{if $\eta\neq v'\nu$},
\end{cases}
\qquad B_{v,v}^\nu=0,
\end{equation}
and
 \begin{equation}\label{bc2}
A^{\eta,\nu}_{v,w}=B_{v,w}^\nu=0\quad\text{if}\quad v\not\leq w.
\end{equation}

Moreover, the expansion coefficients  in question
comply the following invariance with respect to the action of $\Omega$ on the indices:
\begin{equation}  \label{recrel0}
A^{u'\eta,\nu}_{uv,uw}=A^{\eta,\nu}_{v,w}, \quad B_{uv,uw}^\nu=B_{v,w}^\nu \qquad (u\in \Omega).
\end{equation}
\end{subequations}
\end{theorem}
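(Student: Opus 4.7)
I argue by induction on the length $\ell(w)$. The base case $\ell(w)=0$ is immediate: for $w=u\in\Omega$ there is no $v\in W$ strictly below $u$ in the Bruhat order (elements of $\Omega$ are minimal in their own $W_R$-coset and distinct cosets are incomparable), so the expansion \eqref{TwXnu} collapses to the defining relation $T_uX^\nu=X^{u'\nu}T_u$ from \eqref{TuX-rel}, and the boundary conditions \eqref{bc1}, \eqref{bc2} hold trivially.

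For the inductive step I pick a simple reflection $s=s_j$ with $sw<w$, so that $T_w=T_sT_{sw}$ (the $\chi$-correction in \eqref{TjTw} vanishes because $\ell(sw)<\ell(w)$). Applying $T_s$ on the left to the inductive expansion of $T_{sw}X^\nu$ and using the two fundamental relations $T_jX^\mu=X^{s_j'\mu}T_j+q_j\Delta_j(X^\mu)$ from \eqref{TjX}---where $q_j:=\tau_j-\tau_j^{-1}$ and $\Delta_j(X^\mu):=(X^\mu-X^{s_j'\mu})/(1-X^{-\alpha_j})$ is the polynomial from \eqref{term-geom}, supported on the $\alpha_j$-string between $\mu$ and $s_j'\mu$ (endpoint $s_j'\mu$ excluded)---together with $T_jT_{\tilde v}=T_{s_j\tilde v}+\chi(\tilde v^{-1}a_j)q_jT_{\tilde v}$ from \eqref{TjTw}, one sees that a term $T_{\tilde v}$ inside $T_{sw}X^\nu$ contributes to $T_v$ only when $\tilde v=sv$ (multiplicity one) or $\tilde v=v$ with $sv<v$ (multiplicity $q_j$). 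Collecting contributions, the coefficient of $X^\eta T_v$ in $T_wX^\nu$ reads
\begin{equation*}
A^{s_j'\eta,\nu}_{sv,sw}+\chi(v^{-1}a_j)\,q_j\,A^{s_j'\eta,\nu}_{v,sw}+q_j\sum_{\mu\in W_0\nu}A^{\mu,\nu}_{v,sw}\,[X^\eta]\Delta_j(X^\mu),
\end{equation*}
with an analogous expression for $B^\nu_{v,w}$ in which $[X^\eta]$ is replaced by $[X^0]$ and the first two $A$-coefficients by the corresponding $B$-coefficients. The leading term $X^{w'\nu}T_w$ arises from $T_sX^{(sw)'\nu}T_{sw}$ once one notes $s_j'(sw)'=w'$ (a short verification separating $j=0$, where $s_0=t_{c\vartheta}s_\vartheta$, from $j\ge 1$, where $s_j\in W_0$).

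To finish, I match this formula with \eqref{recrel1}, \eqref{recrel2} by exploiting that $\nu\in P_\vartheta^\star$ forces $|\langle\mu,\alpha_j^\vee\rangle|\le 2$ on $W_0\nu$, with the extremal values $\pm 2$ realized only when $\nu=\vartheta$ and $\mu=\pm\alpha_j$ (forcing $\alpha_j$ short). Reading off $[X^\eta]\Delta_j(X^\mu)$ via \eqref{term-geom} and splitting by the sign of $\langle\eta,\alpha_j^\vee\rangle$, the inner sum collapses to $A^{\eta,\nu}_{v,sw}-A^{s_j'\eta,\nu}_{v,sw}$ when $\langle\eta,\alpha_j^\vee\rangle>0$ and to $0$ otherwise; in the delicate case $\langle\eta,\alpha_j^\vee\rangle=0$ with $\nu=\vartheta$, the formally appearing weights $\eta\pm\alpha_j$ lie outside $W_0\vartheta$ (a sum of two orthogonal short roots has strictly larger squared length than a short root), so the corresponding $A$-coefficients vanish by the support of the expansion. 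For $B^\nu_{v,w}$, only $\mu=\pm\alpha_j$ can contribute via the constant term of $\Delta_j$ (and then only when $\nu=\vartheta$ and $\alpha_j$ is short), producing precisely the summation over $\langle\eta,\alpha^\vee\rangle=2$ displayed in \eqref{recrel2}. The support condition \eqref{bc2} propagates inductively using the Bruhat-order identities \eqref{br-p2}, \eqref{br-p3}, while the $\Omega$-invariance \eqref{recrel0} follows at once from $T_uT_w=T_{uw}$ and $T_uX^\mu=X^{u'\mu}T_u$ applied to the expansion of $T_wX^\nu$. The principal obstacle is this combinatorial case analysis---especially the quasi-minuscule subcase---where one must confirm that the apparent leakage to weights outside $W_0\nu$ is inert.
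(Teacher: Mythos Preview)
Your proof is correct and follows essentially the same route as the paper's: induction on $\ell(w)$, multiplying the inductive expansion of $T_{sw}X^\nu$ on the left by $T_s$ via the cross relation \eqref{TjX} and the Hecke multiplication rule \eqref{TjTw}, then collecting coefficients of $X^\eta T_v$ to read off \eqref{recrel1}--\eqref{recrel2}, with the $\Omega$-invariance deduced from $T_uT_w=T_{uw}$ and $T_uX^\mu=X^{u'\mu}T_u$. The only presentational difference is that the paper uses the pre-simplified relation \eqref{com-rel} (which is just \eqref{TjX-rel} written out case by case for $\eta\in P_\vartheta$), so the ``leakage'' issue you flag never surfaces there---the support on $W_0\nu\cup\{0\}$ is manifest from \eqref{com-rel}---whereas you recover the same conclusion by the string/length argument (equivalently: $\langle\mu,\alpha_j^\vee\rangle=\pm 2$ with $\mu\in P_\vartheta$ forces $\mu=\pm\alpha_j$, hence $\eta=0$).
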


The proof of this theorem is by induction on the length of $w$; the details
are relegated to the end of this section.

For $w\in W_0$, Theorem \ref{multiplication:thm} amounts
to a multiplication formula in the Bernstein-Zelevinsky presentation of the extended affine Hecke algebra
with basis $X^\lambda T_v$, $\lambda\in P$, $v\in W_0$.
The expansion for $T_w X^\nu$ simplifies in this situation if $\nu\in P_\vartheta^\star$ is
dominant:

\begin{corollary}\label{multiplication:cor}
For $\omega\in P$ (quasi-)minuscule and $w\in W_0$, one has that
\begin{subequations}
 \begin{equation}   \label{highercross3}
T_w X^\omega=  X^{w\omega}T_w+ \sum_{\substack{v\in W_0\\ v< w}} (B_{v,w}^\omega+ A_{v,w}^\omega X^{v\omega})T_v,
  \end{equation}
where the coefficients $A_{v,w}^\omega$ and $B_{v,w}^\omega$ are determined by the recurrence relations
(for $s=s_j$, $\alpha=\alpha_j$, $q=q_j$ with $j\in \{ 1,\ldots ,n\}$ such that $s_jw<w$):
\begin{eqnarray}
\lefteqn{  A_{v,w}^\omega=} && \\
&&
\begin{cases}
 A_{sv,s w}^\omega &    \text{if $sv>v$ and $\langle v\omega,\alpha^\vee\rangle=0$ or $sv<v$ and $\langle v\omega,\alpha^\vee\rangle<0$}\\
  A_{sv,sw}^\omega+q A_{v,sw}^\omega  & \text{if $sv>v$ and $\langle v\omega,\alpha^\vee\rangle > 0$ or $sv<v$ and $\langle v\omega,\alpha^\vee\rangle=0$}
\end{cases} ,\nonumber
\end{eqnarray}
and
\begin{equation}
B_{v,w}^\omega=
\begin{cases}
   B_{sv,sw}^\omega & \text{if $sv>v$}\\
   B_{sv,sw}^\omega+q  B_{v,sw}^\omega  & \text{if $sv<v$}
\end{cases}
\quad +\ \begin{cases}
\ \ qA_{v,sw}^\omega & \text{if $v^{-1}\alpha=\omega$} \\
-qA_{v,sw}^\omega& \text{if $v^{-1}\alpha=-\omega$}\\
\ \ \ 0 & \text{otherwise}
\end{cases},
\end{equation}
\end{subequations}
subject to the boundary conditions $A_{v,v}^\omega=1$, $B_{v,v}^\omega=0$, and $A_{v,w}^\omega=B_{v,w}^\omega=0$ if $v\not\leq w$.
\end{corollary}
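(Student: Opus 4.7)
The plan is to derive the corollary from Theorem~\ref{multiplication:thm} by specializing to $w\in W_0$ (so that $w'=w$, and $v\leq w$ forces $v\in W_0$ since a reduced expression for $w\in W_0$ involves only $s_1,\dots,s_n$) and using the recurrence \eqref{recrel1} in the variant $j\in\{1,\dots,n\}$ with $s_jw<w$. The key step behind the collapse of $\sum_{\eta\in W_0\omega}A^{\eta,\omega}_{v,w}X^\eta$ into a single term $A^\omega_{v,w}X^{v\omega}$ is the \emph{diagonal property} $A^{\eta,\omega}_{v,w}=0$ unless $\eta=v\omega$, which I would prove by induction on $\ell(w)$, with base case $w=e$ handled by the boundary condition $A^{\eta,\omega}_{v,v}=\delta_{\eta,v\omega}$ in \eqref{bc1}.

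The inductive step hinges on the following geometric input coming from (quasi-)minusculity: every $\beta\in W_0\omega$ satisfies $\langle\beta,\alpha^\vee\rangle\in\{0,\pm 1\}$ for all $\alpha\in R_0$ in the minuscule case, and $\langle\beta,\alpha^\vee\rangle\in\{0,\pm 1,\pm 2\}$ in the quasi-minuscule case with $\pm 2$ admissible only when $\alpha=\pm\beta$. Combined with $s_jv>v\Leftrightarrow v^{-1}\alpha_j\in R_0^+$, this forces $\langle v\omega,\alpha_j^\vee\rangle\geq 0$ when $sv>v$ and $\leq 0$ when $sv<v$. By the induction hypothesis, in each branch of \eqref{recrel1} the contributions $A^{s'\eta,\omega}_{sv,sw}$ and $A^{\eta,\omega}_{v,sw}$ can be nonzero only when $\eta=v\omega$. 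The remaining ingredient $A^{s'\eta,\omega}_{v,sw}$, which appears only in branches 1 and 4, would require $\eta=s_j(v\omega)$; if this is distinct from $v\omega$, then the identity $\langle s_j(v\omega),\alpha_j^\vee\rangle=-\langle v\omega,\alpha_j^\vee\rangle$ produces a sign opposite to the branch condition on $\langle\eta,\alpha_j^\vee\rangle$, ruling it out. Hence the diagonal property propagates.

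With the diagonal property in hand, evaluating \eqref{recrel1} at $\eta=v\omega$ yields exactly the two-case recurrence in \eqref{highercross3}: the branches $sv>v,\,\langle v\omega,\alpha^\vee\rangle=0$ and $sv<v,\,\langle v\omega,\alpha^\vee\rangle<0$ collapse to $A^\omega_{v,w}=A^\omega_{sv,sw}$, while the branches $sv>v,\,\langle v\omega,\alpha^\vee\rangle>0$ and $sv<v,\,\langle v\omega,\alpha^\vee\rangle=0$ produce the additional $qA^\omega_{v,sw}$. For the $B$-coefficients, substituting the diagonal property into \eqref{recrel2} reduces the $\eta$-sum to the two potentially nonzero contributions: $\eta=v\omega$ with $\langle v\omega,\alpha_j^\vee\rangle=2$ (forcing $v^{-1}\alpha_j=\omega$ in the quasi-minuscule case) giving $+qA^\omega_{v,sw}$, and $\eta=s_j(v\omega)$ with $\langle s_j(v\omega),\alpha_j^\vee\rangle=2$ (forcing $v^{-1}\alpha_j=-\omega$) giving $-qA^\omega_{v,sw}$; in every other situation the correction term vanishes, and in particular $B^\omega_{v,w}\equiv 0$ throughout when $\omega$ is minuscule (since a minuscule weight is never a root).

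I expect the main obstacle to be the bookkeeping in the inductive step verifying the diagonal property: the recurrence in \eqref{recrel1} shifts the $\eta$-index by $s=s_j$, so a potential off-diagonal contribution at $\eta=s_j(v\omega)$ could a priori enter in branches 1 and 4. Excluding it requires simultaneously tracking the branch condition on $\langle\eta,\alpha_j^\vee\rangle$ and the sign of $\langle v\omega,\alpha_j^\vee\rangle$ dictated by $s_jv\gtrless v$, and then confirming their incompatibility. This is precisely where the saturated structure of the orbit $W_0\omega$ under (quasi-)minusculity does the decisive work.
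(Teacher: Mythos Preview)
Your proposal is correct and follows essentially the same approach as the paper: the paper isolates the ``diagonal property'' $A^{\eta,\omega}_{v,w}=0$ for $\eta\neq v\omega$ as a separate lemma proved by induction on $\ell(w)$ via the recurrence \eqref{recrel1}, and then reads off the simplified recurrences exactly as you describe. Your exclusion of the off-diagonal contribution $\eta=s_j(v\omega)$ in branches 1 and 4 by comparing the sign of $\langle v\omega,\alpha_j^\vee\rangle$ (forced by dominance of $\omega$ and $s_jv\gtrless v$) with the branch condition on $\langle\eta,\alpha_j^\vee\rangle$ is precisely the paper's argument.
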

Corollary \ref{multiplication:cor} is immediate from Theorem \ref{multiplication:thm} and the following lemma
upon setting $A^{\omega}_{v,w}:= A^{v\omega ,\omega}_{v,w} $.

\begin{lemma}
For $\omega\in P$ (quasi-)minuscule and $v,w\in W_0$ with $v<w$,
the expansion coefficients $A^{\eta ,\omega}_{v,w}$, $\eta\in W_0\omega$, vanish if $\eta\neq v\omega$.
\end{lemma}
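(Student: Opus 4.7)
The plan is to proceed by induction on $\ell(w)$, exploiting the recurrence relation \eqref{recrel1} of Theorem \ref{multiplication:thm}. Since $w\in W_0$, the relevant simple reflections $s=s_j$ satisfy $s'=s$ and have $\alpha=\alpha_j\in R_0^+$. The base case $v=w$ follows directly from the boundary condition \eqref{bc1}, which gives $A^{\eta,\omega}_{v,v}=\delta_{\eta,v\omega}$.

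For the inductive step, fix $v<w$ and $\eta\in W_0\omega\setminus\{v\omega\}$, and choose $j\in\{1,\ldots,n\}$ with $sw<w$ where $s=s_j$. Each case of \eqref{recrel1} expresses $A^{\eta,\omega}_{v,w}$ as a linear combination of the three quantities $A^{s\eta,\omega}_{sv,sw}$, $A^{\eta,\omega}_{v,sw}$, and $A^{s\eta,\omega}_{v,sw}$ (the last one appearing only in Cases 1 and 4). The inductive hypothesis applied at $sw$ asserts that $A^{\mu,\omega}_{u,sw}=0$ unless $\mu=u\omega$. Since $s\eta=(sv)\omega$ is equivalent to $\eta=v\omega$, the first coefficient vanishes; the assumption $\eta\ne v\omega$ kills the second; and the third is nonzero only when $s\eta=v\omega$, i.e.\ when $\eta=(sv)\omega$. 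Consequently Cases 2 and 3 immediately yield $A^{\eta,\omega}_{v,w}=0$.

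The heart of the argument is to rule out the remaining subcases in Cases 1 and 4, where the surviving value $\eta=(sv)\omega$ could in principle generate a nonzero contribution. Writing $\eta=v\omega-\langle v\omega,\alpha^\vee\rangle\alpha$ one computes $\langle\eta,\alpha^\vee\rangle=-\langle v\omega,\alpha^\vee\rangle$. In Case 1 ($sv>v$), the length condition amounts to $v^{-1}\alpha\in R_0^+$, so dominance of $\omega$ yields $\langle v\omega,\alpha^\vee\rangle=\langle\omega,(v^{-1}\alpha)^\vee\rangle\ge 0$, contradicting the case hypothesis $\langle\eta,\alpha^\vee\rangle>0$. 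Case 4 is excluded symmetrically: $sv<v$ forces $v^{-1}\alpha\in R_0^-$, whence $\langle v\omega,\alpha^\vee\rangle\le 0$ clashes with $\langle\eta,\alpha^\vee\rangle<0$. Therefore $A^{\eta,\omega}_{v,w}=0$ in every case, completing the induction.

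The principal obstacle, which the dominance of the (quasi-)minuscule weight $\omega$ overcomes, is precisely the asymmetric occurrence of $A^{s'\eta,\nu}_{v,sw}$ with flipped upper index in Cases 1 and 4 of \eqref{recrel1}: this term would a priori leak off-diagonal contributions into $A^{\eta,\omega}_{v,w}$ for $\eta\ne v\omega$, but the sign conditions on $\langle\eta,\alpha^\vee\rangle$ built into the recurrence are incompatible with $\omega\in P^+$ and $\eta=(sv)\omega$, rendering the dangerous subcases vacuous.
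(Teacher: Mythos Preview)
Your proof is correct and follows essentially the same approach as the paper's: both proceed by induction on $\ell(w)$, apply the recurrence \eqref{recrel1} to reduce to the terms $A^{s\eta,\omega}_{sv,sw}$, $A^{\eta,\omega}_{v,sw}$, $A^{s\eta,\omega}_{v,sw}$, kill the first two via the induction hypothesis, and rule out the residual case $\eta=(sv)\omega$ in Cases~1 and~4 by the sign argument $\langle\eta,\alpha^\vee\rangle=-\langle\omega,(v^{-1}\alpha)^\vee\rangle$ combined with the dominance of $\omega$. The only cosmetic difference is that the paper verifies the base case $\ell(w)=1$ by an explicit computation of $T_jX^\omega-X^{s_j\omega}T_j$, whereas you absorb it into the induction via the boundary conditions \eqref{bc1}--\eqref{bc2}; both organizations are valid.
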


\begin{proof}
 The proof is by induction on $\ell (w)$ starting from the elementary case that $\ell (w)=1$.
 For $w=s_j$ ($j=1,\ldots ,n$) it is readily deduced from Theorem \ref{multiplication:thm}
 (or alternatively from Eq. \eqref{TjX-rel} and the fact that $\omega$ is (quasi-)minuscule) that
 $$ T_jX^\omega-X^{s_j\omega} T_j =
 \begin{cases} 0&\text{if}\ s_j\omega =\omega  ,\\ q X^\omega &\text{if}\ s_j\omega \neq\omega ,
 \end{cases} $$
which confirms the stated vanishing property of $A^{\eta ,\omega}_{v,w}$ if $\ell (w)=1$.
When $\ell (w)> 1$ we pick $s=s_j$ such that  $sw<w$. For $\eta\neq v\omega$ the recurrence relations in Eq. \eqref{recrel1} combined with the induction hypothesis entail that
\begin{equation}\label{A-dom-rec}
A^{\eta ,\omega}_{v,w}=
\begin{cases}
-  q A^{s\eta,\omega}_{v,sw}   & \text{if $sv>v$ and $\langle \eta,\alpha^\vee\rangle >0$},\\
0 &    \text{if $sv>v$ and $\langle \eta,\alpha^\vee\rangle \le 0$},\\
 0 & \text{if $sv<v$ and $\langle \eta,\alpha^\vee\rangle \ge 0$},\\
 q  A^{s\eta,\omega}_{v,sw}  & \text{if $sv<v$ and $\langle \eta,\alpha^\vee\rangle <0$},
\end{cases}
\end{equation}
where $A^{s\eta,\omega}_{v,sw}=0$ if $s\eta\neq v\omega$. Since the equality $s\eta= v\omega$ implies that
 $\langle  \eta, \alpha^\vee\rangle=-\langle v^{-1}s\eta,v^{-1}\alpha\rangle=-\langle \omega, v^{-1}\alpha\rangle$, it is moreover clear from Eq. \eqref{ls1} that in this situation
$\langle  \eta, \alpha^\vee\rangle\le 0$ if $sv>v$ and $\langle  \eta, \alpha^\vee\rangle\ge 0$ if $sv<v$.
In other words, the coefficient $A^{s\eta, \omega}_{v,sw}$ in Eq. \eqref{A-dom-rec} vanishes (and thus so does $A^{\eta,\omega}_{v,w}$).
\end{proof}

\begin{remark}\label{rem2.5}
The recurrence relations in Theorem \ref{multiplication:thm} and Corollary \ref{multiplication:cor} are reminiscent of the recurrence for the Kazhdan-Lusztig $R$-polynomials in the expansion of affine Hecke algebra elements of the form $T_{w^{-1}}^{-1}$ ($w\in W_R$)
in terms of the standard basis $T_v$, $v\in W_R$ \cite[Ch. 7]{hum:reflection}.
In fact, it is manifest from our recurrence relations that the coefficients $A_{v,w}^{\eta ,\nu}$,
$B_{v,w}^{\nu}$ and $A_{v,w}^\omega$,
$B_{v,w}^\omega$ are themselves polynomials in the indeterminates $q_j=\tau_j-\tau_j^{-1}$ with integral coefficients.
The boundary conditions in Eqs. \eqref{bc1}, \eqref{bc2} imply that these expansion coefficients  are actually quite sparse. For instance, it is immediate from the recurrence that
all coefficients $B_{v,w}^{\nu}$ vanish unless $\nu\in W_0\vartheta$. Similarly, in Corollary \ref{multiplication:cor} the coefficients $B_{v,w}^\omega$ vanish if $\omega$ is minuscule.
\end{remark}

\begin{remark}
For $\ell (w)=0$ and $\ell (w)=1$ the formula in Theorem \ref{multiplication:thm} reproduces the relations
in Eqs. \eqref{TuX-rel} and \eqref{TjX-rel} (in the form of Eq.  \eqref{com-rel} below), respectively.
In principle the expansion coefficients $A^{\eta ,\nu}_{v,w}$, $B_{v,w}^\nu$ ($v<w$) can be computed inductively from the recurrence relations in explicit form for any $w\in W$, but the expressions soon tend to become quite cumbersome.
In the very special case that $R_0$ is the root system of rank {\em one} and $\nu=\pm \omega$, with $\omega$ denoting the minuscule weight, then it is not hard arrive in this manner
at the following explicit expansion formula
 \cite{die-ems:discrete}:
\begin{eqnarray*}
\lefteqn{T_w X^{\varepsilon}= X^{\varepsilon (-1)^{\ell(w)+r}}T_w} &&\\
&& +\varepsilon(\ell(ws)-\ell(w))X^{(\ell(ws)-\ell(w)) (-1)^{\ell(w)+r}}
\sum_{v\in W, v<w} A(\ell(w)-\ell(v))T_v ,
\end{eqnarray*}
where $\varepsilon\in \{ 1 ,-1\}$, $X:=X^\omega$,  $s:=s_1$,
$$
A(k):=\frac{1-\tau^2}{1+\tau^2}(\tau^{-k}+(-1)^{k+1}\tau^k) , \qquad  r:=\begin{cases}0 &\text{if}\
w\in W_R ,\\
1 &\text{if}\
w\in W\setminus W_R ,
\end{cases}
$$
and $\tau:=\tau_1$.
\end{remark}

\subsection{Proof of Theorem \ref{multiplication:thm}}\label{sec2s4}
We need to show that the product $T_wX^\nu$ expands on the basis
$X^\lambda T_v$ ($\lambda\in P, v\in W$) as
\begin{equation}\label{TwXnu-exp}
T_wX^\nu= \sum_{\substack{v\in W\\ v\le w}} (B_{v,w}^\nu + \sum_{\eta\in W_0\nu} A^{\eta ,\nu}_{v,w} X^\eta)T_v ,
\end{equation}
with expansion coefficients $A^{\eta ,\nu}_{v,w}$, $B_{v,w}^\nu$ governed by the recurrence relations and boundary conditions stated by the theorem.
For $\ell (w)=0$ the expansion in question reduces to Eq. \eqref{TuX-rel}.
For $\ell (w)> 0$, we pick $s:=s_j$ ($j=0,\ldots ,n$) such that $\tilde{w}:=sw<w$ and perform induction with respect to the length of $w$:
$$
 T_w X^\nu = T_s T_{\tilde{w}}X^\nu
=\sum_{v\le \tilde{w}} (B_{v,\tilde{w}}^{\nu}T_s T_v + \sum_{\eta\in W_0\nu} A^{\eta,\nu}_{v,\tilde{w}} T_sX^\eta T_v) .
$$
Invoking of the commutation relation (cf.  Eq. \eqref{TjX-rel})
\begin{subequations}
\begin{equation}\label{com-rel}
T_s X^\eta = X^{s'\eta} T_s+
\begin{cases}
 qX^\eta+q & \text{if $\langle \eta,\alpha^\vee\rangle=2\, (\iff \eta=\alpha)$},\\
 qX^\eta & \text{if $\langle \eta,\alpha^\vee\rangle=1$},\\
0& \text{if $\langle \eta,\alpha^\vee\rangle=0$},\\
 -qX^{s'\eta} & \text{if $\langle \eta,\alpha^\vee\rangle=-1$},\\
-qX^{s'\eta}-q & \text{if $\langle \eta,\alpha^\vee\rangle=-2\, (\iff \eta=-\alpha)$},
\end{cases}
\end{equation}
(where $\alpha=\alpha_j$) followed by the multiplication rule  (cf. Eqs. \eqref{ls1}, \eqref{TjTw})
\begin{equation}\label{mult-rule}
T_sT_v =\begin{cases} T_{sv} &\text{if}\ sv>v ,\\
T_{sv}+qT_v &\text{if}\ sv<v ,
\end{cases}
\end{equation}
\end{subequations}
entails that:
\begin{eqnarray}\label{TwXnu-exp2}
\lefteqn{T_w X^\nu=} && \\
&&  \sum_{v\le \tilde{w}} (B_{v,\tilde{w}}^\nu +\sum_{\eta\in W_0\nu} A^{\eta,\nu}_{v,\tilde{w}}X^{s'\eta} )T_{sv}
+q  \sum_{\substack{ v\le \tilde{w} \\ sv < v }} (B_{v,\tilde{w}}^\nu +\sum_{\eta\in W_0\nu} A^{\eta,\nu}_{v,\tilde{w}} X^{s'\eta} )T_v\nonumber \\
&&
+q\sum_{ v\le \tilde{w} } \Bigl(
\sum_{\substack{ \eta\in W_0\nu   \\ \langle \eta,\alpha^\vee\rangle> 0}} A^{\eta,\nu}_{v,\tilde{w}}X^{\eta}
- \sum_{\substack{ \eta\in W_0\nu  \\ \langle \eta,\alpha^\vee\rangle< 0}}
A^{\eta,\nu}_{v,\tilde{w}}X^{s'\eta} \Bigr) T_{v}
\nonumber \\
&&
+ q
\sum_{ v\le \tilde{w} }  \Bigl(
\sum_{\substack{ \eta\in W_0\nu   \\ \langle \eta,\alpha^\vee\rangle=2}} A^{\eta,\nu}_{v,\tilde{w}}-
\sum_{\substack{ \eta\in W_0\nu  \\ \langle \eta,\alpha^\vee\rangle=-2}} A^{\eta,\nu}_{v,\tilde{w}}\Bigr) T_v .\nonumber
\end{eqnarray}
Upon substituting $\tilde{w}=sw$, exploiting that $v\leq w$ if $sv\leq \tilde{w}$ or $v\leq \tilde{w}$
(in view of Eqs. \eqref{br-p1}, \eqref{br-p2}), and recalling that
$A^{\eta,\nu}_{v,\tilde{w}}=B_{v,\tilde{w}}^\nu=0$ if $v\not\leq \tilde{w}$ (by the induction hypothesis), it is not difficult to rewrite
the expansion in Eq.  \eqref{TwXnu-exp2}---by collecting the coefficients of the basis elements \eqref{H-basis}---in the form of
Eq. \eqref{TwXnu-exp}, with coeffcients  $A^{\eta ,\nu}_{v,w}$ and $B_{v,w}^\nu$
given by Eqs. \eqref{recrel1} and \eqref{recrel2}, respectively.

This proves that the product $T_w X^\nu$ expands as in Eq. \eqref{TwXnu-exp}  with
coefficients  $A^{\eta ,\nu}_{v,w}$ and $B_{v,w}^\nu$ satisfying the recurrence relations
in Eqs. \eqref{recrel1}, \eqref{recrel2} together with the boundary condition in Eq. \eqref{bc2}.
The invariance of these coefficients with respect to the action of $\Omega$ in Eq. \eqref{recrel0}
is then clear from the following elementary computation
(and the linear independence of the basis elements \eqref{H-basis}):
\begin{eqnarray*}
&&\sum_{v\le uw} (B_{v,uw}^\nu+\sum_{\eta\in W_0\nu}A^{\eta ,\nu}_{v,uw} X^{\eta})T_v
=T_{uw}X^\nu
=T_uT_wX^\nu = \\
&&\sum_{v\le w} (B_{v,w}^\nu+\sum_{\eta\in W_0\nu}A^{\eta ,\nu}_{v,w} X^{u'\eta})T_{uv}
=\sum_{v\le u w} (B_{u^{-1}v,w}^\nu+\sum_{\eta\in W_0\nu}A^{u'^{-1}\eta ,\nu}_{u^{-1}v,w} X^{\eta})T_v .
\end{eqnarray*}
Finally, it is immediate from the recurrence and Eq. \eqref{bc2}
that  $A^{\eta ,\nu}_{w,w}=A^{s^\prime \eta ,\nu}_{\tilde{w},\tilde{w}}$ and
$B_{w,w}^\nu=B_{\tilde{w},\tilde{w}}^\nu$, which together with Eq. \eqref{TuX-rel} implies Eq. \eqref{bc1}.

\section{Difference-Reflection Operators}\label{sec3}
When our positive scale parameter $c$ is integral-valued, the weight lattice $P\subset V$ is stable with respect to the action of $W$. Unless explicitly stated otherwise, from now onwards we will always consider an integral-valued scale parameter $c>1$:
\begin{equation}\label{int-cond}
c\in\mathbb{N}_{>1} .
\end{equation}
For this situation, we introduce in this section a representation of $\mathbb{H}$ given by difference-reflection operators acting in the space $\mathcal{C}(P)$ of complex functions $f:P\to \mathbb{C}$. Our starting point is an analogous difference-reflection representation of $H$ on $\mathcal{C}(P)$ taken from Ref. \cite{die-ems:unitary}.

\subsection{Difference-reflection representation $\hat{T}(H)$}
For any affine root  $a$, let us define the following
difference-reflection operator on $\mathcal C(V)$
\begin{subequations}
\begin{equation}\label{That1}
\hat T_a:= \tau_a+ \chi_a(s_a -1) \qquad (a\in R),
\end{equation}
where $\tau_a$ and $\chi_a$ act by multiplication and
\begin{equation}
\chi_a(x)=
\begin{cases}
\tau_a &  \text{if}\ a(x)> 0,\\
1  &  \text{if}\  a(x)= 0,\\
\tau_a^{-1} & \text{if}\  a(x)< 0 .
\end{cases}
\end{equation}
\end{subequations}
More explicitly, $\hat{T}_a$ acts on $f:V\to\mathbb{C}$ as
\begin{equation}
(\hat T_a f)(x)=
 \begin{cases}\label{That2}
  \tau_a f(s_a x)  & \text{if}\  a(x) > 0,\\
   \tau_a f(x)  & \text{if}\  a(x) =0,\\
  \tau_a^{-1} f(s_a x) +( \tau_a- \tau^{-1}_a)f(x) &   \text{if}\ a(x) < 0.
\end{cases}
\end{equation}
It is instructive to realize oneself that the difference-reflection operator
$\hat{T}_a$ maps continuous functions to continuous functions and that its action can be restricted to a discrete difference-reflection operator in
$\mathcal{C}(P)$ thanks to our integrality condition for $c$ (cf. Eq. \eqref{int-cond}).
The following proposition is a straightforward consequence of \cite[Thm. 3.1]{die-ems:unitary}.
 \begin{proposition}\label{dr-rep-aha:prp}
The assignment $T_j\mapsto \hat  T_j:=\hat{T}_{a_j}$ ($j=0,\dots, n$) and
$T_u\mapsto u$ ($u\in \Omega$)  extends (uniquely) to a
representation $h\mapsto \hat{T}(h)$ ($h\in H$) of the extended affine Hecke algebra on $C(V)$.
\end{proposition}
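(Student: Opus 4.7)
My plan is to reduce Proposition \ref{dr-rep-aha:prp} to the affine Hecke algebra statement \cite[Thm.~3.1]{die-ems:unitary} by checking the extra cross relation \eqref{W-rel1} for $\Omega$. Specifically, the extended affine Hecke algebra $H$ is presented by the generators $T_u$ ($u\in\Omega$) and $T_j$ ($j=0,\dots,n$) subject to \eqref{Tu-rel}, \eqref{quadratic-rel} and \eqref{braid-rel}; the theorem cited from \cite{die-ems:unitary} already delivers a well-defined representation of the (non-extended) affine Hecke algebra $H_R$ via $T_j\mapsto \hat T_j$, which takes care of \eqref{quadratic-rel} and \eqref{braid-rel}. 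The first half of \eqref{Tu-rel}, namely $T_uT_{\tilde u}=T_{u\tilde u}$, is immediate because $u\mapsto u$ acts as a concrete group of linear automorphisms of $V$ (and, thanks to the integrality condition \eqref{int-cond}, preserves $P$, so the representation also restricts to $\mathcal{C}(P)$).

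The heart of the matter is therefore the conjugation relation $T_uT_j=T_{u_j}T_u$, i.e.\ $u\,\hat T_{a_j}\,u^{-1}=\hat T_{a_{u_j}}=\hat T_{u(a_j)}$. I would derive this from the more general intertwining identity
\begin{equation}\label{interplan}
w\,\hat T_a\, w^{-1}=\hat T_{wa}\qquad (w\in W,\ a\in R),
\end{equation}
applied to $w=u\in\Omega$. To establish \eqref{interplan}, I would just unpack the definition \eqref{That1}--\eqref{That2} case by case on the sign of $a(w^{-1}x)=(wa)(x)$, use the $W$-invariance of the multiplicity function $\tau$ (so $\tau_a=\tau_{wa}$), and the standard reflection identity $w s_a w^{-1}=s_{wa}$. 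For example, if $(wa)(x)>0$ then $a(w^{-1}x)>0$, and
\begin{equation*}
(w\hat T_a w^{-1}f)(x)=(\hat T_a w^{-1}f)(w^{-1}x)=\tau_a (w^{-1}f)(s_aw^{-1}x)=\tau_{wa}f(s_{wa}x)=(\hat T_{wa}f)(x);
\end{equation*}
the other two cases are analogous. Specializing $w=u$ and $a=a_j$ and using $ua_j=a_{u_j}$ (the defining property of the permutation $j\mapsto u_j$) yields the required relation.

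Once \eqref{interplan} is in hand, the only remaining task is the standard abstract-nonsense argument that the generators and relations of $H$ under $T_u\mapsto u$, $T_j\mapsto\hat T_j$ are all satisfied, so the assignment extends uniquely to an algebra homomorphism $\hat T\colon H\to\mathrm{End}(\mathcal{C}(V))$ via $T_w=T_uT_{j_1}\cdots T_{j_\ell}\mapsto u\hat T_{j_1}\cdots\hat T_{j_\ell}$ for any reduced expression \eqref{red-exp} (well-definedness is guaranteed by the braid and cross relations just verified). I do not foresee any real obstacle: the only mildly subtle point is keeping track of signs in the case analysis needed for \eqref{interplan}, but this is routine and the $W$-equivariance of $\chi_a$ (namely $\chi_{wa}(x)=\chi_a(w^{-1}x)$) makes each case fall into place.
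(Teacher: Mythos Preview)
Your extension from $H_R$ to $H$ via the conjugation identity $w\hat T_a w^{-1}=\hat T_{wa}$ is correct and is exactly how the paper finishes. The gap is in the step you treat as a black box: you assert that \cite[Thm.~3.1]{die-ems:unitary} already delivers the $H_R$-representation $T_j\mapsto\hat T_j$ for the operators $\hat T_a=\tau_a+\chi_a(s_a-1)$ defined here, on $\mathcal{C}(V)$, for general $c$. It does not. The operators in \cite{die-ems:unitary} use $\chi_{a_j}^{-1}$ rather than $\chi_{a_j}$ (equivalently, they are $\hat T_{-a_j}$, not $\hat T_{a_j}$), and the result there is stated for $c=1$ on $\mathcal{C}(P)$. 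Passing to $\mathcal{C}(V)$ and rescaling $c$ is indeed routine, but that still only yields $T_j\mapsto\hat T_{-a_j}$.

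The paper then has to work to reach $T_j\mapsto\hat T_{a_j}$: a sign flip of the positive system turns this into $T_j\mapsto\hat T_j$ for $j=1,\dots,n$ but only $T_0\mapsto\hat T_{\alpha_0^\vee-c}$ (the affine constant comes out with the wrong sign). The quadratic and braid relations for the correct $\hat T_0=\hat T_{\alpha_0^\vee+c}$ are then obtained by a translation argument: one checks $t_x\hat T_{\alpha^\vee+rc}t_{-x}=\hat T_{\alpha^\vee+(rc-\langle x,\alpha^\vee\rangle)}$ and, for each $j\in\{1,\dots,n\}$, chooses $x\in cP$ on $\langle x,\alpha_0^\vee\rangle+2c=0$ and $\langle x,\alpha_j^\vee\rangle=0$, so that conjugation by $t_x$ sends $\hat T_{\alpha_0^\vee-c}$ to $\hat T_0$ while fixing $\hat T_j$. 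This is the substantive part of the paper's proof, and your proposal skips it entirely.
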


\begin{proof}
It was shown in \cite[Sec. 3]{die-ems:unitary} that for $c=1$ the assignment
$$T_j\mapsto \tau_{j}+ \chi_{a_j}^{-1}(s_{j} -1)\quad (j=0,\ldots ,n) $$
extends to a representation of the affine Hecke algebra $H_R$ on $\mathcal{C}(P)$.
Moreover, the proof of this result in {\em loc. cit.} extends verbatim from $\mathcal{C}(P)$ to
$\mathcal{C}(V)$ (cf. also \cite[Rem. 3.4]{die-ems:unitary}), first for $c=1$ and then for general (not necessarily integral) $c>0$ upon rescaling.
In other words, the assignment $T_j\mapsto \hat{T}_{-a_j}$ ($j=0,\ldots ,n$)
extends to a representation of $H_R$ on $\mathcal{C}(V)$.
By flipping the sign of the choice of positive roots it follows in turn that the
assignment
$T_0\mapsto\hat{T}_{\alpha_0^\vee -c}$, $T_j\mapsto \hat{T}_j$ ($j=1,\ldots ,n$)  extends to a representation
of $H_R$ on $\mathcal{C}(V)$. We will now modify the action associated with the affine generator $T_0$ following a translation argument borrowed from the proof of \cite[Thm. 4.11]{ems-opd-sto:trigonometric}.
Specifically, from the translation $t_{x}s_at_{-x}=s_{t_{x} a}=s_{a-a^\prime(x)}$ ($x\in cP$, $a\in R$) and the definition of the difference-reflection operators it is immediate that
$$t_{x} \hat
T_{\alpha^\vee+rc} t_{-x}=\hat  T_{\alpha^\vee+(rc-\langle x,\alpha^\vee \rangle)} \qquad
(x\in cP,\ \alpha\in R_0,\ r\in\mathbb{Z}).
$$
Hence, if one picks---for a given $j\in \{1,\ldots ,n\}$---a point $x\in cP$ belonging to the intersection of the hyperplanes $\langle x, \alpha_0^\vee\rangle+2c=0$ and $\langle x, \alpha_j^\vee\rangle=0$, then
 $t_{x} \hat  T_{\alpha_0^\vee-c}t_{-x}=\hat T_0$ and
$t_{x} \hat T_j t_{-x}=\hat T_j$.
The quadratic relation for $\hat{T}_0$ follows therefore from the quadratic relation for  $ \hat  T_{\alpha_0^\vee-c}$ and the braid relation
between $\hat{T}_0$ and $\hat{T}_j$ follows from the braid relation
between $ \hat  T_{\alpha_0^\vee-c}$ and $\hat{T}_j$. We thus conclude
that the assignment
$T_j\mapsto \hat{T}_{j}$ ($j=0,\ldots ,n$)  extends to a representation of $H_R$ on $\mathcal{C}(V)$.
The further extension of this representation from $H_R$ to $H$ as stated by the proposition follows from
the conjugation relation $u \hat T_j u^{-1} =\hat T_{u_j }$ ($u\in \Omega$, $j=0,\ldots ,n$), which is in turn immediate from
Eq. \eqref{W-rel1} (and the definition of the difference-reflection operators).
\end{proof}

\subsection{Action of $\hat{T}(H)$ on $\mathcal{C}(P)$}
A fundamental domain for the action of $W_R$ on $V$ is given by the
(closed) dominant Weyl alcove
\begin{align}\label{alcove}
A_c:=& \{ x\in V\mid a(x )\geq 0,\ \forall a\in R^+\}\\
=&\{ x\in V\mid 0\leq \langle x,\alpha^\vee\rangle\leq c,\ \forall\alpha\in R^+_0\}  . \nonumber
\end{align}
Since $R^+$ is stable with respect to the action of $\Omega$, so is $A_c$:
\begin{equation}\label{Ac-fix}
\Omega = \{w\in W\mid wA_c=A_c \} .
\end{equation}
Moreover, it is readily seen by induction with respect to the length that for any $w\in W$ and $f\in\mathcal{C}(V)$
\begin{equation}\label{Tact}
(\hat{T}_w f) (x) =\tau_w f (w^{-1}x) \qquad\text{if} \ x\in A_c,
\end{equation}
where $\hat{T}_w:=\hat{T}(T_w)$.
Indeed, for $\ell (w)=0$ Eq. \eqref{Tact} is immediate from the action of $\Omega\subset W$ on $\mathcal{C}(V)$ (cf. Eq. \eqref{W-action}), whereas for
$\ell (w)>0$ picking $j\in \{ 0,\ldots ,n\}$ such that $ws_j<w$ entails inductively that
$$(\hat{T}_wf)(x)=(\hat{T}_{ws_j} \hat{T}_{s_j} f)(x)=\tau_{ws_j}( \hat{T}_{s_j} f)(s_j w^{-1}x)=\tau_{w}f(w^{-1}x),$$ where the last equality hinges on Eq. \eqref{That2}  together with the observation that
$a_j(s_jw^{-1}x)=-(w a_j)(x)\geq 0$ (since $wa_j\in R^-$ by Eq. \eqref{ls2}).

We will now exploit the integrality of $c$ to restrict
the difference-reflection representation $\hat{T}(H)$ of Proposition \ref{dr-rep-aha:prp}  to a
 ($c$-dependent) representation of the extended affine Hecke algebra on $\mathcal{C}(P)$. This representation turns out to be faithful when the intersection of the weight lattice with the fundamental alcove
\begin{equation}\label{Pc}
P_c^+:= P\cap A_{c}
\end{equation}
contains a $W$-regular weight (i.e. a weight with a trivial stabilizer), which is the case when $c$ is larger than the Coxeter number
\begin{equation}\label{coxeter}
h(R_0):=\langle \rho ,\vartheta^\vee\rangle+1\quad \text{with}\quad  \rho:=\frac{1}{2}\sum_{\alpha\in R_0^+}\alpha
=\omega_1+\cdots +\omega_n .
\end{equation}
Here $\omega_1,\ldots ,\omega_n$ denotes the basis of fundamental weights:
\begin{equation}
\langle \omega_j,\alpha_k^\vee \rangle =\begin{cases} 1&\text{if}\ j= k, \\ 0&\text{if}\ j\neq k. \end{cases}
\end{equation}

\begin{proposition}
For $c> h (R_0)$, the action of the representation $\hat{T}(H)$  on  $\mathcal{C}(P)$ is faithful.
\end{proposition}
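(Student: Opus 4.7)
The plan is to exploit Eq.~\eqref{Tact}, which says that on the dominant alcove $A_c$ the action of $\hat T_w$ collapses to the scalar $\tau_w$ composed with the untwisted translation by $w^{-1}$. This essentially decouples the basis $\{T_w\}_{w\in W}$ of $H$ once we can test against a point in $P_c^+$ whose full $W$-stabilizer is trivial, i.e.\ a $W$-regular weight.

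Concretely, suppose $h=\sum_w c_w T_w\in H$ (a finite linear combination) satisfies $\hat T(h)=0$ on $\mathcal C(P)$. For any $x_0\in P_c^+$ and $f\in\mathcal C(P)$, Eq.~\eqref{Tact} yields
$$
0=(\hat T(h)f)(x_0)=\sum_{w\in W} c_w\tau_w\, f(w^{-1}x_0),
$$
with $w^{-1}x_0\in P$ thanks to the integrality condition \eqref{int-cond}. If $x_0$ is $W$-regular, the points $\{w^{-1}x_0\}_{w\in W}$ are pairwise distinct, so for any prescribed $v\in W$ one may take $f$ to be the indicator of the single point $v^{-1}x_0\in P$, which reduces the displayed identity to $c_v\tau_v=0$; since $\tau_v\neq 0$ by our standing assumption on the multiplicity function, each $c_v$ vanishes and hence $h=0$.

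The remaining task is to produce a $W$-regular weight in $P_c^+$ when $c>h(R_0)$. My primary candidate is $x_0=\rho=\omega_1+\cdots+\omega_n$: the equalities $\langle\rho,\alpha_j^\vee\rangle=1$ together with $\langle\rho,\vartheta^\vee\rangle=h(R_0)-1<c$ place $\rho$ in the open alcove and so give it trivial stabilizer in $W_R$; any residual symmetry therefore lives in $\Omega$. Since $c>h(R_0)$ provides a slack of at least $2$ on the affine wall, each $\rho+\omega_i$ remains in the interior of $A_c$, and for a nontrivial $u=vt_{c\lambda}\in\Omega$ one computes $u(\rho+\mu)=u\rho+v\mu$, so whenever $u\rho=\rho$ one has $v\in W_0\setminus\{e\}$ and breaking the residual $\Omega$-symmetry amounts to choosing a perturbation $\mu\in P_c^+-\rho$ with $v\mu\neq\mu$ for every such $v$. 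The main technical wrinkle is confirming that a single admissible perturbation works simultaneously for the whole stabilizer of $\rho$ in $\Omega$ (rather than one $u$ at a time); since each fixed locus of a nontrivial $u\in\Omega$ is a proper affine subspace of $V$ and $\Omega\cong P/Q$ is finite, a short type-by-type inspection (using the explicit action of $\Omega$ on the $\omega_i$'s from Bourbaki's tables) completes the step. Everything else is a direct consequence of Eq.~\eqref{Tact}.
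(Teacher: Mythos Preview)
Your reduction via Eq.~\eqref{Tact} to the existence of a $W$-regular weight in $P_c^+$, followed by evaluation on indicator functions, is exactly the paper's argument. Where you diverge is in establishing the regularity: you hedge on whether $\rho$ itself is $\Omega$-regular and propose a perturbation backed by a type-by-type check. In fact $\rho$ is already $\Omega$-regular once $c>h(R_0)$, and the paper shows this directly with no case analysis: for $u\in\Omega\setminus\{1\}$ one has $u_0>0$, and since $a_j(u^{-1}\rho)=(ua_j)(\rho)=a_{u_j}(\rho)$ equals $1$ when $u_j\neq 0$ and equals $a_0(\rho)=c+1-h(R_0)$ when $u_j=0$, one reads off
\[
u\rho=\sum_{\substack{1\le j\le n\\ j\neq u_0}}\omega_j+(c+1-h(R_0))\,\omega_{u_0}\neq\rho
\]
because $c+1-h(R_0)>1$. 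No perturbation is needed.

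Separately, your perturbation step contains an error: the claim that each $\rho+\omega_i$ remains in the interior of $A_c$ requires $\langle\rho+\omega_i,\vartheta^\vee\rangle<c$, i.e.\ $m_i=\langle\omega_i,\vartheta^\vee\rangle\le c-h(R_0)$; for the borderline $c=h(R_0)+1$ this forces $m_i=1$, which fails for non-minuscule $\omega_i$ (and for every $\omega_i$ in types $E_8$, $F_4$, $G_2$). The fix---restrict to minuscule $\omega_i$, which exist precisely when $\Omega$ is nontrivial, so that the case $\Omega=\{1\}$ needs no perturbation anyway---is available, but the direct computation above makes the whole detour unnecessary.
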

\begin{proof}
Clearly the (strictly dominant) weight $\rho$ \eqref{coxeter} lies in the interior of $A_c$ \eqref{alcove} provided $c> h(R_0)-1$. For $S\subset W$ finite and a linear combination of the basis elements
$h=\sum_{w\in S} a_w T_w\in H$ that lies in the kernel of $\hat{T}$, i.e.  with $\sum_{w\in S} a_w \hat{T}_w =0$,  it thus follows---upon acting on an arbitrary function $f\in\mathcal{C}(P)$ and evaluation at $\rho$ with the aid of Eq. \eqref{Tact}---that
\begin{equation*}\label{kernel}
 \sum_{w\in S} a_w \tau_w f (w^{-1}\rho )=0\qquad (\forall f\in\mathcal{C}(P)).
\end{equation*}
The weights $w^{-1}\rho$, $w\in W$ are all distinct if $\rho$ is $W$-regular. Hence,
picking $f$ equal to the indicator function of an arbitrary point $w^{-1}\rho$ ($w\in S$) reveals
that in this situation the corresponding coefficients $a_w$ must vanish, i.e. the kernel of $\hat{T}$ is
necessarily trivial in $H$.
It remains to infer that the condition $c>h(R_0)$ effectively guarantees that
the weight $\rho$ is $W$-regular. Given that $\rho$ lies
in the interior of $A_c$, we only need to check that it is $\Omega$-regular (cf. Eq. \eqref{Ac-fix}).
Moreover, since for any $u\in\Omega$ and $j\in\{0,\ldots ,n\}$
$$a_j(u^{-1} \rho)=(u a_j)(\rho)=a_{u_j}(\rho)=\begin{cases}  a_0(\rho)&\text{if}\ u_j=0 \\ 1&\text{if}\ u_j>0 \end{cases},
$$
one readily deduces that for $u\in\Omega\setminus\{ 1\}$ (whence $u_0>0$)
\begin{equation*}
u\rho = \sum_{\substack{1\leq j\leq n\\ j\neq u_0}} \omega_j+a_0(\rho)\omega_{u_0}\neq\rho ,
\end{equation*}
because $a_0(\rho)=c-\langle\rho,\vartheta^\vee\rangle=c+1-h(R_0)>1$.
\end{proof}

\subsection{The extension $\hat{T}(H)\to \hat{T}(\mathbb{H})$}
Our primary concern is to extend $\hat{T}(H)$ to a difference-reflection representation
$\hat{T}(\mathbb{H})$ of the double affine Hecke algebra at critical level on $\mathcal{C}(P)$.
To this end some further notation is needed.
For $x\in V$, let us write $w_x\in W_R$ for the (unique) shortest affine Weyl group element such that
\begin{equation}
x_+:=w_x x \in A_c
\end{equation}
and let $W_{R,x}$ denote the stabilizer subgroup
\begin{equation}\label{stabilizer}
W_{R,x}:=\{ w\in W_R\mid wx=x\} .
\end{equation}
The element $w_x$
is a minimal left-coset representative of $W_{R,x}$ and a minimal right-coset representative of $W_{R,x_+}$:
\begin{subequations}
\begin{align}
\ell (w_xv)&=\ell (w_x)+\ell (v)\quad \forall v\in W_{R,x} \label{minimal-left-coset} \\
\ell (vw_x)&=\ell (w_x)+\ell (v)\quad \forall v\in W_{R,x_+} .\label{minimal-right-coset}
\end{align}
\end{subequations}
The length of $w_x$ is given by the number of affine root hyperplanes $V_a$ separating $x$ from (the interior of) $A_c$:
\begin{equation}\label{Rwx}
R(w_x)=\{ a\in R^+\mid a(x)<0\} .
\end{equation}

To a weight $\nu\in P_\vartheta^\star$ \eqref{P-theta}, we now associate the operator
$\hat {X}^\nu:\mathcal{C}(P)\to \mathcal{C}(P)$ acting on $f:P\to\mathbb{C}$ via
\begin{subequations}
\begin{eqnarray}\label{Xnu}
\lefteqn{(\hat {X}^\nu f)(\lambda):=}&& \\
&& a_{\lambda ,\nu} f(\lambda-\nu) +b_{\lambda,\nu} (1-\tau_0^{-2}) f(\lambda) +
\tau^{-1}_{w_\lambda}\sum_{\substack{v<w_\lambda\\ \eta\in W_0\nu}} A^{\eta,\nu}_{v,w_\lambda}
    \tau^2_{w_{\lambda_+-\eta}}  (\hat  T_v f)(\lambda_+-\eta)
 \nonumber \\
&&+
\tau^{-1}_{w_\lambda}\sum_{v<w_\lambda} \tau_v \Bigl( B^\nu_{v,w_\lambda}  +(1-\tau_0^{-2}) \sum_{\eta\in W_0\nu} c_{\lambda_+,\eta}
        A^{\eta,\nu}_{v,w_\lambda} \Bigr) f(v^{-1}\lambda_+ ) \qquad (\lambda\in P),\nonumber
\end{eqnarray}
where
$A^{\eta,\nu}_{v,w}$ and $B^\nu_{v,w} $ refer to the expansion coefficients in Theorem \ref{multiplication:thm},
and the coefficients
$ a_{\lambda ,\nu}$, $b_{\lambda,\nu}$  and $c_{\lambda ,\eta}$ are given by
\begin{align}
 a_{\lambda ,\nu}&:=\tau_{w_{w_\lambda (\lambda-\nu)}}\tau_{w_{w_\lambda (\lambda-\nu)}w_\lambda}\tau_{w_\lambda}^{-1}, \label{a-coef}\\
b_{\lambda,\nu}&:=
\begin{cases}
c_{\lambda_+ ,w_\lambda^\prime\nu} &\text{if}\ (\lambda-\nu)_+\neq\lambda_+  \\
 \tau_{w_{w_\lambda (\lambda-\nu)}}^2  \chi (\nu^\vee+(1-\langle \lambda,\nu^\vee\rangle) ) & \text{if}\ (\lambda-\nu)_+ =\lambda_+
\end{cases}
 \label{b-coef}
\end{align}
and
\begin{equation}\label{c-coef}
c_{\lambda,\eta}:=\theta (\lambda -\eta) e^\vee_\tau (\eta ) (h_\tau^\vee)^{-\text{sign} (\langle\lambda ,\eta^\vee\rangle )}
\end{equation}
(with the convention that $\text{sign}(0):=0$).
Here
 $\theta:P\to \mathbb{N}\cup \{ 0\}$ denotes the function
 \begin{equation}\label{e:theta}
\theta(\lambda ):=\abs{\{b\in R(w_\lambda )\mid b(\lambda )=-2\}} ,
\end{equation}
\begin{equation}\label{hvee}
h^\vee_\tau:=\tau_0^2\, e_\tau^\vee (\vartheta )\quad\text{with}\quad e_\tau^\vee (\eta ):=\prod_{\alpha\in R^+_0} \tau_{\alpha^\vee}^{\langle \eta, \alpha^\vee \rangle } ,
\end{equation}
\end{subequations}
and---recall---$\chi:R\to \{0,1\}$ refers to the characteristic function of $R^-$ (cf. Remark \ref{root:rem} at the end of Appendix \ref{appA}). This definition of $\hat{X}^\nu$ is trivially extended to all $\nu\in P_\vartheta$ via the convention that the operator in question is equal to the identity operator in $\mathcal{C}(P)$ when $\nu=0$.

 \begin{theorem}\label{dr-rep-daha:thm}
The assignment $T_j\mapsto \hat  T_j$ ($j=0,\dots, n$),
$T_u\mapsto u$ ($u\in \Omega$), and $X^\nu\mapsto \hat{X}^\nu$ ($\nu\in P_\vartheta$) extends (uniquely) to a
representation $h\mapsto \hat{T}(h)$ ($h\in \mathbb{H}$) of the double affine Hecke algebra
at critical level
on $\mathcal{C}(P)$.
\end{theorem}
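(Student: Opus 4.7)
The plan is to reduce the verification to the finite presentation \eqref{Tu-rel}--\eqref{X-rel} of $\mathbb{H}$. Proposition \ref{dr-rep-aha:prp} already supplies the relations \eqref{Tu-rel}--\eqref{braid-rel} among $\hat T_u$ and $\hat T_j$, so what remains is to establish (i) the $\Omega$-covariance $\hat T_u \hat X^\nu = \hat X^{u'\nu}\hat T_u$, (ii) the Lusztig cross relation
\[
\hat T_j \hat X^\nu = \hat X^{s_j'\nu}\hat T_j + (\tau_j-\tau_j^{-1})\frac{\hat X^\nu-\hat X^{s_j'\nu}}{1-\hat X^{-\alpha_j}},
\]
and (iii) the compatibility $\hat X^\nu \hat X^{\tilde\nu}=\hat X^{\nu+\tilde\nu}$ whenever $\nu,\tilde\nu,\nu+\tilde\nu\in P_\vartheta$. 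The guiding principle is that the formula \eqref{Xnu} defining $\hat X^\nu$ is engineered so that the operator multiplication formula of Theorem \ref{multiplication:thm} holds on the nose: for $\lambda\in P$ and $f\in\mathcal{C}(P)$, combining \eqref{Tact} with Theorem \ref{multiplication:thm} should give
\[
\tau_{w_\lambda}(\hat X^\nu f)(\lambda)=(\hat T_{w_\lambda}\hat X^\nu f)(\lambda_+)=\bigl(\hat X^{w_\lambda'\nu}\hat T_{w_\lambda}f\bigr)(\lambda_+)+\sum_{v<w_\lambda}\Bigl(B^\nu_{v,w_\lambda}+\!\!\sum_{\eta\in W_0\nu}\!\! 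A^{\eta,\nu}_{v,w_\lambda}\hat X^\eta\Bigr)(\hat T_vf)(\lambda_+),
\]
and one checks that expanding the right-hand side via \eqref{Tact} (for $\hat T_v$ and $\hat T_{w_\lambda}$) together with the much simpler boundary formula for $\hat X^\eta$ at a point of $A_c$ reproduces exactly the four lines of \eqref{Xnu}. This observation both motivates the definition and organizes the verification.

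First I would dispose of (i): by \eqref{recrel0} the coefficients $A^{\eta,\nu}_{v,w}$, $B^\nu_{v,w}$ are $\Omega$-equivariant in $(v,w)$, and since $u\in\Omega$ preserves $A_c$ one has $(u\lambda)_+=u\lambda_+$ and $w_{u\lambda}=uw_\lambda u^{-1}$; moreover the auxiliary coefficients $a_{\lambda,\nu}$, $b_{\lambda,\nu}$, $c_{\lambda,\eta}$ transform consistently under $\lambda\mapsto u\lambda$, $\nu\mapsto u'\nu$. Substituting these transformations into \eqref{Xnu} and using that $\hat T_u$ acts by the permutation $u$, the identity $u\hat X^\nu=\hat X^{u'\nu}u$ falls out term by term.

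For (ii) I would proceed by induction on $\ell(w_\lambda)$. The base case $\lambda\in A_c$ is a direct computation: the sums over $v<w_\lambda$ are absent, so $(\hat X^\nu f)(\lambda)$ collapses to $a_{\lambda,\nu}f(\lambda-\nu)+b_{\lambda,\nu}(1-\tau_0^{-2})f(\lambda)$, and the Lusztig relation can be checked by the same case distinction on the sign of $a_j(\lambda)$ used in \eqref{That2}, matching the four cases on the right against the terminating geometric series \eqref{term-geom} using the concrete form of $a_{\lambda,\nu}$, $b_{\lambda,\nu}$. For the inductive step, I would pick $s=s_k$ with $sw_\lambda<w_\lambda$, write $\lambda=s\mu$ with $\ell(w_\mu)<\ell(w_\lambda)$, and use the already-proved Lusztig relation at $\mu$ together with the recurrences \eqref{recrel1}--\eqref{recrel2}, which are precisely tuned to propagate Theorem \ref{multiplication:thm} one step at a time. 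The invertibility of $\hat T_{w_\lambda}$ in $\hat T(H)$ (Proposition \ref{dr-rep-aha:prp} combined with the quadratic relation \eqref{quadratic-rel}) is what makes this reduction well defined. Assertion (iii) is then handled by choosing a path $0\to\nu^{(1)}\to\cdots\to\nu^{(\ell)}=\nu+\tilde\nu$ inside $P_\vartheta$ as in the path-connectedness discussion following \eqref{X-rel}, and verifying the shift-composition by iterated application of (ii).

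The main obstacle will be the inductive step in (ii): the case-splits in \eqref{b-coef} and the sign-of-$\langle\eta,\alpha^\vee\rangle$ branches in \eqref{recrel1}--\eqref{recrel2} interact non-trivially with whether $(\lambda-\nu)_+$ equals $\lambda_+$, which is precisely the combinatorial question of whether the shift $\lambda\mapsto\lambda-\nu$ crosses an affine wall. Matching the shift terms $a_{\lambda,\nu}f(\lambda-\nu)$ against $\hat X^{s'_j\nu}\hat T_j$ will require careful bookkeeping using \eqref{minimal-left-coset}--\eqref{Rwx} to compare $w_\lambda$, $w_{\lambda-\nu}$, $w_{s_j\lambda}$ and their lengths, and the boundary correction $b_{\lambda,\nu}(1-\tau_0^{-2})f(\lambda)$ against the geometric-series contribution \eqref{term-geom}. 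This is exactly the type of technical combinatorics the authors indicate has been relegated to Appendix \ref{appA}, and I would expect the explicit computations involving $\theta(\lambda)$, $e_\tau^\vee$, $h_\tau^\vee$ to be the bulk of the work there.
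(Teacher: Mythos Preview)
Your approach is genuinely different from the paper's, and there is a real gap in part (iii).

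The paper does \emph{not} verify the relations \eqref{TuX-rel}--\eqref{X-rel} directly for the operators $\hat X^\nu$. Instead, it introduces (in Section~\ref{sec4}) a second representation $I(\mathbb{H})$ on $\mathcal{C}(P)$, in which $X^\nu$ acts simply by the translation $t_\nu$; the relations \eqref{TuX-rel}--\eqref{X-rel} are then almost trivial for $I$ (translations commute, and the cross relation is the classical Demazure--Lusztig computation for $I_j$). The proof of Theorem~\ref{dr-rep-daha:thm} is completed in Section~\ref{sec5} by exhibiting an explicit bijection $\mathcal{J}:\mathcal{C}(P)\to\mathcal{C}(P)$ satisfying $\mathcal{J}I_j=\hat T_j\mathcal{J}$, $\mathcal{J}u=u\mathcal{J}$, and $\mathcal{J}t_\nu=\hat X^\nu\mathcal{J}$; then $\hat T=\mathcal{J}\,I\,\mathcal{J}^{-1}$ is automatically a representation. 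The hard computation is the single intertwining relation $\mathcal{J}t_\nu=\hat X^\nu\mathcal{J}$ (Eq.~\eqref{itc}), and Theorem~\ref{multiplication:thm} is used there---inside the already-established representation $I$---to expand $I_{w_\lambda}t_\nu$. Appendix~\ref{appA} supplies the boundary analysis of the shift $\lambda_+\mapsto\lambda_+-\eta$ near the alcove walls.

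Your direct route fails at (iii). The Lusztig relation (ii) governs how $\hat T_j$ moves past $\hat X^\nu$; it gives no handle on a product $\hat X^\nu\hat X^{\tilde\nu}$. The path-connectedness discussion after \eqref{X-rel} characterizes the \emph{abstract} algebra $\mathbb{C}[X]$ as freely generated by a basis in $P_\vartheta$---it does not furnish a mechanism to collapse a composite of two operators of the form \eqref{Xnu} into a single one. Even at $\lambda\in A_c$, computing $(\hat X^\nu\hat X^{\tilde\nu}f)(\lambda)$ forces you to evaluate $\hat X^{\tilde\nu}f$ at $\lambda-\nu\notin A_c$, which reintroduces the full sum over $v<w_{\lambda-\nu}$ and the coefficients $A^{\eta,\tilde\nu}_{v,w_{\lambda-\nu}}$, $B^{\tilde\nu}_{v,w_{\lambda-\nu}}$; matching this against $(\hat X^{\nu+\tilde\nu}f)(\lambda)$ would require nontrivial identities among these coefficients that are nowhere in sight. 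The same obstruction blocks the commutativity $\hat X^\nu\hat X^{\tilde\nu}=\hat X^{\tilde\nu}\hat X^\nu$ (implicit in the presentation), which you do not mention. The intertwining-operator argument sidesteps all of this: commutativity and \eqref{X-rel} hold trivially for the translations $t_\nu$, and are transported to $\hat X^\nu$ by the bijection $\mathcal{J}$.

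Your sketch of (i) is fine, and your ``guiding principle'' for (ii) is exactly how the paper \emph{discovers} the formula \eqref{Xnu}---but as a proof device it is circular, since the displayed identity $\hat T_{w_\lambda}\hat X^\nu=\hat X^{w_\lambda'\nu}\hat T_{w_\lambda}+\cdots$ presupposes that $\hat T$ is already a homomorphism on $T_{w_\lambda}X^\nu$. An induction on $\ell(w_\lambda)$ to prove (ii) pointwise may be salvageable, but without (iii) it cannot close.
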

The proof  of this theorem is relegated to Section \ref{sec5} below. It consists of showing that the difference-reflection representation
$\hat{T}(\mathbb{H})$ in Theorem \ref{dr-rep-daha:thm}
arises from an equivalent representation of the double affine Hecke algebra at critical level
in terms of integral-reflection operators to be introduced in the next section.

\begin{remark}\label{dual-cox:rem}
In the equal label case, i.e. when $\tau_a=\tau$, $\forall a\in R$, one has that
\begin{subequations}
\begin{equation}
\tau_w=\tau^{\ell(w)} \quad\text{and}\quad
h_\tau^\vee=\tau^{2 h^\vee (R_0^\vee)}.
\end{equation}
Here $h^\vee (\cdot )$ refers to the dual Coxeter number:
\begin{equation}
h^\vee (R_0):=\langle \rho,\varphi^\vee\rangle +1,
\end{equation}
\end{subequations}
with $\varphi$ denoting the highest root of $R_0$
(so $h^\vee (R_0^\vee)=\langle\rho^\vee,\vartheta\rangle +1$ with $\rho^\vee:=\frac{1}{2}\sum_{\alpha\in R^+_0}\alpha^\vee$).
\end{remark}

\begin{remark}\label{Xnu-minuscule:rem}
For $\nu\in W_0\omega$ with $\omega\in P^+$ minuscule the action of $\hat {X}^\nu$  \eqref{Xnu}--\eqref{hvee} on $f\in \mathcal{C}(P)$ simplifies to
\begin{equation}\label{Xnu-min}
(\hat {X}^\nu f)(\lambda)=\tau_{w_{w_\lambda (\lambda-\nu)}}^2 f(\lambda-\nu)
+
\tau^{-1}_{w_\lambda}\sum_{\substack{v<w_\lambda\\ \eta\in W_0\nu}} A^{\eta,\nu}_{v,w_\lambda}
    \tau^2_{w_{\lambda_+-\eta}}  (\hat  T_v f)(\lambda_+-\eta) .
\end{equation}
Indeed, invoking of Lemma \ref{theta:lem} from Appendix \ref{appA} (below) reveals that for $\lambda\in P$ and $\nu\in P_\vartheta^\star$:
\begin{subequations}
\begin{align}
\label{pa}
w_{w_\lambda (\lambda-\nu)} \in W_{R,\lambda_+}\quad &\text{if}\quad (\lambda-\nu)_+\neq\lambda_+  \\
\intertext{and}
\label{pb} \nu\in W_0\vartheta \quad &\text{if}\quad  (\lambda-\nu)_+=\lambda_+\
\text{or}\ \theta (\lambda_+ -\nu)>0
\end{align}
\end{subequations}
(where we have used that $w_\lambda (\lambda-\nu)=\lambda_+- w_\lambda^\prime\nu$).
Hence, for $\nu\in W_0\omega$ with $\omega$ minuscule one has that
$(\lambda-\nu)_+\neq\lambda_+$ and $\theta (\lambda_+ -\nu)=0$ (by Eq. \eqref{pb}).
We thus conclude that in this situation
$a_{\lambda ,\nu}=\tau_{w_{w_\lambda (\lambda-\nu)}}^2 $ (because
$\ell (w_{w_{\lambda}(\lambda-\nu)}w_\lambda)=\ell (w_{w_{\lambda}(\lambda-\nu)})+\ell (w_\lambda)$ by Eq. \eqref{pa})
and, furthermore, that $b_{\lambda ,\nu}=c_{\lambda_+, \eta}=0$ for all $\eta\in W_0\nu$. The simplification of the action in Eq. \eqref{Xnu-min} is now evident upon recalling that the relevant coefficients
$B_{v,w}^\nu$ vanish in view of Remark \ref{rem2.5}.
\end{remark}

\section{Integral-Reflection Operators}\label{sec4}
In this section we formulate a representation of $\mathbb{H}$ on $\mathcal{C}(P)$
in terms of integral-reflection operators. This extends an analogous integral-reflection representation of
the affine Hecke algebra introduced in Ref. \cite{die-ems:unitary}.

\subsection{Integral-reflection representation $I(\mathbb{H})$}

For any affine root $a=\alpha^\vee+rc\in R$, we define a corresponding discrete integral-reflection operator of the form
\begin{subequations}
\begin{equation}\label{Ia:action}
I_a:=\tau_a s_a+(\tau_a-\tau_a^{-1}  )  J_a.
\end{equation}
Here $J_a:\mathcal C(P)\to \mathcal C(P)$ denotes a discrete integral operator
 that integrates the lattice function
$f(\lambda )$ over the $\alpha$-string from $\lambda$ to $s_a\lambda$,
where an endpoint in the negative half-space
\begin{equation}\label{Ha}
H_a:=\{x\in V \mid a(x)< 0\}
\end{equation}
is included
and
the endpoint(s) in the nonnegative half-space $V\setminus H_a$ are excluded:
\begin{eqnarray}\label{Ja:action}
\lefteqn{(J_af)(\lambda ) :=} && \\
&& \begin{cases}
-f(\lambda-\alpha)-
f(\lambda-2\alpha)-\cdots -f(s_a\lambda) &\text{if}\ a(\lambda) > 0 ,\\
0&\text{if}\ a(\lambda)= 0 ,\\
f(\lambda)+f(\lambda+\alpha)+\cdots +f(s_a\lambda-\alpha)
&\text{if}\ a(\lambda)< 0 .
\end{cases} \nonumber
\end{eqnarray}
\end{subequations}
As usual, the operators corresponding to the simple basis will be abbreviated by $I_j:=I_{a_j}$, $j=0,\ldots ,n$.

\begin{theorem}\label{integral-reflection:thm}
The assignment $T_j\to I_j$ ($j=0,\ldots ,n$), $T_u\mapsto u$ ($u\in \Omega$), and  $X^\lambda\to t_\lambda$ ($\lambda\in P$) extends (uniquely)
to a representation $h\to I(h)$ ($h\in \mathbb{H}$) of the double affine Hecke algebra at critical level $\mathbb H$ on
$\mathcal{C}(P)$.
\end{theorem}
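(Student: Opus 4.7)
The approach is to verify that the proposed assignment respects the defining relations \eqref{Tu-rel}--\eqref{X-rel} of the double affine Hecke algebra at critical level. The relations \eqref{Tu-rel}, \eqref{quadratic-rel}, and \eqref{braid-rel} involve only the Hecke generators $T_u$ and $T_j$; they are exactly the relations of the extended affine Hecke algebra $H$ and have been verified for the assignment $T_u\mapsto u$, $T_j\mapsto I_j$ in \cite{die-ems:unitary}, so they may be imported wholesale. The commutative relation \eqref{X-rel} follows from the additivity of translations, $t_\nu t_{\tilde\nu}=t_{\nu+\tilde\nu}$. For \eqref{TuX-rel}, decomposing $u\in W_0\ltimes t(cP)$ as $u=u^\prime t_{c\kappa}$ shows that $u$ has linear part $u^\prime$, whence the abstract conjugation identity $u\,t_\nu\,u^{-1}=t_{u^\prime\nu}$ holds on $V$ and hence on $\mathcal C(P)$ via the action \eqref{W-action}.

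The bulk of the argument therefore concerns the cross relation \eqref{TjX-rel}. Inserting $I_j=\tau_j s_j+(\tau_j-\tau_j^{-1})J_j$ and using the operator identity $s_j\,t_\nu=t_{s_j^\prime\nu}\,s_j$ (a special case of the general fact $w\,t_y\,w^{-1}=t_{w^\prime y}$ for $w\in W$), the relation \eqref{TjX-rel} reduces to the single operator identity
\begin{equation*}
J_j\,t_\nu - t_{s_j^\prime\nu}\,J_j \;=\; \frac{t_\nu-t_{s_j^\prime\nu}}{1-t_{-\alpha_j}}
\end{equation*}
on $\mathcal C(P)$, with the right-hand side interpreted as the terminating geometric series of translation operators in the sense of \eqref{term-geom}. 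This identity is then proved by evaluating both sides at $f\in\mathcal C(P)$ and $\lambda\in P$, unfolding $J_j$ via its definition \eqref{Ja:action}, and performing a case analysis on the signs of $a_j(\lambda)$ and $a_j(\lambda-s_j^\prime\nu)$ (equivalently, on the comparison of $\langle\lambda,\alpha_j^\vee\rangle$ with $\langle\nu,\alpha_j^\vee\rangle$). In each case the two $\alpha_j$-strings summed by $J_j\,t_\nu$ and by $t_{s_j^\prime\nu}\,J_j$ agree on a common bulk range, while the residual endpoint contributions assemble precisely into the prescribed geometric-series correction.

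Because the presentation \eqref{Tu-rel}--\eqref{X-rel} of $\mathbb{H}$ only requires \eqref{TjX-rel} to be checked for $\nu\in P_\vartheta$, and each such $\nu$ satisfies $|\langle\nu,\alpha_j^\vee\rangle|\le 2$, the case analysis above is finite and bounded. The main obstacle I anticipate is the sign bookkeeping in the displayed identity, particularly the degenerate subcase $a_j(\lambda)=0$ (where $J_j$ annihilates and the string collapses) and the affine simple reflection $j=0$ (where $\alpha_0=-\vartheta$ flips several signs and the affine-translation part of $s_0$ must be tracked carefully through the conjugation $s_0\,t_\nu\,s_0$); once these cases are dispatched, the theorem follows at once from the generator-and-relation presentation of $\mathbb{H}$ recalled in Section~\ref{sec2s2}.
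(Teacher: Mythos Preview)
There is a genuine gap in your reduction. The result you cite from \cite{die-ems:unitary} (their Prop.~4.1) establishes that $T_j\mapsto I_j$ for $j=1,\ldots,n$ together with $X^\lambda\mapsto t_\lambda$ gives a representation of the affine Hecke algebra in its \emph{Bernstein--Zelevinsky} presentation (basis $X^\lambda T_v$, $v\in W_0$). That is the \emph{second} copy of the affine Hecke algebra sitting inside $\mathbb{H}$, not the subalgebra $H$ spanned by $T_w$, $w\in W$. In particular, the operator $I_0=I_{a_0}$ (with $a_0=-\vartheta^\vee+c$ a genuinely affine root) does not appear in \cite{die-ems:unitary}, and neither the quadratic relation \eqref{quadratic-rel} for $I_0$ nor the braid relations \eqref{braid-rel} between $I_0$ and $I_j$ are verified there. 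You cannot ``import them wholesale.''

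The paper handles these missing relations by an indirect route: it first shows (Lemma~\ref{ir-rep-aha:lem}) that $T_j\mapsto I_{\alpha_j^\vee}$ for $j=0,\ldots,n$ defines a representation of $H_R$, by reducing to rank-one and rank-two finite-type subsystems; then it uses translation conjugation $t_{c\mu}I_{\alpha_0^\vee}t_{-c\mu}=I_0$ (for suitable $\mu\in P$) to transfer the quadratic and braid relations from $I_{\alpha_0^\vee}$ to $I_0$. The braid relations in particular require a case analysis by root-system type, with a separate commutativity lemma (Lemma~\ref{I-com:lem}) for perpendicular hyperplanes and an ad hoc argument for $G_2$. Your proposal contains no mechanism for any of this. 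Your direct approach to the cross relation \eqref{TjX-rel} via the identity $J_j t_\nu - t_{s_j'\nu}J_j=(t_\nu-t_{s_j'\nu})/(1-t_{-\alpha_j})$ is reasonable and differs from the paper's conjugation trick, but it does not rescue the argument: the quadratic and braid relations for $I_0$ remain unaddressed and are the actual content of the theorem beyond what was already known.
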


The integral-reflection representation $I(\mathbb{H})$ is a double affine extension of
\cite[Prop. 4.1]{die-ems:unitary}, where is was demonstrated---by exploiting a duality relating to the standard polynomial representation of the affine Hecke algebra in terms of Demazure-Lusztig operators---that the assignment
$T_j\to I_j$ ($j=1,\ldots ,n$) and  $X^\lambda\to t_\lambda$ ($\lambda\in P$) extends
to a representation of the extended affine Hecke algebra with Bernstein-Zelevinsky basis $X^\lambda T_v$, $\lambda\in P$, $v\in W_0$. Hence, to prove Theorem \ref{integral-reflection:thm} it only remains to
infer the following relations (for $u\in\Omega$, $\lambda\in P$, $\nu\in P_\vartheta$):
\begin{subequations}
\begin{equation}\label{Irela}
ut_\lambda u^{-1}=t_{u^\prime\lambda}, \qquad u I_j u^{-1} =I_{u_j}\quad (j=0,\ldots ,n),
\end{equation}
\begin{equation}\label{Irelb}
(I_0-\tau_0)(I_0+\tau_0^{-1})=1,
\end{equation}
\begin{equation}\label{Irelc}
\underbrace{I_0I_jI_0\cdots}_{m_{0j}\ {\rm factors}}
=\underbrace{I_jI_0I_j\cdots}_{m_{0j}\ {\rm factors}}\qquad (j=1,\ldots ,n),
\end{equation}
and
\begin{equation} \label{Ireld}
I_0t_\nu = t_{s_0^\prime \nu }I_0+(\tau_0-\tau_0^{-1})\frac{t_\nu-t_{s'_0\nu}}{1-t_{-\alpha_0}}
\end{equation}
\end{subequations}
(cf. Eq. \eqref{term-geom}).

\subsection{Proof of Theorem \ref{integral-reflection:thm}}
Eq. \eqref{Irela} is an immediate consequence of the following
elementary relations upon specialization:
\begin{equation}\label{conj-rel1}
wt_xw^{-1} =t_{w^\prime x} \quad\text{and}\quad   wI_aw^{-1}=I_{wa}\quad  (w\in W,\, x\in V,\, a\in R).
\end{equation}
The first of these relations is manifest from the fact that $w$ acts as an affine linear transformation in $V$ and the second relation follows from the conjugation relations
\begin{equation}\label{conj-rel2}
ws_aw^{-1}=s_{wa} \quad\text{and}\quad  wJ_aw^{-1}=J_{wa}
\end{equation}
 (the former of which is obvious since $w\in W$ acts in fact as an affine {\em orthogonal} transformation in the Euclidean vector space $V$
and the latter conjugation is readily verified from the first and the definition of $J_a$ by acting with both sides on an arbitrary function $f:P\to\mathbb{C}$).
To verify Eqs. \eqref{Irelb}--\eqref{Ireld} it is convenient to proceed along the lines of the proof of Proposition \ref{dr-rep-aha:prp}.
For this purpose we prepare two lemmas.
The first formulates an auxiliary representation of $H_R$ following from \cite[Prop. 4.1]{die-ems:unitary} and the second
highlights the commutativity of the integral-reflection operators associated with perpendicular reflection hyperplanes.

\begin{lemma}\label{ir-rep-aha:lem}
The assignment $T_j\mapsto I_{\alpha_j^\vee}$ ($j=0, \ldots ,n$) extends (uniquely)
to a representation of the affine Hecke algebra $H_R$ on $\mathcal C(P)$.
\end{lemma}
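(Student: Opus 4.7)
My plan is to verify the defining quadratic and braid relations of the affine Hecke algebra $H_R$ for the operators $I_{\alpha_j^\vee}$ ($j = 0, \ldots, n$), following the same strategy as the proof of Proposition \ref{dr-rep-aha:prp} via a translation argument.

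The relations among $I_{\alpha_1^\vee}, \ldots, I_{\alpha_n^\vee}$ are already delivered by the Bernstein-Zelevinsky integral-reflection representation of the extended affine Hecke algebra in \cite[Prop.~4.1]{die-ems:unitary}. The quadratic relation for $I_{\alpha_0^\vee} = I_{-\vartheta^\vee}$ is a general instance of the identity $(I_a - \tau_a)(I_a + \tau_a^{-1}) = 0$ valid for any affine root $a$, which one verifies directly from \eqref{Ia:action}--\eqref{Ja:action} via the telescoping structure of $J_a$ and the sign analysis on $a(\lambda)$. The heart of the proof is therefore the braid relation between $I_{\alpha_0^\vee}$ and each $I_{\alpha_j^\vee}$ for $j = 1, \ldots, n$.

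For these, I would mimic the proof of Proposition \ref{dr-rep-aha:prp}: the integral-reflection analogue of the construction there (deducible from the extended affine Hecke representation of \cite{die-ems:unitary} by a sign-flip of the positive-root system) produces an intermediate representation of $H_R$ on $\mathcal{C}(P)$ with $T_0 \mapsto I_{\alpha_0^\vee - c}$ and $T_j \mapsto I_{\alpha_j^\vee}$ for $j \geq 1$. The conjugation identity $t_y I_a t_{-y} = I_{t_y a}$---with $t_y(\alpha^\vee + rc) = \alpha^\vee + (rc - \langle y, \alpha^\vee\rangle)$, immediate from $t_y s_a t_{-y} = s_{t_y a}$ and $t_y J_a t_{-y} = J_{t_y a}$---then lets one transport the braid relation to the target representation by choosing, for each fixed $j$, a weight $y_j \in P$ with $\langle y_j, \alpha_0^\vee\rangle = -c$ and $\langle y_j, \alpha_j^\vee\rangle = 0$; conjugation by $t_{y_j}$ sends $I_{\alpha_0^\vee - c}$ to $I_{\alpha_0^\vee}$ and fixes $I_{\alpha_j^\vee}$, so the braid relation transports verbatim.

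The main obstacle is guaranteeing the existence of these translation vectors in the weight lattice. Writing $y_j = \sum_{k \neq j} n_k \omega_k$, the condition becomes the Diophantine equation $\sum_{k \neq j} n_k m_k^\vee = -c$ in the comarks $m_k^\vee := \langle \omega_k, \vartheta^\vee\rangle$, solvable in integers iff $\gcd\{m_k^\vee : k \neq j\}$ divides $c$. This gcd equals $1$ for most root-system types and indices (so the argument runs uniformly), but in the non-simply-laced cases for an exceptional index $j$ (typically the unique cominuscule node, e.g.\ $j=n$ in $B_n$) the gcd can exceed $1$ and additional care is required---either a direct case-analysis verification from \eqref{Ia:action}--\eqref{Ja:action}, or a refinement of the translation argument combining several conjugations together.
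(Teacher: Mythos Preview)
Your approach differs substantially from the paper's, and it has a genuine gap at the very first step.

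The paper does \emph{not} use a translation argument here. Instead it uses rank reduction: since $\alpha_0=-\vartheta$ is anti-dominant, $\langle\alpha_0,\alpha_j^\vee\rangle\le 0$ for every $j\in\{1,\dots,n\}$, so $\{\alpha_0,\alpha_j\}$ is the simple basis of an irreducible rank-two root system. The quadratic relation for $I_{\alpha_0^\vee}$ then follows from the rank-one instance of \cite[Prop.~4.1]{die-ems:unitary}, and the braid relation between $I_{\alpha_0^\vee}$ and $I_{\alpha_j^\vee}$ follows from the rank-two instance, once one checks that the orthogonal projection of any $\lambda\in P$ onto the line (resp.\ plane) spanned by $\alpha_0$ (resp.\ $\alpha_0,\alpha_j$) lies in the weight lattice of that sub-root-system. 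This argument is uniform in $j$ and requires no case analysis.

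Your proposed route---produce an intermediate $H_R$-representation with $T_0\mapsto I_{\alpha_0^\vee-c}$ via a sign-flip, then translate---does not get off the ground. The sign-flip argument in the proof of Proposition~\ref{dr-rep-aha:prp} works because \cite[Thm.~3.1]{die-ems:unitary} already supplies an $H_R$-representation in the \emph{Coxeter generators} $\hat T_0,\dots,\hat T_n$. By contrast, \cite[Prop.~4.1]{die-ems:unitary} is a Bernstein--Zelevinsky representation (generators $T_1,\dots,T_n,X^\lambda$): it does not hand you an operator for $T_0$ to sign-flip. You would first have to identify which operator on $\mathcal C(P)$ realizes $T_0$ in this representation and prove it equals $I_{\alpha_0^\vee-c}$ (or some conjugate thereof), which is essentially the content of Theorem~\ref{integral-reflection:thm} itself---so the argument becomes circular.

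Even granting the intermediate representation, your own diagnosis of the obstruction is correct: the required $y_j\in P$ with $\langle y_j,\vartheta^\vee\rangle=c$ and $\langle y_j,\alpha_j^\vee\rangle=0$ need not exist (the gcd condition can fail). The paper \emph{does} face a version of this issue later, when proving the braid relation \eqref{Irelc} between $I_0$ and $I_j$; there it handles the exceptional indices via the commutativity Lemma~\ref{I-com:lem} (when $\langle\alpha_0,\alpha_j^\vee\rangle=0$) and a separate type-$A_2$ argument for the short root in $G_2$. But for Lemma~\ref{ir-rep-aha:lem} itself no such patching is needed, precisely because the rank-reduction argument is uniform.
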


\begin{proof}
From Prop. 4.1 of Ref. \cite{die-ems:unitary} cited above, it is immediate that the assignment
 $T_j\mapsto I_j=I_{\alpha_j^\vee}$ ($j=1, \ldots ,n$) extends (uniquely)
to an integral-reflection representation of the finite Hecke algebra $H_0$ on $\mathcal C(P)$.  To further extend this representation from $H_0$ to $H_R$ as stated by the lemma, it suffices to verify the quadratic relation \eqref{Irelb}
and the braid relations \eqref{Irelc}  with $I_0$ replaced by $I_{\alpha^\vee_0}$.
When acting with both sides of these relations on an arbitrary function $f\in\mathcal{C}(P)$, equality follows by relying once more on the integral-reflection representation of the finite Hecke algebra $H_0$.
Indeed, the quadratic relation for $I_{\alpha^\vee_0}$ follows from the quadratic relation for the integral-reflection representation of the finite Hecke algebra corresponding to the
rank-one root system with simple basis $\alpha_0$ and  the braid relation between $I_{\alpha^\vee_0}$ and
$I_{\alpha^\vee_j}$, $j\in \{ 1,\ldots ,n\}$ hinges on the
braid relation for the integral-reflection representation of the finite Hecke algebra corresponding to the rank-two root system with simple basis $\alpha_0$, $\alpha_j$. (Notice in this connection that
$\langle\alpha_0,\alpha_j^\vee\rangle\leq 0$ because $-\alpha_0=\vartheta\in P^+$, i.e.  $\alpha_0$ and $\alpha_j$ form the basis of a rank-two root system; moreover,
the orthogonal projections of $\lambda\in P$  onto the line spanned by $\alpha_0$ and onto the plane spanned by $\alpha_0$ and $\alpha_j$, respectively, belong to the weight lattices of the corresponding root systems of rank one and rank two.)
\end{proof}

\begin{lemma}\label{I-com:lem}
For $a=\alpha^\vee +rc$ and $b=\beta^\vee +lc$ in $R$ with $\langle \alpha,\beta^\vee\rangle =0$, one has that
 $$[I_a,I_b]:=I_a I_b-I_b I_a=0.$$
\end{lemma}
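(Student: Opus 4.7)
The plan is to expand both sides of the commutator using the decomposition $I_c = \tau_c s_c + (\tau_c - \tau_c^{-1}) J_c$, which produces four types of operator products to compare pairwise: $s_a s_b$ versus $s_b s_a$, $s_a J_b$ versus $J_b s_a$, $J_a s_b$ versus $s_b J_a$, and $J_a J_b$ versus $J_b J_a$. Showing each pair coincides will give the claim.

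The first equality $s_a s_b = s_b s_a$ is automatic, since the reflection hyperplanes $V_a$ and $V_b$ are perpendicular and intersect in a codimension-two affine subspace fixed by both reflections. For the mixed pairs, note that the hypothesis $\langle \alpha, \beta^\vee\rangle = 0$ is equivalent to $\langle \beta, \alpha^\vee\rangle = 0$ (both amount to $\langle \alpha,\beta\rangle=0$); hence $s_b$ acts trivially on the coefficient vector $\alpha^\vee$ of $a$, so $s_b a = a$ as affine linear functionals on $V$, and similarly $s_a b = b$. Combined with the conjugation identity $w J_c w^{-1} = J_{wc}$ recorded in Eq. \eqref{conj-rel2}, this immediately yields $s_a J_b = J_b s_a$ and $J_a s_b = s_b J_a$.

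The main step is therefore $J_a J_b = J_b J_a$, which I would prove by direct computation from the definition \eqref{Ja:action}. The essential observation is that perpendicularity forces each of the functionals $a,b$ to be constant along the $\alpha$- and $\beta$-strings of the other: for every $\lambda \in P$ and every $k \in \mathbb{Z}$ one has $b(\lambda + k\alpha) = b(\lambda) + k\langle \alpha,\beta^\vee\rangle = b(\lambda)$, and similarly $a(\lambda + k\beta) = a(\lambda)$. Consequently the branch of \eqref{Ja:action} selected when evaluating $(J_b f)$ at any point of the $\alpha$-string through $\lambda$ coincides with the branch selected at $\lambda$, and symmetrically for $J_a$ along the $\beta$-string.

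Performing the resulting case analysis on the signs of $a(\lambda)$ and $b(\lambda)$ (nine cases, most of which reduce to $0=0$ because $J_c$ vanishes when $c(\lambda)=0$), both $(J_a J_b f)(\lambda)$ and $(J_b J_a f)(\lambda)$ can be rewritten as the same signed double sum of values $f(\lambda + i\alpha + k\beta)$ with $(i,k)$ ranging over the rectangular lattice region determined by $a(\lambda)$ and $b(\lambda)$ (with the common sign $(-1)^{\chi(a(\lambda)>0)+\chi(b(\lambda)>0)}$). Fubini for these finite sums then gives the equality. The only step that is genuinely forced is the case analysis; since the two functionals control independent summation directions, nothing subtle occurs, but keeping track of the sign conventions is the one place where care is required.
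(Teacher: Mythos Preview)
Your proof is correct and follows essentially the same approach as the paper's own proof: both expand the commutator into the four brackets $[s_a,s_b]$, $[s_a,J_b]$, $[J_a,s_b]$, $[J_a,J_b]$, dispatch the first three via the conjugation identities $s_c a = a$ (for perpendicular $c$) together with $wJ_cw^{-1}=J_{wc}$, and then verify $[J_a,J_b]=0$ by computing $(J_aJ_bf)(\lambda)$ explicitly as a signed sum over a rectangular lattice region symmetric in $a$ and $b$. Your emphasis on the branch of $J_b$ being constant along the $\alpha$-string (because $b(\lambda+k\alpha)=b(\lambda)$) is exactly the mechanism that makes the paper's description of the region $X_{ab;\lambda}$ work, so there is no substantive difference.
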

\begin{proof}
Since $[I_a,I_b]$ is equal to
\begin{align*}
\tau_a\tau_b & [s_a,s_b] +
\tau_a (\tau_b-\tau_b^{-1})[s_a,J_b]+
 \tau_b (\tau_a-\tau_a^{-1}) [J_a,s_b]  \\
&+ (\tau_a-\tau_a^{-1}) (\tau_b-\tau_b^{-1})   [J_a,J_b],
\end{align*}
it is sufficient to verify that
\begin{equation*}
[s_a,s_b]=0,\quad
[s_b,J_a]=0,\quad  [s_a,J_b]=0, \quad [J_a,J_b]=0.
\end{equation*}
The vanishing of the first three brackets is plain from the second relation in Eq. \eqref{conj-rel1} and the first relation in Eq. \eqref{conj-rel2}.  Moreover, acting with $J_aJ_b$ on an arbitrary function $f\in\mathcal{C}(P)$ and evaluation at $\lambda\in P$ manifestly produces that $(J_aJ_b f)(\lambda)=0$ if $a(\lambda)b(\lambda)=0$, whereas for  $a(\lambda)b(\lambda)\neq 0$ it yields a sum of the form
$$(J_aJ_b f)(\lambda)=\text{sign}(a(\lambda)b(\lambda))\sum_{\mu\in X_{ab;\lambda}} f(\mu).$$
Here $X_{ab;\lambda} \subset P$ denotes the intersection of $\lambda+Q$ and the rectangle with vertices
$\lambda$, $s_a\lambda$, $s_b\lambda$,  and $s_as_b\lambda$, leaving out the weights on the (boundary) $\alpha$-string and $\beta$-string intersecting at the vertex that belongs to the positive quadrant $\{ x\in V\mid a(x)>0,\, b(x)>0\}$
(which means, in particular, that the only vertex actually contained in
$X_{ab;\lambda}$ is the one belonging to the negative quadrant $\{ x\in V\mid a(x)<0,\, b(x)<0\}$).
Since $X_{ab;\lambda}$ is clearly symmetric in $a$ and $b$ so is the value of $(J_aJ_b f)(\lambda)$, whence
$ [J_a,J_b]=0$.
\end{proof}

After these preparations, we are now in the position to finish the proof of Eqs. \eqref{Irelb}--\eqref{Ireld}.
We will need the decomposition of $\vartheta^\vee$ in the simple basis of $R_0^\vee$:
\begin{equation}\label{highest-coroot}
 \vartheta^\vee = m_1\alpha_1^\vee +\cdots +m_n\alpha_n^\vee .
\end{equation}
Given a concrete root system $R_0$, the corresponding values of the strictly positive integers $m_1,\ldots,m_n$ can be read-off from the tables in Bourbaki \cite{bou:groupes}.

{\em  Proof of Eq. \eqref{Irelb}.}  One readily infers with the aid of  Bourbaki's tables that the GCD of
$m_1,\ldots ,m_n$ is equal to $1$. In other words, there exists a $\mu\in P$ such that $\langle \mu,\vartheta^\vee\rangle=1$. From the second conjugation relation in Eq. \eqref{conj-rel1} we now deduce that $t_{c\mu} I_{\alpha_0^\vee} t_{-c\mu}= I_{t_{c\mu}\alpha_0^\vee}=I_0$. Hence, the quadratic relation for $I_{\alpha_0^\vee}$ following from Lemma \ref{ir-rep-aha:lem} implies the quadratic relation for $I_0$.

{\em  Proof of Eq. \eqref{Irelc}.}
Let $J$  be the subset of indices $j\in\{ 1,\ldots ,n\}$ such that $\text{GCD} (m_1,\ldots,\hat{m}_j,\ldots ,m_n)=1$ (where the hat indicates that the corresponding value $m_j$ is ommitted). From the tables in Bourbaki it is seen that
$|J|=n$ for the (simply laced) root systems of type $ADE$, $|J|=n-1$ for the root systems of type $BCF$, and $|J|=0$ for the root system of type $G$.  For $j\in J$, there exists a $\mu\in P$ such that  $\langle \mu ,\vartheta^\vee\rangle =1$ and $\langle \mu,\alpha_j^\vee\rangle=0$. In this situation Eq. \eqref{conj-rel1} implies that
$t_{c\mu} I_{\alpha_0^\vee} t_{-c\mu}= I_{t_{c\mu}\alpha_0^\vee}=I_0$
and
$t_{c\mu} I_{\alpha_j^\vee} t_{-c\mu}= I_{t_{c\mu}\alpha_j^\vee}=I_{\alpha^\vee_j}=I_j$, whence the braid relation between
$I_0$ and $I_j$ is then a consequence of the braid relation between $ I_{\alpha_0^\vee} $ and  $I_{\alpha_j^\vee} $ following from Lemma \ref{ir-rep-aha:lem}. Inspection of the tables in Bourbaki reveals moreover that for $j\in \{1,\ldots ,n\}\setminus J$ one has that $\langle\alpha_0,\alpha_j^\vee\rangle=0$, except when $\alpha_j$ amounts to the short simple root of the root system of type $G$. When $\langle\alpha_0,\alpha_j^\vee\rangle=0$, the order $m_{0j}$ of $s_0s_j$ is equal to $2$. The braid relation reduces in this situation to the commutativity  $I_0I_j=I_jI_0$, which follows in turn from Lemma \ref{I-com:lem}.
When $\alpha_j$ is equal to the short simple root of the root system of type $G$, then $\tau_j=\tau_0$ and, furthermore, the roots $\alpha_0$ and $\alpha_j$ form the simple basis of a type $A$ root system of rank two centered at the intersection of the lines $a_0(x)=0$ and $a_j(x)=0$. Since the affine roots $a_0$ and $a_j$ take integral values on $P$, this lattice is contained in the weight lattice of our translated rank-two root system of type $A$. The upshot is that in this situation the braid relation between $I_0$ and $I_j$ follows from that of type $A$.

{\em  Proof of Eq. \eqref{Ireld}.}  For any $j\in \{ 1,\ldots ,n\}$,
the integral-reflection operator $I_j$ satisfies the relation
\begin{equation*}
I_jt_\lambda = t_{s_j \lambda }I_j+(\tau_j-\tau_j^{-1})\frac{t_\lambda-t_{s_j\lambda}}{1-t_{-\alpha_j}},
 \qquad \lambda\in P
\end{equation*}
(by \cite[Prop. 4.1]{die-ems:unitary}). By picking $\alpha_j$ short (so $W_0\alpha_j=W_0\alpha_0$ and $\tau_j=\tau_0$) and applying the second conjugation relation in Eq. \eqref{conj-rel1} with
an appropiate element $w\in W_0$ such that $w\alpha_j=\alpha_0$, we arrive at
Eq. \eqref{Ireld} with $I_0$ replaced by $I_{\alpha_0^\vee}$.
The translation employed in the proof of Eq. \eqref{Irelb} above now transforms the relation in question into
the one for $I_0$.


\section{Equivalence of $\hat{T}(\mathbb{H})$ and $I(\mathbb{H})$}\label{sec5}
In this section it is shown
that the difference-reflection representation $\hat{T}(\mathbb{H})$
arises from the integral-reflection representation $I(\mathbb{H})$
by means of an explicit intertwining operator.

\subsection{Intertwining operator}
Let  $\mathcal{J}:\mathcal{C}(P)\to\mathcal{C}(P)$ be the operator determined by the following action
on $f\in \mathcal \mathcal{C}(P)$:
\begin{equation}\label{int-op}
(\mathcal Jf)(\lambda):=\tau_{w_\lambda}^{-1} (I_{w_\lambda} f)(\lambda_+) \qquad (\lambda\in P) .
\end{equation}
It is clear from this definition that $\mathcal{J}$ acts trivially on lattice functions with support inside the fundamental alcove:   $ (\mathcal Jf)(\lambda)=f(\lambda)$ if $\lambda\in P_c^+$.
Moreover, in the definition of $\mathcal{J}$ one may actually replace $w_\lambda$ by any $w\in W_R$ such that $w\lambda=\lambda_+$. This hinges on the following useful invariance property for the action of $I_w$ on $f\in\mathcal{C}(P)$:
for any $w,\tilde{w}\in W_R$ and $\mu\in P_c^+$ one has that
\begin{equation}\label{stable}
\tau_w^{-1} (I_wf)(\mu)=\tau_{\tilde{w}}^{-1} (I_{\tilde{w}}f)(\mu)\quad\text{if}\quad w^{-1} \mu=\tilde{w}^{-1}\mu .
\end{equation}
Indeed, from the decomposition $w= v_\mu w_{w^{-1}\mu}$ with
$v_\mu\in W_{R,\mu}$ (so $\ell (w)=\ell (v_\mu )+\ell (w_{w^{-1}\mu })$ by Eq. \eqref{minimal-right-coset}) it is readily seen---via a reduced decomposition of $v_\mu$---that $\tau_w^{-1} (I_wf)(\mu)=(\mathcal{J}f)(w^{-1}\mu)$. Notice in this connection that $(I_jf)(\mu)=\tau_j f(\mu)$
for any simple reflection $s_j\in W_{R,\mu }$.

\begin{proposition}\label{intertwining:prp}
The operator $\mathcal{J}$  \eqref{int-op} constitutes a linear automorphism of $\mathcal{C}(P)$.
\end{proposition}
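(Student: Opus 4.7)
Linearity of $\mathcal J$ is immediate from its definition; the substance is bijectivity, which I would establish by showing that $\mathcal J$ is triangular with non-vanishing diagonal with respect to a well-founded grading on $P$ and then inverting by induction.

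The starting point is a reduction formula. For $\lambda\notin P_c^+$, Eq.~\eqref{Rwx} supplies a simple index $j$ with $a_j(\lambda)<0$, and Eq.~\eqref{ls2} then gives $\ell(w_\lambda s_j)=\ell(w_\lambda)-1$. A minimality argument identifies $w_{s_j\lambda}=w_\lambda s_j$ with $(s_j\lambda)_+=\lambda_+$, so that $I_{w_\lambda}=I_{w_{s_j\lambda}}I_{s_j}$ is a reduced product in the representation of $H_R$. Combined with $\tau_{w_\lambda}=\tau_j\tau_{w_{s_j\lambda}}$ and the defining formula for $\mathcal J$, this yields the key recursion
\begin{equation*}
(\mathcal J f)(\lambda) \;=\; \tau_j^{-1}\,\bigl(\mathcal J(I_{s_j}f)\bigr)(s_j\lambda).
\end{equation*}

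Inducting on $\ell(w_\lambda)$ via the recursion, one would prove
\begin{equation*}
(\mathcal J f)(\lambda) \;=\; \tau_{w_\lambda}^{-2}\,f(\lambda) \;+\; \sum_{\mu\,:\,g(\mu)<g(\lambda)}K(\lambda,\mu)\,f(\mu),
\end{equation*}
where the grading
\begin{equation*}
g(\nu)\,:=\,\sum_{a\in R^+}\max(0,-a(\nu))\;\in\;\mathbb Z_{\ge 0}
\end{equation*}
measures the total affine depth of $\nu$ below the alcove $A_c$, so that $g^{-1}(0)=P_c^+$. The diagonal coefficient $\tau_{w_\lambda}^{-2}$ is extracted inductively: since $a_j(s_j\lambda)>0$, the evaluation $(I_{s_j}f)(s_j\lambda)$ contributes $\tau_j f(\lambda)$ from the reflection part and $-(\tau_j-\tau_j^{-1})f(\lambda)$ from the endpoint of the integral $J_{s_j}$, combining to $\tau_j^{-1}f(\lambda)$; together with the prefactor $\tau_j^{-1}$ of the recursion and the inductive diagonal $\tau_{w_{s_j\lambda}}^{-2}$ furnished by the induction hypothesis at $s_j\lambda$, this gives the coefficient $\tau_j^{-2}\tau_{w_{s_j\lambda}}^{-2}=\tau_{w_\lambda}^{-2}$, which is nonzero by the standing assumption that $\tau$ is $\mathbb C^{\times}$-valued.

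Once triangularity with non-vanishing diagonal is established, $\mathcal J^{-1}$ is defined by well-founded recursion on $g$: for $h\in\mathcal C(P)$, set $(\mathcal J^{-1}h)(\lambda)=h(\lambda)$ on $P_c^+$, and on the rest solve
\begin{equation*}
(\mathcal J^{-1}h)(\lambda) \;=\; \tau_{w_\lambda}^{2}\Bigl(h(\lambda)\,-\,\sum_{\mu\,:\,g(\mu)<g(\lambda)}K(\lambda,\mu)\,(\mathcal J^{-1}h)(\mu)\Bigr).
\end{equation*}
The principal obstacle is the triangularity itself: on the endpoints one immediately has $g(s_j\lambda)=g(\lambda)-|a_j(\lambda)|<g(\lambda)$, but one must also control the intermediate string points $s_j\lambda-k\alpha_j$ for $1\le k<|a_j(\lambda)|$ picked up by $J_{s_j}$. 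The essential input is piecewise-linear convexity in $k$ of each summand $\max(0,-a(s_j\lambda-k\alpha_j))$, combined with the permutation action of $s_j$ on $R^+\setminus\{a_j\}$, which together keep the accumulated depth at intermediate $k$ strictly below $g(\lambda)$.
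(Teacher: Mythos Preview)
Your overall strategy—establish triangularity with nonzero diagonal, then invert by well-founded recursion—matches the paper's, and your recursion $(\mathcal J f)(\lambda)=\tau_j^{-1}(\mathcal J(I_jf))(s_j\lambda)$ together with the diagonal coefficient $\tau_{w_\lambda}^{-2}$ are both correct. The gap is that the induction does not close with the scalar grading $g$. After invoking the induction hypothesis at $s_j\lambda$ you obtain
\[
(\mathcal J f)(\lambda)=\tau_j^{-1}\Bigl(\tau_{w_{s_j\lambda}}^{-2}(I_jf)(s_j\lambda)+\sum_{\mu:\,g(\mu)<g(s_j\lambda)}K(s_j\lambda,\mu)\,(I_jf)(\mu)\Bigr),
\]
and each $(I_jf)(\mu)$ brings in values $f(\nu)$ for $\nu$ on the $\alpha_j$-string through $\mu$. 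Your convexity argument gives $g(\nu)\le\max\bigl(g(\mu),g(s_j\mu)\bigr)$, but since $s_j$ permutes $R^+\setminus\{a_j\}$ one computes $g(s_j\mu)=g(\mu)+a_j(\mu)$, and the hypothesis $g(\mu)<g(s_j\lambda)$ puts no bound on $a_j(\mu)$. Thus nothing prevents $g(s_j\mu)$—and hence $g(\nu)$—from exceeding $g(\lambda)$. You treat only the string through $\mu=s_j\lambda$ (where the endpoints are $s_j\lambda$ and $\lambda$ themselves), overlooking the deeper strings contributed by the sum.

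The paper avoids this by carrying a stronger invariant through the induction: it orders $P$ by $\mu\preceq\lambda$ iff $\mathrm{Conv}\,[\mu_+,\mu]\subseteq\mathrm{Conv}\,[\lambda_+,\lambda]$, where $[\lambda_+,\lambda]$ denotes the Bruhat interval inside the $W_R$-orbit of $\lambda$. The key stability is $[\lambda_+,s_j\lambda]\cup s_j\bigl([\lambda_+,s_j\lambda]\bigr)\subseteq[\lambda_+,\lambda]$, so that $\mu\preceq s_j\lambda$ forces the entire $\alpha_j$-string from $\mu$ to $s_j\mu$ to lie in $\mathrm{Conv}\,[\lambda_+,\lambda]$, i.e.\ every point on it is $\preceq\lambda$. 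This hereditary property under the simple reflection is precisely what a scalar depth function cannot supply, and it is what makes the induction go through.
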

To demonstrate the bijectivity of $\mathcal{J}:C(P)\to C(P)$  it will be shown below that the operator in question is triangular with respect to a suitable partial order on $P$ that is inherited from the Bruhat order on $W_R$.
Next we will verify that the following intertwining relations associated with the generators of $\mathbb{H}$ are satisfied:
\begin{subequations}
\begin{align}
\mathcal{J} I_j &=  \hat{T}_j  \mathcal{J} \qquad (j=0,\ldots ,n), \label{ita} \\
  \mathcal{J}u  &= u \mathcal{J}  \qquad\ \  (u\in\Omega), \label{itb} \\
 \mathcal J t_\nu&= \hat{X}^\nu \mathcal J  \qquad (\nu\in P_\vartheta ).\label{itc}
\end{align}
\end{subequations}
These intertwining relations imply---in combination with the bijectivity of
$\mathcal{J}$ \eqref{int-op}---that Theorem \ref{dr-rep-daha:thm} follows as a direct consequence of
Theorem \ref{integral-reflection:thm}, i.e. the
difference-reflection representation $\hat{T}(\mathbb{H})$ arises
from the integral-reflection representation $I(\mathbb{H})$ in Theorem \ref{integral-reflection:thm} upon intertwining with
$\mathcal{J}$:
\begin{equation}\label{intertwining}
\mathcal{J} I(h)=\hat T(h)  \mathcal{J} \qquad (\forall h \in\mathbb{H}) .
\end{equation}

\subsection{Triangularity}
Our starting point for the triangularity proof for $\mathcal{J}$  \eqref{int-op} is the following (very rough) partial order on $V$ stemming from the Bruhat order:
\begin{equation}\label{br-order-orbit}
\forall x,y\in V:\quad x\leq y\quad\text{iff}\quad
\text{(i)}\ x_+=y_+\ \text{and}\
\text{(ii)}\ w_x\leq w_y.
\end{equation}
By definition, this order only compares points
belonging to the same $W_R$-orbit.
For $x,y\in V$, we denote by $[x,y]$
the interval $\{ z\in V\mid x\leq z\leq y\}$ (so $[x,y]=\emptyset$ if $x\not\leq y$).
Since a nonempty interval $[x,y]$ contains only a finite number of points, its convex hull $\text{Conv}\, [x,y]$ is a compact polytope in $V$.
We will now employ the convex polytope generated by the interval $[x_+,x]\subset W_Rx$ of all points smaller or equal to a given point $x\in V$ to refine the partial order in Eq. \eqref{br-order-orbit} (so as to permit comparing points belonging to different $W_R$-orbits).

\begin{lemma}\label{br-order-conv:lem} For any $x,y\in V$, one has that:
\begin{itemize}
\item[$i)$.] $\text{Conv}\, [y_+,y] \subseteq\text{Conv} (W_0 y) $,
\item[$ii)$.] $\text{Conv}\, [x_+,x]=\text{Conv}\, [y_+,y]  \Longrightarrow x =y$,
\item[$iii)$.] $x\in \text{Conv}\, [y_+,y] \Longrightarrow \text{Conv}\, [x_+,x] \subseteq\text{Conv}\, [y_+,y] $.
\end{itemize}
\end{lemma}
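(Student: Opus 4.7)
I plan to prove the three parts in order, inducting on $\ell(w_y)$ (resp.\ $\ell(w_x)$ for (iii)) and using the identification of $[y_+,y]$ with the Bruhat interval $[e,w_y]\subset W_R$ via $w\mapsto w^{-1}y_+$.

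For (i), the base $\ell(w_y)=0$ gives $[y_+,y]=\{y\}\subset W_0y$. For the inductive step, pick $s=s_j$ with $w_ys<w_y$, so that by \eqref{ls2} we have $a_j(y)<0$; setting $y'=s_jy$, one has $w_{y'}=w_ys$ with $\ell(w_{y'})=\ell(w_y)-1$ and $(y')_+=y_+$. The Bruhat subword property then decomposes
\[
[y_+,y]=[y_+,y']\cup s_j\{z'\in[y_+,y']:\ell(w_{z'}s)=\ell(w_{z'})+1\}.
\]
The induction hypothesis gives $\mathrm{Conv}[y_+,y']\subseteq\mathrm{Conv}(W_0y')$, so it remains to show $\mathrm{Conv}(W_0y')\cup s_j\mathrm{Conv}(W_0y')\subseteq\mathrm{Conv}(W_0y)$. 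This follows from the classical characterization $\mathrm{Conv}(W_0\lambda)=\{\mu:\mu_{++}\preceq\lambda_{++}\}$ (with $\preceq$ the dominance order on dominant $W_0$-representatives), applied to the dominant $W_0$-representatives of $y'$ and $y$.

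For (ii), I would single $y$ out intrinsically from $\mathrm{Conv}[y_+,y]$ as the unique maximizer of $\|\cdot\|^2$ over $[y_+,y]$. Using the identity
\[
\|s_av\|^2-\|v\|^2=\|\alpha\|^2\cdot rc\cdot a(v)\qquad(a=\alpha^\vee+rc\in R)
\]
along a reduced path $\gamma_0=y_+,\ \gamma_k=s_{j_k}\gamma_{k-1}$ ending at $y$, and observing $a_{j_k}(\gamma_{k-1})=(v_{k-1}a_{j_k})(y_+)\ge 0$ (from $v_{k-1}a_{j_k}\in R^+$ via \eqref{ls2} together with $y_+\in A_c$), one obtains a nondecreasing sequence of norms terminating at $\|y\|^2$. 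A comparison with subwords then shows the maximum is strict at $y$, so $\mathrm{Conv}[x_+,x]=\mathrm{Conv}[y_+,y]$ forces $x=y$.

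For (iii), it suffices to show $[x_+,x]\subseteq\mathrm{Conv}[y_+,y]$, whence the claim follows by convexity of the right-hand side. I argue by induction on $\ell(w_x)$; the base is trivial. For the step, pick $s=s_j$ with $w_xs<w_x$ and set $x'=s_jx$. By reflecting a convex combination representing $x\in\mathrm{Conv}[y_+,y]$ across the wall $V_{a_j}$ and invoking (i), one shows $x'\in\mathrm{Conv}[y_+,y]$. The induction hypothesis then yields $[(x')_+,x']\subseteq\mathrm{Conv}[y_+,y]$, and the remaining elements $s_jz'\in[x_+,x]$ (for $z'\in[(x')_+,x']$) are handled by the same reflection argument. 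The main obstacle throughout is that $\mathrm{Conv}[y_+,y]$ is not generally stable under affine reflections, so the reflection arguments must exploit the specific convex-combination structure rather than a global symmetry of the polytope; this is also what makes the dominance-order input indispensable in (i).
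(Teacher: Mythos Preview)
Parts (i) and (ii) contain genuine gaps. In (i), the reduction ``it remains to show $\mathrm{Conv}(W_0y')\cup s_j\mathrm{Conv}(W_0y')\subseteq\mathrm{Conv}(W_0y)$'' fails for $j=0$: in type $A_1$ with $c=3$, $y=4\omega$, $y'=s_0y=2\omega$, one computes $s_0(-2\omega)=8\omega\notin\mathrm{Conv}(W_0y)=[-4\omega,4\omega]$. The induction hypothesis $[y_+,y']\subseteq\mathrm{Conv}(W_0y')$ is too loose to control the image under an \emph{affine} reflection, and the dominance-order input cannot rescue it. In (ii), your own identity shows that $\|\cdot\|^2$ is unchanged under any $s_j$ with $j\ge1$, so whenever $w_y\in W_0$ every point of $[y_+,y]$ has norm $\|y_+\|$; already in type $A_1$ with $y=-\omega$ one gets $[y_+,y]=\{\omega,-\omega\}$ with equal norms. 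Hence $y$ is not in general the unique maximizer of $\|\cdot\|^2$ on $[y_+,y]$, and your proposed characterization of $y$ fails.

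The paper avoids both problems by running a \emph{downward} induction inside the $W_R$-orbit of $y$. For (i), starting from $y\in\mathrm{Conv}(W_0y)$ and passing from $z$ to $s_az$ with $a\in R(w_z)$, one observes $a'(z)\le a(z)<0$, so $s_az$ lies on the segment from $z$ to $s_a'z$, which sits in the $W_0$-invariant set $\mathrm{Conv}(W_0y)$; this replaces the affine reflection by its linear part and removes the need for dominance-order comparisons. For (ii), part (i) then forces $y\in W_0y$ to be an extreme point of $\mathrm{Conv}[y_+,y]$, and $y$ is recovered as the unique Bruhat-maximal vertex rather than by norm. For (iii), your reflection idea becomes a proof once you first slice $\mathrm{Conv}[y_+,y]$ by the closed half-space $\{a_j\le0\}$ containing $\tilde{x}$: the extreme points of that slice are either points of $[y_+,y]$ with $a_j<0$ (sent back into $[y_+,y]$ by $s_j$, via \eqref{br-p3} and \eqref{Rwx}) or points of $V_{a_j}$ (fixed by $s_j$), so $s_j\tilde{x}\in\mathrm{Conv}[y_+,y]$ without any appeal to (i).
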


\begin{proof}
$i)$.  It is sufficient to verify that $[y_+,y]\subseteq \text{Conv}(W_0y)$. To this end we perform downward induction with respect to the partial order in Eq. \eqref{br-order-orbit} starting from the (trivial)
    maximal point $y$ ($\in [y_+,y]\cap \text{Conv}(W_0y)$).
Assuming that $x\in (y_+,y]:=[y_+,y]\setminus \{ y_+\}$ belongs to $\text{Conv}(W_0y)$,
let $s=s_a$, $a\in R^+$ denote any reflection such that $w_{x}s< w_{x}$ (so $sx\in [y_+,y]$ with $sx<x$ since $(sx)_+=x_+=y_+$ and $w_{sx}\leq w_{x} s<w_{x}\leq w_y$).
We then have that $$ \text{Conv}\, \{ x,sx \}\subseteq \text{Conv}\, \{ x,s^\prime x \}\subseteq \text{Conv}(W_0y) ,$$ where $\text{Conv}\, \{ x,sx \}$ denotes to the line segment connecting $x$ and $sx$ and---recall---$s^\prime\in W_0$ refers to the derivative of $s$. The first inclusion hinges on Eq. \eqref{sa} and the observation that
$a^\prime (x)\leq a(x)<0$ (cf. Eqs. \eqref{br-p3}, \eqref{Rwx})
and the second inclusion follows from the fact that
the convex polytope $\text{Conv}(W_0y)$ is $W_0$-invariant.
We thus conclude that the point
$sx\in [y_+,y]$ belongs to $ \text{Conv}(W_0y)$, which completes the induction step.

$ii)$. Since all points on the orbit $W_0y$ are
vertices of $\text{Conv}(W_0y)$, part $i)$ implies in particular that the point $y$
is a vertex of the $\text{Conv}\, [y_+,y]$. Morover, since all vertices of
the latter polytope are contained in the generating set
$[y_+,y]$, it follows that the point $y$ can be characterized as
the unique vertex of the convex polytope $\text{Conv}\, [y_+,y]$ that is maximal
with respect to the order in Eq. \eqref{br-order-orbit}.

$iii)$. It is sufficient to verify that
$[x_+,x]\subseteq \text{Conv}\, [y_+,y]$ when $x\in \text{Conv}\, [y_+,y]$, which will again be done by downward induction with respect to the order in Eq. \eqref{br-order-orbit} starting from the maximal point $x$.
Assuming that $\tilde{x}\in (x_+,x]$ belongs to $\text{Conv}\, [y_+,y]$,
let $s=s_a$, $a\in R^+$ denote any reflection such that $w_{\tilde{x}}s< w_{\tilde{x}}$ (so $s\tilde{x}\in [x_+,x]$ with $s\tilde{x}<\tilde{x}$). To complete the induction step it remains to show that $s\tilde{x}\in \text{Conv}\, [y_+,y]$.
To this end we note that the convex polytope cut out
by the intersection of $\text{Conv}\, [y_+,y]$ with
the nonpositive half-space $\overline{H}_a=H_a\cup V_a$ (cf. Eq. \eqref{Ha}) contains $\tilde{x}$ (because of Eqs. \eqref{br-p3}, \eqref{Rwx}) and it is finitely generated by
the points of $[y_+,y]\cap H_a$ and the vertices of the boundary facet
$\text{Conv}\, [y_+,y]\cap V_a$.
Since $s ([y_+,y]\cap H_a)\subseteq [y_+,y]$ (again by Eqs. \eqref{br-p3}, \eqref{Rwx}) and
$s(\text{Conv}\, [y_+,y]\cap V_a)=\text{Conv}\, [y_+,y]\cap V_a\subseteq\text{Conv}\, [y_+,y]$ (because the points of $V_a$ are fixed by $s$), it follows that $s\tilde{x}\in s(\text{Conv}\, [y_+,y]\cap \overline{H}_a)\subseteq \text{Conv}\, [y_+,y]$.
\end{proof}

It is immediate from part $ii)$ of Lemma \ref{br-order-conv:lem} that the inclusion relation
$$\text{Conv}\, [x_+,x] \subseteq\text{Conv}\, [y_+,y]\qquad (x,y\in V)$$
defines a partial order on $V$. This partial order refines the order in Eq. \eqref{br-order-orbit}:
$$x\leq y\Leftrightarrow x\in [y_+,y]\Leftrightarrow [x_+,x]\subseteq [y_+,y]\Rightarrow \text{Conv}\, [x_+,x]\subseteq \text{Conv}\, [y_+,y] .$$
We will now weaken the order in question to
the following partial order $\preceq$ on the weight lattice:
\begin{subequations}
\begin{equation}\label{br-order-P}
\forall \mu,\lambda\in P\quad \mu\preceq \lambda\ \text{iff}\ \text{(i)}\ \lambda-\mu\in Q\ \text{and}\
\text{(ii)}\ \text{Conv}\, [\mu_+,\mu ]\subseteq \text{Conv}\,[\lambda_+,\lambda ].
\end{equation}
It is obvious from this definition and part $iii)$ of Lemma \ref{br-order-conv:lem} that the set of weights smaller or equal to a given weight $\lambda\in P$ consists of the finite intersection
\begin{equation}
\{\mu\in P\mid \mu\preceq \lambda\}=
\text{Conv} [\lambda_+,\lambda ]\cap (\lambda+Q).
\end{equation}
\end{subequations}

\begin{proposition}\label{triangular:prp}
The operator $\mathcal{J}$ \eqref{int-op} is triangular with respect to the ordering in Eq. \eqref{br-order-P} in the sense that
for all $f\in \mathcal{C}(P)$ and $\lambda\in P$:
\begin{equation}\label{triangular}
(\mathcal{J}f)(\lambda )= \sum_{\mu\in P,\, \mu \preceq \lambda} J_{\lambda,\mu} f(\mu),
\quad\text{with}\quad J_{\lambda ,\lambda}=\tau_{w_\lambda}^{-2} ,
\end{equation}
and with the expansion coefficients $J_{\lambda ,\mu}$, $\mu\prec \lambda$ being Laurent polynomials in the
indeterminates $\tau_j$ with integral coefficients.
\end{proposition}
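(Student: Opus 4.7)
The plan is to proceed by induction on $\ell=\ell(w_\lambda)$. The base case $\ell=0$ is immediate: then $\lambda=\lambda_+$ and $(\mathcal{J}f)(\lambda)=f(\lambda)$, so $J_{\lambda,\lambda}=1=\tau_1^{-2}$. For the inductive step, I would fix a simple reflection $s=s_k$ with $sw_\lambda<w_\lambda$ and write $w_\lambda=sw'$ reduced. Minimality of $w_\lambda$ among elements of $W_R$ sending $\lambda$ to $\lambda_+$ forbids $s\in W_{R,\lambda_+}$, so $a_k(\lambda_+)>0$. Expanding $I_{w_\lambda}=I_sI_{w'}$ at $\lambda_+$ via \eqref{Ia:action}--\eqref{Ja:action}, and absorbing the boundary contribution at $s\lambda_+$ of $J_s$ into the reflection term using the identity $\tau_s-(\tau_s-\tau_s^{-1})=\tau_s^{-1}$, yields
\[
(I_{w_\lambda}f)(\lambda_+)=\tau_s^{-1}(I_{w'}f)(s\lambda_+)-(\tau_s-\tau_s^{-1})\sum_{y\in\sigma^\circ}(I_{w'}f)(y),
\]
where $\sigma^\circ$ is the open $\alpha_k$-string strictly between $\lambda_+$ and $s\lambda_+$.

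The next task is to expand each $(I_{w'}f)(y)$ as a triangular combination. I would do this by further unpacking $I_{w'}$ along its own reduced expression and iterating the same absorption identity at every step, producing a branching tree of $2^{\ell-1}$ reflection/integration choices. Along each branch, the current lattice position hops either by a full simple reflection or by landing on an interior point of the corresponding $\alpha_j$-string; both preserve membership in $\lambda+Q$. The key containment within $\mathrm{Conv}[\lambda_+,\lambda]$ is enforced by repeated application of Lemma \ref{br-order-conv:lem}(iii), which ensures that whenever the current position $x$ lies in $\mathrm{Conv}[\lambda_+,\lambda]$, so does every point of $\mathrm{Conv}[x_+,x]$, and in particular every point of the next $\alpha_j$-string between $x$ and $s_jx$. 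The diagonal coefficient is isolated by tracing the unique ``pure-reflection-after-absorption'' trajectory, which supplies a total coefficient $\prod_k\tau_{j_k}^{-1}=\tau_{w_\lambda}^{-1}$ on $f(w_\lambda^{-1}\lambda_+)=f(\lambda)$ inside $(I_{w_\lambda}f)(\lambda_+)$; prepending the defining prefactor $\tau_{w_\lambda}^{-1}$ of $\mathcal{J}$ gives the claimed $J_{\lambda,\lambda}=\tau_{w_\lambda}^{-2}$. Laurent-polynomiality with integral coefficients is manifest throughout, since the only scalars entering the recursion are $\pm\tau_j^{\pm 1}$ and $\pm(\tau_j-\tau_j^{-1})$.

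The main obstacle is bridging $(I_{w'}f)(y)$ for a string point $y\notin A_c$ to the inductive hypothesis on $\mathcal{J}$: the invariance \eqref{stable} is only stated for evaluations at a common point of $P_c^+$, whereas $y$ lies outside the fundamental alcove. I expect this to be resolved either by lifting the statement to a triangularity lemma for $I_w$ applied at arbitrary weights (proved by a parallel induction on $\ell(w)$), or by a direct combinatorial analysis of the $2^{\ell}$-branch expansion of $I_{w_\lambda}=\prod_k(\tau_{j_k}s_{j_k}+(\tau_{j_k}-\tau_{j_k}^{-1})J_{j_k})$ applied at $\lambda_+$, with the visited lattice points along each branch controlled by iterating Lemma \ref{br-order-conv:lem}(iii). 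In either approach, the bookkeeping of which lattice points arise, and a careful verification that reflection steps $x\mapsto s_jx$ along a branch stay within $\mathrm{Conv}[\lambda_+,\lambda]$ (which requires exploiting the specific reduced expression rather than relying on convexity alone), is where the bulk of the technical work will lie.
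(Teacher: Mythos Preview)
Your overall strategy (induction on $\ell(w_\lambda)$, with Laurent-polynomiality and the diagonal value read off from the recursion) is correct in spirit, but you have chosen the wrong side on which to peel off a simple reflection, and this is precisely what creates the obstacle you flag in your final paragraph.

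You write $w_\lambda = s_k w'$ with $s_k w_\lambda < w_\lambda$ (reduction on the \emph{left}) and then try to expand $I_{w_\lambda}=I_{s_k}I_{w'}$ at $\lambda_+$. As you note, this produces terms $(I_{w'}f)(y)$ with $y\notin A_c$, so the inductive hypothesis on $\mathcal J$ does not apply directly; you are then forced either to prove a separate triangularity lemma for $I_w$ at arbitrary points, or to unfold the full $2^\ell$-branch tree and argue containment in $\mathrm{Conv}[\lambda_+,\lambda]$ branch by branch. Both are doable but unnecessary.

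The paper instead reduces on the \emph{right}: it picks $j$ with $w_\lambda s_j < w_\lambda$, so that $w_\lambda = w_{s_j\lambda}\,s_j$ with $\ell(w_\lambda)=\ell(w_{s_j\lambda})+1$ and $(s_j\lambda)_+=\lambda_+$. Then
\[
(\mathcal J f)(\lambda)=\tau_{w_\lambda}^{-1}(I_{w_\lambda}f)(\lambda_+)
=\tau_j^{-1}\,\tau_{w_{s_j\lambda}}^{-1}(I_{w_{s_j\lambda}}\,I_j f)\bigl((s_j\lambda)_+\bigr)
=\tau_j^{-1}\bigl(\mathcal J(I_j f)\bigr)(s_j\lambda),
\]
and the inductive hypothesis applies immediately to the function $I_j f$ at the point $s_j\lambda$, giving
\[
(\mathcal J f)(\lambda)=\tau_j^{-1}\sum_{\mu\preceq s_j\lambda} J_{s_j\lambda,\mu}\,(I_j f)(\mu).
\]
Now $(I_j f)(\mu)$ is a linear combination of values of $f$ along the $\alpha_j$-string between $\mu$ and $s_j\mu$, and the single containment $[\lambda_+,s_j\lambda]\cup s_j([\lambda_+,s_j\lambda])\subseteq[\lambda_+,\lambda]$ (hence of the corresponding convex hulls) shows that every such point satisfies $\preceq\lambda$. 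The diagonal also falls out in one line: only $\mu=s_j\lambda$ can contribute, and the coefficient of $f(\lambda)$ in $(I_j f)(s_j\lambda)$ is $\tau_j^{-1}$ since $a_j(\lambda)<0$, giving $J_{\lambda,\lambda}=\tau_j^{-2}J_{s_j\lambda,s_j\lambda}$.

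In short: your proposal is not wrong, but the obstacle you identify is self-inflicted. Switching from a left reduction $w_\lambda=s_k w'$ to a right reduction $w_\lambda=w_{s_j\lambda}s_j$ keeps the evaluation point inside $A_c$ and lets the induction hypothesis act directly, eliminating the need for any auxiliary lemma or branch-by-branch bookkeeping.
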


\begin{proof}
The proof is by induction on the length of $w_\lambda$, starting from the trivial situation that $\ell (w_\lambda )=0$ (i.e. $\lambda \in P_c^+$) in which case $(\mathcal{J}f)(\lambda) =f(\lambda)$.
For $\ell(w_\lambda)>0$, we pick $j\in\{ 0,\ldots ,n\}$ such that $w_\lambda s_j <w_\lambda$, whence $w_\lambda =w_{s_j\lambda}s_j$ with $\ell (w_\lambda)=\ell (w_{s_j\lambda})+1$ (i.e. $s_j\lambda<\lambda$).
Elementary manipulations now reveal that:
\begin{align}\label{Iw+1}
(\mathcal{J}f)(\lambda) =& \tau_{w_\lambda}^{-1}(I_{w_{\lambda}} f)(\lambda_+)=
\tau_j^{-1}\tau_{w_{s_j\lambda}}^{-1}
(I_{w_{s_j\lambda}} I_{j} f)((s_j\lambda )_+) \\
 \stackrel{\text{(i)}}{=}&
\tau_j^{-1}\sum_{ \mu\in P,\,\mu \preceq s_j\lambda} J_{s_j\lambda ,\mu} (I_{j} f) (\mu )
\stackrel{\text{(ii)}}{=}
\sum_{\mu\in P,\, \mu \preceq \lambda} J_{\lambda ,\mu} f(\mu ),
\nonumber
\end{align}
where the equality $\text{(i)}$ relies on the induction hypothesis, and the equality $\text{(ii)}$ hinges---upon recalling
that
$(I_jf)(\mu )$ involves a linear combination of function values supported on the $\alpha_j$ string from
$\mu$ to $s_j\mu$---on the observation that
the convex hull of
$ \text{Conv}\, [\lambda_+,s_j\lambda ]$ and
$s_j(\text{Conv}\, [\lambda_+,s_j\lambda ] )$ is contained in
$\text{Conv}\, [\lambda_+,\lambda]  $ (because
$
[\lambda_+,s_j\lambda]\cup s_j([\lambda_+,s_j\lambda])\subseteq [\lambda_+,\lambda]$).
It is manifest from the definition of the integral-reflection operators that the expansion coefficients $J_{\mu ,\lambda}$
are Laurent polynomials in the
indeterminates $\tau_j$ with integral coefficients. It remains to verify that the value of the diagonal coefficient $J_{\lambda ,\lambda}$ pans out as stated in Eq. \eqref{triangular}. Indeed,
since $\lambda$ is the unique maximal
vertex of the polytope $\text{Conv}\, [\lambda_+,\lambda]$
with respect to $\preceq$ (cf. part $ii)$ of Lemma \ref{br-order-conv:lem}), it follows that in Eq. \eqref{Iw+1} the
only contribution to the diagonal on the LHS of equality $(ii)$ stems from the term corresponding to $\mu=s_j\lambda$.  Moreover, one reads-off from the explicit action of $I_j$ that the coefficient of $f(\lambda )$ in
$(I_jf)(s_j\lambda)$  is given by   $\tau_j^{-1}$ if $s_j\lambda < \lambda$ (so $a_j(\lambda )<0$).
A comparison of the coefficients of $f(\lambda)$ on both sides of equality $(ii)$ thus entails that
$J_{\lambda,\lambda}=\tau_j^{-2}J_{s_j\lambda ,s_j\lambda}$, whence the stated value of $J_{\lambda,\lambda}$ follows from the induction.
\end{proof}

The triangularity in Eq. \eqref{triangular} implies that $f\in\mathcal{C}(P)$ can be uniquely solved from the linear equation $(\mathcal{J}f)(\lambda)=g(\lambda)$ for any $g\in\mathcal{C}(P)$, by performing induction in $\lambda$ with respect to the order in Eq. \eqref{br-order-P}. In other words, the intertwining operator $\mathcal{J}:\mathcal{C}(P)\to \mathcal{C}(P)$ is a bijection.

\subsection{Intertwining relations}
We will now verify the intertwining relations in Eqs. \eqref{ita}--\eqref{itc}.
The proof of the first two relations hinges on some short computations analogous to those in the proofs of \cite[\text{Lem}.~5.2]{die-ems:unitary} and \cite[\text{Prop}.~4.2]{die-ems:discrete}. They are included here merely to keep the presentation self-contained. The proof of the last relation is more intricate and some of the harder details are hidden away in Appendix \ref{appA} at the end of the paper.
In all three cases the idea of the proof is to act with the operator at the LHS of the relation on an arbitrary function
$f\in\mathcal{C}(P)$ and then
pull the action of the integral-reflection representation through the intertwining operator
so as to recover the operator at the RHS.

\subsubsection{Proof of Eq. \eqref{ita}} For any $j\in \{0,\ldots n\}$:
\begin{eqnarray*}
(\mathcal{J} I_j f)(\lambda ) &=& \tau^{-1}_{w_\lambda}  ( I_{w_\lambda} I_j f)( \lambda_+ )  \\
&\stackrel{\text{Eq.}~\eqref{TwTj}}{=}&
\tau^{-1}_{w_\lambda}
 \left( (I_{w_\lambda s_j} f)( \lambda_+ )
 +\chi (w_\lambda a_j )(\tau_j-\tau_j^{-1})
 ( I_{w_\lambda} f)( \lambda_+ )  \right)  \\
& = & \tau_j \tau^{-1}_{w_\lambda} (I_{w_\lambda} f)(\lambda_+ ) +\\
&&
\tau_j^{\text{sign} (w_\lambda a_j)}
\bigl(
\tau^{-1}_{w_\lambda s_j} (I_{w_\lambda s_j} f)( \lambda_+ ) -
\tau^{-1}_{w_\lambda} (I_{w_\lambda} f)( \lambda_+ )
\bigr) \\
&\stackrel{\text{Eq.}~\eqref{stable}}{=}& (\hat{T}_j\mathcal{J}f)(\lambda ).
\end{eqnarray*}

\subsubsection{Proof of Eq. \eqref{itb}}
For any $u\in \Omega$:
\begin{align*}
(\mathcal J u f)(\lambda)& = \tau^{-1}_{w_{\lambda}} (I_{w_{\lambda}}u f)(\lambda_+) = \tau^{-1}_{w_{\lambda}} (u^{-1}I_{w_{\lambda}}u f)(u^{-1}\lambda_+) \\
&= \tau^{-1}_{w_{u^{-1}\lambda}} (I_{w_{u^{-1}\lambda}} f)((u^{-1}\lambda)_+) =
(\mathcal  J f)(u^{-1}\lambda)= (u\mathcal J f)(\lambda)
\end{align*}
(where it was used that $u^{-1} I_{w_\lambda} u=I_{u^{-1}w_{\lambda}u}$
and
$ u^{-1} w_\lambda u=w_{u^{-1}\lambda} $, cf. Eq. \eqref{conj-rel1}).

\subsubsection{Proof of Eq. \eqref{itc}} For $\nu=0$ Eq. \eqref{itc} is trivial whereas for
any $\nu\in P_\vartheta^\star$:
\begin{align}\label{Jtf}
&(\mathcal J t_\nu f)(\lambda)= \tau^{-1}_{w_{\lambda}} (I_{w_{\lambda}}t_\nu f)(\lambda_+) \stackrel{\text{Thm}.~\ref{multiplication:thm}}{=} \\
&\tau^{-1}_{w_\lambda} (t_{w^{\prime}_{\lambda}\nu} I_{w_{\lambda}}f)(\lambda_+)
+\tau^{-1}_{w_\lambda} \sum_{\substack{v\in W_R\\ v< w_\lambda}} \Bigl( B^\nu_{v,w_\lambda} (I_vf)(\lambda_+)+ \sum_{ \eta\in W_0\nu} A^{\eta,\nu}_{v,w_\lambda} (t_\eta I_vf)(\lambda_+) \Bigr) . \nonumber
\end{align}
The first term may be rewritten as
\begin{eqnarray*}
  \tau^{-1}_{w_\lambda} (t_{w'_\lambda\nu} I_{w_\lambda}f)(\lambda_+)
  & =& \tau^{-1}_{w_\lambda} (I_{w_\lambda}f)(w_\lambda(\lambda-\nu)) \\
& \stackrel{\text{Eq.}~\eqref{intertwining-property}}{=} & a_{\lambda,\nu} (\mathcal{J}f)(\lambda-\nu) + b_{\lambda,\nu}(1-\tau_0^{-2})(\mathcal {J}f)(\lambda) ,
\end{eqnarray*}
with $a_{\lambda,\nu}$ and $b_{\lambda,\nu}$ as in  Eqs. \eqref{a-coef}--\eqref{hvee}.
Furthermore, invoking of the stability property in Eq. \eqref{stable} allows to recast
the term with coefficient $B^\nu_{v,w_\lambda} $ as
 $$(I_vf)(\lambda_+)=\tau_v (\mathcal Jf)(v^{-1}\lambda_+) .$$
Finally, for the term with coefficient $A^{\eta,\nu}_{v,w_\lambda} $ we obtain
\begin{align*}
&( t_\eta I_vf)(\lambda_+)  =(I_vf)(\lambda_+-\eta) \\
& \stackrel{\text{(i)}}{=} \tau^2_{w_{\lambda_+-\eta}} (\mathcal JI_v f)(\lambda_+-\eta)
+c_{\lambda_+ ,\eta} (1-\tau^{-2}_0) (I_vf)(\lambda_+)\\
&\stackrel{\text{(ii)}}{=}  \tau^2_{w_{\lambda_+-\eta}} (\hat  T_v \mathcal{J}f)(\lambda_+-\eta)
+\tau_v c_{\lambda_+ ,\eta} (1-\tau^{-2}_0) (\mathcal{J}f)(v^{-1}\lambda_+) ,
\end{align*}
with $c_{\lambda,\eta}$ given by Eqs. \eqref{c-coef}--\eqref{hvee}.
Here we used $\text{(i)}$ Lemma \ref{Iqm-action:lem} in the form
 $$ f(\lambda_+-\eta) =\tau^2_{w_{\lambda_+-\eta}}(\mathcal Jf)(\lambda_+-\eta)+c_{\lambda_+,\eta}(1-\tau_0^{-2})f(\lambda_+)$$
 with $f$ replaced by $I_vf$ and
 $\text{(ii)}$  Eq. \eqref{ita} in combination with Proposition \ref{dr-rep-aha:prp} and Theorem \ref{integral-reflection:thm}
 for the first term and Eq. \eqref{stable} for the second term.
Substitution in the RHS of Eq. \eqref{Jtf} now entails that
 \begin{eqnarray*}
\lefteqn{(\mathcal J t_\nu f)(\lambda)=
 a_{\lambda ,\nu} (\mathcal{J}f)(\lambda-\nu) +b_{\lambda,\nu}(1-\tau^{-2}_0)  (\mathcal{J}f)(\lambda) +} && \\
 &&
\tau^{-1}_{w_\lambda}\sum_{\substack{v<w_\lambda\\ \eta\in W_0\nu}} A^{\eta,\nu}_{v,w_\lambda}
    \tau^2_{w_{\lambda_+-\eta}}  (\hat  T_v \mathcal{J}f)(\lambda_+-\eta) +
 \nonumber \\
&&
\tau^{-1}_{w_\lambda}\sum_{v<w_\lambda} \tau_v \Bigl( B^\nu_{v,w_\lambda}  +(1-\tau_0^{-2}) \sum_{\eta\in W_0\nu}
        c_{\lambda_+ ,\eta} A^{\eta,\nu}_{v,w_\lambda} \Bigr) (\mathcal{J}f)(v^{-1}\lambda_+ ) \nonumber \\
        && =( \hat{X}^\nu \mathcal{J} f)(\lambda) .
\end{eqnarray*}

\begin{remark}
The intertwining property in Eq. \eqref{intertwining} constitutes a discrete analog of the intertwining property in \cite[\text{Thm.}~5.3]{ems-opd-sto:periodic} between the integral-reflection and the Dunkl-type differential-reflection representation of the degenerate double affine Hecke algebra at critical level.
In particular, the intertwining relation in Eq. \eqref{itc} producing the operator $\hat{X}^\nu$ as the image of the translation operator $t_\nu$ is the discrete counterpart of the intertwining relation in \cite[\text{Eq.}~(5.10)]{ems-opd-sto:periodic}, where  the directional derivative gets mapped onto the corresponding Dunkl-type differential-reflection operator. In other words, the operators $\hat{X}^\nu$, $\nu\in P_\vartheta^*$ should be viewed as the discrete Dunkl-type operators associated with the present construction.
From this perspective, our proof of Eq. \eqref{itc}
based on the multiplication formula in Theorem~\ref{multiplication:thm} corresponds to a discrete counterpart of the proof of \cite[\text{Eq.}~(5.10)]{ems-opd-sto:periodic} based on the multiplication formula \cite[\text{Eq.}~(5.11)]{ems-opd-sto:periodic}.
\end{remark}


\section{Integrable Discrete Laplacians}\label{sec6}
The images of the center under representations of (degenerate) affine Hecke algebras
provide a fruitful framework for describing quantum integrable systems, cf. e.g. Refs.
\cite{hec-opd:yang,che:double,mac:affine,die-ems:unitary} for examples of integrable systems arising in this manner.
In the present setup quantum integrable systems are obtained similarly via the image  of the
commutative subalgebra
$\mathbb{C}[X]^{W_0}\subset \mathcal{Z}(\mathbb{H})$,
in the spirit of
\cite{ems-opd-sto:periodic} where the quantum integrals for the Schr\"odinger operator with a delta potential on the affine root system
arise from the algebra of $W_0$-invariant polynomials contained in the center of
the degenerate double affine Hecke algebra at critical level.
For our integral-reflection representation, the quantum integrable system at issue consists merely of (linear combinations of) free discrete Laplace operators in $\mathcal{C}(P)$ of the form
\begin{equation}\label{free-laplacian}
m_\omega(t):=I(m_\omega(X))=\sum_{\nu\in W_0\omega } t_\nu\qquad (\omega\in P^+).
\end{equation}
The difference-reflection representation on the other hand produces a nontrivial integrable $\tau$-deformation of these Laplace operators (cf. Remark \ref{free-laplacians:rem} below). In this section we compute the simplest of these (deformed) Laplacians explicitly.

\subsection{Laplacians associated with the (quasi-)minuscule weights}
To any symmetric polynomial $p\in \mathbb{C}[X]^{W_0}$, we associate a (deformed) discrete Laplace operator $L_p:\mathcal{C}(P)\to\mathcal{C}(P)$ defined by
\begin{equation}\label{Lp}
L_p =p(\hat{X}):=\hat{T}(p(X)) \qquad (p\in \mathbb{C}[X]^{W_0}).
\end{equation}
Since the symmetric subalgebra $\mathbb{C}[X]^{W_0}\subset\mathbb{H}$ is commutative, the associated Laplacians $L_p$, $p\in \mathbb{C}[X]^{W_0}$ form a quantum integrable system:
\begin{equation}\label{integrability}
[L_p, L_{\tilde{p}}] =0\qquad (\forall p,\tilde{p}\in \mathbb{C}[X]^{W_0}).
\end{equation}
The next theorem makes the action of $L_p$ on $\mathcal{C}(P)$ explicit for $p=m_\omega$ with $\omega$ (quasi-)minuscule.

\begin{theorem}\label{action:thm}
For $\omega\in P^+$ (quasi-)minuscule, the Laplacian
$L_\omega:=L_{m_\omega}=\sum_{\nu\in W_0\omega}\hat{X}^\nu$ acts on $\mathcal{C}(P)$ via
\begin{equation}\label{Lomega}
(L_\omega f)(\lambda)=
\sum_{\nu\in W_0\omega} \Bigl( a_{\lambda ,\nu} f(\lambda -\nu)+ b_{\lambda ,\nu}(1-\tau^{-2}_0)f(\lambda) \Bigr)\qquad
(f\in \mathcal{C}(P),\lambda\in P),
\end{equation}
where $a_{\lambda ,\nu}$ and $b_{\lambda,\nu}$ are given by Eqs. \eqref{a-coef}--\eqref{hvee}.
\end{theorem}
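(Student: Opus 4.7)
The plan is to exploit the fact that $m_\omega(X)=\sum_{\nu\in W_0\omega}X^\nu$ is central in $\mathbb{H}$ (cf.\ Eq.~\eqref{center}) to deduce the vanishing of the ``off-diagonal'' contributions in $\sum_\nu\hat{X}^\nu$ coming from the $T_v$-terms ($v<w_\lambda$) in the definition of $\hat{X}^\nu$ in Eq.~\eqref{Xnu}.

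First I would apply Theorem~\ref{multiplication:thm} with a generic $w\in W$ and sum over $\nu\in W_0\omega$:
\begin{equation*}
T_w\,m_\omega(X)=\sum_{\nu\in W_0\omega}X^{w'\nu}T_w+\sum_{\substack{v\in W\\ v<w}}\Bigl(\sum_{\nu}B^\nu_{v,w}+\sum_{\eta\in W_0\omega}\bigl(\sum_{\nu}A^{\eta,\nu}_{v,w}\bigr)X^\eta\Bigr)T_v,
\end{equation*}
where in the double sum $\sum_\nu\sum_{\eta\in W_0\nu}$ I have exploited that $W_0\nu=W_0\omega$ for every $\nu\in W_0\omega$ to interchange the order of summation. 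Since $w'\in W_0$ permutes $W_0\omega$, the leading term equals $m_\omega(X)\,T_w$. The centrality of $m_\omega(X)$ thus forces
\begin{equation*}
\sum_{\substack{v\in W\\ v<w}}\Bigl(\sum_{\nu\in W_0\omega}B^\nu_{v,w}+\sum_{\eta\in W_0\omega}\bigl(\sum_{\nu\in W_0\omega}A^{\eta,\nu}_{v,w}\bigr)X^\eta\Bigr)T_v=0,
\end{equation*}
and by linear independence of the basis \eqref{H-basis} I conclude the two scalar identities
\begin{equation*}
\sum_{\nu\in W_0\omega}B^\nu_{v,w}=0,\qquad \sum_{\nu\in W_0\omega}A^{\eta,\nu}_{v,w}=0\qquad(\forall\,v<w,\ \eta\in W_0\omega).
\end{equation*}

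Next I would specialize to $w=w_\lambda$ and substitute these identities into the expression $(L_\omega f)(\lambda)=\sum_{\nu\in W_0\omega}(\hat X^\nu f)(\lambda)$ obtained from Eq.~\eqref{Xnu}. The first two terms $a_{\lambda,\nu}f(\lambda-\nu)+b_{\lambda,\nu}(1-\tau_0^{-2})f(\lambda)$ are diagonal in $\nu$ and reproduce the right-hand side of Eq.~\eqref{Lomega}. For the remaining two summations, interchanging the order of summation over $\nu$ and $\eta$ (again using $W_0\nu=W_0\omega$ for all $\nu\in W_0\omega$) and pulling the $\nu$-sum inside produces factors $\sum_\nu A^{\eta,\nu}_{v,w_\lambda}$ and $\sum_\nu B^\nu_{v,w_\lambda}$, each of which vanishes by the centrality identities. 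All terms involving $(\hat T_v f)(\lambda_+-\eta)$ and $f(v^{-1}\lambda_+)$ with $v<w_\lambda$ therefore cancel, leaving precisely Eq.~\eqref{Lomega}.

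The argument is essentially formal once the centrality identities are in place, so the only real subtlety is bookkeeping: one must verify that the range of $\eta$ is indeed independent of $\nu$ (which is where the fact that $W_0\omega$ is a single orbit enters decisively), and that the centrality argument is legitimate because $W_0\omega\subset P_\vartheta$ for $\omega$ (quasi-)minuscule, so each $X^\nu$ (and hence the central element $m_\omega(X)$) is a bona fide generator of $\mathbb{H}$ rather than a formal object. No further use of the (quasi-)minuscule hypothesis is needed beyond this.
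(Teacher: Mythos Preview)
Your argument is correct and takes a genuinely different route from the paper's. The paper proves Theorem~\ref{action:thm} by passing through the intertwining operator $\mathcal{J}$ and the integral-reflection representation: one computes $(L_\omega\mathcal{J}f)(\lambda)=(\mathcal{J}m_\omega(t)f)(\lambda)$ via Eq.~\eqref{intertwining}, uses the centrality of $m_\omega(t)$ in $I(\mathbb{H})$ to commute it past $I_{w_\lambda}$, then invokes the pointwise intertwining identity of Appendix~\ref{appA} (Eq.~\eqref{intertwining-property}) for each summand, and finally strips off $\mathcal{J}$ by its bijectivity. Your proof instead exploits centrality directly at the abstract Hecke-algebra level: summing the multiplication formula of Theorem~\ref{multiplication:thm} over $\nu\in W_0\omega$ and comparing with $m_\omega(X)T_w$ yields the coefficient identities $\sum_\nu A^{\eta,\nu}_{v,w}=0$ and $\sum_\nu B^\nu_{v,w}=0$ for all $v<w$, which then annihilate the lower-order terms in $\sum_\nu(\hat X^\nu f)(\lambda)$ term by term. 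This is more elementary in that it bypasses both Appendix~\ref{appA} and the bijectivity of $\mathcal{J}$, and it isolates a clean algebraic consequence of centrality that has independent interest. The paper's route, on the other hand, makes the conjugacy $L_\omega=\mathcal{J}\,m_\omega(t)\,\mathcal{J}^{-1}$ explicit, which is the conceptual backbone of the subsequent spectral analysis in Section~\ref{sec7}.
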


\begin{proof}
Acting with $L_\omega\mathcal{J}$ on an arbitrary function $f\in\mathcal{C}(P)$ entails that (for all $\lambda\in P$):
\begin{align*}
  (L_\omega \mathcal{J}f)(\lambda) &\stackrel{\text{Eq.}~\eqref{intertwining}}{=} (\mathcal{J} m_\omega(t) f) (\lambda)\stackrel{\text{Eq.}~\eqref{int-op}}{=}
\tau_{w_\lambda}^{-1} (I_{w_\lambda} m_\omega(t) f)(\lambda_+)  \\
&\stackrel{\text{Eq.}~\eqref{center}}{=}\tau_{w_\lambda}^{-1} (m_\omega(t) I_{w_\lambda} f)(\lambda_+)
=\tau_{w_\lambda}^{-1} \sum_{\nu\in W_0\omega} (I_{w_\lambda} f)(w_\lambda(\lambda-\nu))\\
&\stackrel{\text{Eq.}~\eqref{intertwining-property}}{=}
\sum_{\nu\in W_0\omega} a_{\lambda,\nu}(\mathcal Jf)(\lambda-\nu)+b_{\lambda,\nu}(1-\tau^{-2}_0) (\mathcal Jf)(\lambda) ,
\end{align*}
whence the theorem follows from the bijectivity of $\mathcal{J}$.
\end{proof}

\subsection{$W_R$-invariant reduction}
The $W_R$-invariant subspace
\begin{equation}\label{WRinv1}
\mathcal{C}(P)^{W_R}:=\{ f\in\mathcal{C}(P)\mid wf=f,\ w\in W_R\}
\end{equation}
consists of the functions $f:\mathcal{C}(P)\to\mathbb{C}$ that are symmetric with respect to the action
of $W_0$ and periodic with respect to the lattice of translations $t (cQ)$.

\begin{proposition}\label{WR-reduction:prp}
The $W_R$-invariant subspace $\mathcal{C}(P)^{W_R}$ is stable with respect to the action of $L_p$, $p\in \mathbb{C}[X]^{W_0}$.
\end{proposition}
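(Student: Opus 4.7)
The plan is to exploit the fact that $p(X)$ is a central element of $\mathbb{H}$. Since $\mathbb{C}[X]^{W_0}\subset\mathcal{Z}(\mathbb{H})$ by Eq.~\eqref{center}, the Laplacian $L_p=\hat T(p(X))$ commutes with $\hat T_j$ for every $j=0,\ldots,n$. Moreover, because the affine Weyl group $W_R$ is generated by the simple affine reflections $s_0,\ldots,s_n$, in order to establish that $L_p f$ lies in $\mathcal{C}(P)^{W_R}$ whenever $f$ does, it suffices to verify the $s_j$-invariance of $L_p f$ for each $j\in\{0,\ldots,n\}$.

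The bridge between these two observations is the following elementary remark, which I would state and prove first: the operator $\hat T_j$ acts as the scalar $\tau_j$ precisely on the subspace of $s_j$-invariant lattice functions. Indeed, the definition in Eq.~\eqref{That1} gives
\begin{equation*}
\hat T_a g - \tau_a g = \chi_a\,(s_a g - g),
\end{equation*}
and since the multiplier $\chi_a$ assumes only the nonzero values $\tau_a, 1, \tau_a^{-1}$, this difference vanishes if and only if $s_a g = g$. Applying this to $a=a_j$ yields the equivalence $\hat T_j g=\tau_j g\Longleftrightarrow s_j g=g$.

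With these two ingredients in hand the proof becomes a one-line commutation. For $f\in\mathcal{C}(P)^{W_R}$ one has $\hat T_j f=\tau_j f$ for every $j=0,\ldots,n$ by the equivalence above, so
\begin{equation*}
\hat T_j(L_p f)=L_p(\hat T_j f)=\tau_j\,L_p f,
\end{equation*}
whence, invoking the equivalence in the opposite direction, $L_p f$ is $s_j$-invariant. Running $j$ through $\{0,\ldots,n\}$ shows that $L_p f$ is fixed by every simple affine reflection, hence by all of $W_R$. There is no real obstacle; the only point worth being careful about is that the action of the simple affine reflections preserves $\mathcal{C}(P)$, which is guaranteed by our standing integrality assumption $c\in\mathbb{N}_{>1}$ from Section~\ref{sec3} that makes $P$ stable under $W_R$.
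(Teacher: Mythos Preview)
Your proof is correct and follows exactly the same approach as the paper: both use the equivalence $s_jf=f\Leftrightarrow\hat T_jf=\tau_jf$ (which you justify explicitly via the nonvanishing of $\chi_{a_j}$) together with the commutation of $L_p$ with $\hat T_j$ coming from $\mathbb{C}[X]^{W_0}\subset\mathcal{Z}(\mathbb{H})$. The only cosmetic difference is that the paper records the equivalent characterization $\mathcal{C}(P)^{W_R}=\{f:\hat T_wf=\tau_wf,\ w\in W_R\}$ as an intermediate step, whereas you work directly with the generators $s_j$.
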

\begin{proof}
Since the relations $s_jf=f$ and $\hat{T}_jf=\tau_jf$ for $j\in \{ 0,\ldots ,n\}$ are equivalent, one may alternatively characterize the space of $W_R$-invariant functions as
\begin{equation}\label{WRinv2}
\mathcal{C}(P)^{W_R}=\{ f\in\mathcal{C}(P)\mid \hat{T}_wf=\tau_w f,\ w\in W_R\} .
\end{equation}
The proposition thus follows from the fact that the Laplacians
$L_p$, $p\in \mathbb{C}[X]^{W_0}$ commute with the operators $\hat{T}_w$, $w\in W_R$ (cf. Eq. \eqref{center}).
\end{proof}

The finite intersection $P^+_c$ \eqref{Pc} constitutes a fundamental domain for the action of $W_R$ on $P$. Hence, we may identify the space
$\mathcal{C}(P)^{W_R}$ with the finite-dimensional space $\mathcal{C}(P^+_c)$ of functions $f:P^+_c\to\mathbb{C}$.
The following theorem
describes the restriction of the action of the operator $L_\omega$ in Theorem \ref{action:thm} to
$\mathcal{C}(P^+_c)$.
The proof hinges on Macdonald's celebrated product formula for the generalized Poincar\'e series of the Coxeter group
associated with the length multiplicative function $\tau$ \cite{mac:poincare}. For the stabilizer
$W_{R,\lambda}$ of $\lambda\in P_c^+$, Macdonald's formula for the Poincar\'e polynomial in question becomes:
\begin{subequations}
\begin{align}\label{poincare-stab}
W_{R,\lambda}(\tau^2)&:= \sum_{w\in W_{R,\lambda}}\tau_w^2=
\prod_{a\in R_\lambda^+}\frac{1-\tau_a^2 \tau_s^{2\text{ht}_s(a)} \tau_l^{2\text{ht}_l(a)} }{1-\tau_s^{2\text{ht}_s(a)} \tau_l^{2\text{ht}_l(a)}} \\
&= \prod_{\substack{\alpha\in R_0^+\\ \langle \lambda,\alpha^\vee\rangle=0}}
\frac{1-\tau^2_{\alpha^\vee} e_\tau (\alpha^\vee ) }{1-e_\tau (\alpha^\vee )}
\prod_{\substack{\alpha\in R_0^+\\ \langle \lambda,\alpha^\vee\rangle=c  }}
\frac{1-\tau^2_{\alpha^\vee} h_\tau e_\tau (-\alpha^\vee )}{1-h_\tau e_\tau (-\alpha^\vee )}, \nonumber
\end{align}
where
\begin{equation}\label{htau}
h_\tau:=\tau_0^2\, e_\tau (\vartheta^\vee )\quad\text{and}\quad e_\tau (\eta ):=\prod_{\alpha\in R^+_0} \tau_{\alpha^\vee}^{\langle \eta, \alpha\rangle } \quad (\eta\in P^\vee )
\end{equation}
(with $P^\vee$ denoting the lattice of coweights).
Here
we used the notation $R_\lambda^+:=\{ a\in R^+\mid a(\lambda)=0\} $ and the heights $\text{ht}_s(a)$ and
$\text{ht}_l(a)$ of  $a=k_0a_0+\cdots +k_na_n\in R^+$ are defined as
\begin{equation}
\text{ht}_s(a):=\sum_{\substack{0\leq j\leq n\\ \alpha_j\ \text{short}}} k_j \quad \text{and}\quad \text{ht}_l(a):=\sum_{\substack{1\leq j\leq n\\ \alpha_j\ \text{long}}} k_j
\end{equation}
(with---recall---the convention that all finite roots are {\em short} if $R_0$ is simply-laced).
\end{subequations}

\begin{theorem}
\label{action-symmetric:thm}
For $\omega\in P^+$ (quasi-)minuscule,
the restriction of the action of  $L_\omega$ to $\mathcal C(P_c^+)\simeq \mathcal C(P)^{W_R}$ is given by
\begin{subequations}
\begin{equation}\label{Lomega-sym}
(L_\omega f)(\lambda)=
U_{\lambda,\omega}(\tau^2) f (\lambda)+ \sum_{\substack{\nu\in W_0\omega\\ \lambda-\nu\in P_c^+  }} V_{\lambda,\nu}(\tau^2) f (\lambda-\nu)
\end{equation}
$(f\in \mathcal{C}(P_c^+),  \lambda\in P_c^+)$, with
\begin{align}\label{Vlambdanu}
V_{\lambda,\nu}(\tau^2)&:=
\prod_{a\in R_\lambda^+\setminus R^+_{\lambda-\nu}}\frac{1-\tau_a^2 \tau_s^{2\text{ht}_s(a)} \tau_l^{2\text{ht}_l(a)} }{1-\tau_s^{2\text{ht}_s(a)} \tau_l^{2\text{ht}_l(a)}} \\
&=\prod_{\substack{\alpha\in R_0^+\\ \langle \lambda,\alpha^\vee\rangle=0 \\ \langle \nu,\alpha^\vee\rangle<0 }}
\frac{1-\tau^2_{\alpha^\vee} e_\tau (\alpha^\vee ) }{1-e_\tau (\alpha^\vee )}
\prod_{\substack{\alpha\in R_0^+\\ \langle \lambda,\alpha^\vee\rangle=c \\ \langle \nu,\alpha^\vee\rangle>0 }}
\frac{1-\tau^2_{\alpha^\vee} h_\tau e_\tau (-\alpha^\vee )}{1-h_\tau e_\tau (-\alpha^\vee )} \nonumber
\end{align}
and
\begin{eqnarray}\label{Ulambdaomega}
U_{\lambda ,\omega}(\tau^2):=
\sum_{\substack{\nu\in W_0\omega \\ (\lambda-\nu)_+=\lambda   }} \tau_{w_{\lambda-\nu}}^2+
(1-\tau_0^{-2})
\sum_{\substack{\nu\in W_0\omega \\ w_{\lambda-\nu} \lambda=\lambda   }}c_{\lambda ,\nu} ,
\end{eqnarray}
where the coefficients $c_{\lambda ,\nu}$ are of the form in Eqs. \eqref{c-coef}--\eqref{hvee}.
\end{subequations}
\end{theorem}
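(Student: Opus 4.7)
The plan is to specialize Theorem~\ref{action:thm} to $\lambda\in P_c^+$, where $w_\lambda=1$ and $\lambda_+=\lambda$ collapse the coefficients to $a_{\lambda,\nu}=\tau^2_{w_{\lambda-\nu}}$ together with the piecewise expression in \eqref{b-coef} for $b_{\lambda,\nu}$, and then to impose $W_R$-invariance of $f$ by writing $f(\lambda-\nu)=f((\lambda-\nu)_+)$ and partitioning the sum $\sum_{\nu\in W_0\omega} a_{\lambda,\nu}f(\lambda-\nu)$ according to the value $\mu:=(\lambda-\nu)_+\in P_c^+$. Terms with $\mu=\lambda$ will feed the diagonal coefficient $U_{\lambda,\omega}$, while terms with $\mu\neq\lambda$ will feed the off-diagonal coefficients $V_{\lambda,\nu'}$ indexed by the $\nu'\in W_0\omega$ with $\lambda-\nu'=\mu\in P_c^+$.

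The crucial structural input is the key lemma from Appendix~\ref{appA}: for $\omega$ (quasi-)minuscule and $(\lambda-\nu)_+\neq\lambda$, the element $w_{\lambda-\nu}$ inverts only positive affine roots lying in $R_\lambda^+$ and hence fixes $\lambda$. Consequently $\mu=w_{\lambda-\nu}(\lambda-\nu)=\lambda-(w_{\lambda-\nu})'\nu\in\lambda-W_0\omega$, confirming that every off-diagonal $\mu$ that arises is indeed of the form $\lambda-\nu'$ with $\nu'\in W_0\omega$ and $\lambda-\nu'\in P_c^+$, as required by \eqref{Lomega-sym}.

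For the off-diagonal coefficient of $f(\lambda-\nu)$, the sum $\sum_{\tilde\nu}\tau^2_{w_{\lambda-\tilde\nu}}$ taken over $\tilde\nu\in W_0\omega$ with $(\lambda-\tilde\nu)_+=\mu:=\lambda-\nu$ will be reinterpreted as a sum over the $W_{R,\lambda}$-orbit of $\mu$. The key lemma guarantees that each $w_{\lambda-\tilde\nu}$ is minimal in $W_R$ and lies in $W_{R,\lambda}$, so $\tilde\nu\mapsto w_{\lambda-\tilde\nu}^{-1}$ is a length-preserving bijection onto a complete system of minimal coset representatives of $W_{R,\lambda}/(W_{R,\lambda}\cap W_{R,\mu})$. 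Applying Macdonald's formula \eqref{poincare-stab} both to $W_{R,\lambda}(\tau^2)$ and to the parabolic subgroup $W_{R,\lambda}\cap W_{R,\mu}$ (whose positive system of affine roots is $R_\lambda^+\cap R_{\lambda-\nu}^+$) collapses the resulting Poincar\'e ratio to the product over $R_\lambda^+\setminus R_{\lambda-\nu}^+$ displayed in the first line of \eqref{Vlambdanu}. The second equality in \eqref{Vlambdanu} then follows by classifying the affine roots in $R_\lambda^+$ as either $\alpha^\vee$ with $\langle\lambda,\alpha^\vee\rangle=0$ or $-\alpha^\vee+c$ with $\langle\lambda,\alpha^\vee\rangle=c$, together with the dominance constraint $\langle\lambda-\nu,\alpha^\vee\rangle\in[0,c]$.

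The diagonal coefficient is obtained by combining the $\tau^2_{w_{\lambda-\nu}}f(\lambda)$ terms from $(\lambda-\nu)_+=\lambda$, which give the first sum in $U_{\lambda,\omega}$, with the $b_{\lambda,\nu}(1-\tau_0^{-2})f(\lambda)$ terms. For $(\lambda-\nu)_+\neq\lambda$ one has $b_{\lambda,\nu}=c_{\lambda,\nu}$, and the key lemma equates $\{\nu:w_{\lambda-\nu}\lambda=\lambda\}$ with $\{\nu:(\lambda-\nu)_+\neq\lambda\}$ (these conditions being mutually exclusive for $\nu\neq 0$: from $w_{\lambda-\nu}(\lambda-\nu)=\lambda$ and $w_{\lambda-\nu}\lambda=\lambda$ one derives $w_{\lambda-\nu}\nu=0$ and hence $\nu=0$), producing the second sum in $U_{\lambda,\omega}$; the residual terms with $(\lambda-\nu)_+=\lambda$ vanish because the characteristic-function factor $\chi(\nu^\vee+(1-\langle\lambda,\nu^\vee\rangle)c)$ in \eqref{b-coef} evaluates to zero in each of the configurations permitted by Remark~\ref{Xnu-minuscule:rem}. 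The main technical obstacle is precisely the off-diagonal combinatorial step: the length-preserving bijection with minimal coset representatives together with the identification of $W_{R,\lambda}\cap W_{R,\mu}$ as a parabolic subgroup of $W_{R,\lambda}$ with positive system $R_\lambda^+\cap R_\mu^+$, both of which rest on the (quasi-)minuscule smallness of $\omega$ exploited in Appendix~\ref{appA}.
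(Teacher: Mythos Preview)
Your proposal is correct and follows essentially the same route as the paper's own proof: specialize Theorem~\ref{action:thm} to $\lambda\in P_c^+$ so that $a_{\lambda,\nu}=\tau_{w_{\lambda-\nu}}^2$, use Lemma~\ref{theta:lem} to split into the cases $(\lambda-\nu)_+\neq\lambda$ (where $w_{\lambda-\nu}\in W_{R,\lambda}$) and $(\lambda-\nu)_+=\lambda$ (where $b_{\lambda,\nu}=0$), rewrite the off-diagonal contribution as the orbit sum $\sum_{\mu\in W_{R,\lambda}(\lambda-\nu)}\tau_{w_\mu}^2=W_{R,\lambda}(\tau^2)/(W_{R,\lambda}\cap W_{R,\lambda-\nu})(\tau^2)$, and collapse the ratio via Macdonald's product formula \eqref{poincare-stab}. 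The only imprecision is your reference for the vanishing of $b_{\lambda,\nu}$ when $(\lambda-\nu)_+=\lambda$: this comes directly from part~$ii)$ of Lemma~\ref{theta:lem} (which places $\nu^\vee+(1-\langle\lambda,\nu^\vee\rangle)$ in $R(w_{\lambda-\nu})\subset R^+$, so $\chi$ vanishes), not from Remark~\ref{Xnu-minuscule:rem}.
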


\begin{proof}
It is immediate from Theorem \ref{action:thm} that the action of $L_\omega$ reduces to an
action on
$\mathcal{C}(P^+_c)\cong\mathcal{C}(P)^{W_R}$ of the form in Eq. \eqref{Lomega-sym} with
\begin{align*}\nonumber
V_{\lambda ,\nu}(\tau^2)&=\sum_{\substack{\eta\in W_0\omega\\ (\lambda -\eta)_+=\lambda-\nu}}
a_{\lambda ,\eta} \stackrel{\text{(i)}}{=}
\sum_{\substack{\eta\in W_0\omega\\ (\lambda -\eta)_+=\lambda-\nu}} \tau_{w_{\lambda-\eta}}^2
\stackrel{\text{(ii)}}{=}
\sum_{\mu\in W_{R,\lambda} (\lambda -\nu)} \tau_{w_\mu}^2\\
&=
W_{R,\lambda}(\tau^2)/(W_{R,\lambda}\cap W_{R,\lambda-\nu})(\tau^2) \label{Valt}
\end{align*}
and
\begin{eqnarray*}\nonumber
U_{\lambda ,\omega}(\tau^2)&=&
\sum_{\substack{\nu\in W_0\omega \\ (\lambda-\nu)_+=\lambda   }} a_{\lambda ,\nu}+
(1-\tau_0^{-2})
\sum_{\nu\in W_0\omega } b_{\lambda,\nu}\\
&\stackrel{\text{(i),(ii),(iii)}}{=}&
\sum_{\substack{\nu\in W_0\omega \\ (\lambda-\nu)_+=\lambda   }} \tau_{w_{\lambda-\nu}}^2+
(1-\tau_0^{-2})
\sum_{\substack{\nu\in W_0\omega \\ w_{\lambda-\nu} \lambda=\lambda   }} c_{\lambda ,\nu} . \label{Ualt}
\end{eqnarray*}
Here we used that for any $\lambda\in P^+_c$ and $\nu\in W_0\omega$:
 $\text{(i)}$ $a_{\lambda,\nu}=\tau_{w_{\lambda-\nu}}^2$, $\text{(ii)}$
$(\lambda-\nu)_+\neq \lambda$ iff $w_{\lambda-\nu}\in W_{R,\lambda}$ (by part $i)$ of Lemma \ref{theta:lem}),
and $\text{(iii)}$ $\nu^\vee+(1-\langle\lambda,\nu^\vee\rangle)\in R^+$ if $(\lambda-\nu)_+= \lambda$ (by part $ii)$ of Lemma \ref{theta:lem}), whence $b_{\lambda ,\nu}=0$ in this situation.
Finally, by plugging in Macdonald's product formula for the Poincar\'e polynomial in Eq. \eqref{poincare-stab}
the coefficient $V_{\lambda ,\nu}(\tau^2)$ is rewritten in the form given by Eq. \eqref{Vlambdanu}.
\end{proof}

\begin{remark} For $\omega$ minuscule, the actions of $L_\omega$ in Theorems \ref{action:thm} and \ref{action-symmetric:thm} reduce to (cf. Remark \ref{Xnu-minuscule:rem})
\begin{subequations}
\begin{equation}\label{Lminuscule}
(L_\omega f)(\lambda)=
\sum_{\nu\in W_0\omega}  \tau^2_{w_{w_{\lambda}(\lambda-\nu)}}  f(\lambda -\nu)\qquad
(f\in \mathcal{C}(P),\lambda\in P)
\end{equation}
and
\begin{equation}\label{Lminuscule-red}
(L_\omega f)(\lambda)=
\sum_{\substack{\nu\in W_0\omega\\ \lambda-\nu\in P_c^+  }} V_{\lambda,\nu}(\tau^2) f (\lambda-\nu)
\qquad
(f\in \mathcal{C}(P^+_c),\lambda\in P^+_c) ,
\end{equation}
\end{subequations}
respectively.
For the root systems of type $A$, i.e. with the (affine) Weyl group being equal to the (affine) permutation group,
the operator $L_\omega$ \eqref{Lminuscule-red} was previously found in \cite[Prop. 2.2]{die:diagonalization}.
From this perspective, our present construction provides a Hecke-algebraic framework for the discrete integrable Laplacians in Ref. \cite{die:diagonalization}  permitting their generalization from the affine permutation group to an arbitrary affine Weyl group.
\end{remark}

\begin{remark}\label{free-laplacians:rem}
For $\tau\to 1$, the intertwining operator $\mathcal{J}$ \eqref{int-op} becomes trivial:
\begin{equation}
(\mathcal{J}f)(\lambda)\stackrel{\tau\to 1}{=} f(\lambda )\qquad (f\in \mathcal{C}(P),\lambda\in P).
\end{equation}
The action of $L_\omega$ on $\mathcal{C}(P)$ reduces in this situation
therefore to that of $m_\omega(t)$ \eqref{free-laplacian}, viz.
\begin{equation}
(L_\omega f)(\lambda)\stackrel{\tau\to 1}{=} \sum_{\nu\in W_0 \omega} f(\lambda -\nu).
\end{equation}
This amounts to the action of a conventional Laplacian on $\mathcal{C}(P)$ (shifted by an additive constant such that the diagonal term vanishes).
\end{remark}

\section{Diagonalization: Periodic Macdonald Spherical Functions}\label{sec7}
In this section we diagonalize the commuting Laplacians $L_p$, $p\in \mathbb{C}[X]^{W_0}$
in $\mathcal{C}(P^+_c)$ with the aid of a basis of periodic Macdonald spherical functions. In the remaining of the paper, we will restrict attention to the following (repulsive) parameter regime
\begin{equation}\label{tau-rep}
-1<\tau^2 <1.
\end{equation}
(Actually this parameter restriction is relevant only from Subsection \ref{sec7.3} onwards.)

\subsection{Affine Macdonald spherical functions}
For a spectral parameter $\xi\in V$, the {\em affine Macdonald spherical function} $\Phi_\xi\in\mathcal{C}(P)$ is defined as
\begin{subequations}
\begin{equation}\label{msf}
\Phi_\xi := \mathcal{J}\phi_\xi\quad\text{with}\quad \phi_\xi := I(\mathbf{1}_0)e^{i\xi} ,
\end{equation}
where $e^{i\xi}\in \mathcal{C}(P)$ denotes the plane wave function $e^{i\xi}(\lambda ):=e^{i\langle\lambda ,\xi\rangle}$ ($\lambda\in P$) and
\begin{equation}\label{idem}
\mathbf{1}_0:= \sum_{v\in W_0} \tau_v T_v .
\end{equation}
\end{subequations}
For our purposes it is sufficient to restrict attention to affine Macdonald spherical functions
corresponding to a {\em regular} spectral parameter $\xi$ taken from
\begin{equation}\label{Vreg}
V_{\text{reg}}:=\{ \xi \in V\mid \langle \xi ,\alpha \rangle \not\in 2\pi\mathbb{Z},\, \forall \alpha\in R_0^+\} .
\end{equation}
A celebrated formula for the affine Hecke algebra element $\mathbf{1}_0X^\lambda\mathbf{1}_0$,
$\lambda\in P$ originating from the work of Macdonald
(see e.g. Refs. \cite[Thm. 1]{mac:spherical1}, \cite[(4.1.2)]{mac:spherical2} and Refs. \cite[Thm. 2.9(a)]{nel-ram:kostka}, \cite[Thm. 6.9]{par:buildings}), implies that
the function $\phi_\xi$, $\xi\in V_{\text{reg}}$ decomposes as the following linear combination of plane waves
\cite[Prop. 5.9]{die-ems:unitary}
\begin{subequations}
\begin{equation}\label{cf-decomposition}
  \phi_\xi=\sum_{v\in W_0} C(v\xi)e^{iv\xi } \qquad (\xi\in V_{\text{reg}}),
  \end{equation}
where
\begin{equation}\label{cfun}
 C(\xi):= \prod_{\alpha\in R^+_0} \frac{1-\tau_{\alpha^\vee}^2 e^{-i\langle \xi ,\alpha\rangle }}{1-e^{-i\langle \xi ,\alpha\rangle}} .
\end{equation}
\end{subequations}

The plane waves $e^{i\xi}$, $\xi\in V/2\pi Q^\vee$ are the characters of the weight lattice $P$. It is therefore clear that the plane waves $e^{i\xi_1},\ldots ,e^{i\xi_m}$ are linearly independent in $\mathcal{C}(P)$
iff they
belong to {\em distinct} wave vectors
$\xi_1,\ldots ,\xi_m$ on the compact torus $V/2\pi Q^\vee$.
In particular, for $\xi\in V_{\text{reg}}$ the plane waves $e^{iv\xi}$, $v\in W_0$ are linearly independent, because the stabilizer of $\xi\in V_{\text{reg}}$ inside the
affine Weyl group $W_{\hat{R}}=W_0\ltimes
  t(2\pi Q^\vee)$ of $\hat{R}:=R_0+2\pi\mathbb{Z}$
is trivial.
As a consequence, one concludes from the explicit plane-wave expansion in Eqs. \eqref{cf-decomposition}, \eqref{cfun} that for a regular spectral parameter the affine Macdonald spherical functions
$\Phi_{\xi_1},\ldots ,\Phi_{\xi_m}$ are linearly independent in $\mathcal{C}(P)$
iff the corresponding spectral values
$\xi_1,\ldots ,\xi_m$ belong to {\em distinct}
$W_{\hat{R}}$ orbits of $V_{\text{reg}}$.
In other words, a complete domain of regular spectral values parametrizing the affine Macdonald spherical functions in Eqs. \eqref{cf-decomposition}, \eqref{cfun}
is given by the fundamental domain $2\pi A^\vee$ of $V_{\text{reg}}$ with respect to the action of $W_{\hat{R}}$, where $A^\vee$ refers to the open alcove
 \begin{equation}\label{Avee}
A^\vee:=\{ \xi\in V \mid 0<\langle \xi ,\alpha\rangle< 1,\,\forall\alpha\in R_0^+\} .
 \end{equation}

\subsection{Periodicity}
Since $T_j\mathbf{1}_0=\tau_j\mathbf{1}_0$ for $j=1,\ldots ,n$, it is clear that
the affine Macdonald spherical function is $W_0$-invariant:
\begin{equation}\label{W0-invariance}
\hat{T}_j\Phi_\xi=\hat{T}_j\mathcal{J}\phi_\xi=
\mathcal{J}I_j\phi_\xi=\mathcal{J}\tau_j\phi_\xi=\tau_j\Phi_\xi ,
\end{equation}
so $s_j\Phi_\xi=\Phi_\xi$ for $j=1,\ldots ,n$.
The following proposition provides additional constraints on spectral parameter $\xi$ guaranteeing the affine Macdonald spherical function to be periodic with respect to translations in $t(cQ)$.

\begin{proposition}\label{BAE:prp}
For $\xi\in V_{\text{reg}}$ \eqref{Vreg},
the affine Macdonald spherical function $\Phi_\xi$ \eqref{msf}, \eqref{idem} lies in the
$W_R$-invariant subspace $\mathcal{C}(P)^{W_R}$ \eqref{WRinv1} provided the spectral parameter satisfies
the following algebraic system of equations of Bethe type
\begin{equation}\label{BAE}
 e^{ic\langle \xi,\nu\rangle}
=\prod_{\alpha\in R^+_0}  \Bigl( \frac{1-\tau_{\alpha^\vee}^2e^{i\langle \xi,\alpha \rangle }}{\tau_{\alpha^\vee}^2-e^{i\langle \xi,\alpha \rangle } }
\Bigr)^{\langle \nu, \alpha^\vee\rangle },\qquad \forall \nu\in Q .
\end{equation}
\end{proposition}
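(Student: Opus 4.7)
The plan is to reduce the $W_R$-invariance of $\Phi_\xi$ to a single affine eigenvalue relation for $\phi_\xi$ under $I_0$, and then to convert that relation into the Bethe equations via the plane-wave expansion \eqref{cf-decomposition}. Using the alternative characterization \eqref{WRinv2} and the fact that $W_R$ is generated by $s_0,\dots,s_n$, membership $\Phi_\xi\in\mathcal{C}(P)^{W_R}$ is equivalent to $\hat{T}_j\Phi_\xi=\tau_j\Phi_\xi$ for $j=0,\dots,n$. The intertwining relation \eqref{ita} together with the bijectivity of $\mathcal{J}$ (Proposition \ref{intertwining:prp}) recasts these conditions as $I_j\phi_\xi=\tau_j\phi_\xi$ for $j=0,\dots,n$. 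For $j\geq 1$ this is automatic: since $\phi_\xi=I(\mathbf{1}_0)e^{i\xi}$ and $T_j\mathbf{1}_0=\tau_j\mathbf{1}_0$ holds inside the finite Hecke algebra, $I_j\phi_\xi=I(T_j\mathbf{1}_0)e^{i\xi}=\tau_j\phi_\xi$. Only the affine case $I_0\phi_\xi=\tau_0\phi_\xi$ requires actual work.

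To handle this remaining condition, I would first compute the action of $I_0=\tau_0 s_0+(\tau_0-\tau_0^{-1})J_0$ on a plane wave. Using $s_0\lambda=s_\vartheta\lambda+c\vartheta$ and summing the geometric series defining $(J_0 e^{i\eta})(\lambda)$ in each of the three sign cases for $a_0(\lambda)$ in \eqref{Ja:action}, one checks that all cases collapse to the single identity
\begin{equation*}
I_0\, e^{i\eta}=\tau_0^{-1}\frac{z-\tau_0^2}{z-1}\, e^{ic\langle\vartheta,\eta\rangle}e^{is_\vartheta\eta}+(\tau_0-\tau_0^{-1})\frac{z}{z-1}\, e^{i\eta},\qquad z:=e^{i\langle\vartheta,\eta\rangle}.
\end{equation*}
Substituting the expansion $\phi_\xi=\sum_{v\in W_0}C(v\xi)e^{iv\xi}$ and comparing coefficients of the linearly independent plane waves $\{e^{iv\xi}\}_{v\in W_0}$ (valid since $\xi\in V_{\mathrm{reg}}$), the equation $I_0\phi_\xi=\tau_0\phi_\xi$ reduces to the family of scalar conditions
\begin{equation*}
e^{ic\langle\vartheta,v\xi\rangle}\,\frac{C(v\xi)}{C(s_\vartheta v\xi)}=\frac{1-\tau_0^2\, e^{i\langle\vartheta,v\xi\rangle}}{\tau_0^2-e^{i\langle\vartheta,v\xi\rangle}}\qquad (v\in W_0),\qquad (*)
\end{equation*}
one for each coset coefficient.

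It remains to identify the family $(*)$ with the Bethe equations \eqref{BAE}. Exploiting the inversion-set decomposition $s_\vartheta R_0^+=(R_0^+\setminus R(s_\vartheta))\cup(-R(s_\vartheta))$ together with the $W$-invariance of the multiplicity function $\tau$, a direct computation collapses the $C$-function ratio to a product over the inversion set of $s_\vartheta$:
\begin{equation*}
\frac{C(\eta)}{C(s_\vartheta\eta)}=\prod_{\alpha\in R(s_\vartheta)}\frac{\tau_{\alpha^\vee}^2-e^{i\langle\eta,\alpha\rangle}}{1-\tau_{\alpha^\vee}^2\, e^{i\langle\eta,\alpha\rangle}}.
\end{equation*}
Substituting into $(*)$ with $\eta=v\xi$, reindexing the product by $\beta:=v^{-1}\alpha$, and isolating the contribution from $\alpha=\vartheta\in R(s_\vartheta)$ (which supplies the squared factor required on the right of \eqref{BAE}, using $\tau_0=\tau_{\vartheta^\vee}=\tau_{\nu^\vee}$ for $\nu:=v^{-1}\vartheta$) converts $(*)$ into the Bethe equation \eqref{BAE} for the quasi-minuscule weight $\nu\in W_0\vartheta$. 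Here the involution identity $f(-\alpha)=f(\alpha)^{-1}$ for $f(\alpha):=(1-\tau_{\alpha^\vee}^2\, e^{i\langle\xi,\alpha\rangle})/(\tau_{\alpha^\vee}^2-e^{i\langle\xi,\alpha\rangle})$ is used to handle both positive and negative roots in $v^{-1}R(s_\vartheta)$ uniformly. Finally, since both sides of \eqref{BAE} are multiplicative group homomorphisms $Q\to\mathbb{C}^\times$ in $\nu$, the Bethe equations for $\nu\in W_0\vartheta$ (which spans $Q$ as an abelian group) are equivalent to those for all $\nu\in Q$, yielding the proposition. The main obstacle is precisely this combinatorial matching in the last step: identifying the multiset $v^{-1}R(s_\vartheta)\setminus\{\nu\}$ with the $\nu$-support $\{\alpha\in R_0:\langle\nu,\alpha^\vee\rangle=1\}$ so that the exponents of $f$ on the two sides coincide; this ultimately rests on the quasi-minusculeness of $\vartheta$ and a careful accounting of signs inside $R(s_\vartheta)$.
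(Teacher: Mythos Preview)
Your proof is correct and follows essentially the same route as the paper's: reduce to $I_0\phi_\xi=\tau_0\phi_\xi$, expand in plane waves, and simplify the ratio $C(s_\vartheta\eta)/C(\eta)$ via the inversion set $R(s_\vartheta)$. The only cosmetic difference is that the paper first uses the quasi-minuscule bound $0\le\langle\vartheta,\alpha^\vee\rangle\le 1$ for $\alpha\in R_0^+\setminus\{\vartheta\}$ to rewrite the product over $R(s_\vartheta)$ as a product over all of $R_0^+$ with exponents $\langle\vartheta,\alpha^\vee\rangle$, and \emph{then} substitutes $\xi\to v\xi$ and reindexes; you reindex first and then invoke quasi-minusculeness to match the multisets, which makes the bookkeeping you flag as ``the main obstacle'' slightly heavier but leads to the same identity.
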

\begin{proof}
One deduces from Eqs. \eqref{WRinv2} and \eqref{W0-invariance} that the affine Macdonald spherical function is $W_R$-invariant provided
$\hat{T}_0\Phi_\xi=\tau_0\Phi_\xi$, or equivalently: $I_0\phi_\xi =\tau_0\phi_\xi$.
For $\xi\in V_{\text{reg}}$ \eqref{Vreg}, the plane wave decomposition in Eq. \eqref{cf-decomposition}
combined with the explicit action of $\tau_0 I_0$ on $e^{i\xi}$ (cf. Eqs. \eqref{Ia:action}--\eqref{Ja:action}):
\begin{equation*}
\tau_0 I_0 e^{i\xi}=\frac{\tau_0^2-1}{  1-e^{-i\langle \xi, \vartheta\rangle}    }e^{i\xi}
+\frac{1-\tau_0^2e^{-i\langle \xi, \vartheta\rangle}}{1-e^{-i\langle \xi, \vartheta\rangle}} e^{ic\langle \xi, \vartheta\rangle} e^{is_{\vartheta}\xi} ,
\end{equation*}
produces on the one hand that
\begin{align*}
\tau_0 I_0 \phi_\xi =
&\sum_{v\in W_0} \frac{\tau_0^2-1}{1-e^{-i\langle v\xi,\vartheta\rangle}} C(v\xi)e^{iv\xi}  \\
  & + \sum_{v\in W_0} \frac{1-\tau_0^2 e^{i\langle v\xi,\vartheta\rangle} }{1-e^{i\langle v\xi,\vartheta\rangle}} C(s_\vartheta v\xi) e^{ -ic\langle  v\xi,\vartheta\rangle}  e^{iv\xi} \quad (\xi\in V_{\text{reg}}).
\end{align*}
When comparing this expression with the corresponding plane wave decomposition of $\tau_0^2\phi_\xi$,
it readily follows---upon exploiting the linear independence
of the plane waves $e^{iv\xi}$, $v\in W_0$ when $\xi\in V_{\text{reg}}$---that the affine Macdonald spherical function is $W_R$-invariant for $\xi\in V_{\text{reg}}$ provided
\begin{equation*}
e^{i c\langle v\xi,\vartheta\rangle} =
\frac{C(s_{\vartheta}v\xi)}{C(v\xi)}
\frac{1-\tau_0^2e^{i\langle v\xi,\vartheta \rangle }}{\tau_0^2-e^{i\langle v\xi,\vartheta \rangle } },
\qquad \forall v\in W_0.
\end{equation*}
By substituting the product expansion for $C (\cdot )$ over $R_0^+$ (cf. \eqref{cfun}) and canceling the
common factors in the numerator and denominator:
\begin{align*}
\frac{C(s_\vartheta\xi)}{C(\xi)}&=
\prod_{\alpha\in R(s_\vartheta)}\frac{1-\tau_{\alpha^\vee}^2e^{i\langle \xi,\alpha \rangle }}{\tau_{\alpha^\vee}^2-e^{i\langle \xi,\alpha \rangle } } =
\prod_{\substack{\alpha\in R^+_0\\ \langle  \vartheta,\alpha^\vee\rangle>0  }}
\frac{1-\tau_{\alpha^\vee}^2e^{i\langle \xi,\alpha \rangle }}{\tau_{\alpha^\vee}^2-e^{i\langle \xi,\alpha \rangle } } \\
&=\frac{1-\tau_0^2e^{i\langle \xi,\vartheta \rangle }}{\tau_0^2-e^{i\langle \xi,\vartheta \rangle } }
\prod_{\alpha\in R^+_0\setminus\{ \vartheta\}}
\Bigl(\frac{1-\tau_{\alpha^\vee}^2e^{i\langle \xi,\alpha \rangle }}{\tau_{\alpha^\vee}^2-e^{i\langle \xi,\alpha \rangle } }
\Bigr)^{\langle \vartheta ,\alpha^\vee\rangle}
\end{align*}
(where it was used that $0\leq \langle \vartheta,\alpha^\vee\rangle\leq 1$ for all $\alpha\in R_0^+\setminus \{\vartheta\}$),
the relation for the spectral parameter at issue is rewritten in the form of the algebraic system given by Eq. \eqref{BAE} (first for $\nu\in W_0\vartheta$ and then for $\nu\in Q$ upon recalling that the short roots  generate $Q$ over $\mathbb{Z}$).
\end{proof}

The following explicit formula for the periodic Macdonald spherical function is immediate from the plane waves decomposition in Eqs. \eqref{cf-decomposition}, \eqref{cfun}, the $W_R$-invariance
in Proposition \ref{BAE:prp}, and the trivial action of the intertwining operator $\mathcal{J}$ \eqref{int-op} on functions supported in the fundamental domain $P_c^+$.
\begin{corollary}\label{msf-decomposition:cor}
For $\xi\in V_{\text{reg}}$ \eqref{Vreg} subject to the periodicity conditions in Proposition \ref{BAE:prp}, the affine Macdonald spherical function is given explicitly by
\begin{equation}\label{msf-decomposition}
  \Phi_\xi (\lambda)=\sum_{v\in W_0} C(v\xi)e^{i\langle v\xi,\lambda_+\rangle} \qquad (\lambda\in P),
 \end{equation}
 with $C(\xi )$ taken from Eq. \eqref{cfun}.
\end{corollary}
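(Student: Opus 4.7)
\medskip

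The plan is to combine three ingredients already established in the excerpt: (i) the plane-wave decomposition \eqref{cf-decomposition}--\eqref{cfun} of $\phi_\xi$, (ii) the trivial action of $\mathcal{J}$ on functions evaluated at points in the fundamental alcove (explicitly, $(\mathcal{J}g)(\mu)=g(\mu)$ whenever $\mu\in P_c^+$), and (iii) the $W_R$-invariance of $\Phi_\xi$ furnished by Proposition \ref{BAE:prp} under the Bethe-type conditions \eqref{BAE}.

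First I would treat the dominant case $\lambda=\mu\in P_c^+$. Here $w_\mu=1$, so by definition of $\mathcal{J}$ one has $(\mathcal{J}\phi_\xi)(\mu)=\phi_\xi(\mu)$, and substituting \eqref{cf-decomposition} together with $\mu_+=\mu$ immediately yields
\begin{equation*}
\Phi_\xi(\mu)=\sum_{v\in W_0} C(v\xi)\, e^{i\langle v\xi,\mu\rangle}=\sum_{v\in W_0} C(v\xi)\, e^{i\langle v\xi,\mu_+\rangle},
\end{equation*}
which is the claimed formula on $P_c^+$.

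Next, for an arbitrary $\lambda\in P$, I would extend this to all of $P$ by invoking the $W_R$-invariance of $\Phi_\xi$. Under the hypothesis that $\xi$ satisfies the Bethe equations \eqref{BAE}, Proposition \ref{BAE:prp} gives $\Phi_\xi\in\mathcal{C}(P)^{W_R}$; since $\lambda_+=w_\lambda\lambda$ with $w_\lambda\in W_R$, the $W_R$-invariance forces $\Phi_\xi(\lambda)=\Phi_\xi(\lambda_+)$. Combining this with the dominant case applied to $\mu=\lambda_+\in P_c^+$ produces the expansion \eqref{msf-decomposition} at the arbitrary weight $\lambda$.

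There is no genuine obstacle here: the corollary is essentially bookkeeping, assembling the three facts above. The only point that warrants care is the direction of reasoning in step (iii): one uses the periodicity condition to pass from the dominant representative $\lambda_+$ back to $\lambda$ without disturbing the plane-wave coefficients, which is precisely what the $W_R$-invariance guarantees. Linear independence of the plane waves $\{e^{iv\xi}\}_{v\in W_0}$ for $\xi\in V_{\mathrm{reg}}$ (already recorded in the excerpt) shows moreover that the formula \eqref{msf-decomposition} uniquely characterizes $\Phi_\xi$ among the functions of its form, so no alternative expansion competes with the one obtained.
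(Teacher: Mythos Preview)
Your proof is correct and follows essentially the same approach as the paper, which states that the corollary is immediate from the plane-wave decomposition \eqref{cf-decomposition}--\eqref{cfun}, the $W_R$-invariance in Proposition~\ref{BAE:prp}, and the trivial action of $\mathcal{J}$ on functions supported in $P_c^+$. Your final remark on linear independence is not needed for the corollary itself, but it is harmless.
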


The algebraic system in Eq. \eqref{BAE} inherits from $\Phi_\xi$ the invariance with respect to the action of the affine Weyl group $W_{\hat{R}}$ on the spectral parameter $\xi$.
It constitutes a generalization of the Bethe Ansatz equations
in \cite[Prop. 3.2]{die:diagonalization} to the case of arbitrary Weyl groups in the spirit of
\cite[Thm. 2.6]{ems-opd-sto:periodic}.
A standard technique due to C.N. Yang and C.P. Yang \cite{yan-yan:thermodynamics} allows one to characterize
the solutions of such Bethe Ansatz equations in terms of the global minima of a family of strictly convex Morse functions
\cite{mat:many-body,gau:fonction,kor-bog-ize:quantum,dor:orthogonality}.
We will now detail the relevant solutions for the algebraic system of Bethe type equations in Eq. \eqref{BAE} by adapting Yang and Yang's method to the present context sticking closely to the approach in
\cite[Sec. 4]{die:diagonalization} and \cite[Secs. 2,\ 9-11]{ems-opd-sto:periodic}.
To avoid linear dependencies it suffices to consider only solutions
belonging to the fundamental alcove $2\pi A^\vee$ \eqref{Avee} (cf. Remark \ref{WR-symmetry} below).

\subsection{Solution of the Bethe type equations}\label{sec7.3}
For any $\mu\in P^\vee$, let $\mathcal{V}_\mu:V\to \mathbb{R}$ be a smooth auxiliary function of the form
\begin{subequations}
\begin{equation}\label{mf1}
\mathcal{V}_\mu(\xi)=\frac{c}{2}\langle \xi,\xi \rangle-2\pi \langle \rho^\vee+\mu,\xi\rangle +\sum_{\alpha\in R^+_0}\frac{2}{\langle\alpha,\alpha\rangle}\int_0^{\langle \xi,\alpha\rangle}v_\alpha(\text{x})\text{d} \text{x},
\end{equation}
where $\rho^\vee =\frac{1}{2}\sum_{\alpha\in R_0^+} \alpha^\vee$ (cf. Remark \ref{dual-cox:rem}) and
\begin{align}\label{mf2}
v_\alpha(\text{x})&:=(1-\tau_{\alpha^\vee}^4)\int_0^\text{x} \frac{\text{d}\text{y}}{1-2\tau_{\alpha^\vee}^2 \cos(\text{y})+\tau_{\alpha^\vee}^4} \\
& =2\arctan\Bigl(\frac{1+\tau_{\alpha^\vee}^2}{1-\tau_{\alpha^\vee}^2}\tan \bigl(\frac{\text{x}}{2}\bigr)\Bigr)=
i\log\Bigl(\frac{1-\tau_{\alpha^\vee}^2e^{i\text{x}}}{e^{i\text{x}}-\tau_{\alpha^\vee}^2}\Bigr) . \nonumber
\end{align}
\end{subequations}
Here the branches of $\arctan(\cdot )$ and $\log (\cdot)$ are assumed to be chosen such that
$v_\alpha (\text{x} )$ varies from $-\pi$ to $\pi$ as $\text{x}$ varies from $-\pi$ to $\pi$ (which corresponds to the principal
branch) and $v_\alpha$ is quasi-periodic: $v_\alpha (\text{x} +2\pi)=v_\alpha (\text{x})+2\pi$.  Our parameter restriction \eqref{tau-rep} moreover guarantees that the odd function
 $v_\alpha$ is smooth and strictly monotonously increasing on $\mathbb{R}$.
 Let us denote the directional derivative of $\mathcal{V}_\mu(\xi)$  in the direction $\eta\in V$ by $(\partial_\eta V_\mu)(\xi):=\langle(\nabla V_\mu ) (\xi), \eta\rangle$ (where $\nabla$ refers to the gradient) and write
$\mathcal{H}(\xi)\in \text{End}(V)$ for the ($\mu$-independent) Hessian of $\mathcal{V}_\mu$ at the point $\xi\in V$.
An explicit computation reveals that the associated quadratic form
$\mathcal{H}_{\eta ,\zeta}(\xi ):=\langle \mathcal{H}(\xi) \eta ,\zeta\rangle =(\partial_\eta\partial_{\zeta} \mathcal{V}_\mu)(\xi)$ characterizing this Hessian is given by:
\begin{equation}\label{hessian}
\mathcal{H}_{\eta ,\zeta}(\xi )=
c\langle \eta,\zeta\rangle +
\sum_{\alpha\in R_0} \frac{ (1-\tau_{\alpha^\vee}^4)\langle \eta,\alpha\rangle  \langle \zeta,\alpha\rangle }{\langle\alpha ,\alpha\rangle(1-2\tau_{\alpha^\vee}^2\cos(\langle \xi,\alpha\rangle)+ \tau_{\alpha^\vee}^4 )} \quad   (\xi,\eta,\zeta\in V).
\end{equation}
It is manifest from this explicit expression that $\mathcal{H}_{\eta ,\eta}(\xi )>0$ for $\eta \neq 0$, whence $\mathcal{H}(\xi)$ is positive definite and $V_\mu :V\to\mathbb{R}$ is a strictly convex Morse function. For any $\mu\in P^\vee$, we now define $\xi_\mu$ as the unique global minimum of $\mathcal{V}_\mu$ \eqref{mf1}, \eqref{mf2}.  The existence of such a minimum is guaranteed since
 $\mathcal{V}_\mu (\xi )\to +\infty$ when $\xi \to \infty$ (as $\int_0^{\text{x}} v_\alpha (\text{y})\text{d}\text{y}\geq 0$ for $\text{x}\in \mathbb{R}$).
Moreover, because $\xi_\mu$ is a critical point of $\mathcal{V}_\mu$, it provides the unique solution of the critical equation
$\nabla \mathcal{V}_\mu =0$, or more explicitly:
\begin{equation}\label{critical:eq}
c \xi_\mu+\rho^\vee_v(\xi_\mu)
 =2\pi( \rho^\vee+\mu)
 \qquad \text{where}\ \rho^\vee_v(\xi):= \sum_{\alpha\in R_0^+}v_\alpha(\langle  \xi , \alpha\rangle)\alpha^\vee .
\end{equation}

\begin{proposition}\label{spectrum:prp}
\begin{subequations}
For a coweight $\mu$ belonging to the fundamental region
 \begin{equation}\label{fr}
 P^{\vee,+}_c:=\{ \mu\in P^\vee\mid 0\leq \langle \mu ,\alpha\rangle \leq c,\, \forall \alpha\in R_0^+\} ,
 \end{equation}
the unique global minimum $\xi_\mu$ of $\mathcal{V}_\mu$ \eqref{mf1}, \eqref{mf2} enjoys the
following properties:
\begin{itemize}
\item[$i)$.] The point $\xi_\mu$ provides a solution to the Bethe type equations \eqref{BAE}.
\item[$ii)$.] The parametrization $\mu\mapsto\xi_\mu$, $\mu\in P^{\vee,+}_c$ is injective.
\item[$iii)$.] The minimum $\xi_\mu$ belongs the intersection of the open alcove $2\pi A^\vee$ \eqref{Avee} with the compact convex polytope in $V$ cut out by the following (moment gap) inequalities:
\begin{equation}\label{momentgaps}
\frac{2\pi \langle \rho^\vee+\mu ,\alpha\rangle }{c+\kappa_-(\tau^2)}
\le \langle \xi_\mu,\alpha\rangle\le
\frac{2\pi \langle \rho^\vee+\mu,\alpha\rangle }{c+\kappa_+(\tau^2)}
\qquad \forall\alpha\in R^+_0 ,
  \end{equation}
where
\begin{equation}\label{kappa}
  \kappa_{\pm }(\tau^2):= \frac{2}{n}\sum_{\alpha\in R_0^+}
   \frac{1-\tau_{\alpha^\vee}^4}{(1\pm |\tau_{\alpha^\vee}^2|)^2}
  .
 \end{equation}
 \item[$iv)$.] The position of $\xi_\mu$ depends analytically on the parameter(s) $\tau_{\alpha^\vee}^2\in (-1,1)$, and one has that
 \begin{equation}\label{tau=01}
\lim_{\tau\to 0}\xi_\mu  =\frac{2\pi }{c+h(R_0)} (\rho^\vee+\mu) \quad\text{and}\quad
\lim_{\tau\uparrow 1}\xi_\mu  =2\pi c^{-1} \mu ,
\end{equation}
where---recall---$h(R_0)$ denotes the Coxeter number of $R_0$ (cf. Eq. \eqref{coxeter}).
\end{itemize}
\end{subequations}
\end{proposition}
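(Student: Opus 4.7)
The plan is to use the critical equation
\begin{equation*}
c\,\xi_\mu + \rho^\vee_v(\xi_\mu) = 2\pi(\rho^\vee+\mu)
\end{equation*}
---call it $(\ast)$---as the uniform starting point for all four assertions, addressing them in the order $(i), (ii), (iv), (iii)$.

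Parts $(i)$ and $(ii)$ are nearly formal consequences of $(\ast)$. For $(i)$, multiply $(\ast)$ by $i$, pair with an arbitrary $\nu \in Q$, and exponentiate. The closed-form expression $v_\alpha(\mathrm{x}) = i\log\bigl((1-\tau_{\alpha^\vee}^2 e^{i\mathrm{x}})/(e^{i\mathrm{x}} - \tau_{\alpha^\vee}^2)\bigr)$ from \eqref{mf2} converts the exponentiated sum on the left into the product on the right of \eqref{BAE}, while the exponential on the right of $(\ast)$ yields $e^{2\pi i \langle \rho^\vee + \mu, \nu\rangle} = 1$, since $\rho^\vee + \mu \in P^\vee$ and $\langle P^\vee, Q\rangle \subseteq \mathbb{Z}$. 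For $(ii)$, equation $(\ast)$ is directly solvable for $\mu$ in terms of $\xi_\mu$, exhibiting an explicit left inverse of $\mu \mapsto \xi_\mu$.

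For $(iv)$, the implicit function theorem applied to the real-analytic map $(\xi, \tau) \mapsto c\xi + \rho_v^\vee(\xi;\tau) - 2\pi(\rho^\vee + \mu)$ yields the analytic dependence of $\xi_\mu$ on $\tau$, since the $\xi$-Jacobian at $\xi_\mu$ equals the positive-definite Hessian $\mathcal{H}(\xi_\mu)$. For the limit $\tau \to 0$, $v_\alpha(\mathrm{x}) \to \mathrm{x}$ reduces $(\ast)$ to a linear equation solved via the Cartan identity $\sum_{\alpha \in R_0^+}\langle \xi, \alpha\rangle \alpha^\vee = h(R_0)\xi$ (which follows from $W_0$-equivariance on irreducible $V$ together with the trace computation $\sum_{\alpha \in R_0^+}\langle \alpha, \alpha^\vee\rangle = 2|R_0^+| = n\,h(R_0)$). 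For $\tau \uparrow 1$, $v_\alpha \to \pi\,\mathrm{sign}(\cdot)$ pointwise on $(-2\pi, 2\pi)\setminus\{0\}$, so the containment from $(iii)$ gives $\rho_v^\vee(\xi_\mu) \to \pi \sum_{\alpha \in R_0^+}\alpha^\vee = 2\pi\rho^\vee$, whence $c\xi_\mu \to 2\pi\mu$.

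Part $(iii)$ is the technical heart. Pairing $(\ast)$ with $\alpha \in R_0^+$ shows the moment-gap inequalities \eqref{momentgaps} to be equivalent to
\begin{equation*}
\kappa_+(\tau^2)\langle\xi_\mu,\alpha\rangle \;\le\; \langle\rho_v^\vee(\xi_\mu), \alpha\rangle \;\le\; \kappa_-(\tau^2)\langle\xi_\mu,\alpha\rangle.
\end{equation*}
The strategy is to interpolate along the coroot direction from $\xi_\mu$ to its orthogonal foot $\xi_\mu - \tfrac{1}{2}\langle\xi_\mu,\alpha\rangle\alpha^\vee$ on the wall $\langle\cdot,\alpha\rangle = 0$. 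At that foot, $\langle\rho_v^\vee(\cdot),\alpha\rangle$ vanishes: the roots $\beta \neq \pm\alpha$ pair off via $\beta \leftrightarrow s_\alpha\beta$ with equal $v_\beta$-values and opposite $\alpha$-components of their coroots, while the $\pm\alpha$-contributions cancel by oddness of $v_\alpha$. Integrating the directional derivative $\partial_{\alpha^\vee}\langle\rho_v^\vee(\xi),\alpha\rangle = \sum_{\beta\in R_0^+} v_\beta'(\langle\xi,\beta\rangle)\langle\alpha^\vee,\beta\rangle\langle\beta^\vee,\alpha\rangle$ along this segment, in which every summand is nonnegative because $\langle\alpha^\vee,\beta\rangle\langle\beta^\vee,\alpha\rangle = 4\langle\alpha,\beta\rangle^2/(|\alpha|^2|\beta|^2) \ge 0$, and bounding $v_\beta'(\cdot)$ by its explicit extrema $(1-\tau_{\beta^\vee}^4)/(1\pm|\tau_{\beta^\vee}^2|)^2$, yields the desired inequalities; the factor $2/n$ in $\kappa_\pm$ emerges from the fact that the $W_0$-invariant bilinear form $(\eta,\zeta)\mapsto \sum_{\beta\in R_0^+} v_{\beta,\mathrm{ext}}'\langle\eta,\beta\rangle\langle\zeta,\beta^\vee\rangle$ is a scalar multiple of the standard inner product on the irreducible $V$, with the scalar pinned down by its trace. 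The containment $\xi_\mu \in 2\pi A^\vee$ then follows in two stages: the lower moment gap rules out $\langle\xi_\mu,\alpha\rangle \le 0$ for $\alpha \in R_0^+$ (using $\langle \rho^\vee+\mu,\alpha\rangle \ge 1$), and the upper wall $\langle\xi_\mu,\vartheta\rangle < 2\pi$ is excluded by a continuity-in-$\tau$ argument anchored at $\tau = 0$, where the explicit value $\langle\xi_0,\vartheta\rangle = 2\pi(h(R_0) - 1 + \langle\mu,\vartheta\rangle)/(c + h(R_0))$ is strictly below $2\pi$ by virtue of $\langle\mu,\vartheta\rangle \le c$, combined with the invertibility of $\mathcal{H}$ preventing a wall crossing. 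The hardest piece is precisely this last upper-wall containment as $\tau$ approaches $\pm 1$: the moment-gap upper bound \eqref{momentgaps} becomes slack there, so the continuity argument must be supplemented by a more delicate rigidity statement---for instance a direct examination of what the Bethe equations would force at a putative wall crossing, or a variational obstruction based on strict convexity of $\mathcal{V}_\mu$ restricted to the closed alcove---to clinch the full alcove containment.
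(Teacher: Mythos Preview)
Your treatment of $(i)$, $(ii)$, the analyticity in $(iv)$, the $\tau\to 0$ limit, and the moment-gap bounds in $(iii)$ matches the paper's argument (your coroot-direction interpolation is the paper's reflection-difference formula \eqref{rho-vb} rephrased, and both lean on the same Schur-type trace identity). Two genuine gaps remain, both of which you flag but do not close.

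For the upper alcove wall in $(iii)$---which should read $\langle\xi_\mu,\varphi\rangle<2\pi$ with $\varphi$ the \emph{highest} root, not the highest short root $\vartheta$---your continuity-in-$\tau$ idea does not work: invertibility of $\mathcal{H}$ does not by itself prevent $\langle\xi_\mu,\varphi\rangle$ from hitting $2\pi$. The missing ingredient is the \emph{quasi-periodicity} $v_\alpha(\mathrm{x}+2\pi)=v_\alpha(\mathrm{x})+2\pi$. With it the paper argues directly and uniformly in $\tau$: if $\langle\xi_\mu,\varphi\rangle\ge 2\pi$ then in the reflection-difference sum each term satisfies $v_\alpha(\langle\xi_\mu,\alpha\rangle)-v_\alpha(\langle s_\varphi\xi_\mu,\alpha\rangle)\ge 2\pi\langle\alpha,\varphi^\vee\rangle$ (the argument jumps by $\langle\xi_\mu,\varphi\rangle\langle\alpha,\varphi^\vee\rangle\ge 2\pi\langle\alpha,\varphi^\vee\rangle$ with $\langle\alpha,\varphi^\vee\rangle\in\mathbb{Z}_{\ge 0}$), whence the Schur identity gives $c\langle\xi_\mu,\varphi\rangle+\langle\rho_v^\vee(\xi_\mu),\varphi\rangle\ge 2\pi(c+h(R_0))$; the critical equation then forces $\langle\mu,\varphi\rangle\ge c+1$, contradicting $\mu\in P^{\vee,+}_c$. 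For the $\tau\uparrow 1$ limit in $(iv)$, your pointwise argument is likewise insufficient: the convergence $v_\alpha\to\pi\,\mathrm{sign}$ is not uniform near $0$ and $2\pi$, and when $\mu$ lies on the boundary of $P^{\vee,+}_c$ the point $\xi_\mu$ approaches the alcove boundary at a rate competing with $1-\tau^2$, so $v_\alpha(\langle\xi_\mu,\alpha\rangle)$ need not tend to $\pi$. The paper instead shows that $\mathcal{V}_\mu$ converges \emph{uniformly} on the compact closure $2\pi\overline{A^\vee}$ to the strictly convex function $\tfrac{c}{2}\langle\xi,\xi\rangle-2\pi\langle\mu,\xi\rangle$ and then invokes a general continuity-of-minimizers lemma to conclude $\xi_\mu\to 2\pi c^{-1}\mu$.
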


\begin{proof}

$i)$.
Upon pairing Eq. \eqref{critical:eq} with an arbitrary positive root:
\begin{equation}\label{critical-beta:eq}
c \langle \xi_\mu,\beta\rangle+\langle \rho^\vee_v(\xi_\mu),\beta\rangle
 =2\pi\langle  \rho^\vee+\mu ,\beta\rangle \qquad (\beta\in R_0^+),
\end{equation}
multiplication by $i$ and exponentiation of both sides (while using that
$e^{iv_\alpha (\text{x})}=(e^{i\text{x}}-\tau_{\alpha^\vee}^2)/(1-\tau_{\alpha^\vee}^2e^{i\text{x}})$)
reproduces the Bethe type equations in Eq. \eqref{BAE} evaluated at $\xi=\xi_\mu$ (first for $\nu=\beta$
and then for arbitrary $\nu\in Q$ as the root lattice is generated over $\mathbb{Z}$ by $R_0^+$).

$ii)$. The injectivity of the parametrization $\mu\mapsto\xi_\mu$ is
immediate from the observation that Eq. \eqref{critical:eq} permits recovering $\mu$ from $\xi_\mu$.

$iii)$. Using that $v_\alpha (\text{x})$ is an odd function, one may rewrite $\langle \rho^\vee_v(\xi_\mu),\beta\rangle$ in Eq. \eqref{critical-beta:eq} as
\begin{align}
\langle \rho^\vee_v(\xi_\mu),\beta\rangle & = \frac{1}{2}\sum_{\alpha\in R_0}
v_\alpha(\langle  \xi_\mu, \alpha\rangle) \langle\alpha^\vee,\beta \rangle  \nonumber\\
& =
\frac{1}{2}\sum_{\substack{\alpha\in R_0 \\ \langle \alpha^\vee,\beta \rangle >0}}
\bigl( v_\alpha(\langle  \xi_\mu, \alpha\rangle)-v_\alpha (\langle  s_\beta \xi_\mu, \alpha\rangle) \bigr) \langle\alpha^\vee,\beta \rangle . \label{rho-vb}
\end{align}
Because $v_\alpha (\text{x})$ is strictly monotonously increasing and
\begin{equation}\label{pos}
\langle \xi_\mu,\alpha\rangle-\langle s_\beta \xi_\mu,\alpha\rangle
=
\langle\xi_\mu,\beta\rangle\langle\alpha,\beta^\vee\rangle  ,
\end{equation}
one reads-off from Eq. \eqref{critical-beta:eq} with $\langle \rho^\vee_v(\xi_\mu),\beta\rangle$
in the form given by Eq. \eqref{rho-vb}
that the sign of $\langle \xi_\mu,\beta\rangle$ must be the same as that of
$\langle \rho^\vee+\mu ,\beta\rangle$, i.e. $\langle \xi_\mu,\beta\rangle >0$ for $\mu\in P^{\vee,+}_c$.
Since $\beta\in R_0^+$ was arbitrary, to confirm that $\xi_\mu$ lies in the fundamental alcove $2\pi A^\vee$ \eqref{Avee} it only remains to verify that
$\langle \xi_\mu ,\varphi\rangle < 2\pi$ (where---recall---$\varphi$ denotes the highest root of $R_0$, cf. Remark \ref{dual-cox:rem}).
To this end we will infer that the inequality $\langle \xi_\mu ,\varphi\rangle \geq 2\pi$ would imply that
$\langle \mu,\varphi \rangle > c$, which contradicts our assumption that $\mu\in P^{\vee,+}_c$ \eqref{fr}.
Indeed, from Eqs. \eqref{rho-vb}, \eqref{pos} with $\beta=\varphi$ and the quasi-periodicity of the strictly increasing function $v_\alpha (\text{x})$ one deduces that for $\langle \xi_\mu ,\varphi\rangle \geq 2\pi$:
\begin{equation}\label{lhs-bound}
c\langle \xi_\mu,\varphi\rangle + \langle \rho_v^\vee (\xi_\mu),\varphi\rangle\geq
2\pi c +\pi
\sum_{\alpha\in R_0^+} \langle \alpha,\varphi^\vee\rangle \langle \varphi,\alpha^\vee\rangle
 \stackrel{\text{Rem}.~\ref{schur:rem}}{=} 2\pi (c+h(R_0) )
\end{equation}
(where we used in passing that $\varphi\in P^+$ and that $h(R_0)=|R_0|/n$). By combining the inequality in Eq. \eqref{lhs-bound}  with Eq. \eqref{critical-beta:eq} for $\beta=\varphi$, we conclude that in this situation
$$c+h(R_0)\leq \langle \rho^\vee +\mu ,\varphi\rangle
 =\langle \mu ,\varphi\rangle + h(R_0)-1,$$
 i.e. $\langle \mu ,\varphi\rangle>c$ as announced.

It remains to verify the (moment gap) bounds in Eqs. \eqref{momentgaps}, \eqref{kappa}. Upon writing
\begin{equation*}
v_{\alpha}(\langle \xi,\alpha \rangle) - v_{\alpha}(\langle s_\beta\xi, \alpha\rangle)=
\int_{\langle \xi, \alpha \rangle}^{\langle s_\beta\xi,\alpha \rangle} \frac{(1-\tau_{\alpha^\vee}^4)\, \text{d}\text{y}}{1-2\tau_{\alpha^\vee}^2 \cos(\text{y})+\tau_{\alpha^\vee}^4}
  \end{equation*}
and observing that the integrand stays bounded between
$(1-\tau_{\alpha^\vee}^4)/(1+ |\tau_{\alpha^\vee}^2|)^2 $ and $(1-\tau_{\alpha^\vee}^4)/(1- |\tau_{\alpha^\vee}^2|)^2$, one readily
infers from Eqs. \eqref{rho-vb}, \eqref{pos} that
\begin{equation}\label{rho-v-bound}
 \kappa_+(\tau^2) \langle \xi_\mu ,\beta\rangle\leq  \langle \rho_v^\vee (\xi_\mu ),\beta\rangle\leq \kappa_- (\tau^2)\langle \xi_\mu ,\beta\rangle \quad (\mu\in P^{\vee ,+},\, \beta\in R_0^+),
\end{equation}
where it was used that
  \begin{eqnarray*}
\lefteqn{\frac{1}{2} \sum_{\substack{\alpha\in R_0\\ \langle\beta, \alpha^\vee\rangle >0 }} \frac{1-\tau_{\alpha^\vee}^4}{(1\pm |\tau_{\alpha^\vee}^2|)^2}
\langle \beta,\alpha^\vee\rangle \langle \alpha,\beta^\vee\rangle = }&& \\
 &&\frac{1}{2}
\sum_{\alpha\in R_0^+ } \frac{1-\tau_{\alpha^\vee}^4}{(1\pm |\tau_{\alpha^\vee}^2|)^2}
\langle \beta,\alpha^\vee\rangle \langle \alpha,\beta^\vee\rangle
  \stackrel{\text{Rem}.~\ref{schur:rem}}{=}\frac{1}{n}
  \sum_{\alpha\in R_0 } \frac{1-\tau_{\alpha^\vee}^4}{(1\pm |\tau_{\alpha^\vee}^2|)^2}  =\kappa_{\pm}(\tau^2) .
  \end{eqnarray*}
Combination of Eqs. \eqref{critical-beta:eq} and \eqref{rho-v-bound} now entails the desired (moment gap) inequalities in Eqs. \eqref{momentgaps}, \eqref{kappa}.

$iv)$. The integrand of $v_\alpha (\text{x})$ \eqref{mf2} is analytic in $\tau_{\alpha^\vee}^2\in (-1,1)$, and thus so is the function $\mathcal{V}_\mu (\xi)$ \eqref{mf1} and its critical equation \eqref{critical:eq} determining
the global minimum $\xi_\mu$. Since the Jacobian of the critical equation \eqref{critical:eq} with respect to $\xi$ amounts to the positive definite Hessian $\mathcal{H} (\xi)$ \eqref{hessian} of $\mathcal{V}_\mu (\xi)$, its determinant  is nonzero (viz. positive). Straightforward application of the
implicit function theorem therefore yields that the solution $\xi_\mu$ of the critical equation must depend analytically on
$\tau_{\alpha^\vee}^2\in (-1,1)$. The first limit in Eq. \eqref{tau=01} is now immediate from the (moment gap) inequalities \eqref{momentgaps}, \eqref{kappa} and the observation that $\lim_{\tau\to 0} \kappa_\pm (\tau^2 )=|R_0|/n =h(R_0)$. For $\tau\uparrow 1$, the function $v_\alpha (\text{x})$ \eqref{mf2} tends pointwise to the following staircase function
$$
v (\text{x}):=
\begin{cases} 2\pi\, \text{sign}(x)\left( \left\lceil \frac{|\text{x}|}{2\pi}\right\rceil -\frac{1}{2}\right)&\text{if}\ \text{x}\in\mathbb{R}\setminus 2\pi\mathbb{Z} ,\\
\text{x}&\text{if}\ \text{x}\in 2\pi\mathbb{Z},
\end{cases}
$$
where $\lceil \cdot \rceil$ refers to the ceiling function rounding up to the next higher integer. Since $v (\text{x})=\pi$ for $0<\text{x}<2\pi$,
it follows that on the compact closure
$2\pi \overline{A^\vee}=\{ \xi\in V\mid 0\leq \langle \xi ,\alpha\rangle \leq 2\pi,\ \forall\alpha\in R^+_0\}$  the function $\mathcal{V}_\mu (\xi )$ \eqref{mf1}, \eqref{mf2} converges in this limit uniformly to the strictly convex Morse function
$
\frac{c}{2}\langle \xi,\xi \rangle-2\pi \langle \mu,\xi\rangle ,
$
which has a unique global minimum given by $2\pi c^{-1}\mu$ (belonging to $2\pi \overline{A^\vee}$ if $\mu\in P^{\vee ,+}_c$ \eqref{fr}).
The upshot is that $\mathcal{V}_\mu$ extends to a continuous function
of $(\xi ,\tau)\in 2\pi \overline{A^\vee}\times \mathcal{T}$ with $\mathcal{T}=\{ -1<\tau ^2< 1\}\cup \{ \tau=1\}$.
Invoking of \cite[Lem. 4]{ems:completeness} (with $X=2\pi \overline{A^\vee}$, $Y=\mathcal{T}$, and $f=\mathcal{V}_\mu$)
now ensures that for $\mu\in P^{\vee,+}_c$ the global minimum $\xi_\mu\in 2\pi A^\vee$ of $\mathcal{V}_\mu $ with $-1<\tau^2<1$ converges for $\tau\uparrow 1$ to the global minimum $2\pi c^{-1}\mu\in 2\pi \overline{A^\vee}$ of $\lim_{\tau\uparrow 1}\mathcal{V}_\mu$.
\end{proof}

\begin{remark}\label{schur:rem} In the proof of part $iii)$ of Proposition \ref{spectrum:prp} it was used that for {\em any} root multiplicative function $\tau$ and root $\beta\in R_0$:
\begin{equation}\label{schur:eq}
\sum_{\alpha\in R_0^+} \tau_{\alpha^\vee} \langle \beta,\alpha^\vee  \rangle \langle\alpha ,\beta^\vee\rangle =\frac{2}{n}\sum_{\alpha\in R_0} \tau_{\alpha^\vee}
\end{equation}
(cf. the proof of \cite[\text{Lem}.~10.1]{ems-opd-sto:periodic}). Indeed, the $W_0$-invariant linear map $A_\tau:V\to V$ defined
by $A_\tau x:=\sum_{\alpha\in R_0^+}\tau_{\alpha^\vee} \langle x, \alpha^\vee \rangle\alpha$ ($x\in V$) is a constant multiple of the identity
by Schur's lemma and the irreducibility of the representation of $W_0$ on $V$. A computation of the trace of $A_\tau$ reveals
that the proportionality constant at issue is equal to
\begin{align*}
c_\tau &=\frac{1}{n}\sum_{j=1}^n \langle A_\tau e_j, e_j \rangle  =\frac{1}{n} \sum_{\alpha\in R_0^+} \tau_{\alpha^\vee} \sum_{j=1}^n \langle e_j,\alpha^\vee\rangle \langle e_j,\alpha\rangle \\
&=\frac{1}{n}\sum_{\alpha\in R_0^+} \tau_{\alpha^\vee} \langle \alpha^\vee,\alpha\rangle
=\frac{1}{n}\sum_{\alpha\in R_0} \tau_{\alpha^\vee} ,
\end{align*}
whence $\langle A_\tau \beta ,\beta^\vee\rangle = c_\tau \langle \beta,\beta^\vee\rangle=2c_\tau$ (which amounts to Eq. \eqref{schur:eq}).
\end{remark}

\subsection{Diagonalization}
For
$p=\sum_{\lambda\in P} p_\lambda X^\lambda\in\mathbb{C}[X]$ (so only a finite number of the complex coefficients $p_\lambda$, $\lambda\in P$ are nonzero), we define
\begin{equation}
p(e^{i\xi}):=\sum_{\lambda\in P} p_\lambda e^{i\xi}(\lambda) = \sum_{\lambda\in P} p_\lambda e^{i\langle \lambda ,\xi \rangle} \in \mathcal{P}(V/2\pi Q^\vee ),
\end{equation}
where $\mathcal{P}(V/2\pi Q^\vee )$ denotes the algebra of trigonometric polynomials on $V$ with period lattice $2 \pi Q^\vee$. Clearly the assignment $p\mapsto p(e^{i\xi})$ defines an algebra isomorphism between $\mathbb{C}[X]$ and
$\mathcal{P}(V/2\pi Q^\vee )$.

The main theorem of this paper affirms that
the periodic Macdonald spherical functions constitute a complete basis of eigenfunctions for the Laplace operator
$L_p$ with eigenvalues given by the evaluations of $p(e^{-i\xi})$ at the minima $\xi_\mu$ corresponding to the coweights in the fundamental region $P^{\vee,+}_c$ \eqref{fr}.

\begin{theorem}\label{diagonal:thm}
The periodic Macdonald spherical functions $\Phi_{\xi_\mu}$, $\mu\in P^{\vee,+}_c$ form a basis of $\mathcal{C}(P_c^+)$
consisting of joint eigenfunctions for the commuting Laplacians $L_p$ \eqref{Lp}:
\begin{equation}
L_p \Phi_{\xi_\mu} = p(e^{-i\xi_\mu }) \Phi_{\xi_\mu} \qquad (p\in \mathbb{C}[X]^{W_0},\, \mu\in P^{\vee,+}_c).
\end{equation}
\end{theorem}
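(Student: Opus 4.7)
My plan is to split the argument into three steps: (i) verifying that each $\Phi_{\xi_\mu}$ is a $W_R$-invariant joint eigenfunction with eigenvalue $p(e^{-i\xi_\mu})$, (ii) establishing linear independence of the family $\{\Phi_{\xi_\mu}\}_{\mu\in P^{\vee,+}_c}$, and (iii) matching cardinalities to upgrade linear independence to the basis property.

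For step (i), I would use the intertwining relation \eqref{intertwining} with $h=p(X)$ to rewrite
\[
L_p \Phi_{\xi_\mu}=\hat T(p(X))\,\mathcal{J}\phi_{\xi_\mu}=\mathcal{J}\,p(t)\,\phi_{\xi_\mu}.
\]
Because translations act diagonally on plane waves, namely $t_\lambda e^{i\xi}=e^{-i\langle\lambda,\xi\rangle}e^{i\xi}$, and because $p\in\mathbb{C}[X]^{W_0}$ implies the $W_0$-invariance $p(e^{-iv\xi})=p(e^{-i\xi})$, the plane-wave decomposition \eqref{cf-decomposition} gives $p(t)\phi_{\xi_\mu}=p(e^{-i\xi_\mu})\phi_{\xi_\mu}$, whence $L_p\Phi_{\xi_\mu}=p(e^{-i\xi_\mu})\Phi_{\xi_\mu}$. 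The membership $\Phi_{\xi_\mu}\in\mathcal{C}(P)^{W_R}\simeq\mathcal{C}(P_c^+)$ follows at once from Proposition \ref{BAE:prp} combined with part $i)$ of Proposition \ref{spectrum:prp}, which ensures that $\xi_\mu$ satisfies the Bethe-type equations \eqref{BAE}.

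For step (ii), I would invoke parts $ii)$ and $iii)$ of Proposition \ref{spectrum:prp}, which together guarantee that the spectral values $\{\xi_\mu\}_{\mu\in P^{\vee,+}_c}$ are pairwise distinct points of the open alcove $2\pi A^\vee$, and hence representatives of distinct $W_{\hat R}$-orbits of regular spectral parameters. In the repulsive regime $-1<\tau^2<1$, each factor of the $c$-function \eqref{cfun} appearing in the plane-wave expansion of Corollary \ref{msf-decomposition:cor} is nonzero, and the characters $e^{iv\xi_\mu}$ for varying $v\in W_0$ and $\mu\in P^{\vee,+}_c$ are linearly independent on $P$. This yields linear independence of $\{\Phi_{\xi_\mu}\}$ in $\mathcal{C}(P)$, and a fortiori in $\mathcal{C}(P_c^+)$.

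The main obstacle I anticipate is step (iii): reducing the basis claim to the cardinality identity $|P^{\vee,+}_c|=|P_c^+|$. Since $P^{\vee,+}_c$ is precisely the analogue of $P_c^+$ constructed from the Langlands-dual root system $R_0^\vee$, the identity is trivial when $R_0$ is simply-laced or self-dual (types $A_n,D_n,E_{6,7,8},F_4,G_2$), and in the remaining cases $B_n\leftrightarrow C_n$ it follows from an explicit bijection between the relevant lattice points (or, equivalently, from the coincidence of the orders of the two extended affine Weyl groups and of the volumes of the corresponding alcoves). Once the dimension count is in hand, the basis property is immediate from step (ii); steps (i) and (ii) themselves are essentially formal consequences of the intertwining operator $\mathcal{J}$ and of the regularity, injectivity, and alcove-containment properties of $\mu\mapsto\xi_\mu$ established in Proposition \ref{spectrum:prp}.
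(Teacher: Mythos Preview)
Your proposal is correct and follows the same three-part skeleton as the paper's proof: eigenvalue relation via the intertwining property, linear independence of the $\Phi_{\xi_\mu}$, and the cardinality match $|P_c^+|=|P^{\vee,+}_c|$ (which the paper handles in Remark~\ref{dimensions:rem} via a generating-function argument rather than by type-checking, but the content is equivalent).

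The one substantive difference is in step~(ii). The paper argues linear independence via eigenvalue separation: it first shows $\Phi_\xi(0)=\sum_{v\in W_0}C(v\xi)=W_0(\tau^2)>0$ by Macdonald's Poincar\'e-series identity, so each $\Phi_{\xi_\mu}$ is nonzero, and then observes that the algebra $\mathbb{C}[X]^{W_0}$ separates points of $2\pi A^\vee$, so distinct $\xi_\mu$ yield distinct joint eigenvalues and hence independent eigenfunctions. You instead invoke the character independence of the plane waves $e^{iv\xi_\mu}$ together with the nonvanishing of the $c$-function factors in the regime $-1<\tau^2<1$; this is exactly the mechanism the paper sets up in Subsection~\ref{sec7}.1 (before the theorem) but does not use in the proof itself. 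Your route is slightly more elementary in that it avoids the appeal to Macdonald's identity, at the cost of using the parameter restriction explicitly to exclude zeros of $C(\xi)$; the paper's route is parameter-insensitive at the nonvanishing step but imports a nontrivial combinatorial identity.
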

\begin{proof}
For any $\xi\in V_{\text{reg}}$ and $\omega\in P^+$, the affine Macdonald spherical function $\Phi_\xi$ \eqref{msf}, \eqref{idem}
satisfies the eigenvalue equation for $L_\omega=L_{m_\omega}$:
\begin{eqnarray*}
\lefteqn{L_\omega \Phi_\xi = m_\omega (\hat{X})\mathcal{J} \phi_\xi = \hat{T}(m_\omega (X))\mathcal{J} \phi_\xi  =} && \\ && \mathcal{J} I(m_\omega (X))\phi_\xi
=\mathcal{J} m_\omega (t)\phi_\xi =  m_\omega (e^{-i\xi}) \Phi_\xi ,
\end{eqnarray*}
where the last step hinges on the plane wave expansion in Eqs. \eqref{cf-decomposition}, \eqref{cfun} and the explicit action of the free Laplacian $m_\omega (t)$ \eqref{free-laplacian} on plane waves:
\begin{equation*}
m_\omega (t) e^{iv\xi}= \sum_{\nu\in W_0\omega}t_\nu e^{iv\xi} =
m_\omega (e^{-i\xi}) e^{iv\xi}\qquad (v\in W_0) .
\end{equation*}
The explicit formula of the periodic
Macdonald spherical function in Corollary \ref{msf-decomposition:cor}
confirms---upon evaluation at $\lambda=0$---that for
$\xi$ satisfying the Bethe type equations in Eq. \eqref{BAE}
we indeed arrive this way at a {\em nonvanishing} eigenfunction in
$\mathcal{C}(P)^{W_R}\cong \mathcal{C}(P^+_c)$:
\begin{align*}
\Phi_{\xi} (0)& = \sum_{v\in W_0} C(v\xi )=\sum_{v\in W_0} \prod_{\alpha\in R^+_0} \frac{1-\tau_{\alpha^\vee}^2 e^{-i\langle v\xi ,\alpha\rangle }}{1-e^{-i\langle v\xi ,\alpha\rangle}} \\
&=
\sum_{v\in W_{0}}\tau_v^2 = \prod_{\alpha\in R_0^+}
\frac{1-\tau^2_{\alpha^\vee} e_\tau (\alpha^\vee ) }{1-e_\tau (\alpha^\vee )}> 0 ,
\end{align*}
where the equalities on the second line rely on Macdonald's well-known identity from Ref.
\cite[\text{Thm.} (2.8)]{mac:poincare} and Macdonald's product formula \eqref{poincare-stab}, \eqref{htau} for $\lambda=0$.
Since the symmetric monomials $m_\omega (X)$, $\omega\in P^+$ form a basis of $\mathbb{C}[X]^{W_0}$, and the algebra of $W_0$-invariant trigonometric polynomials $p(e^{-i\xi})$ on the torus $\mathcal{P}(V/2\pi Q^\vee)$ separates the points of the fundamental alcove $2\pi A^\vee$ \eqref{Avee}, it follows that distinct solutions of the Bethe type equations
belonging to $2\pi A^\vee$ give rise to linearly independent eigenfunctions.
As a consequence, the eigenfunctions $\Phi_{\xi_\mu}$, $\mu\in P^{\vee,+}_c$ associated with the Bethe solutions in Proposition \ref{spectrum:prp} form a complete basis of $\mathcal{C}(P_c^+)$ because
$\dim \mathcal{C}(P_c^+)=|P^{+}_c|=|P^{\vee ,+}_c|$ (cf. Remark \ref{dimensions:rem} below).
\end{proof}

In particular, for the simplest Laplace operator $L_\omega$ with $\omega$ (quasi-)minuscule given by Theorems \ref{action:thm} and \ref{action-symmetric:thm} the diagonalization in Theorem \ref{diagonal:thm} boils down to the following spectral decomposition.

\begin{corollary}\label{diagonal:cor}
For $\omega\in P^+$ (quasi-)minuscule, the eigenvalues and eigenfunctions of the integrable Laplacian $L_\omega $ \eqref{Lomega-sym}--\eqref{Ulambdaomega} in the space $\mathcal{C}(P_c^+)$ are given by $m_\omega (e^{-i\xi_\mu })$ and $\Phi_{\xi_\mu}$, $\mu\in P^{\vee,+}_c$.
\end{corollary}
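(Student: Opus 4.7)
The plan is to observe that Corollary \ref{diagonal:cor} is essentially an immediate specialization of the main Theorem \ref{diagonal:thm} to the particular choice $p=m_\omega$ with $\omega\in P^+$ (quasi-)minuscule, combined with the explicit action formulas in Theorems \ref{action:thm} and \ref{action-symmetric:thm}. The strategy therefore is short: identify the Laplacian in question as $L_{m_\omega}$ and then invoke the general diagonalization result.

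More precisely, first I would note that by definition $L_\omega = L_{m_\omega} = \sum_{\nu\in W_0\omega} \hat{X}^\nu$ is the operator $\hat{T}(m_\omega(X))$ associated via \eqref{Lp} to the $W_0$-invariant polynomial $m_\omega(X)\in \mathbb{C}[X]^{W_0}$. Theorem \ref{action:thm} gives its action on $\mathcal{C}(P)$ and Theorem \ref{action-symmetric:thm}, using Proposition \ref{WR-reduction:prp}, provides the explicit reduced action on $\mathcal{C}(P_c^+)\cong\mathcal{C}(P)^{W_R}$ in the form \eqref{Lomega-sym}--\eqref{Ulambdaomega}. Since the $L_p$ with $p\in \mathbb{C}[X]^{W_0}$ mutually commute by \eqref{integrability} and preserve $\mathcal{C}(P_c^+)$, the operator $L_\omega$ acting in $\mathcal{C}(P_c^+)$ is one of the commuting integrals treated by Theorem \ref{diagonal:thm}.

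Applying Theorem \ref{diagonal:thm} with $p=m_\omega$ then yields at once that the periodic Macdonald spherical functions $\Phi_{\xi_\mu}$, $\mu\in P^{\vee,+}_c$, constitute a basis of $\mathcal{C}(P_c^+)$ diagonalizing $L_\omega$ with corresponding eigenvalues
\begin{equation*}
L_\omega \Phi_{\xi_\mu} = m_\omega(e^{-i\xi_\mu})\,\Phi_{\xi_\mu}\qquad (\mu\in P^{\vee,+}_c),
\end{equation*}
which is exactly the statement of the corollary. There is no real obstacle to overcome here: the only point worth flagging is to make sure that Theorem \ref{diagonal:thm} is literally applicable, i.e. that $m_\omega \in \mathbb{C}[X]^{W_0}$ (immediate by construction of the symmetric monomials) and that the action on $\mathcal{C}(P_c^+)$ in Theorem \ref{action-symmetric:thm} agrees with the restriction of the $\mathcal{C}(P)^{W_R}$-stable action from Theorem \ref{action:thm} (guaranteed by Proposition \ref{WR-reduction:prp}). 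All genuine work (Bethe Ansatz diagonalization, completeness via $|P_c^+|=|P^{\vee,+}_c|$, nonvanishing of $\Phi_{\xi_\mu}(0)$) has already been carried out in the proof of Theorem \ref{diagonal:thm}.
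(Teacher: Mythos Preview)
Your proposal is correct and matches the paper's own treatment: the corollary is stated right after Theorem~\ref{diagonal:thm} as the specialization $p=m_\omega$, with the paper simply noting that the diagonalization ``boils down to'' this case in view of the explicit formulas from Theorems~\ref{action:thm} and~\ref{action-symmetric:thm}. No separate proof is given in the paper, so your short derivation is exactly what is intended.
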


\begin{remark}\label{dimensions:rem}
Since the fundamental region $P_c^+$  \eqref{Pc} consists of all nonnegative integral combinations of the fundamental weights  $k_1\omega_1+\cdots +k_n\omega_n$ such that
$k_1m_1+\cdots +k_nm_n\leq c$ (where the integers
$m_1,\dots ,m_n$ refer to the coefficients of the highest coroot
$\vartheta^\vee$ in the simple basis $\alpha_1^\vee,\ldots,\alpha_n^\vee$ of $R_0^\vee$, cf.
Eq. \eqref{highest-coroot}), it is elementary that the dimensions
$\dim \mathcal{C}(P_c^+) =|P_c^+|$, $c\in\mathbb{N}$ can be computed from the generating function
\begin{equation}
1+\sum_{c> 0} \dim \mathcal{C}(P_c^+)\, q^c =(1-q)^{-1}\prod_{j=1}^n (1-q^{m_j})^{-1} \qquad (|q|<1).
\end{equation}
One reads-off from this generating function that the dimensions in question are invariant when replacing $R_0$ by its dual root system $R_0^\vee$, i.e. $|P_c^+|=|P^{\vee ,+}_c|$. For root systems other than of type $B$ or $C$ this is obvious because then
$R_0$ and $R_0^\vee$ are isomorphic. The root systems of type $B$ and $C$ on the other hand are dual to each other
and share the property that $1\leq m_j\leq 2$ for $j=1,\ldots ,n$ with the lower bound being reached only {\em once}, so their generating functions coincide.
\end{remark}

\begin{remark}\label{WR-symmetry}
It is immediate from the proof of Proposition \ref{spectrum:prp} that in fact for {\em any} $\mu\in P^\vee$ the global minimum $\xi_\mu$ of $\mathcal{V}_\mu$ \eqref{mf1}, \eqref{mf2}
provides a solution to the Bethe type equations \eqref{BAE} in $V$ and that the assignment $\mu\to \xi_\mu$, $\mu\in P^\vee$ is still injective.
This injection turns out to be equivariant with respect to two actions of the affine Weyl group $W_{\hat{R}}$
 generated by the orthogonal reflections across the walls of $2\pi A^\vee$:
\begin{equation} \label{equivariance}
 w\xi_{\mu}=\xi_{ w\cdot\mu} \quad  (w\in {W}_{\hat{R}},\, \mu\in P^\vee),
\end{equation}
where ${w}$ acts on the LHS via the standard action of $W_{\hat{R}}$ on $V$ and on the RHS via a `dot action'
$$
v\cdot x:=v(x+\rho^\vee)-\rho^\vee,  \quad t_{2\pi \nu}\cdot x:=x+(c+h(R_0))\nu
$$
($x\in V$, $v\in W_0$, $\nu\in Q^\vee$) for which $P^\vee$ is manifestly stable. Indeed, the equivariance in Eq. \eqref{equivariance} is a consequence of the following $W_{\hat{R}}$-equivariance of the gradient of the
Morse function $\mathcal{V}_\mu$:
$$ \nabla \mathcal V_{ w\cdot\mu}( w \xi)=  w'\nabla \mathcal V_\mu(\xi),
\quad (\xi\in V,\,  w\in W_{\hat{R}},\, \mu\in P^\vee ),
$$
which is readily inferred from Eq. \eqref{critical:eq} upon using that
$\rho^\vee_v({w}\xi)={w}\rho^\vee_v(\xi)$ for ${w}\in W_0$ (since $v_\alpha$ is an odd function) and
$\rho^\vee_v(\xi+2\pi \nu)=\rho^\vee_v(\xi) + 2\pi h(R_0) \nu$ for $\nu\in Q^\vee$
(by the quasi-periodicity of $v_\alpha$, Remark \ref{schur:rem}, and the fact that $h(R_0)=|R_0|/n$).
It follows from Eq. \eqref{equivariance} and the injectivity that
$\xi_\mu\in V_{\text{reg}}$ iff
$\langle \rho^\vee+\mu,\alpha \rangle\not\in (c+h(R_0))\mathbb Z$ for all $\alpha\in R_0$ (i.e. iff $\mu\in P^\vee$ is regular with respect to the `dot action' of $W_{\hat{R}}$).
The parametrization $\mu\to\xi_\mu$, $\mu\in P^{\vee ,+}_c$ in Proposition \ref{spectrum:prp} arises by restricting the injection on $P^\vee$
to the regular elements of fundamental domains for the corresponding $W_{\hat{R}}$-actions.
\end{remark}

\begin{remark}
It follows from the proof of Theorem \ref{diagonal:thm} that for {\em any} spectral parameter value $\xi\in V_{\text{reg}}$
the affine Macdonald spherical function $\Phi_\xi$ \eqref{msf}, \eqref{idem} is a joint eigenfunction of the Laplacians $L_p$ \eqref{Lp} in $\mathcal{C}(P)^{W_0}$,  but---in view of Proposition \ref{BAE:prp}---only for $\xi$ satisfying the Bethe type equations \eqref{BAE} it restricts to an eigenfunction in $\mathcal{C}(P_c^+)\cong\mathcal{C}(P)^{W_R}$. The completeness of the eigenbasis in Theorem \ref{diagonal:thm} implies that apart from the solutions detailed in Proposition \ref{spectrum:prp} there are no other solutions of the Bethe type equations inside the fundamental alcove $2\pi A^\vee$.
\end{remark}

\begin{remark}
For $p=m_\omega$ with $\omega\in P^+$ (quasi-)minuscule, the eigenvalue equation in Theorem \ref{diagonal:thm} amounts
to that of a free discrete Laplacian:
\begin{subequations}
\begin{equation}\label{fl}
\sum_{\nu\in W_0\omega} \psi_\xi (\lambda-\nu) = m_\omega (e^{-i\xi}) \psi_\xi (\lambda)\qquad (\lambda\in P_c^+),
\end{equation}
subject to the boundary conditions
\begin{equation}\label{bc}
\psi_\xi (\lambda-\nu)=
\begin{cases}
\tau_j^2 \psi_\xi (s_j(\lambda-\nu)) &\text{if}\ a_j(\lambda-\nu)=-1 \\
\tau_j^2 \psi_\xi (s_j(\lambda-\nu))+(\tau_j^2-1)\psi (\lambda)  &\text{if}\ a_j(\lambda-\nu)=-2
\end{cases},
\end{equation}
\end{subequations}
$0\leq j\leq n$.  Indeed, it was shown in \cite[\text{Sec}.~4]{die:plancherel} that the free Laplacian on $\mathcal{C}(P^+)$ with boundary conditions of the form \eqref{bc} for $j=1,\ldots ,n$ is diagonalized by a Bethe Ansatz wave function of the form in Eqs. \eqref{cf-decomposition}, \eqref{cfun}. For $\lambda\in P^+_c$, this Bethe Ansatz wave function satisfies moreover the boundary condition \eqref{bc} for $j=0$ provided the spectral parameter $\xi$ solves the Bethe Ansatz equations in Proposition \ref{BAE:prp} (cf. also \cite[\text{Prp}.~3.2]{die:diagonalization}).
\end{remark}

\begin{remark} It is immediate from the proof of Proposition \ref{WR-reduction:prp} that its analog for the extended affine Weyl group is also valid:
the Laplacians $L_p$, $p\in \mathbb{C}[X]^{W_0}$ map the $W$-invariant subspace
\begin{align*}
\mathcal{C}(P)^W &=\{ f\in\mathcal{C}(P)\mid wf=f,\, w\in W\}\\
                 & =\{ f\in \mathcal{C}(P)\mid \hat{T}_wf=\tau_wf,\, w\in W\}
\end{align*}
into itself. The corresponding Bethe-Ansatz equations guaranteeing the $W$-invariance of the Macdonald spherical function $\Phi_\xi$ \eqref{msf}, \eqref{idem} are of the form in Eq. \eqref{BAE} with $\nu\in P$. By adapting the argument in the proof of part $i)$ of Proposition \ref{spectrum:prp} it is seen that
this $\Omega$-invariant subspace of  $\mathcal{C}(P)^{W_R}$ is spanned by the eigenfunctions
$\Phi_{\xi_\mu}$, $\mu\in P^{\vee ,+}_c\cap Q^\vee$.
\end{remark}

\section{Hilbert Space Structure}\label{sec8}
In this section we endow the function space over our weight lattice $P$ with an appropriate Hilbert space structure and study the unitarity of the difference-reflection representation $\hat{T}(H)$, the (self-)adjointness of the Laplacian $L_\omega$, and the orthogonality of the periodic Macdonald spherical functions $\Phi_{\xi_\mu}$ in this Hilbert space.
For this purpose we will further restrict to the positive parameter regime
\begin{equation}
0<\tau^2<1
\end{equation}
from now on (but see Remark \ref{parameter-domain:rem} below).

\subsection{Hilbert space}
Let us define $l^2(P,\delta )$ to be the Hilbert space of functions
 $\{ f\in C(P)\mid \langle f,f\rangle_\delta <\infty \}$ associated with the inner product
\begin{subequations}
\begin{equation}\label{ip-delta}
\langle f ,g\rangle_\delta :=\sum_{\lambda\in P}  f(\lambda) \overline{g(\lambda)} \delta_\lambda \qquad
(f,g\in l^2(P,\delta )) ,
\end{equation}
where
\begin{equation}\label{delta}
\delta_\lambda := W_R^{-1}(\tau^2)\, \tau^2_{w_\lambda}  \quad \text{and} \ W_R (\tau^2):=\sum_{w\in W_R}\tau_w^2.
\end{equation}
\end{subequations}
Here the normalization of the orthogonality measure $\delta:P\to (0,1)$ is chosen such that it restricts to a probability measure on the $W_R$-orbits of the regular weights:  $\sum_{\mu\in W_R\lambda}\delta_\mu \leq 1$ for $\lambda\in P$ with equality holding when $|W_{R,\lambda}|=1$ (cf. Eq. \eqref{Delta} below).
It follows from \cite[\text{Thm.} (3.3)]{mac:poincare} that the relevant normalization factor given by the generalized Poincar\'e series $W_R(\tau^2)$ may be alteratively rewritten in product form as (cf. Eqs. \eqref{poincare-stab}, \eqref{htau})
\begin{align*}
W_R(\tau^2) &= (1-h_\tau)^{-1}\prod_{\substack{a\in R^+\\ \text{ht}(a)<h(R_0)}}\frac{1-\tau_a^2 \tau_s^{2\text{ht}_s(a)} \tau_l^{2\text{ht}_l(a)} }{1-\tau_s^{2\text{ht}_s(a)} \tau_l^{2\text{ht}_l(a)}} \\
\intertext{where $\text{ht}(a):=\text{ht}_s(a)+\text{ht}_l(a)$, or equivalently}
W_R(\tau^2)&=(1-h_\tau)^{-1} \prod_{\alpha\in R_0^+}
\Bigl(\frac{1-\tau^2_{\alpha^\vee} e_\tau (\alpha^\vee ) }{1-e_\tau (\alpha^\vee )}\Bigr)
\Bigl(\frac{1-\tau^2_{\alpha^\vee} h_\tau e_\tau (-\alpha^\vee )}{1-h_\tau e_\tau (-\alpha^\vee )}\Bigr)
\end{align*}
(though we will not actually use these product formulas here).
Since the total mass of $\delta$ is finite, all bounded functions in $\mathcal{C}(P)$ belong to $l^2(P,\delta )$. As such it is clear that our Hilbert space contains
the $W_R$-invariant space $\mathcal{C}(P)^{W_R}$ as a finite-dimensional linear subspace.
By partitioning the sum in Eq. \eqref{ip-delta} into orbit sums
\begin{align*}
\sum_{\lambda\in P}  f(\lambda) \overline{g(\lambda)} \delta_\lambda &=
\sum_{\lambda\in P^+_c} \sum_{\mu\in W_R\lambda} f(\mu) \overline{g(\mu)} \delta_\mu\qquad (\text{for}\ f,g\in l^2(P,\delta )) \\
&=
\sum_{\lambda\in P^+_c} f(\lambda) \overline{g(\lambda)} \sum_{\mu\in W_R\lambda}  \delta_\mu\qquad (\text{for}\ f,g\in \mathcal{C}(P)^{W_R}\subset l^2(P,\delta )),
\end{align*}
it is seen that---upon identifying $\mathcal{C}(P^+_c)$ with $\mathcal{C}(P)^{W_R}$ via the $W_R$-invariant embedding of $\mathcal{C}(P^+_c)$ into $\mathcal{C}(P)$---the inner product $\langle \cdot ,\cdot \rangle_\delta$ pulls back to an inner product on $\mathcal{C}(P^+_c)$ of the form
\begin{subequations}
\begin{equation}\label{ip-Delta}
\langle f,g\rangle_{ \Delta}:=\sum_{\lambda\in P^+_c} f(\lambda)\overline{g(\lambda)}\Delta_\lambda\qquad (f,g\in \mathcal{C}(P^+_c)),
\end{equation}
with
\begin{equation}\label{Delta}
\Delta_\lambda := \sum_{\mu\in W_R\lambda} \delta_\mu = \frac{1}{W_{R}(\tau^2)}\sum_{\mu\in W_R\lambda} \tau_{w_\mu}^2
=
\frac{1}{W_{R,\lambda}(\tau^2)} .
\end{equation}
\end{subequations}
In other words, the inner product $\langle \cdot ,\cdot\rangle_\Delta$ associated with the orthogonality measure $\Delta:P^+_c\to (0,1)$ turns $\mathcal{C}(P^+_c)$ into the finite-dimensional Hilbert space $l^2(P^+_c,\Delta)$ in such a way that its $W_R$-invariant embedding into $l^2(P,\delta )$ becomes an isometry.

\subsection{Unitarity and (self-)adjointness}
The extended affine Hecke algebra $H$ carries a natural antilinear involution $*:H\to H$ determined by
\begin{equation}
T_w^*:=T_{w^{-1}}\qquad (w\in W),
\end{equation}
which turns it into a $*$-algebra. The difference-reflection representation $\hat{T}(H)$ restricts to a unitary representation on $\ell^2 (P,\delta)$ with respect to this $*$-structure.

\begin{proposition}\label{unitary-H:prp}
The difference-reflection representation $h\to\hat{T}(h)$ ($h\in H$) on $C(P)$ restricts to a unitary representation of the affine Hecke algebra
into the space of bounded operators on $l^2(P,\delta )$, i.e.
\begin{equation}
\langle \hat{T}(h)f,g\rangle_\delta =
\langle f,\hat{T}(h^*)g\rangle_\delta \qquad (h\in H,\ f,g\in l^2(P,\delta )).
\end{equation}
\end{proposition}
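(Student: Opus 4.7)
Since $H$ is generated by $T_u$ ($u\in \Omega$) and $T_j$ ($j=0,\ldots,n$), with $T_u^*=T_{u^{-1}}$ and $T_j^*=T_j$ (because $s_j^{-1}=s_j$), my plan is to check unitarity one generator at a time and extend multiplicatively. The case of $u\in\Omega$ is an immediate change of variables: $\langle uf,g\rangle_\delta=\langle f,u^{-1}g\rangle_\delta$ is equivalent to $\delta_{u\lambda}=\delta_\lambda$, which in turn follows from $uA_c=A_c$ (cf.\ Eq.~\eqref{Ac-fix}) and uniqueness of the minimal left-coset representative \eqref{minimal-left-coset} forcing $w_{u\lambda}=uw_\lambda u^{-1}$, an equality that preserves $\tau$-length because $\tau$ is constant on $W$-conjugacy classes.

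\textbf{Core computation for $\hat T_j$.} For the nontrivial generator $\hat T_j$, I would decompose $P=P_j^+\sqcup P_j^0\sqcup P_j^-$ according to the sign of $a_j(\lambda)$ and plug in the explicit formula \eqref{That2}. In all terms of $\langle \hat T_j f, g\rangle_\delta$ that involve $f(s_j\lambda)$ I would change variables $\lambda \mapsto s_j\lambda$, using that $s_j$ swaps $P_j^\pm$ and fixes $P_j^0$. A term-by-term comparison with $\langle f, \hat T_j g\rangle_\delta$ then shows that the diagonal contributions from $P_j^0$ and from the $(\tau_j-\tau_j^{-1})$-correction on $P_j^-$ match automatically, while the off-diagonal terms agree if and only if the single weight identity
\begin{equation*}
\delta_{s_j\lambda}=\tau_j^{\,2}\,\delta_\lambda \qquad (\lambda\in P_j^+)
\end{equation*}
holds.

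\textbf{The key identity and the main obstacle.} To establish this weight identity is the central algebraic task. I would use the characterization $R(w_\mu)=\{a\in R^+\mid a(\mu)<0\}$ from \eqref{Rwx} together with the standard fact that the simple reflection $s_j$ permutes $R^+\setminus\{a_j\}$ and sends $a_j$ to $-a_j$. For $\lambda\in P_j^+$ this yields, after a short bookkeeping, $R(w_{s_j\lambda})=s_j\bigl(R(w_\lambda)\bigr)\sqcup\{a_j\}$ (disjointly, since $a_j\notin R(w_\lambda)$), and because $\tau_a$ is $W$-invariant on affine roots the reconstruction formula \eqref{reconstruct} gives $\tau_{w_{s_j\lambda}}=\tau_j\tau_{w_\lambda}$, hence the required identity on $\delta$. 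The main obstacle lurking behind this otherwise clean argument is the analytic step of promoting the formal identity on finitely supported functions to an identity of bounded operators on $l^2(P,\delta)$: this requires an $l^2$-bound for $\hat T_j$, which I would obtain from the pointwise estimate $|(\hat T_jf)(\lambda)|^2\le C(|f(\lambda)|^2+|f(s_j\lambda)|^2)$ combined with the just-proven weight-ratio bound $\delta_{s_j\lambda}/\delta_\lambda\in\{1,\tau_j^{\pm 2}\}$, and then pass to the full $l^2$-statement by density. I would expect the technical details of this boundedness and density argument to be relegated to Appendix~\ref{appB}.
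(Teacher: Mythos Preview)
Your proposal is correct and follows essentially the same route as the paper: reduce to the generators, use $\delta_{u\lambda}=\delta_\lambda$ via $w_{u\lambda}=uw_\lambda u^{-1}$ for $u\in\Omega$, and for $\hat T_j$ reduce everything to the weight identity $\delta_{s_j\lambda}=\chi_{a_j}^2(\lambda)\,\delta_\lambda$ (your $\delta_{s_j\lambda}=\tau_j^2\delta_\lambda$ on $P_j^+$ is the same statement). The only cosmetic differences are that the paper packages the computation as self-adjointness of the single operator $\chi_{a_j}s_j$ rather than splitting into the three sign regions, and it obtains boundedness directly from $\langle s_jf,s_jf\rangle_\delta=\langle f,\chi_{a_j}^2 f\rangle_\delta$ without a density argument; nothing is deferred to Appendix~\ref{appB}, which treats a different issue.
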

\begin{proof}
Formally the proof is the same as that of \cite[Thm. 6.1]{die-ems:unitary}, which corresponds to the situation that $c=1$. Since strictly speaking the statement for $c\in \mathbb{N}_{>1}$ only follows from that for $c=1$ when $f,g\in l^2(P,\delta)$ are supported on the sublattice $cP$, the argument is repeated here (for $c\in \mathbb{N}_{>1}$) so as to keep our presentation complete.
Let $f,g\in l^2(P,\delta )$. It suffices to show that the actions of $\hat{T}_j$ ($0\leq j\leq n$) and  $u$ ($u\in\Omega$) determine bounded operators on $l^2(P,\delta )$ satisfying
$\text{(i)}$ $\langle \hat{T}_j f,g\rangle_\delta=\langle f, \hat{T}_j g\rangle_\delta$ and  $\text{(ii)}$ $\langle u f,g\rangle_\delta=\langle f, u^{-1} g\rangle_\delta$.
Property $\text{(ii)}$  follows by performing the change of coordinates $\lambda\to u\lambda$ to the (discrete) integral $\langle u f,g\rangle_\delta$. Indeed, invoking of the symmetry $\delta_{u\lambda}=\delta_\lambda$ (as
$w_{u\lambda}=uw_\lambda u^{-1}$ and therefore $\tau_{w_{u\lambda}}=\tau_{w_\lambda}$) then produces the integral $\langle  f,u^{-1}g\rangle_\delta$.
Property $\text{(i)}$ follows in turn by performing the change of coordinates $\lambda\to s_j\lambda$ to the integral $\langle \chi_{a_j} s_j f,g\rangle_\delta$, which entails the integral  $\langle f,  \chi_{a_j} s_j g\rangle_\delta$.
Here one uses the symmetries
$s_j\chi_{a_j}=\chi_{a_j}^{-1}s_j$ and
$\delta_{s_j\lambda}=\chi_{a_j}^2(\lambda )\delta_\lambda$ (as $w_{s_j\lambda}=w_\lambda s_j$ and $\ell(w_{s_j\lambda})=\ell(w_\lambda)+\text{sign}(a_j(\lambda))$ for $a_j(\lambda)\neq 0$). The computations in question also reveal that the actions of
$u$  and $s_j$ (and thus that of $\hat{T}_j$) are indeed bounded in  $l^2(P,\delta )$
(as $\langle uf ,uf\rangle_\delta =\langle f ,f\rangle_\delta$
and $\langle s_jf ,s_j f\rangle_\delta =\langle\chi_{a_j}s_j f ,\chi_{a_j}^{-1}s_jf\rangle_\delta=\langle f ,\chi_{a_j} s_j\chi_{a_j}^{-1}s_jf\rangle_\delta=\langle f ,\chi_{a_j}^{2}f\rangle_\delta$, and
$\chi_{a_j}$ is a bounded function on $P$).
\end{proof}

Let us denote the longest element of $W_0$ by $v_0$. The $*$-structure on $H$ will now be extended to an antilinear anti-involution of the subalgebra $ \mathbb{C}[X]^{W_0}\otimes H \cong   \mathbb{C}[X]^{W_0}H\subset \mathbb{H}$ with basis $m_\lambda (X)T_w$ ($\lambda\in P^+$, $w\in W$)
as follows:
\begin{equation}
m_\lambda (X)^* := m_{\lambda^*}(X)\qquad (\lambda\in P^+),
\end{equation}
where $\lambda^*:=-v_0\lambda$ ($\in P^+$).  We expect that the unitarity of Proposition \ref{unitary-H:prp} carries
over to $ \mathbb{C}[X]^{W_0} H$:
\begin{equation}\label{unitary-Z}
\langle L_p f,g,\rangle_\delta \stackrel{?}{=}\langle f,L_{p^{*}}g\rangle_\delta \qquad (\forall p\in\mathbb{C}[X]^{W_0}\ \text{and}\ f,g\in l^2(P,\delta)) .
\end{equation}
For $p=m_\omega$
with $\omega\in P^+$ (quasi-)minuscule, the explicit formula for $L_p$ in Theorem \ref{action:thm} allows us to confirm
that  Eq. \eqref{unitary-Z} indeed holds in this special case. Notice in this connection that if $\omega$ is minuscule then so is $\omega^*$, and that if $\omega$ is quasi-minuscule then $\omega^*=\omega$.

\begin{theorem}\label{unitary-Z:thm}
For $\omega\in P^+$ (quasi-)minuscule, one has that
\begin{equation}\label{adjointness-delta:rel}
\langle L_\omega f,g,\rangle_\delta =\langle f,L_{\omega^{*}}g\rangle_\delta \qquad (\forall f,g\in l^2(P,\delta)).
\end{equation}
\end{theorem}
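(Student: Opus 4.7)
The plan is to verify \eqref{adjointness-delta:rel} by direct computation from the explicit formula for $L_\omega$ in Theorem~\ref{action:thm}. Expanding $\langle L_\omega f,g\rangle_\delta$, performing the substitution $\lambda\to\lambda+\nu$ in the shift term of the resulting double sum, and rewriting $\langle f,L_{\omega^{*}}g\rangle_\delta$ using the reindexing $\eta=-\nu$ to convert the sum over $W_0\omega^{*}=-W_0\omega$ (valid since $\omega^{*}=-v_0\omega$ with $v_0\in W_0$) into a sum over $W_0\omega$ while noting that the coefficients $a_{\lambda,\nu}$ and $b_{\lambda,\nu}$ are real polynomials in $\tau$ in the regime $0<\tau^2<1$, the adjointness claim reduces to two pointwise identities that must hold for every $\lambda\in P$:
\begin{equation*}
\mathrm{(i)}\ \ a_{\lambda,\nu}\,\delta_\lambda=a_{\lambda-\nu,-\nu}\,\delta_{\lambda-\nu}\ \ (\nu\in W_0\omega),
\qquad
\mathrm{(ii)}\ \ \sum_{\nu\in W_0\omega}b_{\lambda,\nu}=\sum_{\nu\in W_0\omega^{*}}b_{\lambda,\nu}.
\end{equation*}

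Identity (ii) is handled immediately. By Remark~\ref{rem2.5}, $b_{\lambda,\nu}=0$ unless $\nu\in W_0\vartheta$, so both sums vanish when $\omega$ is minuscule. In the quasi-minuscule case $\omega=\vartheta$ one has $\omega^{*}=-v_0\vartheta=\vartheta$, so $W_0\omega=W_0\omega^{*}$ and the sums coincide termwise.

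Identity (i) carries the main content. Inserting $\delta_\lambda=W_R(\tau^2)^{-1}\tau_{w_\lambda}^2$ and the defining formula \eqref{a-coef}, and abbreviating $w=w_\lambda$, $\tilde w=w_{\lambda-\nu}$, $y=w(\lambda-\nu)$, $\tilde y=\tilde w\lambda$, identity (i) assumes the form
$$\tau_{w_y}\,\tau_{w_y w}\,\tau_w=\tau_{w_{\tilde y}}\,\tau_{w_{\tilde y}\tilde w}\,\tau_{\tilde w}.$$
Since $w_y w$ and $w_{\tilde y}\tilde w$ lie in $W_R$ and respectively transport $\lambda-\nu$ and $\lambda$ to their alcove representatives, they admit the factorizations $w_y w=\tilde w v$ and $w_{\tilde y}\tilde w=w\tilde v$ for some $v\in W_{R,\lambda-\nu}$ and $\tilde v\in W_{R,\lambda}$. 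The minimal-left-coset length additivity \eqref{minimal-left-coset} then yields $\tau_{w_y w}=\tau_{\tilde w}\tau_v$ and $\tau_{w_{\tilde y}\tilde w}=\tau_w\tau_{\tilde v}$, so (i) reduces further to the symmetric identity
$$\tau_{w_y}\,\tau_v=\tau_{w_{\tilde y}}\,\tau_{\tilde v}.\qquad (\star)$$

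The main obstacle is the verification of $(\star)$, which encodes a symmetry of $\tau$-weighted lengths under interchanging the roles of $\lambda$ and $\lambda-\nu$. I would prove it by producing a short/long-label-preserving bijection of the affine-root inversion sets attached to $(w_y,v)$ and $(w_{\tilde y},\tilde v)$, using the Bruhat-order properties \eqref{br-p1}--\eqref{Rwx} together with the case analysis of Lemma~\ref{theta:lem}. In the minuscule case the simplifications recorded in Remark~\ref{Xnu-minuscule:rem} (in particular $(\lambda-\nu)_+\neq\lambda_+$) collapse $(\star)$ to the single equality $\tau_{w_y}=\tau_{w_{\tilde y}}$; the general quasi-minuscule case requires the more delicate length bookkeeping developed in Appendix~\ref{appB} for the proof of Proposition~\ref{intertwining:prp}.
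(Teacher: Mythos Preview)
Your reduction to the two identities (i) and (ii) is exactly the paper's approach, and identity (i) is precisely relation~\eqref{sym-rel} of Appendix~\ref{appB}. Two minor misattributions: the vanishing of $b_{\lambda,\nu}$ for $\nu$ in a minuscule orbit follows from Remark~\ref{Xnu-minuscule:rem} (which analyses $b_{\lambda,\nu}$ and $c_{\lambda_+,\eta}$ directly), not from Remark~\ref{rem2.5}, which concerns the unrelated coefficients $B^\nu_{v,w}$; and Appendix~\ref{appB} is not devoted to Proposition~\ref{intertwining:prp} but is precisely the place where the identity you call $(\star)$ is established.

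The genuine gap is that $(\star)$ is left unproven. Your plan --- a label-preserving bijection of inversion sets --- is plausible but vague, and the paper's actual argument proceeds differently. Appendix~\ref{appB} splits according to whether $(\lambda-\nu)_+\neq\lambda_+$ or $(\lambda-\nu)_+=\lambda_+$, via Lemmas~\ref{same-alcove:lem} and~\ref{neighbor-alcove:lem}. In the first case one shows that $w_{w_\lambda(\lambda-\nu)}w_\lambda=w_{w_{\lambda-\nu}(\lambda)}w_{\lambda-\nu}$ is the \emph{unique} shortest element of $W_R$ mapping both $\lambda$ and $\lambda-\nu$ into $P_c^+$; both sides of \eqref{sym-rel} then collapse to $\tau_{w_{w_\lambda(\lambda-\nu)}w_\lambda}^2$. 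In the second case the two elements differ by a single simple reflection $s_j$ with $\tau_j=\tau_0$, and a careful length computation shows both sides equal $\tau_0\,\tau_{w_{w_\lambda(\lambda-\nu)}w_\lambda}\,\tau_{w_{w_{\lambda-\nu}(\lambda)}w_{\lambda-\nu}}$. Your reformulation via $v\in W_{R,\lambda-\nu}$ and $\tilde v\in W_{R,\lambda}$ is correct and equivalent, but you still need this geometric case analysis (or a genuinely worked-out bijective substitute) to complete the proof; as written, the argument stops at the reduction.
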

\begin{proof}
It follows from the explicit formula in Theorem \ref{action:thm} that $L_\omega$ is bounded in $l^2 (P,\delta)$ for $\omega$ (quasi-)minuscule. Indeed, for any $\nu\in W_0\omega$ the coefficients $a_{\lambda ,\nu}$ and $b_{\lambda ,\nu}$ \eqref{a-coef}--\eqref{hvee} remain bounded as functions of $\lambda\in P$.
By comparing $\langle L_\omega f, g \rangle_\delta$ with $\langle  f, L_{\omega^*} g \rangle_\delta$ we see that
\begin{eqnarray*}
\lefteqn{\langle L_\omega f, g \rangle_\delta - \langle  f, L_{\omega^*} g \rangle_\delta } && \\
&&= \sum_{\substack{\lambda\in P\\ \nu\in W_0\omega}}  \left( a_{\lambda,\nu} f(\lambda-\nu)\overline{g(\lambda)}
-a_{\lambda,-\nu} f(\lambda) \overline{g(\lambda+\nu)}\right)
\delta_\lambda \\
&&=
\sum_{\substack{\lambda\in P\\ \nu\in W_0\omega}}  \bigl( a_{\lambda,\nu} \delta_\lambda
-a_{\lambda-\nu ,-\nu} \delta_{\lambda-\nu} \bigr) f(\lambda-\nu)\overline{g(\lambda)} ,
\end{eqnarray*}
whence it suffices to show that $ a_{\lambda,\nu} \delta_\lambda
=a_{\lambda-\nu ,-\nu} \delta_{\lambda-\nu}$ for all $\lambda\in P$ and $\nu\in W_0\omega$, or more explicitly:
\begin{equation*}
\tau_{w_{w_\lambda(\lambda-\nu)}w_\lambda}\tau_{w_{w_\lambda(\lambda-\nu)}}\tau_{w_\lambda}
=\tau_{w_{w_{\lambda-\nu}(\lambda)}w_{\lambda-\nu}}\tau_{w_{w_{\lambda-\nu}(\lambda)}}\tau_{w_{\lambda-\nu}} .
\end{equation*}
The proof of this relation for the length multiplicative function $\tau$ is relegated to Appendix \ref{appB} below.
\end{proof}

The following (self-)adjointness relation is immediate from
Theorem \ref{unitary-Z:thm} upon the restriction of $L_\omega$ to the $W_R$-invariant subspace $l^2(P,\delta)^{W_R}\cong l^2 (P^+_c,\Delta )$.
\begin{corollary}\label{unitary-Z:cor}
For $\omega\in P^+$ (quasi-)minuscule, one has that
\begin{equation}\label{adjointness-Delta:rel}
\langle L_\omega f,g,\rangle_\Delta =\langle f,L_{\omega^{*}}g\rangle_\Delta \qquad (\forall f,g\in l^2(P^+_c,\Delta)).
\end{equation}
\end{corollary}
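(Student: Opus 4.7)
The plan is to deduce the statement from Theorem \ref{unitary-Z:thm} by exploiting the fact that $l^2(P_c^+,\Delta)$ is (canonically isometric to) a closed subspace of $l^2(P,\delta)$ that is stable under $L_\omega$.

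First I would make the reduction explicit. Given $f\in \mathcal{C}(P_c^+)$, let $\tilde{f}\in \mathcal{C}(P)^{W_R}$ denote its (unique) $W_R$-invariant extension to $P$, i.e.\ $\tilde{f}(\lambda):=f(\lambda_+)$ for $\lambda\in P$. The computation preceding Eq.~\eqref{ip-Delta} in the paper already shows that the map $f\mapsto \tilde{f}$ is an isometry from $l^2(P_c^+,\Delta)$ onto the closed $W_R$-invariant subspace $l^2(P,\delta)^{W_R}\subset l^2(P,\delta)$, because partitioning the sum defining $\langle \tilde f,\tilde g\rangle_\delta$ into $W_R$-orbits and using $\Delta_\lambda=\sum_{\mu\in W_R\lambda}\delta_\mu$ produces precisely $\langle f,g\rangle_\Delta$.

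Next I would check that this isometry intertwines the two incarnations of $L_\omega$. By Proposition \ref{WR-reduction:prp} (with $p=m_\omega$), the operator $L_\omega$ preserves $\mathcal{C}(P)^{W_R}$, so $L_\omega \tilde f$ is again $W_R$-invariant; and by the very definition of the action of $L_\omega$ on $\mathcal{C}(P_c^+)\cong\mathcal{C}(P)^{W_R}$ used in Theorem \ref{action-symmetric:thm}, the restriction $(L_\omega \tilde f)|_{P_c^+}$ equals $L_\omega f$. In short, $(L_\omega f)^{\sim}=L_\omega\tilde f$, and analogously for $\omega^*$.

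Finally I would apply Theorem \ref{unitary-Z:thm} to the extensions $\tilde f,\tilde g\in l^2(P,\delta)$, obtaining
\begin{equation*}
\langle L_\omega\tilde f,\tilde g\rangle_\delta=\langle \tilde f,L_{\omega^*}\tilde g\rangle_\delta,
\end{equation*}
and translate each side back to $l^2(P_c^+,\Delta)$ by means of the isometry above, yielding $\langle L_\omega f,g\rangle_\Delta=\langle f,L_{\omega^*}g\rangle_\Delta$ as required. There is no real obstacle in this argument; the only point that deserves a line of verification is the compatibility $(L_\omega f)^{\sim}=L_\omega\tilde f$, which is essentially tautological given how the restricted action in Theorem \ref{action-symmetric:thm} was derived from Theorem \ref{action:thm}.
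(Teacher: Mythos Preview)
Your argument is correct and matches the paper's approach exactly: the paper simply states that the corollary is immediate from Theorem~\ref{unitary-Z:thm} upon restricting $L_\omega$ to the $W_R$-invariant subspace $l^2(P,\delta)^{W_R}\cong l^2(P_c^+,\Delta)$, and you have spelled out precisely this restriction via the isometric embedding and Proposition~\ref{WR-reduction:prp}.
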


\begin{remark}\label{adjointness:rem}
For the classical root systems of type $A_n$, $B_2$ and $D_4$ the unitarity in Eq. \eqref{unitary-Z} and the (self-)adjointness relation in Eq. \eqref{adjointness-Delta:rel}  with $\omega\in P^+$ arbitrary are
a direct consequence of Theorem \ref{unitary-Z:thm}, because in these special cases the monomials $m_\omega (X)$ with $\omega$ (quasi-)minuscule already generate the complete algebra $\mathbb{C}[X]^{W_0}$.
\end{remark}

\subsection{Orthogonality}
The diagonalization in Corollary \ref{diagonal:cor} and the expected unitarity in Eq. \eqref{unitary-Z} suggest that the periodic Macdonald spherical functions form an {\em orthogonal} basis of $l^2(P^+_c,\Delta)$. However, since Corollary \ref{unitary-Z:cor} only establishes the (self-)adjointness relations for $\omega$ (quasi-)minuscule, the orthogonality in question is not immediate at this point (because of possible degeneracies in the spectrum of the relevant discrete Laplacians $L_\omega$) and a more sophisticated analysis is required.

By a standard continuity argument, it does follow from the diagonalization in Corollary \ref{diagonal:cor} and the (self-)adjointness relations in Corollary \ref{unitary-Z:cor} that the periodic Macdonald spherical functions $\Phi_{\xi_{\mu}}$ and $\Phi_{\xi_{\tilde{\mu}}}$ ($\mu,\tilde{\mu}\in P^{\vee ,+}_c$) are orthogonal in $l^2(P^+_c,\Delta)$ for all $0<\tau^2 < 1$ if---for some (quasi-)minuscule weight $\omega$---the corresponding eigenvalues $m_{\omega} (e^{i\xi_\mu})$ and $m_{\omega} (e^{i\xi_{\tilde{\mu}}})$ (of $L_{\omega^*}$) are distinct as analytic functions in the parameter(s) $\tau_{\alpha^\vee}$, $\alpha\in R_0^+$. In view of the injectivity in part $ii)$ of Proposition \ref{spectrum:prp},
 this assures the orthogonality for $\mu\neq\tilde{\mu}$ in the case of
 the classical root systems $A_n$, $B_2$ and $D_4$, as for these types the relevant symmetric monomials $m_\omega (e^{i\xi})$ with $\omega\in P^+$ (quasi-)minuscule separate the points of $2\pi A^\vee$ \eqref{Avee} (cf. Remark \ref{adjointness:rem}). More generally, it suffices to verify the inequality of $m_\omega (e^{i\xi_\mu})$ and $m_\omega (e^{i\xi_{\tilde{\mu}}})$ (or any of their derivatives with respect to the parameter(s))
at any fixed value for $\tau $ in the analyticity domain $-1<\tau^2 < 1$ to conclude their inequality as analytic functions.
By determining the limiting behavior of $m_\omega (e^{i\xi_\mu})$ for $\tau\uparrow 1$, and
computing
$m_\omega (e^{i\xi_\mu})$ and $\frac{\partial}{\partial \tau_{\beta^\vee}^2} m_\omega (e^{i\xi_\mu})$ ($\beta\in R_0^+$)  at the special limiting value $\tau\to 0$ by means
of part $iv)$ of Proposition \ref{spectrum:prp} and the implicit function theorem,  we arrive at the following explicit numerical criterion guaranteeing the orthogonality of the periodic Macdonald spherical functions
for general $R_0$.

\begin{proposition}\label{orthogonality:prp}
For any $\mu,\tilde{\mu}\in P^{\vee,+}_c$ the corresponding periodic Macdonald spherical functions are orthogonal:
\begin{subequations}
\begin{equation}\label{ort-rel}
\langle \Phi_{\xi_\mu},\Phi_{\xi_{\tilde{\mu}}}\rangle_\Delta
=\sum_{\lambda\in P_c^+}\Phi_{\xi_\mu}(\lambda) \Phi_{\xi_{\tilde{\mu}}}(\lambda) \Delta_\lambda=0 ,
\end{equation}
if for {\em some} $\omega\in P^+$ (quasi-)minuscule and {\em some} $\epsilon \in \{ 0,1\}$, $\beta\in R_0^+$:
\begin{equation}\label{ort-crit1}
m_\omega ( e^{i\xi_{\mu }^{\epsilon  }}) \neq m_\omega ( e^{i\xi_{\tilde{\mu}}^\epsilon} )
\end{equation}
{\em or}
\begin{eqnarray}\label{ort-crit2}
\lefteqn{\sum_{\nu\in W_0\omega}
\sum_{\substack{\alpha\in R_0^+\\ \|\alpha \| =\|\beta\| }}
e^{i\langle \nu ,\xi_\mu^0\rangle}\sin (\langle \xi_\mu^0 ,\alpha\rangle )\langle \nu , \alpha^\vee \rangle\neq }&& \\
&& \sum_{\nu\in W_0\omega}
\sum_{\substack{\alpha\in R_0^+\\ \|\alpha \|=\|\beta\| }}
e^{i\langle \nu ,\xi_{\tilde{\mu}}^0\rangle}\sin (\langle \xi_{\tilde{\mu}}^0 ,\alpha\rangle )\langle \nu , \alpha^\vee \rangle , \nonumber
\end{eqnarray}
where $\|\alpha\| :=\langle \alpha ,\alpha\rangle^{1/2}$ and
\begin{equation}\label{spectrum-tau-1-0}
\xi_\mu^\epsilon :=\lim_{\tau\to \epsilon} \xi_\mu =\begin{cases}
\frac{2\pi i}{c+ h(R_0)}( \rho^\vee+\mu) &\text{if}\ \epsilon =0, \\
2\pi i c^{-1}\mu &\text{if}\ \epsilon =1 .
\end{cases}
\end{equation}
\end{subequations}
\end{proposition}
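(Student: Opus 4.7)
The plan is to leverage the self-adjointness in Corollary \ref{unitary-Z:cor}, the eigenvalue equation in Corollary \ref{diagonal:cor}, and the analytic $\tau$-dependence established in Proposition \ref{spectrum:prp}(iv), assembled along the lines sketched in the discussion that precedes the proposition. Since $\omega^*=-v_0\omega$ yields $W_0\omega^*=-W_0\omega$, one has $m_{\omega^*}(e^{-i\xi})=m_\omega(e^{i\xi})$, which for real $\xi$ equals $\overline{m_\omega(e^{-i\xi})}$. Feeding the eigenvalue relation from Corollary \ref{diagonal:cor} into the self-adjointness relation $\langle L_\omega \Phi_{\xi_\mu},\Phi_{\xi_{\tilde\mu}}\rangle_\Delta=\langle \Phi_{\xi_\mu},L_{\omega^*}\Phi_{\xi_{\tilde\mu}}\rangle_\Delta$ of Corollary \ref{unitary-Z:cor} thus produces
\begin{equation*}
  \bigl(m_\omega(e^{-i\xi_\mu})-m_\omega(e^{-i\xi_{\tilde\mu}})\bigr)\,\langle\Phi_{\xi_\mu},\Phi_{\xi_{\tilde\mu}}\rangle_\Delta=0,
\end{equation*}
so at a fixed $\tau$ with $0<\tau^2<1$ the orthogonality holds whenever the two eigenvalues disagree there.

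By Proposition \ref{spectrum:prp}(iv) the spectral point $\xi_\mu$ depends analytically on the parameters $\tau_{\alpha^\vee}^2\in(-1,1)$, and the plane-wave formula in Corollary \ref{msf-decomposition:cor} transfers this analyticity both to the eigenvalues $m_\omega(e^{-i\xi_\mu})$ and to the inner product $\langle\Phi_{\xi_\mu},\Phi_{\xi_{\tilde\mu}}\rangle_\Delta$. If the eigenvalues fail to coincide as analytic functions of $\tau$, they differ on a dense open subset of the parameter domain; on this subset the displayed identity forces the inner product to vanish, and analytic continuation then propagates the vanishing to the entire domain, including the given $\tau$. Consequently, it suffices to exhibit non-identity of the eigenvalues as analytic functions. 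Criterion (\ref{ort-crit1}) achieves this by direct evaluation at one of the endpoints $\tau=0$ or $\tau\uparrow 1$, whose limit values of $\xi_\mu$ are made explicit by Proposition \ref{spectrum:prp}(iv).

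For criterion (\ref{ort-crit2}) I will instead differentiate at $\tau=0$ via the implicit function theorem applied to the critical equation (\ref{critical:eq}). At $\tau=0$ one has $v_\alpha(x)=x$ and $v_\alpha'(x)=1$, and the Schur-type argument of Remark \ref{schur:rem} (specialized to $\tau_{\alpha^\vee}\equiv 1$, applied to the map $\xi\mapsto\sum_{\alpha\in R_0^+}\langle\xi,\alpha\rangle\alpha^\vee$) identifies the Jacobian of (\ref{critical:eq}) at $\tau=0$ with $(c+h(R_0))\cdot\mathrm{Id}$. A direct differentiation of the integral in (\ref{mf2}) further gives $(\partial v_\alpha/\partial\tau_{\beta^\vee}^2)(x)|_{\tau=0}=2\sin(x)$ when $\|\alpha\|=\|\beta\|$ and $0$ otherwise (since $\tau_{\alpha^\vee}$ is constant on $W_0$-orbits of roots, only those $\alpha$ sharing the length of $\beta$ feel a variation in $\tau_{\beta^\vee}$). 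Implicit differentiation of (\ref{critical:eq}) then yields
\begin{equation*}
  (c+h(R_0))\,\frac{\partial \xi_\mu}{\partial \tau_{\beta^\vee}^2}\bigg|_{\tau=0}=-2\sum_{\substack{\alpha\in R_0^+\\ \|\alpha\|=\|\beta\|}}\sin\!\bigl(\langle\xi_\mu^0,\alpha\rangle\bigr)\,\alpha^\vee,
\end{equation*}
and substituting into the chain rule $\partial_{\tau_{\beta^\vee}^2}m_\omega(e^{-i\xi_\mu})|_{\tau=0}=-i\sum_{\nu\in W_0\omega}e^{-i\langle\nu,\xi_\mu^0\rangle}\,\langle\nu,\partial_{\tau_{\beta^\vee}^2}\xi_\mu|_{\tau=0}\rangle$ reproduces, up to a common nonzero prefactor, precisely the two sums appearing in (\ref{ort-crit2}). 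Their inequality thus certifies non-identity of the analytic eigenvalues, completing the chain. The main technical obstacle is this final computation: carefully carrying out the implicit function theorem, pinning down the vanishing rule for $\partial v_\alpha/\partial\tau_{\beta^\vee}^2$, and matching the resulting expression to (\ref{ort-crit2}) while treating the $\tau\uparrow 1$ endpoint uniformly with the $\tau\to 0$ one (which for criterion (\ref{ort-crit1}) already requires the uniform-convergence argument in the proof of Proposition \ref{spectrum:prp}(iv)).
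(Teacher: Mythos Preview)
Your proposal is correct and follows essentially the same route as the paper: the standard adjointness argument combined with analyticity in $\tau$ reduces the problem to showing that the eigenvalues differ as analytic functions, which is then certified either by their limiting values at $\tau\to 0$ or $\tau\uparrow 1$ (criterion \eqref{ort-crit1}) or by the $\tau_{\beta^\vee}^2$-derivative at $\tau=0$ computed via the implicit function theorem applied to \eqref{critical:eq} (criterion \eqref{ort-crit2}). One small remark: since you differentiate $m_\omega(e^{-i\xi_\mu})$ rather than $m_\omega(e^{i\xi_\mu})$, the expression you obtain is the complex conjugate of the sums in \eqref{ort-crit2} rather than literally those sums; this is harmless for the inequality test, but ``reproduces precisely'' slightly overstates the match.
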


\begin{proof}
It is clear from part $iv)$ of Proposition \ref{spectrum:prp} that for $0<\tau^2 < 1$
\begin{equation*}
\lim_{\tau\to \epsilon} m_\omega (e^{i\xi_\mu }) = m_\omega (e^{i\xi_\mu^\epsilon}) .
\end{equation*}
Hence, if Eq. \eqref{ort-crit1} is satisfied (either for $\epsilon =0$ or for $\epsilon =1$) then the eigenvalues $m_{\omega} (e^{i\xi_\mu})$ and $m_{\omega} (e^{i\xi_{\tilde{\mu}}})$ cannot be identical as analytic functions of $\tau_{\alpha^\vee}$, $\alpha\in R_0^+$,
whence $\langle \Phi_{\xi_\mu},\Phi_{\xi_{\tilde{\mu}}}\rangle_\Delta=0$.
In the same way it is seen from the limit
\begin{eqnarray}\label{der-eig-lim}
\lefteqn{\lim_{\tau\to 0}\frac{\partial}{\partial \tau_{\beta^\vee}^2} m_\omega (e^{i\xi_\mu})=} && \\
&& \frac{2}{i(c+h(R_0))}
\sum_{\nu\in W_0\omega}
\sum_{\substack{\alpha\in R_0^+\\ \|\alpha \| =\|\beta\| }}
e^{i\langle \nu ,\xi_\mu^0\rangle}\sin (\langle \xi_\mu^0 ,\alpha\rangle )\langle \nu , \alpha^\vee \rangle  \nonumber
\end{eqnarray}
that $\langle \Phi_{\xi_\mu},\Phi_{\xi_{\tilde{\mu}}}\rangle_\Delta=0$ if the inequality in Eq. \eqref{ort-crit2} is satisfied.
To verify the limit in Eq. \eqref{der-eig-lim}
we first compute $\frac{\partial}{\partial \tau_{\beta^\vee}^2} m_\omega (e^{i\xi_\mu})$ for general $0<\tau^2 < 1$:
\begin{subequations}
\begin{equation}\label{der-eig1}
\frac{\partial}{\partial \tau_{\beta^\vee}^2} m_\omega (e^{i\xi_\mu})=\frac{\partial}{\partial \tau_{\beta^\vee}^2}
\sum_{\nu\in W_0\omega}e^{i\langle \nu ,\xi_\mu\rangle}
=i \sum_{\nu\in W_0\omega}e^{i\langle \nu ,\xi_\mu\rangle}\langle \nu ,\frac{\partial \xi_\mu}{\partial \tau_{\beta^\vee}^2}\rangle
\end{equation}
with (upon employing the implicit function theorem to Eq. \eqref{critical:eq})
\begin{equation}
\frac{\partial \xi_\mu}{\partial \tau_{\beta^\vee}^2}=
-\mathcal{H}^{-1}(\xi_\mu ) \frac{\partial  \rho^\vee_v}{\partial \tau_{\beta^\vee}^2}(\xi_\mu) .
\end{equation}
Here $\mathcal{H}(\xi)$ refers to the Hessian with components $\mathcal{H}_{\eta ,\zeta}(\xi )$ given by Eq. \eqref{hessian}
and
\begin{equation}\label{der-eig3}
\frac{\partial  \rho^\vee_v}{\partial \tau_{\beta^\vee}^2}(\xi)=
\sum_{\substack{\alpha\in R_0^+\\ \|\alpha \| =\|\beta\|}}
\frac{2\sin (\langle \xi ,\alpha\rangle )}{1-2\tau_{\alpha^\vee}^2\cos(\langle \xi,\alpha\rangle)+ \tau_{\alpha^\vee}^4} \alpha^\vee .
\end{equation}
\end{subequations}
For $\tau\to 0$ the expression in Eqs. \eqref{der-eig1}--\eqref{der-eig3} simplifies to the RHS of Eq. \eqref{der-eig-lim}
since $\xi_\mu \stackrel{\tau\to 0}{=} \xi_\mu^0$ and
$
\mathcal{H}_{\eta ,\zeta}(\xi )\stackrel{\tau\to 0}{=}
(c+h(R_0))\langle \eta,\zeta\rangle
$
(in view of Remark \ref{schur:rem} and recalling also that $|R_0|=nh(R_0)$).
\end{proof}

For a given concrete root system $R_0$ and a fixed value of $c\in\mathbb{N}_{>1}$ of `reasonable size', a direct verification of the numerical criterion in Proposition \ref{orthogonality:prp} readily entails the orthogonality of $\Phi_{\xi_\mu}$ and $\Phi_{\xi_{\tilde{\mu}}}$ in $l^2(P^+_c,\Delta)$ for most (and possibly all) coweights $\mu\neq\tilde{\mu}$ in $P^{\vee ,+}_c$. In fact,
we have not spotted any counterexamples where our criterion fails to separate the eigenvalues if $\mu\neq \tilde{\mu}$, even though we are not in the position to offer an a priori argument ruling out the existence of such degeneracies altogether
(apart from the above separation argument in the already mentioned cases when $R_0$ is of type
$A_n$, $B_2$ or $D_4$).

\subsection{Normalization}
To determine the spectral measure of the eigenfunction transform of our Laplacians $L_p$ \eqref{Lp} it remains to compute the quadratic norms of the periodic Macdonald spherical functions in $l^2(P^+_c,\Delta)$.
After the seminal contributions of Gaudin and Korepin \cite{gau:fonction,kor:calculations}, it is nowadays a paradigm of the Bethe Ansatz method that the quadratic norm of a Bethe eigenfunction corresponding to a solution of the Bethe equations
determined by the global minimum of a strictly convex Morse function should (essentially) be given by the determinant of its Hessian \cite{kor-bog-ize:quantum,sla:algebraic}.
Translated to our present setting, this heuristics---which is confirmed for many Bethe Ansatz models \cite{kor-bog-ize:quantum}---gives rise to the following conjectural Gaudin-type determinantal formula for the quadratic norms in question.

\begin{conjecture}\label{gaudin:con} For any $\mu \in P^{\vee ,+}_c$, the quadratic norm of the periodic Macdonald spherical function $\Phi_{\xi_\mu}$ in the Hilbert space $l^2(P^+_c,\Delta)$ is given by
\begin{equation}\label{gaudin-formula}
\langle \Phi_{\xi_\mu},\Phi_{\xi_\mu}\rangle_\Delta
=\sum_{\lambda\in P^+_c}|\Phi_{\xi_\mu}(\lambda)|^2 \Delta_\lambda= \text{Ind}(R_0)C(\xi_\mu)C(-\xi_\mu)\det\mathcal{H}(\xi_\mu) ,
\end{equation}
where $\text{Ind}(R_0):=|\Omega |$, $C(\cdot )$ is taken from Eq. \eqref{cfun}, and $\det\mathcal{H}(\cdot)$ refers to the determinant of the Hessian given by Eq. \eqref{hessian}.
\end{conjecture}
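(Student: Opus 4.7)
The plan is to follow the Gaudin--Korepin paradigm for computing Bethe-eigenfunction norms, adapted to the present Hecke-algebraic setting. Inserting the plane-wave decomposition of Corollary~\ref{msf-decomposition:cor} into $\langle \Phi_{\xi_\mu},\Phi_{\xi_\mu}\rangle_\Delta$ produces the double sum
\[
\langle \Phi_{\xi_\mu},\Phi_{\xi_\mu}\rangle_\Delta
= \sum_{v,w\in W_0} C(v\xi_\mu)\,\overline{C(w\xi_\mu)}\,
\sum_{\lambda\in P_c^+} \Delta_\lambda\, e^{i\langle v\xi_\mu-w\xi_\mu,\lambda\rangle},
\]
which will be the starting point. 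Using the isometry $l^2(P_c^+,\Delta)\simeq l^2(P,\delta)^{W_R}$ together with Macdonald's product formula \eqref{poincare-stab} for $W_{R,\lambda}(\tau^2)$, the first step is to unfold the inner sum into a sum over the $t(cQ)$-fundamental region $P/cQ$ (at the cost of an overall combinatorial factor), where it becomes a genuine finite geometric series that can be evaluated in closed form.

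Next, the off-diagonal contributions ($v\neq w$) must be reorganized using the Bethe type equations \eqref{BAE}: each telescoped boundary term $e^{ic\langle (v-w)\xi_\mu,\nu\rangle}$ ($\nu\in Q$) that emerges from the evaluated geometric sum is replaced---via \eqref{BAE}---by the rational factor $\prod_{\alpha\in R_0^+}((1-\tau_{\alpha^\vee}^2 e^{i\langle \xi_\mu,\alpha\rangle})/(\tau_{\alpha^\vee}^2-e^{i\langle \xi_\mu,\alpha\rangle}))^{\langle\nu,\alpha^\vee\rangle}$, whose components cancel against corresponding factors of $C(v\xi_\mu)\overline{C(w\xi_\mu)}$. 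After summing over $v,w\in W_0$, what remains is a \emph{local} expression in $\xi_\mu$; a Cauchy--Binet type antisymmetrization reshapes it as a determinant which, upon differentiating the logarithm of \eqref{BAE} with respect to $\xi$, is identified with $\det\mathcal{H}(\xi_\mu)$ as given by \eqref{hessian}. A final application of Macdonald's identity \cite{mac:poincare} collapses the remaining symmetric prefactor to $C(\xi_\mu)C(-\xi_\mu)$, while the combinatorial factor $\mathrm{Ind}(R_0)=|\Omega|=|P/Q|$ enters when passing from an unfolded $t(cQ)$-sum to the corresponding $t(cP)$-sum inside the extended affine Weyl group $W=W_0\ltimes t(cP)$.

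The main obstacle is the Korepin-type reorganization of the off-diagonal contributions into the Hessian determinant. In the type $A$ case this identification is carried out in \cite{die:finite-dimensional}, and the continuous analogue of the conjecture is confirmed for rank $\le 3$ in \cite{bus-die-maz:norm} by reducing to a tedious algebraic identity checked by symbolic computer algebra---the same strategy should permit a case-by-case verification here for low-rank examples. A root-system-uniform proof, however, remains elusive; a natural alternative is to interpret the sum via a residue-calculus reformulation on an affine Weyl-group torus quotient, where the determinant emerges from a Plancherel-type decomposition of the non-periodic difference-reflection representation \cite{die-ems:unitary}. The principal technical difficulty in this second approach is controlling the contributions from non-regular strata of the dominant Weyl chamber, corresponding to limiting configurations where the Bethe roots $\xi_\mu$ collide on the walls of the fundamental alcove $2\pi A^\vee$.
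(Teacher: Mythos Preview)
The statement is labeled a \emph{Conjecture} in the paper, and the paper does not prove it. What the paper offers is precisely the strategy you outline in your first two paragraphs: expand $\langle\Phi_{\xi_\mu},\Phi_{\xi_\mu}\rangle_\Delta$ via the plane-wave decomposition of Corollary~\ref{msf-decomposition:cor} into exponential sums $\sum_{\lambda\in P_c^+}e^{i\langle\tilde\xi,\lambda\rangle}\Delta_\lambda$; evaluate these as terminating geometric series by partitioning $P_c^+$ according to the stabilizer $W_{R,\lambda}$ (i.e.\ over facets of the Coxeter complex of $W_R$), rather than by your unfolding to $P/cQ$; eliminate the resulting factors $e^{ic\langle\xi,\beta\rangle}$ via the Bethe equations~\eqref{BAE}; and then compare the resulting algebraic expression with $\mathrm{Ind}(R_0)\,C(\xi)C(-\xi)\det\mathcal{H}(\xi)$ by brute force. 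The paper reports symbolic verification for all classical root systems of rank $\le 2$ and for $G_2$, and numerical verification at random parameter values for classical root systems of rank $\le 4$. It also verifies the degenerations $\tau\to 0$ and $\tau\uparrow 1$ independently, reducing them to the orthogonality relations~\eqref{tau=0:ort} and~\eqref{tau=1:ort} for (anti)symmetric monomials.

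Your proposal is therefore not a proof but a proof \emph{strategy}, and you correctly identify the obstruction: the ``Korepin-type reorganization'' of the off-diagonal terms into $\det\mathcal{H}(\xi_\mu)$---your Cauchy--Binet step---is exactly where a root-system-uniform argument is missing, both in your sketch and in the paper. The paper makes no claim that this step can be carried out in closed form; it simply checks the resulting algebraic identity case by case with computer algebra. One small correction: your remark that the type~$A$ identification ``is carried out in \cite{die:finite-dimensional}'' overstates matters---the paper explicitly notes that the type~$A$ norm formula in that reference is itself conjectural (cf.\ the discussion immediately following the conjecture). Your suggested residue-calculus alternative is not discussed in the paper and would be genuinely new if it could be made rigorous.
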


Conjecture \ref{gaudin:con} generalizes analogous conjectural normalization formulas from \cite[\text{Eq.}~(3.5)]{die:finite-dimensional}
(corresponding to the special situation of a root system of type $A$ \cite{die:diagonalization})  and \cite[\text{Eq.}~(11)]{ems:completeness} (corresponding to the trigonometric degenerate double affine Hecke algebra at critical level encoding
Gaudin's Weyl-group invariant delta-potential models with periodic boundary conditions \cite{ems-opd-sto:periodic}).
Following a brute-force approach detailed in Refs. \cite{die:finite-dimensional} and \cite{bus-die-maz:norm} for these two previous conjectures, it is possible to confirm that Conjecture \ref{gaudin:con} holds true for small root systems.
In a nutshell, the idea is that the plane wave decomposition in Eq. \eqref{msf-decomposition} permits expressing the sum $\sum_{\lambda\in P^+_c}|\Phi_{\xi}(\lambda)|^2 \Delta_\lambda$ in terms of exponential sums of the type
$\sum_{\lambda\in P^+_c} e^{i\langle \tilde{\xi},\lambda\rangle} \Delta_\lambda$ (with $\tilde{\xi}\in W_0(\xi -v\xi)$, $v\in W_0$).  Since $\Delta_\lambda$ \eqref{Delta}
is determined completely by the stabilizer $W_{R,\lambda}$, partitioning of the exponential sum into partial sums over weights of $P^+_c$
having the same stabilizer subgroup inside $W_R$  (i.e., weights belonging to the same facet of the Coxeter complex of $W_R$) produces an exact evaluation of the exponential sum in terms of terminating {\em geometric} sums. Upon
elimination of any exponentials of the form $e^{ic\langle \xi,\beta\rangle}$, $\beta\in R_0^+$ with the aid of the Bethe type equations \eqref{BAE}, one ends up with a tedious algebraic expression for $\sum_{\lambda\in P^+_c}|\Phi_{\xi}(\lambda)|^2 \Delta_\lambda$ that is to be compared with the conjectural formula $\text{Ind}(R_0)C(\xi)C(-\xi)\det\mathcal{H}(\xi)$ on the RHS. In all cases that we have checked (using computer algebra), both expressions turn out to agree as algebraic {\em functions} of the spectral variable $\xi$ and the parameter $\tau$. Specifically,
equality was checked
for all classical root systems of rank $\leq 2$ symbolically and for all classical root systems of rank $\leq 4$ upon evaluating both expressions at a large number of random values for $\tau_s$, $\tau_l$ and $\xi$. We also verified the case of the exceptional root system $G_2$ symbolically for a significant number of random values for $\tau_s$ and $\tau_l$.

For $\tau\uparrow 1$  (cf. Remark \ref{free-laplacians:rem}), the (conjectural) orthogonality of the periodic Macdonald spherical functions reduces to the following orthogonality relations
\begin{subequations}
\begin{equation}\label{tau=1:ort}
\sum_{\lambda\in P^+_c} M_\lambda ( e^{i\xi_{\mu}^1})
M_\lambda ( e^{-i\xi_{\tilde{\mu}}^1}) |W_{R,\lambda} |^{-1} =
\begin{cases}
c^n\text{Ind} (R_0)|W_{\hat{R}_c,\mu} | &\text{if}\ \mu =\tilde{\mu}\\
0&\text{if}\ \mu \neq \tilde{\mu}
\end{cases}
\end{equation}
for the periodic symmetric monomials $M_\lambda (  e^{i\xi_{\mu}^1})$ $(\lambda\in P_c^+,\ \mu\in P^{\vee ,+}_c)$,
where
\begin{equation}
M_\lambda (  e^{i\xi}):= |W_{R,\lambda}\cap W_0| m_\lambda ( e^{i\xi})=\sum_{v\in W_0}e^{i\langle \lambda,v \xi \rangle} ,
\end{equation}
\end{subequations}
$\xi_\mu^1=2\pi c^{-1}\mu$ (cf. Eq. \eqref{spectrum-tau-1-0}), and $\hat{R}_c=R_0+c\mathbb{Z}$ refers to the affine root system with $R_0$ replaced by $R_0^\vee$.
For $\tau\to 0$ on the other hand, one arrives at the following orthogonality relations (cf. \cite[\text{Thm.}~6.2]{kir:inner}, \cite[\text{Prp.}~5.4]{hri-pat:discretization} and---for $R_0$ of type $A$---\cite[\text{Eq.}~(4.8)]{die:finite-dimensional} and
\cite[\text{Thm.}~6.7]{kor-stro:slnk})
\begin{subequations}
\begin{equation}\label{tau=0:ort}
\sum_{\lambda\in P^+_c} \chi_\lambda ( e^{i\xi_{\mu}^0})
\chi_\lambda ( e^{-i\xi_{\tilde{\mu}}^0})=
\begin{cases}
(c+h(R_0))^n\text{Ind} (R_0) &\text{if}\ \mu =\tilde{\mu}\\
0&\text{if}\ \mu \neq \tilde{\mu}
\end{cases}
\end{equation}
for the periodic anti-symmetric monomials $ \chi_\lambda ( e^{i\xi_{\mu}^0})$ ($\lambda\in P_c^+$, $\mu\in P^{\vee ,+}_c$), where
\begin{equation}
\chi_\lambda (e^{i\xi} ):= \sum_{v\in W_0} (-1)^{\ell(v)} e^{i\langle \rho+\lambda ,v\xi\rangle}
\end{equation}
\end{subequations}
and $\xi_\mu^0=\frac{2\pi}{c+h(R_0)}(\rho^\vee+\mu)$ (cf. Eq. \eqref{spectrum-tau-1-0}).

Both degenerations of the orthogonality relations under consideration
can be verified directly with the aid of the orthogonality of the characters $e^{2\pi i c^{-1}\mu}$, $\mu\in P^\vee/c Q^\vee$ of the finite abelian group $P/cQ$:
\begin{equation}\label{character:ort}
\sum_{\lambda\in P/cQ} e^{2\pi i c^{-1}\mu} (\lambda)=
\begin{cases} 0 &\text{if}\ \mu\in P^\vee\setminus c Q^\vee \\
|P/cQ |&\text{if}\ \mu\in c Q^\vee
\end{cases},
\end{equation}
upon  using that $|P/cQ|=c^n\text{Ind}(R_0)$ and
\begin{equation}\label{symmetrization}
\frac{1}{|W_0|} \sum_{\lambda\in P/cQ} f(\lambda)=\sum_{\lambda\in P_c^{+}}\frac{1}{|W_{R,\lambda}|}  f(\lambda) \quad\text{for}\quad f\in\mathcal{C}(P)^{W_R}.
\end{equation}
Indeed, the LHS of Eq. \eqref{tau=1:ort} is readily rewritten as
\begin{eqnarray*}
\lefteqn{\sum_{\tilde{v}\in W_0} \sum_{\lambda\in P_c^+}  \frac{1}{ |W_{R,\lambda} |}\sum_{v\in W_0}  e^{2\pi ic^{-1} v (\mu-\tilde{v}\tilde{\mu})} (\lambda)\stackrel{\text{Eq.}~\eqref{symmetrization}}{=}} && \\
&& \frac{1}{|W_0|} \sum_{v,\tilde{v}\in W_0}  \sum_{\lambda\in P/cQ}   e^{2\pi ic^{-1} v (\mu-\tilde{v}\tilde{\mu})} (\lambda ) \stackrel{\text{Eq.}~\eqref{character:ort}}{=}
\begin{cases}
c^n\text{Ind} (R_0)|W_{\hat{R}_c,\mu} | &\text{if}\ \mu =\tilde{\mu}\\
0&\text{if}\ \mu \neq \tilde{\mu}
\end{cases}
\end{eqnarray*}
(where it was exploited in the last step that
both coweights $\mu$ and $\tilde{\mu}$ belong to the fundamental domain $P_c^{\vee , +}$ \eqref{fr}).
Moreover, the LHS of Eq. \eqref{tau=0:ort} is rewritten along similar lines as
\begin{eqnarray*}
\lefteqn{\sum_{\tilde{v}\in W_0} (-1)^{\ell (\tilde{v})} \sum_{\lambda\in P_c^+} \sum_{v\in W_0}  e^{\frac{2\pi i}{c+h} v (\rho^\vee+\mu-\tilde{v}(\rho^\vee+\tilde{\mu}))} (\rho+\lambda)} && \\
&&=
\sum_{\tilde{v}\in W_0} (-1)^{\ell (\tilde{v})} \sum_{\rho+\lambda\in P_{c+h}^+}
 \frac{1}{ |W_{R_{c+h},\lambda} |}
 \sum_{v\in W_0}  e^{\frac{2\pi i}{c+h} v (\rho^\vee+\mu-\tilde{v}(\rho^\vee+\tilde{\mu}))} (\rho+\lambda)
 \\
&& \stackrel{\text{Eq.}~\eqref{symmetrization}}{=}  \frac{1}{|W_0|} \sum_{v,\tilde{v}\in W_0}  (-1)^{\ell (\tilde{v})} \sum_{\rho+\lambda\in P/(c+h)Q}   e^{\frac{2\pi i}{c+h} v (\rho^\vee+\mu-\tilde{v}(\rho^\vee+\tilde{\mu}))} (\rho+\lambda)\\
&&\stackrel{\text{Eq.}~\eqref{character:ort}}{=}  \begin{cases}
 (c+h)^n\text{Ind} (R_0) &\text{if}\ \mu =\tilde{\mu}\\
0&\text{if}\ \mu \neq \tilde{\mu}
\end{cases} ,
\end{eqnarray*}
where we have employed the shorthands $h$ for $h(R_0)$ and $R_{c+h}$ for the affine root system with $c$ replaced by $c+h$. The first equality relies on the fact that $\lambda\in P_c^+$ iff $\rho+\lambda$ is a regular point of $P_{c+h}^+$ (with respect to the action of $W_{R_{c+h}}$), while we also used that in the expression on the second line the terms with $ |W_{R_{c+h},\lambda} |>1$ cancel out; in the last equality it was exploited that for $\mu,\tilde{\mu}\in P^{\vee ,+}_c$ both coweights $\rho^\vee+\mu$ and $\rho^\vee+\tilde{\mu}$ are regular points of the fundamental domain $P_{c+h (R_0^\vee)}^{\vee , +}$  (with respect to the action of $W_{\hat{R}_{c+h (R_0^\vee)}}$).

It is not very difficult to infer that for $\tau\uparrow 1$ the inner product
$
\langle \Phi_{\xi_\mu},\Phi_{\xi_{\tilde{\mu}}}\rangle_\Delta $ indeed tends
to the LHS of Eq. \eqref{tau=1:ort}. Here one uses that for $\mu\in P_c^{\vee ,+}$: $\lim_{\tau\uparrow 1}\xi_\mu=\xi_\mu^1$ and $\lim_{\tau\uparrow 1} C(\xi_\mu)=1$. (When $|W_{\hat{R}_c,\mu}|>1$ the second limit can be deduced from the first
with the aid of the critical equation \eqref{critical:eq}, cf. also the proof of Prop. 3 in Ref. \cite{ems:completeness}.)
This confirms the orthogonality of the periodic Macdonald
spherical functions in the limit $\tau\uparrow 1$. Moreover, the conjectured value of the quadratic norm of the periodic Macdonald spherical function on the RHS of Eq. \eqref{gaudin-formula} converges for $\tau\uparrow 1$ to the RHS of Eq. \eqref{tau=1:ort} provided
\begin{equation}\label{hessian-limit-tau=1}
\lim_{\tau\uparrow 1} \text{det} \mathcal{H}(\xi_\mu)= c^n |W_{\hat{R}_c,\mu }| \qquad (\mu\in P^{\vee ,+}_c).
\end{equation}
Whereas Eq. \eqref{hessian-limit-tau=1} is readily seen to hold when $\mu$ is $W_{\hat{R}_c}$-regular (i.e. $|W_{\hat{R}_c,\mu }|=1$), the limit in question is far from obvious when $|W_{\hat{R}_c,\mu }|>1$
(cf. also \cite[\text{Sec.}~2]{ems:completeness}).
In other words, this only confirms our norm formula for the periodic Macdonald
spherical functions in the limit $\tau\uparrow 1$
when $\mu\in P_c^{\vee ,+}$ is $W_{\hat{R}_c}$-regular, whereas for $|W_{\hat{R}_c,\mu }|>1$ the limit in Eq. \eqref{hessian-limit-tau=1} would follow rather as a {\em consequence} of the norm formula in Conjecture \ref{gaudin:con}.
The limiting behavior for $\tau\to 0$ is straightforward: upon multiplying out the (normalizing) Weyl denominators
$\chi_\rho (e^{i\xi_\mu^0})$ and $\chi_\rho (e^{-i\xi_{\tilde{\mu}}^0})$  the inner product
$
\langle \Phi_{\xi_\mu},\Phi_{\xi_{\tilde{\mu}}}\rangle_\Delta $ tends for $\tau\to 0$
to the LHS of Eq. \eqref{tau=0:ort} and the  RHS of Eq. \eqref{gaudin-formula} converges  to the RHS of Eq. \eqref{tau=0:ort},
which confirms our orthogonality relations for the periodic Macdonald
spherical functions in the limit $\tau\to 0$.

\begin{remark}
In the special situation that we are dealing with a root system $R_0$ of type $A$ and $\omega$ is chosen to be minuscule, the diagonalization, adjointness relations, and the integrability of the symmetrized operator $L_\omega$ \eqref{Lminuscule-red} in Corollaries \ref{diagonal:cor},
\ref{unitary-Z:cor} and Eq. \eqref{integrability}, respectively, as well as the orthogonality of the basis of periodic Macdonald spherical functions implied by Proposition \ref{orthogonality:prp} (and Remark \ref{adjointness:rem}), reproduce the principal results of Ref. \cite{die:diagonalization} in Prop. 2.3 and  Thms. 5.1--5.3. While double affine Hecke algebras did not manifest themselves at all in Ref. \cite{die:diagonalization}, their role in the present generalization of these previous results to the case of {\em arbitrary} Weyl groups seems to be fundamental (reminding of a similar state of affairs in the theory of Macdonald's polynomials \cite{mac:symmetric,mac:affine,che:double}).
\end{remark}

\begin{remark}\label{parameter-domain:rem}
 For the parameter regime $-1<\tau^2 <0$, the coefficients $a_{\lambda ,\nu}$ and $b_{\lambda ,\nu}$ \eqref{a-coef}--\eqref{hvee} are still real-valued for any $\lambda\in P$ and $\nu\in P_\vartheta^\ast$ because of their quadratic dependence on $\tau$. For the coefficient $b_{\lambda,\nu}$ this is immediate form the definition whereas for $a_{\lambda ,\nu}$ this follows from the observation that the coefficient in question
is always of the form $\tau^2_{w_{w_\lambda(\lambda-\nu)}}\tau_0^{-2\epsilon}$ with $\epsilon\in \{0 ,1\}$ (cf. Appendix \ref{appB}). Macdonald's product formula \eqref{poincare-stab}, \eqref{htau} reveals moreover that $\Delta_\lambda>0$ in this parameter domain even though the positivity of the nondegenerate scalar product $\langle \cdot ,\cdot\rangle_\delta$
\eqref{ip-delta}, \eqref{delta} is now lost. The upshot is that
Theorem \ref{unitary-Z:thm}, Corollary \ref{unitary-Z:cor} and Proposition \ref{orthogonality:prp} remain valid (with the given proofs applying verbatim) for $-1<\tau^2 <0$. It is also expected that the prediction of the quadratic norms in Conjecture \ref{gaudin:con} still holds for the parameter regime at issue.
\end{remark}

\vspace{3ex}
\section*{Acknowledgments.}
The results of this paper were presented at the Complex Analysis and Integrable Systems program of the Mittag-Leffler Institute, Djursholm, Sweden (Fall, 2011) and at the Analysis Colloquium of the Delft University of Technology, Delft, The Netherlands (Spring, 2012). We would like to thank Alexander Vasil'ev (JFvD) and Wolter Groenevelt (EE) for these opportunities to report our work.
Our brute-force verification for small root systems  of the Gaudin-type normalization formula in Conjecture \ref{gaudin:con} depended heavily on
Stembridge's Maple packages {\tt COXETER} and
{\tt WEYL}. We are grateful to Manuel O'Ryan of the Universidad de Talca for the necessary computer facilities permitting to carry out these computations. Finally,
the referees'
constructive comments helping us to straighten the presentation are very much appreciated.

\appendix

\section{Intertwining Properties}\label{appA}
In this appendix we prove that for any $f\in\mathcal{C}(P)$,
$\lambda\in P$ and $\nu\in P_\vartheta^\star$:
\begin{equation}\label{intertwining-property}
\tau^{-1}_{w_\lambda}  (I_{w_\lambda}f)(w_\lambda(\lambda-\nu)) =
a_{\lambda,\nu}(\mathcal{J}f)(\lambda-\nu)+b_{\lambda,\nu} (1-\tau_0^{-2}) (\mathcal{J}f)(\lambda),
\end{equation}
with $a_{\lambda,\nu}$ and $b_{\lambda,\nu}$ given by Eqs. \eqref{a-coef}--\eqref{hvee}.
This relation lies at the basis of the affine intertwining relation in Eq. \eqref{itc} and the explicit expression for $L_\omega$ in Theorem \ref{action:thm}.
It is a double affine analog of a similar relation in \cite[\text{Eq}.~(7.7)]{die-ems:unitary}. The proof below runs along the same lines as the corresponding proof in \cite[\text{Sec.}~7.1]{die-ems:unitary}, but we feel compelled to provide details in this appendix as the transition from `affine' to `double affine' is quite subtle at key points.

Our verification of Eq. \eqref{intertwining-property} hinges on two technical lemmas concerning the properties of
$\theta (\lambda-\nu)$
\eqref{e:theta} and $(\mathcal{J}f)(\lambda -\nu)$ \eqref{int-op} for $\lambda\in P_c^+$ (and $\nu\in P_\vartheta^\star$).

\begin{lemma}\label{theta:lem}
 For $\lambda\in P_c^+$ and $\nu\in P_\vartheta^\star$, we are in either one of the following two situations:
$i)$ if $(\lambda-\nu)_+\neq \lambda$, then $w_{\lambda-\nu}\in W_{R,\lambda}$ and
 \begin{equation*}
\theta(\lambda-\nu)=
\begin{cases}
1 & \text{for $\nu^\vee-\langle \lambda,\nu^\vee\rangle \in R(w_{\lambda-\nu})$}\\
0 & \text{for $\nu^\vee-\langle \lambda,\nu^\vee\rangle \not\in R(w_{\lambda-\nu})$}\\
\end{cases} ,
\end{equation*}
or $ii)$ if $(\lambda-\nu)_+=\lambda$, then $w_{\lambda-\nu}'\nu=-\alpha_j$ for some $j\in\{0,\dots, n\}$ with $\tau_j=\tau_0$, moreover,
$s_j w_{\lambda-\nu}\in W_{R,\lambda}$, $\theta(\lambda-\nu)=0$,
$$R(w_{\lambda-\nu})\setminus R(s_j w_{\lambda-\nu})
= \{ (s_j w_{\lambda-\nu})^{-1}a_j\}  , $$
and  $a_j(\lambda)=1$, i.e.  $ (s_j w_{\lambda-\nu})^{-1}a_j =\nu^\vee+(1-\langle \lambda,\nu^\vee\rangle) .$
\end{lemma}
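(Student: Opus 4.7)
The plan is to analyze the inversion set $R(w_{\lambda-\nu}) = \{a \in R^+ : a(\lambda-\nu) < 0\}$ directly, exploiting the numerical constraints from $\lambda \in P_c^+$ (so $0 \leq \langle \lambda, \beta^\vee\rangle \leq c$ for $\beta \in R_0^+$), $\nu \in P_\vartheta^\star$ (so $|\langle \nu, \beta^\vee\rangle| \leq 2$ with equality only when $\beta = \pm \nu$), and $c \geq 2$.

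First I would enumerate the affine roots $a = \alpha^\vee + rc \in R^+$ satisfying $a(\lambda-\nu) < 0$: the above bounds combined with $c \geq 2$ force either $(r,\alpha) = (0,\alpha)$ with $\alpha \in R_0^+$ and $\langle \lambda, \alpha^\vee\rangle < \langle \nu, \alpha^\vee\rangle$, or $(r,\alpha) = (1, -\beta)$ with $\beta \in R_0^+$ and $\langle \lambda, \beta^\vee\rangle > c + \langle\nu,\beta^\vee\rangle$. A direct check then yields $a(\lambda) \in \{0,1\}$ and $a(\lambda-\nu) = a(\lambda) - \langle \nu, \alpha^\vee\rangle \in \{-1,-2\}$ for each surviving case. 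Moreover, $a(\lambda-\nu) = -2$ forces $a(\lambda) = 0$ and $\alpha = \nu$, uniquely identifying $a = \nu^\vee + rc$ with $rc = -\langle \lambda, \nu^\vee\rangle$ (which is the meaning of the notation $\nu^\vee - \langle \lambda, \nu^\vee\rangle$); while $a(\lambda) = 1$ forces $\alpha = \nu$, so $\nu \in R_0$ must be short (placing it in $W_0 \vartheta$, giving $\tau_a = \tau_0$) and $s_a(\lambda) = \lambda - \nu$. In particular the existence of an inversion $a^\ast$ with $a^\ast(\lambda) = 1$ is equivalent to $(\lambda-\nu)_+ = \lambda$, and $a^\ast$ is then uniquely determined as $\nu^\vee + (1-\langle \lambda, \nu^\vee\rangle)$.

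In case (i) no inversion has $a(\lambda) = 1$, so every $a \in R(w_{\lambda-\nu})$ fixes $\lambda$; the standard Coxeter-theoretic decomposition $w_{\lambda-\nu} = s_{b_\ell}\cdots s_{b_1}$ with $b_k \in R(w_{\lambda-\nu})$ (cf. Section \ref{sec2s1}) immediately places $w_{\lambda-\nu}$ in $W_{R,\lambda}$, and $\theta(\lambda-\nu)$ is the indicator of whether the unique candidate $\nu^\vee - \langle \lambda, \nu^\vee\rangle$ for $b(\lambda-\nu) = -2$ belongs to $R(w_{\lambda-\nu})$.

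In case (ii) one uses the exchange condition to factor $w_{\lambda-\nu} = s_j \tilde w$ with $\tilde w = s_j w_{\lambda-\nu} \in W_{R,\lambda}$ and $\ell(\tilde w) = \ell(w_{\lambda-\nu})-1$. The simple affine reflection $s_j$ is identified via $w_{\lambda-\nu}(\lambda) = \lambda + w'_{\lambda-\nu}\nu$ combined with the minimality of $w_{\lambda-\nu}$: the point $w_{\lambda-\nu}(\lambda)$ lies in an alcove adjacent to $A_c$ across exactly one wall, forcing $w'_{\lambda-\nu}\nu = -\alpha_j$ for some simple affine root $\alpha_j$ (which is automatically short since $\nu$ is, hence $\tau_j = \tau_0$). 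Once this identification is in hand, the remaining claims follow by direct computation: $a_j(\lambda) = a^\ast(\tilde w^{-1}\lambda) = a^\ast(\lambda) = 1$; $R(w_{\lambda-\nu}) \setminus R(s_j w_{\lambda-\nu}) = \{-w_{\lambda-\nu}^{-1}(a_j)\} = \{(s_j w_{\lambda-\nu})^{-1}a_j\}$ reduces to the singleton $\{a^\ast\} = \{\nu^\vee + (1-\langle \lambda, \nu^\vee\rangle)\}$; and $\theta(\lambda-\nu) = 0$ because the constant term $-\langle \lambda, \nu^\vee\rangle$ of $\nu^\vee - \langle \lambda, \nu^\vee\rangle$ fails to be a multiple of $c$ in the configurations $\langle \lambda, \nu^\vee\rangle \in \{1, 1-c\}$ that realize case (ii), so no affine root with $b(\lambda-\nu) = -2$ exists. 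The hardest part is the identification of $-w'_{\lambda-\nu}\nu$ as a simple affine root, which rests on the minimality of $w_{\lambda-\nu}$: the parabolic Bruhat decomposition of $W_R$ relative to $W_{R,\lambda}$ combined with the uniqueness of $a^\ast$ among non-stabilizing inversions forces $s_j w_{\lambda-\nu}$ to lie in $W_{R,\lambda}$ for a uniquely determined simple reflection $s_j$.
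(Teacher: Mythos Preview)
Your approach is correct and genuinely different from the paper's. The paper argues inductively by fixing a reduced expression $w_{\lambda-\nu}=s_{j_\ell}\cdots s_{j_1}$ and following the ``chain'' $\lambda-\nu_0\to\lambda-\nu_1\to\cdots$ through the alcove geometry, classifying each step as one of three local cases $(A),(B),(C)$ and using a recurrence for $\theta$; case $(C)$ (the non-stabilizing step) can only occur at the very end because it lands in $P_c^+$. You instead analyze the inversion set $R(w_{\lambda-\nu})=\{a\in R^+:a(\lambda-\nu)<0\}$ globally, pinning down from the numerical constraints that every inversion satisfies $a(\lambda)\in\{0,1\}$ and $a(\lambda-\nu)\in\{-1,-2\}$, with at most one inversion of each exceptional type (value $-2$, resp.\ nonzero at $\lambda$), both having gradient $\nu^\vee$. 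This is cleaner and avoids the step-by-step bookkeeping; the paper's chain argument is more elementary but longer.

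One point in your case~(ii) deserves to be made precise. The phrases ``exchange condition'' and ``parabolic Bruhat decomposition'' do not by themselves produce a \emph{simple} reflection $s_j$ with $s_jw_{\lambda-\nu}\in W_{R,\lambda}$; what actually does the work is your uniqueness of $a^\ast$. Concretely: since $w_{\lambda-\nu}(\lambda)\neq\lambda$ lies in the $W_R$-orbit of $\lambda$, it is not in $A_c$, so some simple $a_j$ has $a_j(w_{\lambda-\nu}\lambda)<0$. Then $b:=-w_{\lambda-\nu}^{-1}a_j\in R(w_{\lambda-\nu})$ with $b(\lambda)=-a_j(w_{\lambda-\nu}\lambda)>0$, forcing $b=a^\ast$ by uniqueness. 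Hence $a_j=-w_{\lambda-\nu}a^\ast$, which gives $\alpha_j=-w'_{\lambda-\nu}\nu$ and $s_jw_{\lambda-\nu}(\lambda)=w_{\lambda-\nu}s_{a^\ast}(\lambda)=w_{\lambda-\nu}(\lambda-\nu)=\lambda$, with $a_j(\lambda)=a^\ast(\lambda)=1$ following immediately. Once this is stated, all remaining claims in (ii) (including $\theta(\lambda-\nu)=0$ via the congruence obstruction $\langle\lambda,\nu^\vee\rangle\equiv 1\pmod c$) are exactly as you describe.
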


\begin{lemma}\label{Iqm-action:lem} For any $f\in\mathcal{C}(P)$, $\lambda\in P_c^+$ and
$\nu\in P_\vartheta^\star$:
 \begin{equation*}
\tau_{w_{\lambda-\nu}} (I_{w_{\lambda-\nu}} f)((\lambda-\nu )_+)=
f(\lambda-\nu) - c_{\lambda ,\nu}(1-\tau^{-2}_0)  f(\lambda) ,
\end{equation*}
with
$c_{\lambda ,\nu}=\theta (\lambda-\nu) e^\vee_\tau (\nu) (h_\tau^\vee)^{-\text{sign} (\langle \lambda ,\nu^\vee\rangle)}$ (cf. Eqs. \eqref{c-coef}--\eqref{hvee}).
\end{lemma}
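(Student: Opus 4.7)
The plan is to induct on $\ell(w_{\lambda-\nu})$. The base case $\ell(w_{\lambda-\nu})=0$ is immediate: then $\lambda-\nu\in P_c^+$, so $I_{w_{\lambda-\nu}}$ acts as the identity, $R(w_{\lambda-\nu})=\emptyset$ forces $\theta(\lambda-\nu)=0$ and hence $c_{\lambda,\nu}=0$, and both sides collapse to $f(\lambda-\nu)$. For the inductive step, I would fix a reduced expression $w_{\lambda-\nu}=s_{j_1}\cdots s_{j_\ell}$ so that $I_{w_{\lambda-\nu}}=I_{j_1}\cdots I_{j_\ell}$, and peel off the leftmost factor via $I_{j_1}=\tau_{j_1}s_{j_1}+(\tau_{j_1}-\tau_{j_1}^{-1})J_{j_1}$. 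The reflection part transports $(\lambda-\nu)_+$ to $s_{j_1}(\lambda-\nu)_+$, which by the stability property \eqref{stable} is the image of $\lambda-\nu$ under the shorter element $w':=s_{j_1}w_{\lambda-\nu}$; the $J_{j_1}$ part produces a finite sum of terms $f(y)$ with $y$ ranging over the $\alpha_{j_1}$-string connecting $(\lambda-\nu)_+$ and $s_{j_1}(\lambda-\nu)_+$, all of which lie inside $\mathrm{Conv}[(\lambda-\nu)_+,\lambda-\nu]$.

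The structural content of Lemma \ref{theta:lem} is what pins down which of these integration terms survive and how they pair against the reflection contribution. In Case (i), where $w_{\lambda-\nu}\in W_{R,\lambda}$, every simple reflection in the reduced expression preserves $\lambda$; iterating the step above, the $\tau_{j_k}$ contributions from the reflection terms telescope against the $(\tau_{j_k}-\tau_{j_k}^{-1})$ contributions from the integrals, collapsing the whole expression to $f(\lambda-\nu)$ except at the unique step (if any) at which the integration range crosses the point $\lambda$ itself. By Lemma \ref{theta:lem}(i), such a crossing occurs precisely when $\nu^\vee-\langle\lambda,\nu^\vee\rangle$ appears as one of the $b_k\in R(w_{\lambda-\nu})$, i.e. when $\theta(\lambda-\nu)=1$; a direct tally of the $\tau$-weights accumulated along the remainder of the reduced expression beyond that step then produces exactly $e_\tau^\vee(\nu)(h_\tau^\vee)^{-\text{sign}\langle\lambda,\nu^\vee\rangle}$, matching the definition of $c_{\lambda,\nu}$. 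In Case (ii), where $(\lambda-\nu)_+=\lambda$ and $s_jw_{\lambda-\nu}\in W_{R,\lambda}$ for the distinguished $s_j$ with $\tau_j=\tau_0$, one first extracts $I_j$ and then invokes Case (i) for the stabilizing factor $s_jw_{\lambda-\nu}$; since $\theta(\lambda-\nu)=0$ here, the identity reduces to $\tau_{w_{\lambda-\nu}}(I_{w_{\lambda-\nu}}f)(\lambda)=f(\lambda-\nu)$, which follows from the stability property once one notes that $s_jw_{\lambda-\nu}\lambda=\lambda$.

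The main obstacle I anticipate is the $\tau$-weight bookkeeping underlying the second paragraph, in particular keeping track of the $\tau_0$-contributions generated whenever the affine reflection $s_0$ appears in the reduced expression of $w_{\lambda-\nu}$. This is exactly the double-affine subtlety flagged at the start of Appendix \ref{appA}: in the purely finite setting the analog of $W_{R,\lambda}$ is a parabolic subgroup of $W_0$, whereas here $W_{R,\lambda}$ typically involves reflections across walls of the alcove that carry the $c$-shift, and these walls are precisely the ones that contribute the factor $h_\tau^\vee=\tau_0^2\, e_\tau^\vee(\vartheta)$ to $c_{\lambda,\nu}$. Verifying that the inductive tally reproduces the exponent $-\text{sign}\langle\lambda,\nu^\vee\rangle$ on $h_\tau^\vee$ requires a careful case split based on whether the crossing of $\lambda$ takes place through the ``finite'' wall $V_{\nu^\vee}$ or through an ``affine'' wall shifted by $c$; the constraint $\nu\in W_0\vartheta$ forced by $\theta(\lambda-\nu)>0$ keeps the case analysis finite, but it is genuinely more delicate than in the affine Hecke setting of \cite{die-ems:unitary}.
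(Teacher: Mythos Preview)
Your instinct to induct on $\ell(w_{\lambda-\nu})$ is right, but you are peeling the reduced expression from the wrong end. Peeling the \emph{leftmost} factor $I_{j_1}$ forces you to evaluate $I_{s_{j_1}w_{\lambda-\nu}}f$ at $s_{j_1}(\lambda-\nu)_+$, which lies \emph{outside} $P_c^+$; there is then no smaller instance of the lemma to which the induction hypothesis applies, and what you describe is not an induction but a direct unwinding whose bookkeeping you defer. The paper instead peels from the \emph{right}: pick $s_j$ with $a_j\in R(w_{\lambda-\nu})$ (so $a_j(\lambda-\nu)<0$), and use the trichotomy (A), (B), (C) recorded just before the proof of Lemma~\ref{theta:lem}. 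In cases (A) and (B) one has $s_j(\lambda-\nu)=\lambda-s_j'\nu$ with $s_j'\nu\in P_\vartheta^\star$ and $w_{\lambda-\nu}=w_{\lambda-s_j'\nu}\,s_j$; the induction hypothesis now applies verbatim with $(\nu,f)$ replaced by $(s_j'\nu,I_jf)$, and the step is completed by evaluating $(I_jf)(\lambda-s_j'\nu)$ and $(I_jf)(\lambda)$ via the explicit formula~\eqref{Ijact} (using $a_j(\lambda-s_j'\nu)\in\{1,2\}$ and $a_j(\lambda)=0$). Case~(C) forces $w_{\lambda-\nu}=s_j$ and is handled directly by the second line of~\eqref{Ijact}.

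Your Case~(ii) argument also needs correction: the stability property~\eqref{stable} yields $\tau_w^{-1}(I_wf)(\mu)=(\mathcal Jf)(w^{-1}\mu)$ for $\mu\in P_c^+$, which is \emph{not} $f(w^{-1}\mu)$ when $w^{-1}\mu\notin P_c^+$; knowing $s_jw_{\lambda-\nu}\in W_{R,\lambda}$ therefore does not by itself give $\tau_{w_{\lambda-\nu}}(I_{w_{\lambda-\nu}}f)(\lambda)=f(\lambda-\nu)$. In the right-peeling scheme the split into Lemma~\ref{theta:lem}'s situations (i)/(ii) never enters the induction explicitly---it is absorbed into the trichotomy (A)/(B)/(C) at each step. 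Finally, the $\tau$-bookkeeping you flag as the main obstacle is precisely what right-peeling handles cleanly: one checks at each step that $c_{\lambda,s_j'\nu}=c_{\lambda,\nu}\tau_j^{-2}$ in case~(A) (splitting into $j>0$ versus $j=0$ to track the $h_\tau^\vee$ contribution) and $c_{\lambda,s_j'\nu}=0$, $c_{\lambda,\nu}=\tau_0^2$ in case~(B).
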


It is readily seen from Lemma \ref{theta:lem}
that for $\lambda\in P^+_c$ and $\nu\in P_\vartheta$:
\begin{equation}\label{theta:range}
\theta (\lambda -\nu)\in \{ 0,1\}\quad \text{and}\quad
\langle \lambda ,\nu^\vee\rangle\in \{-c,0,c\} \  \text{if}\  \theta(\lambda -\nu)>0
\end{equation}
(which means, in particular, that in $\hat{X}^\nu$ \eqref{Xnu}--\eqref{hvee} the factor
$\theta (\lambda_+-\eta)$ of $c_{\lambda,\eta}$ \eqref{c-coef}
only assumes the values $0$ or $1$). To infer the second statement one notices that $\nu^\vee-\langle \lambda,\nu^\vee\rangle \in R^+$
if $\theta(\lambda -\nu)>0$, i.e.
$\nu\in R_0\cap P_\vartheta = W_0\vartheta$ and $\langle \lambda,\nu^\vee\rangle \in c\mathbb{Z}$. The statement now follows
because $|\langle \lambda,\nu^\vee\rangle |\leq c$ when $\lambda\in P_c^+$.

The proof of Lemma \ref{theta:lem} uses an elementary recurrence relation for $\theta$ \eqref{e:theta}.
Let $\mu\in P\setminus P^+_c$ and $j\in \{ 0,\ldots ,n\}$ such that $a_j\in R(w_{\mu})$ (cf. Eq. \eqref{Rwx}). Then
$w_\mu =w_{s_j\mu}s_j$ with $\ell( w_\mu )= \ell (w_{s_j\mu})+1$, and thus
$R(w_\mu)=s_jR(w_{s_j\mu})\cup \{a_j\}$ (cf. \cite[(2.2.4)]{mac:affine}).
From the definition of $\theta$ \eqref{e:theta} it is clear that in this situation:
\begin{equation}  \label{theta-rec}
  \theta(\mu)=
  \begin{cases}
  \theta(s_j\mu)+1& \text{if $a_j(\mu)=-2$},\\
  \theta(s_j\mu) & \text{if $a_j(\mu)\neq-2$}.
  \end{cases}
\end{equation}
Armed with this recurrence for $\theta$ it is seen that for $\lambda$ and $\nu$ as in Lemma \ref{theta:lem} with $\lambda-\nu\not\in P^+_c$, and
$\tilde{\nu}:=\lambda -s_j(\lambda-\nu)$ (so $s_j (\lambda -\nu)=\lambda-\tilde{\nu}$) with $j\in \{ 0,\ldots ,n\}$ such that $a_j\in R(w_{\lambda-\nu})$, we are necessarily in one of the following three cases:
\begin{itemize}
\item[$(A)$]$a_j(\lambda)=0$ and $ \langle \nu, \alpha^\vee_j\rangle =1$ (so $a_j(\lambda-\nu)=-1$). Then
$s_j\in W_{R,\lambda}$, so
 $\tilde{\nu}=s'_j\nu$ and  $\theta(\lambda-\nu)\stackrel{\text{Eq.}~\eqref{theta-rec}}{=}\theta(\lambda-s'_j\nu)$.
\item[$(B)$] $a_j(\lambda)=0$ and $\langle \nu,\alpha^\vee_j\rangle =2$ (so $a_j(\lambda-\nu)=-2$). Then
$s_j\in W_{R,\lambda}$ and $\nu=\alpha_j$,
so $\tilde{\nu}=s'_j\nu=-\alpha_j$ and $\theta(\lambda-\nu)\stackrel{\text{Eq.}~\eqref{theta-rec}}{=}\theta(\lambda-s'_j\nu)+1$.
\item[$(C)$] $a_j(\lambda)=1$ and $\langle\nu,\alpha_j^\vee\rangle=2$  (so $a_j(\lambda-\nu)=-1$).  Then
$\nu=\alpha_j$ and $\tilde{\nu}=0$, so
$w_{\lambda-\nu}=s_j$ and $\theta(\lambda-\nu)\stackrel{\text{Eq.}~\eqref{theta-rec}}{=}\theta (\lambda)=0$.
\end{itemize}
It is moreover manifest that the cases $(B)$ and $(C)$ only occur when $\nu\in R_0\cap P_\vartheta=W_0\vartheta$ and
(thus) $\tau_j=\tau_0$.

\subsection{Proof of Lemma~\ref{theta:lem}}
It is sufficient to restrict attention to the case that
$\lambda -\nu\not\in P^+_c$ (as for $\lambda-\nu\in P^+_c$ the lemma is trivial). For a reduced
decomposition
$w_{\lambda-\nu}=s_{j_\ell}\cdots s_{j_1}$ with $\ell =\ell (w_{\lambda-\nu})\geq 1$, we write
$\nu_k:= s'_{j_{k}}\cdots s'_{j_1} \nu$ for $k=0,\ldots ,\ell$ and
$b_k=\beta_k^\vee+ r_k c:= s_{j_1}\cdots s_{j_{k}}a_{j_{k+1}}$ for $k=0,\ldots ,\ell-1$ (with the conventions that
$\nu_0:=\nu$ and $b_0:=a_{j_1}$).
This means that $R(w_{\lambda-\nu})=\{ b_0,\ldots ,b_{\ell-1}\}$ (cf. \cite[(2.2.9)]{mac:affine}).
It is immediate from the observations $(A)$--$(C)$ above that the minimal sequence of weights taking $\lambda-\nu$ to $(\lambda-\nu)_+$ by successive application of the simple reflections
in our reduced decomposition of $w_{\lambda-\nu}$ is either of the form
(situation $i)$):
\begin{equation*}\label{chain1}
\lambda-\nu=\lambda-\nu_0\stackrel{s_{j_1}}{\longrightarrow}\lambda-\nu_1\stackrel{s_{j_2}}{\longrightarrow}\cdots  \stackrel{s_{j_{\ell-1}}}{\longrightarrow}
\lambda-\nu_{\ell-1} \stackrel{s_{j_\ell}}{\longrightarrow} \lambda-\nu_\ell =(\lambda-\nu)_+ ,
\end{equation*}
or of the form (situation $ii)$):
\begin{equation*}\label{chain2}
\lambda-\nu=\lambda-\nu_0\stackrel{s_{j_1}}{\longrightarrow}\lambda-\nu_1\stackrel{s_{j_2}}{\longrightarrow}\cdots  \stackrel{s_{j_{\ell-1}}}{\longrightarrow}
\lambda-\nu_{\ell-1} \stackrel{s_{j_\ell}}{\longrightarrow} \lambda =(\lambda-\nu)_+ ,
\end{equation*}
because case $(C)$ can at most occur
at the last step:  $\lambda-\nu_{\ell-1} \stackrel{s_{j_\ell}}{\longrightarrow} \lambda= (\lambda-\nu)_+$ (as
this case takes us back to $P^+_c$).
In situation $i)$ (i.e. case $(C)$ does not occur at the last step) we have that
$w_{\lambda-\nu}\in W_{R,\lambda}$ and $(\lambda-\nu)_+\neq \lambda $, whereas in situation $ii) $ (i.e. case $(C)$ does occur at the last step) we have that
 $s_{j_\ell}w_{\lambda-\nu}=s_{j_{\ell-1}}\cdots s_{j_1}\in W_{R,\lambda}$,
 $(\lambda-\nu)_+=\lambda$, and $\tau_{j_\ell}=\tau_0$.
Moreover, in the latter situation
$s^\prime_{j_{\ell}}w'_{\lambda-\nu}\nu=\nu_{\ell -1}=\alpha_{j_\ell}$, i.e.
 $\nu =(s'_{j_{\ell}}w'_{\lambda-\nu})^{-1}\alpha_{j_\ell}=(s'_{j_1}\cdots s'_{j_{\ell-1}}\alpha_{j_\ell})=\beta_{\ell-1}$, which implies that
$$\langle\lambda,\nu^\vee\rangle+r_{\ell -1}c =b_{\ell-1}(\lambda)=((s_{j_{\ell}}w_{\lambda-\nu})^{-1}a_{j_\ell})(\lambda)
=a_{j_\ell}(s_{j_{\ell-1}}\cdots s_{j_1}\lambda)=a_{j_{\ell}}(\lambda)=1,$$ i.e.
\begin{equation}\label{nuvee-1}
\nu^\vee+(1-\langle \lambda,\nu^\vee\rangle)=b_{\ell -1}\in R(w_{\lambda-\nu})  \setminus R(s_{j_\ell} w_{\lambda-\nu}).
\end{equation}

It remains to compute $\theta (\lambda-\nu )$. Since $\theta ((\lambda-\nu)_+)=0$, it is clear from the observations $(A)$--$(C)$ that $\theta (\lambda-\nu )$ is equal to the number of
 times case $(B)$ occurs in the above sequences, i.e. the number of times that
 $ \langle \nu_k,\alpha^\vee_{j_{k+1}} \rangle =2$ for $k=0,\ldots ,\ell^\prime -1$, where
 $\ell^\prime =\ell$ in situation $i)$ and $\ell^\prime=\ell -1$ in situation $ii)$.
Since for $k=0,\ldots ,\ell^\prime -1$: $$\langle \nu_k,\alpha^\vee_{j_{k+1}} \rangle =2\Leftrightarrow\langle \nu ,\beta^\vee_k \rangle =2\Leftrightarrow\nu=\beta_k,$$ and
$$ \langle\lambda,\beta_{k}^\vee\rangle+r_{k} c=b_{k}(\lambda)=(s_{j_1}\cdots s_{j_{k}}a_{j_{k+1}})(\lambda)= a_{j_{k+1}}(s_{j_{k}}\cdots s_{j_1}\lambda)=a_{j_{k+1}}(\lambda)=0,$$ i.e.
$$
\beta_{k}^\vee-\langle\lambda,\beta_{k}^\vee\rangle =b_{k}\in R(w_{\lambda-\nu}) ,
$$
it follows that $\theta (\lambda-\nu)$ is equal to $1$ or $0$ depending whether $\nu^\vee-\langle\lambda,\nu^\vee\rangle\in R(w_{\lambda-\nu})$ or $\nu^\vee-\langle\lambda,\nu^\vee\rangle\not\in R(w_{\lambda-\nu})$, respectively. In particular, in situation $ii)$ we have that $\theta(\lambda-\nu)=0$,
because by Eq. \eqref{nuvee-1} (and thus $\langle \lambda,\nu^\vee\rangle-1\in c\mathbb{Z}$) the fact that $\nu^\vee-\langle\lambda,\nu^\vee\rangle$ belongs to $ R(w_{\lambda-\nu})$ (and thus
$\langle \lambda,\nu^\vee\rangle\in c\mathbb{Z}$) would imply that $c=1$, which
contradicts our assumption that $c>1$.

\subsection{Proof of Lemma~\ref{Iqm-action:lem}}

The explicit action of $I_j$ implied by Eqs. \eqref{Ia:action}--\eqref{Ja:action} simplifies close to (the positive side of) the hyperplane $V_j$
to
\begin{equation}\label{Ijact}
(I_jf)(\mu )
 =
 \begin{cases}\tau_j f(\mu )&\text{if}\ a_j(\mu) =0 \\
 \tau_j^{-1}f(s_j\mu ) =\tau_j^{-1} f(\mu-\alpha_j) &\text{if}\ a_j(\mu) =1 \\
\tau_j^{-1}f(\mu-2\alpha_j) -(\tau_j-\tau_j^{-1})f(\mu-\alpha_j) &\text{if}\ a_j(\mu) = 2
\end{cases}
\end{equation}
(for $f\in C(P)$, $\mu\in P$, $j=0,\ldots, n$).
With the aid of these formulas
the proof  of the Lemma follows by induction on $\ell (w_{\lambda-\nu})$ starting from the trivial base
$\lambda-\nu\in P^+_c$.

Specifically, let $\ell (w_{\lambda-\nu})>1$ and $s_j$ ($0\leq j\leq n$) such that
$\ell (w_{\lambda-\nu}s_j)=\ell (w_{\lambda-\nu})-1$ (i.e. $a_j\in R(w_{\lambda-\nu})$).
From the observations before the proof of Lemma \ref{theta:lem} it is clear that
$w_{\lambda-\nu}s_j=w_{s_j(\lambda-\nu)}$  with either
$s_j(\lambda -\nu)=\lambda-s'_j\nu$ (cases $(A)$ and $(B)$) or
$s_j(\lambda -\nu)=\lambda (\in P^+_c)$  (case $(C)$). In the latter situation $w_{\lambda-\nu}=s_j$ and the statement
of the lemma reduces to the second line of  Eq. \eqref{Ijact} (with $\mu=\lambda$).
Moreover, in the cases $(A)$ and $(B)$  invoking of the induction hypothesis yields
\begin{align*}
\tau_{w_{\lambda-\nu}}(I_{w_{\lambda-\nu}} f)((\lambda-\nu)_+)&=
\tau_j \tau_{w_{\lambda-s'_j\nu}} (I_{w_{\lambda-s'_j\nu}} I_jf)((\lambda-s'_j\nu)_+)  \\  &=
\tau_j(I_j f) (\lambda-s'_j\nu) - \tau_j c_{\lambda,s'_j\nu} (1-\tau_0^{-2})(I_jf)(\lambda)
\end{align*}
(where we have used that $(\lambda-s'_j\nu)_+=(\lambda-\nu)_+$).

In case $(A)$, one has that $\tau_j(I_j f) (\lambda-s'_j\nu) =f(\lambda -\nu)$ (by the second line of Eq. \eqref{Ijact}
with $\mu=\lambda-s'_j\nu$) and $(I_jf)(\lambda )=\tau_j f(\lambda )$ (by the first line of Eq. \eqref{Ijact}
with $\mu=\lambda$), which completes the induction step for this situation as now
$c_{\lambda ,s_j^\prime\nu}=c_{\lambda ,\nu}\tau_j^{-2} $.
Indeed, clearly
$\theta (\lambda-s'_j\nu)=\theta (\lambda-\nu)$ with for $j>0$: $e^\vee_\tau(s_j\nu)=e^\vee_\tau(\nu)\tau_j^{-2}$
and $\langle \lambda,s_j \nu^\vee \rangle=
\langle s_j\lambda,\nu^\vee \rangle=\langle \lambda,\nu^\vee \rangle$, whereas for $j=0$:
$e^\vee_\tau(s'_0\nu)=e^\vee_\tau(\nu-\alpha_0)=e^\vee_\tau(\nu+\vartheta)=e^\vee_\tau(\nu)h^\vee_\tau \tau_0^{-2}$
and---assuming $\theta (\lambda -\nu)>0$---$\langle \lambda,s'_0 \nu^\vee \rangle=
\langle s'_0\lambda,\nu^\vee \rangle=\langle \lambda+c\alpha_0,\nu^\vee \rangle =
\langle \lambda,\nu^\vee \rangle+c\langle \alpha_0,\nu^\vee \rangle=\langle \lambda,\nu^\vee \rangle+c$ (upon recalling that
$\nu\in W_0\vartheta$
when $\theta (\lambda -\nu)>0$ and thus $\langle \alpha_0,\nu^\vee \rangle=\langle \alpha_0^\vee,\nu \rangle=1$), i.e.
$\text{sign} (\langle \lambda,s_0^\prime \nu\rangle)=\text{sign} (\langle \lambda, \nu\rangle)+1$ (cf. Eq. \eqref{theta:range}).

In case $(B)$, one has that $\tau_j(I_j f) (\lambda-s'_j\nu) =f(\lambda-\nu)-\tau_0^2(1-\tau_0^{-2})f(\lambda )$ (by the third line of Eq. \eqref{Ijact} with $\mu=\lambda-s'_j\nu$ and the fact that
$\tau_j=\tau_0$) and
$c_{\lambda,s'_j\nu}=0$ (since $0\leq\theta (\lambda-s'_j\nu )<\theta (\lambda-\nu)\leq 1$), which completes the induction step for this situation as now
$c_{\lambda ,\nu}=\tau_0^2$.
Indeed, clearly $\theta (\lambda-\nu)=1$ with for $j>0$:
$e_\tau^\vee(\nu)=e_\tau^\vee(\alpha_j)=\tau_j^2=\tau_0^2$ (as $e_\tau^\vee (\alpha_j)=  e_\tau^\vee(s_j\alpha_j)\tau_j^{2\langle \alpha_j,\alpha_j^\vee\rangle}
= e_\tau^\vee (-\alpha_j)\tau_j^4=\tau_j^4/e_\tau^\vee (\alpha_j)$) and $\langle \lambda ,\nu^\vee\rangle=\langle \lambda,\alpha^\vee_j\rangle=0$, whereas for $j=0$: $e_\tau^\vee (\nu)=e_\tau^\vee (\alpha_0)=e_\tau^\vee (-\vartheta)=\tau_0^2/h^\vee_\tau$ and $\langle \lambda,\nu^\vee\rangle=\langle \lambda,\alpha_0^\vee\rangle= a_0(\lambda)-c=-c<0$.

\subsection{Proof of Eq.~\eqref{intertwining-property}}

We are now in a position to verify Eq. \eqref{intertwining-property} by making the action of the intertwining operator  on $f\in\mathcal{C}(P)$ explicit:
\begin{equation}\label{aJact}
a_{\lambda,\nu}(\mathcal{J}f) (\lambda-\nu)
\stackrel{\text{Eq.}~\eqref{stable}}{=} a_{\lambda,\nu}
\tau_{w_{w_\lambda (\lambda-\nu)}w_\lambda}^{-1}(I_{w_{w_\lambda (\lambda-\nu)}w_\lambda}f)((\lambda-\nu)_+)  .
\end{equation}
Throughout it will be used that $w_\lambda (\lambda-\nu)=\lambda_+-w_\lambda^\prime\nu$.

For $(\lambda-\nu)_+\neq\lambda_+$,
Lemma \ref{theta:lem} (with $\lambda$ and $\nu$ replaced by
$\lambda_+$ and $w'_\lambda\nu$) ensures that $w_{w_\lambda (\lambda-\nu)}\in W_{R,\lambda_+}$, whence
$\ell (w_{w_\lambda (\lambda-\nu)}w_\lambda)=\ell (w_{w_\lambda (\lambda-\nu)})+\ell (w_\lambda)$
and we may rewrite the expression in question as:
\begin{align*}
\tau_{w_\lambda} ^{-1} &\tau_{w_{w_\lambda (\lambda-\nu)}} (I_{w_{w_\lambda (\lambda-\nu)}} I_{w_\lambda}f)((\lambda-\nu)_+)
  \\
   \stackrel{\text{Lem.}~\ref{Iqm-action:lem}}{=} &\tau_{w_\lambda} ^{-1}\left(
(I_{w_\lambda}f)(w_\lambda (\lambda -\nu) )
 - c_{\lambda_+,w'_\lambda \nu} (1-\tau_0^{-2})
(I_{w_\lambda} f)(\lambda_+)
\right)  \\
 =\ \ \ \, &\tau_{w_\lambda} ^{-1}
(I_{w_\lambda}f)(w_\lambda (\lambda -\nu) )-b_{\lambda,\nu}(1-\tau_0^{-2})(\mathcal{J}f)(\lambda) ,
\end{align*}
which proves Eq. \eqref{intertwining-property} when $(\lambda-\nu)_+\neq\lambda_+$.

Similarly, for $(\lambda-\nu)_+= \lambda_+$ we rewrite the RHS of Eq. \eqref{aJact} as:
\begin{align*}
& a_{\lambda ,\nu} \tau_{w_{w_\lambda (\lambda-\nu)}w_\lambda}^{-1}(I_{w_{w_\lambda (\lambda-\nu)}w_\lambda}f)(\lambda_+)
   \\
& \stackrel{\text{(i)}}{=}     \tau_{w_\lambda}^{-1}\tau_{w_{w_\lambda (\lambda-\nu)}}\left(
(I_jI_{s_jw_{w_\lambda (\lambda-\nu)} w_\lambda}f)(\lambda_+)  \right. \\
&\qquad \qquad \ \ \ \
  \left.  -\chi((s_jw_{w_\lambda (\lambda-\nu)} w_\lambda)^{-1}a_j)  (\tau_j-\tau_j^{-1})
(I_{s_jw_{w_\lambda (\lambda-\nu)}w_\lambda} f)(\lambda_+)
\right)   \\
&\stackrel{\text{(ii)}}{=}  \tau_{w_\lambda} ^{-1}\Bigl(
(I_{w_\lambda}f)(w_\lambda (\lambda -\nu) )  \\
&\qquad \qquad \ \ \ \
  \left. -\tau_{w_{w_\lambda (\lambda-\nu)}}^2 \chi(\nu^\vee+(1-\langle \lambda,\nu^\vee\rangle))(1-\tau_0^{-2})
(I_{w_\lambda} f)(\lambda_+)
\right)  \\
&= \tau_{w_\lambda} ^{-1}
(I_{w_\lambda}f)(w_\lambda (\lambda -\nu) )-b_{\lambda,\nu}(1-\tau_0^{-2})(\mathcal{J}f)(\lambda) ,
\end{align*}
which proves Eq. \eqref{intertwining-property} when $(\lambda-\nu)_+=\lambda_+$.
Here the equality
$\text{(i)}$ hinges on Eq.~\eqref{TjTw} (for any $j\in \{0,\ldots ,n\}$), whereas for inferring
equality $\text{(ii)}$ we
pick $j$ as in part $ii)$ of Lemma \ref{theta:lem} (with $\lambda$ and $\nu$ replaced by
$\lambda_+$ and $w'_\lambda\nu$). Then
$s_jw_{w_\lambda (\lambda-\nu)}\in W_{R,\lambda_+}$ (so $\ell (s_jw_{w_\lambda (\lambda-\nu)}w_\lambda)=\ell (s_jw_{w_\lambda (\lambda-\nu)})+\ell (w_\lambda )$) and
$(s_jw_{w_\lambda (\lambda-\nu)})^{-1}a_j\in R (w_{w_\lambda (\lambda-\nu)})$
(so $\ell (w_{w_\lambda (\lambda-\nu)})=\ell (s_jw_{w_\lambda (\lambda-\nu)})+1$). Hence, in this situation
the first term on the RHS of equality $\text{(i)}$ may be rewritten as:
\begin{subequations}
\begin{eqnarray}
\tau_{w_{w_\lambda (\lambda-\nu)} }(I_jI_{s_jw_{w_\lambda (\lambda-\nu)} w_\lambda}f)(\lambda_+)&=& \tau_{w_{w_\lambda (\lambda-\nu)} }(I_{w_{w_\lambda (\lambda-\nu)} }I_{ w_\lambda}f)((\lambda-\nu)_+) \nonumber\\
& \stackrel{\text{Lem.}~\ref{Iqm-action:lem}}{=}& (I_{w_\lambda}f)(w_\lambda (\lambda -\nu) ) , \label{rw1}
\end{eqnarray}
where it was used that $c_{\lambda_+,w^\prime_\lambda\nu}=0$ as
$\theta (\lambda_+-w^\prime_\lambda\nu)=0$ (by Lemma \ref{theta:lem}). For the second term one deduces in a similar way that:
\begin{eqnarray}
\lefteqn{(I_{s_jw_{w_\lambda (\lambda-\nu)}w_\lambda} f)(\lambda_+) = (I_{s_jw_{w_\lambda (\lambda-\nu)}}I_{w_\lambda} f)(\lambda_+)
\stackrel{\text{Eq.}~\eqref{stable}}{=}} && \nonumber \\
&&\tau_{s_jw_{w_\lambda (\lambda-\nu)}} (I_{w_\lambda} f)(\lambda_+)=\tau_{w_{w_\lambda (\lambda-\nu)}}\tau_j^{-1} (I_{w_\lambda} f)(\lambda_+) . \label{rw2}
\end{eqnarray}
\end{subequations}
Performing the substitutions \eqref{rw1} and \eqref{rw2} on the RHS of equality $\text{(i)}$ gives rise to equality $\text{(ii)}$.
Here one uses in addition that---for this particular choice of $j$---Lemma \ref{theta:lem} guarantees that
$\tau_j=\tau_0$ and
\begin{equation}\label{wd}
(s_jw_{w_\lambda (\lambda-\nu)} w_\lambda)^{-1}a_j=
w_\lambda^{-1}\bigl( w_\lambda^\prime\nu^\vee+(1-\langle \lambda_+,w_\lambda^\prime\nu^\vee\rangle)\bigr)
=\nu^\vee+(1-\langle \lambda,\nu^\vee\rangle) .
\end{equation}

\vspace{2ex}
\begin{remark}\label{root:rem}
It is read-off from Eq. \eqref{wd} that for any $\lambda\in P$ and $\nu\in P_\vartheta^\star$
such that $(\lambda-\nu)_+=\lambda_+$, one has that
$\nu^\vee+(1-\langle \lambda,\nu^\vee\rangle)\in R$ (and thus $\nu\in R_0\cap P_\vartheta = W_0\vartheta$ and $\langle \lambda,\nu^\vee\rangle -1\in  c\mathbb{Z}$).
\end{remark}

\section{The adjoint of $L_\omega$}\label{appB}

In this appendix it is shown that
for all $\lambda\in P$ and $\nu\in P_\vartheta^\star$:
\begin{equation}\label{sym-rel}
\tau_{w_{w_\lambda(\lambda-\nu)}w_\lambda}\tau_{w_{w_\lambda(\lambda-\nu)}}\tau_{w_\lambda}
=\tau_{w_{w_{\lambda-\nu}(\lambda)}w_{\lambda-\nu}}\tau_{w_{w_{\lambda-\nu}(\lambda)}}\tau_{w_{\lambda-\nu}} .
\end{equation}
This relation for the length multiplicative function $\tau$ lies at the basis of the unitarity in Theorem \ref{unitary-Z:thm} and the consequent (self-)adjointness relation in Corollary \ref{unitary-Z:cor}.
The proof of Eq. \eqref{sym-rel} hinges on two lemmas elucidating the geometric interpretation of the group element $w_{w_\lambda(\lambda-\nu)}w_\lambda$
for $(\lambda-\nu)_+\neq \lambda_+$ (i.e. $\lambda$ and $\lambda-\nu$ lie on the same closed alcove $wA_c$, $w\in W_R$) and for $(\lambda-\nu)_+= \lambda_+$ (i.e. $\lambda$ and $\lambda-\nu$ are separated by a (unique) wall $V_a$, $a\in R^+$), respectively.

\begin{lemma}\label{same-alcove:lem} Let $\lambda\in P$ and $\nu\in P_\vartheta^\star$ such that $(\lambda-\nu)_+\neq \lambda_+$. Then $w_{w_\lambda(\lambda-\nu)}w_\lambda$  is the {\em unique} element in $W_R$ of minimal length mapping both $\lambda-\nu$ and $\lambda$ to $P^+_c$, i.e. $w_{w_\lambda(\lambda-\nu)}\in W_{R;\lambda_+}$ and
\begin{equation}\label{translation-inv}
w_{w_{\lambda-\nu}(\lambda)}w_{\lambda-\nu}=w_{w_\lambda(\lambda-\nu)}w_\lambda .
\end{equation}
\end{lemma}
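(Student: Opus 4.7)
The plan is to first reduce the statement to a direct application of Lemma \ref{theta:lem} via the identification $\lambda_+ - w_\lambda^\prime\nu = w_\lambda(\lambda-\nu)$. Since $w_\lambda^\prime \in W_0$ and $P_\vartheta$ is $W_0$-stable, $w_\lambda^\prime\nu \in P_\vartheta^\star$; moreover $(\lambda_+ - w_\lambda^\prime\nu)_+ = (\lambda-\nu)_+ \neq \lambda_+$. Thus part $i)$ of Lemma \ref{theta:lem}, applied with $\lambda_+$ in place of $\lambda$ and $w_\lambda^\prime\nu$ in place of $\nu$, immediately yields $w_{w_\lambda(\lambda-\nu)}\in W_{R,\lambda_+}$.

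Next I would verify that $\tilde{w}_0 := w_{w_\lambda(\lambda-\nu)}w_\lambda$ does the job: by the previous step it fixes $\lambda_+$, so $\tilde{w}_0 \lambda = \lambda_+$, while by definition of $w_{w_\lambda(\lambda-\nu)}$ it sends $w_\lambda(\lambda-\nu)$ to $(\lambda-\nu)_+$, hence $\tilde{w}_0(\lambda-\nu) = (\lambda-\nu)_+$. Both images lie in $P_c^+$. For the minimality and uniqueness, let $\tilde{w}\in W_R$ be any element mapping both $\lambda$ and $\lambda-\nu$ into the fundamental domain $P_c^+$; since $P_c^+$ is a set of $W_R$-representatives we must have $\tilde{w}\lambda = \lambda_+$ and $\tilde{w}(\lambda-\nu) = (\lambda-\nu)_+$. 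Writing $\tilde{w} = v\, w_\lambda$ with $v\in W_{R,\lambda_+}$ (by minimality of $w_\lambda$) and then $v = u\, w_{w_\lambda(\lambda-\nu)}$ with $u \in W_{R,(\lambda-\nu)_+}$ (by minimality of $w_{w_\lambda(\lambda-\nu)}$), part $i)$ of Lemma \ref{theta:lem} forces $u \in W_{R,\lambda_+}\cap W_{R,(\lambda-\nu)_+}$. Two applications of Eq. \eqref{minimal-right-coset} then give
\begin{equation*}
\ell(\tilde{w}) \;=\; \ell(v) + \ell(w_\lambda) \;=\; \ell(u) + \ell\bigl(w_{w_\lambda(\lambda-\nu)}\bigr) + \ell(w_\lambda),
\end{equation*}
which is minimized precisely when $u = 1$, in which case $\tilde{w} = \tilde{w}_0$; this establishes both minimality and uniqueness.

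Finally, the translation invariance Eq. \eqref{translation-inv} follows by symmetry: replace $(\lambda,\nu)$ by $(\lambda-\nu, -\nu)$; the hypothesis $(\lambda-\nu)_+ \neq \lambda_+$ and the fact that $-\nu \in P_\vartheta^\star$ show that the same uniqueness characterization produces $w_{w_{\lambda-\nu}(\lambda)}w_{\lambda-\nu}$ as the \emph{unique} minimal-length element of $W_R$ mapping both $\lambda-\nu$ and $\lambda$ into $P_c^+$. Equating the two characterizations yields Eq. \eqref{translation-inv}.

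The only delicate point I foresee is the length bookkeeping in the uniqueness argument, specifically ensuring that the factorizations $v = u\, w_{w_\lambda(\lambda-\nu)}$ and $\tilde{w} = v\, w_\lambda$ both split lengths additively; this rests on having $u$ and $v$ in the correct stabilizer subgroups, which is why it is essential to first isolate $w_{w_\lambda(\lambda-\nu)}\in W_{R,\lambda_+}$ from Lemma \ref{theta:lem} before proceeding to the coset argument.
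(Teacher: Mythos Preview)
Your proposal is correct and follows essentially the same approach as the paper: reduce to Lemma~\ref{theta:lem} to obtain $w_{w_\lambda(\lambda-\nu)}\in W_{R,\lambda_+}$, factor an arbitrary $\tilde{w}$ through $w_\lambda$ and then through $w_{w_\lambda(\lambda-\nu)}$ using the minimal right-coset property~\eqref{minimal-right-coset}, and conclude by the symmetry $\lambda\leftrightarrow\lambda-\nu$. The paper's proof compresses your two-step coset factorization into a single inequality $\ell(ww_\lambda^{-1})\geq \ell(w_{w_\lambda(\lambda-\nu)})$ followed by the uniqueness of $w_{w_\lambda(\lambda-\nu)}$, but the content is identical.
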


\begin{lemma}\label{neighbor-alcove:lem} Let $\lambda\in P$ and $\nu\in P_\vartheta^\star$ such that $(\lambda-\nu)_+= \lambda_+$. Then
\begin{subequations}
\begin{equation}\label{simple-wall}
-( w_{w_\lambda(\lambda-\nu)}w_\lambda)^\prime \nu=\alpha_j=(w_{w_{\lambda-\nu}(\lambda)}w_{\lambda-\nu})^\prime\nu
\end{equation}
for some $j\in\{0,\dots, n\}$ with $\tau_j=\tau_0$, and
 $w_{w_\lambda(\lambda-\nu)}w_\lambda$ is the {\em unique} element in $W_R$ of minimal length mapping $\lambda-\nu$ to $\lambda_+$ and $\lambda$ to $s_j\lambda_+$, i.e.
 $s_j w_{w_\lambda (\lambda-\nu)}\in W_{R,\lambda_+}$  and
 \begin{equation}\label{translation-cov}
 w_{w_{\lambda-\nu}(\lambda)}w_{\lambda-\nu}=s_jw_{w_\lambda(\lambda-\nu)}w_\lambda
 \end{equation}
 \end{subequations}
with
$\ell(w_{w_\lambda(\lambda-\nu)})=\ell(s_jw_{w_\lambda(\lambda-\nu)})+1$ and
$\ell(w_{w_{\lambda-\nu}(\lambda)})=\ell(s_jw_{w_{\lambda-\nu}(\lambda)})+1$.
\end{lemma}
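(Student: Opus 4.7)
The plan is to deduce both (\ref{simple-wall}) and (\ref{translation-cov}) from two parallel applications of Lemma~\ref{theta:lem} part $ii)$: one to the pair $(\lambda_+,w_\lambda'\nu)$, the other to $(\lambda_+,-w_{\lambda-\nu}'\nu)$. Both pairs lie in $P_c^+\times P_\vartheta^\star$ (since $P_\vartheta$ is $W_0$-invariant and closed under negation as a saturated set, and $\nu\neq 0$), and both satisfy the neighbor-alcove hypothesis of part $ii)$: the identities $\lambda_+-w_\lambda'\nu=w_\lambda(\lambda-\nu)$ and $\lambda_+-(-w_{\lambda-\nu}'\nu)=w_{\lambda-\nu}\lambda$ show that both shifted weights have positive part equal to $\lambda_+$, by the standing assumption $(\lambda-\nu)_+=\lambda_+$.

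The first application then produces an index $j\in\{0,\dots,n\}$ with $\tau_j=\tau_0$, the stabilizer membership $s_jw_{w_\lambda(\lambda-\nu)}\in W_{R,\lambda_+}$, the length relation $\ell(w_{w_\lambda(\lambda-\nu)})=\ell(s_jw_{w_\lambda(\lambda-\nu)})+1$, the wall condition $a_j(\lambda_+)=1$, and the derivative identity $w_{w_\lambda(\lambda-\nu)}'(w_\lambda'\nu)=-\alpha_j$ --- which rewritten gives the first equality in (\ref{simple-wall}). Setting $w:=w_{w_\lambda(\lambda-\nu)}w_\lambda$, a short direct computation next shows $w(\lambda-\nu)=\lambda_+$ and $w\lambda=s_j\lambda_+$ (the latter using the stabilizer membership). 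The parallel second application yields an index $j'$ with $\tau_{j'}=\tau_0$ and entirely analogous data; in particular, $\tilde w:=w_{w_{\lambda-\nu}(\lambda)}w_{\lambda-\nu}$ satisfies $\tilde w\lambda=\lambda_+$, $\tilde w(\lambda-\nu)=s_{j'}\lambda_+$, and $\tilde w'\nu=\alpha_{j'}$.

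The crux, and main obstacle, is then the identification $\tilde w=s_jw$: once this is in hand, the computation $(s_jw)'\nu=s_j'(-\alpha_j)=\alpha_j$ compared with $\tilde w'\nu=\alpha_{j'}$ forces $\alpha_{j'}=\alpha_j$ (hence $j'=j$) and delivers the second equality in (\ref{simple-wall}), while Eq.~(\ref{translation-cov}) is precisely $\tilde w=s_jw$. Both $s_jw$ and $\tilde w$ send $\lambda$ to $\lambda_+$ and so differ by an element of $W_{R,\lambda_+}$, but this stabilizer is in general nontrivial when $\lambda_+$ lies on a facet of $A_c$, so the identification is not automatic. My plan to close the gap is to exploit the minimality of $w_{w_\lambda(\lambda-\nu)}$ and $w_{w_{\lambda-\nu}(\lambda)}$ as shortest representatives in their respective cosets of the relevant stabilizer (Eqs.~(\ref{minimal-left-coset})--(\ref{minimal-right-coset})), combined with careful length bookkeeping via Eq.~(\ref{ls1}), to pin $w$ and $\tilde w$ down uniquely up to a stabilizer correction, and then to match them by invoking the derivative constraint $(s_jw_{w_\lambda(\lambda-\nu)})'(w_\lambda'\nu)=\alpha_j$, which singles out $v:=s_jw_{w_\lambda(\lambda-\nu)}$ uniquely within $W_{R,\lambda_+}$. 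The uniqueness/minimality assertion in the lemma will then fall out as a byproduct of this identification.
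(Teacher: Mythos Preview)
Your setup via two parallel applications of Lemma~\ref{theta:lem} part $ii)$ matches the paper exactly, and you have correctly isolated the crux as the identification $\tilde w = s_j w$ together with $j'=j$.

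The gap is in your proposed resolution of the crux. The claim that the derivative constraint $v'(w_\lambda'\nu)=\alpha_j$ ``singles out $v:=s_jw_{w_\lambda(\lambda-\nu)}$ uniquely within $W_{R,\lambda_+}$'' is not justified: the equation $v'\mu=\alpha_j$ determines $v'$ only up to the stabilizer of $\mu$ in the derivative image of $W_{R,\lambda_+}$, and that stabilizer need not be trivial. Your appeal to ``length bookkeeping'' is not spelled out enough to see how it would remove this ambiguity. There is also a circularity in your order of operations: you want to prove $\tilde w=s_jw$ first and then read off $j'=j$, but before knowing $j'=j$ you cannot even assert that $s_jw$ and $\tilde w$ have the same image on $\lambda-\nu$ (one gives $s_j\lambda_+$, the other $s_{j'}\lambda_+$), so the comparison you need is not yet set up.

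The paper proceeds via a geometric ingredient you are missing. For \emph{any} $w\in W_R$ with $w(\lambda-\nu)=\lambda_+$ and $w\lambda=s_j\lambda_+$, one has $w^{-1}a_j=(w_{w_\lambda(\lambda-\nu)}w_\lambda)^{-1}a_j$, because both equal the unique positive affine root whose hyperplane separates $\lambda$ from $\lambda-\nu$ (indeed $w^{-1}V_j$ is that hyperplane, and $(w')^{-1}\alpha_j=-\nu$). This pins down the sign in $\ell(s_jw)=\ell(w)+\text{sign}(w^{-1}a_j)$ uniformly across all candidates $w$, and a chain of length inequalities then forces $w=w_{w_\lambda(\lambda-\nu)}w_\lambda$, establishing the minimality/uniqueness claim directly. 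After swapping $\lambda\leftrightarrow\lambda-\nu$ to produce your $j'$ (the paper calls it $k$), the equality $j'=j$ is proved by a separate argument: the segments $\text{Conv}\{\lambda_+,s_j\lambda_+\}$ and $\text{Conv}\{\lambda_+,s_{j'}\lambda_+\}$ lie in the same $W_R$-orbit, and since half of each lies in $A_c$ one concludes $s_j\lambda_+=s_{j'}\lambda_+$. Eq.~(\ref{translation-cov}) then follows from a second length-inequality chain, again using the separating-hyperplane identity to control signs.
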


For $(\lambda-\nu)_+\neq \lambda_+$ the relation in Eq. \eqref{sym-rel} is manifest from Lemma \ref{same-alcove:lem} and the symmetry in $\lambda$ and $\lambda-\nu$, which entail that in this situation both sides simplify to $\tau_{w_{w_\lambda(\lambda-\nu)}w_\lambda}^2=\tau_{w_{w_{\lambda-\nu}(\lambda)}w_{\lambda-\nu}}^2$.
For $(\lambda-\nu)_+=\lambda_+$ the relation follows in turn from Lemma \ref{neighbor-alcove:lem}.
Indeed, in the latter situation it is readily seen that
$$
\tau_{w_{w_\lambda(\lambda-\nu)}}\tau_{w_\lambda}
=\tau_0 \tau_{s_jw_{w_\lambda(\lambda-\nu)}}\tau_{w_\lambda}
=\tau_0\tau_{s_jw_{w_\lambda(\lambda-\nu)}w_\lambda}
=\tau_0 \tau_{w_{w_{\lambda-\nu}(\lambda)}w_{\lambda-\nu}}
$$
and (upon interchanging the role of $\lambda$ and $\lambda-\nu$)
$$
\tau_{w_{w_{\lambda-\nu}(\lambda)}}\tau_{w_{\lambda-\nu}}
=\tau_0\tau_{s_jw_{w_{\lambda-\nu}(\lambda)}}\tau_{w_{\lambda-\nu}}
=\tau_0\tau_{s_jw_{w_{\lambda-\nu}(\lambda)}w_{\lambda-\nu}}
=\tau_0\tau_{w_{w_\lambda(\lambda-\nu)}w_\lambda} ,
$$
whence both sides of Eq. \eqref{sym-rel} now simplify to $ \tau_0 \tau_{w_{w_\lambda(\lambda-\nu)}w_\lambda} \tau_{w_{w_{\lambda-\nu}(\lambda)}w_{\lambda-\nu}}$.

\subsection{Proof of Lemma~\ref{same-alcove:lem}}
Clearly $w_{w_\lambda(\lambda-\nu)}w_\lambda (\lambda-\nu)=(\lambda-\nu)_+$ and---by
Lemma \ref{theta:lem}  (with $\lambda$ and $\nu$ replaced by $\lambda_+$ and $w'_\lambda\nu$)---it furthermore follows
that $w_{w_\lambda(\lambda-\nu)}\in W_{R,\lambda_+}$, i.e.
$w_{w_\lambda(\lambda-\nu)}w_\lambda(\lambda)=\lambda_+$ and
$\ell(w_{w_\lambda(\lambda-\nu)}w_\lambda)=\ell(w_{w_\lambda(\lambda-\nu)})+\ell(w_\lambda)$.
Let $w$ now denote {\em any} element in $W_R$ of {\em minimal length} sending $\lambda$ and $\lambda-\nu$ to $P^+_c$.
Then $ww_\lambda^{-1}\in W_{R,\lambda_+}$ (so $\ell (w)=\ell (ww_\lambda^{-1})+\ell (w_\lambda)$) and
$ww_\lambda^{-1} (w_\lambda (\lambda-\nu))=(\lambda-\nu)_+$ (so
$\ell (ww_\lambda^{-1})\geq \ell (w_{w_\lambda (\lambda-\nu)})$), i.e.
$$\ell (w)=\ell (ww_\lambda^{-1})+\ell (w_\lambda )\geq \ell (w_{w_\lambda (\lambda-\nu)})+\ell (w_\lambda)=\ell (w_{w_\lambda (\lambda-\nu)} w_\lambda).$$ It thus follows that $\ell (w)=\ell (w_{w_\lambda (\lambda-\nu)} w_\lambda)$
(and therefore $\ell (ww_\lambda^{-1})=\ell (w_{w_\lambda (\lambda-\nu)})$) by the minimality of $\ell (w)$, and consequently $ww_\lambda^{-1}=w_{w_\lambda (\lambda-\nu)}$
(by the uniqueness of $w_{w_\lambda (\lambda-\nu)}$), i.e. $w=w_{w_\lambda(\lambda-\nu)}w_\lambda$.
The equality in Eq. \eqref{translation-inv} now follows
upon interchanging the roles of $\lambda$ and $\lambda-\nu$.

\subsection{Proof of Lemma~\ref{neighbor-alcove:lem}}
For $j\in \{ 0,\ldots ,n\}$ as in Lemma \ref{theta:lem} (with $\lambda$ and $\nu$ replaced by
$\lambda_+$ and $w'_\lambda\nu$) we have that $s_jw_{w_\lambda (\lambda-\nu)}\in W_{R,\lambda_+}$, whence
$w_{w_\lambda(\lambda-\nu)}w_\lambda$ maps $\lambda-\nu$ to $\lambda_+$ and $\lambda$ to $s_j\lambda_+$. In addition
$( w_{w_\lambda(\lambda-\nu)}w_\lambda)^\prime \nu=w_{\lambda-w_\lambda^\prime\nu}^\prime w_\lambda^\prime\nu=-\alpha_j$ and
$\ell(w_{w_\lambda(\lambda-\nu)})=\ell(s_jw_{w_\lambda(\lambda-\nu)})+1$ (as
$R(w_{w_\lambda(\lambda-\nu)})$ is the disjoint union of $R(s_jw_{w_\lambda(\lambda-\nu)})$
and $\{ (s_jw_{w_\lambda(\lambda-\nu)})^{-1} a_j  \}$).

Let $w$ now be {\em any} element in $W_R$ of {\em minimal length}
satisfying that $w(\lambda-\nu)=\lambda_+$ and $w\lambda=s_j\lambda_+$.
Then $s_jww_\lambda^{-1}\in W_{R,\lambda_+}$ (so $\ell (s_jw)=\ell (s_jww_\lambda^{-1})+\ell (w_\lambda)$) and
$ww_\lambda^{-1} (w_\lambda (\lambda-\nu))=\lambda_+$ (so
$\ell (ww_\lambda^{-1})\geq \ell (w_{w_\lambda (\lambda-\nu)})$), i.e.
\begin{align}
&\ell (w)\stackrel{\text{(i)}}{=}\ell (s_jw)-\text{sign}(w^{-1}a_j)=
\ell (s_jww_\lambda^{-1})+\ell (w_\lambda)-\text{sign}(w^{-1}a_j)
\nonumber \\ & \geq
\ell (ww_\lambda^{-1})+\ell (w_\lambda)-\text{sign}(w^{-1}a_j)-1
\geq
\ell (w_{w_\lambda (\lambda-\nu)})+\ell (w_\lambda)-\text{sign}(w^{-1}a_j)-1 \nonumber \\
&=
\ell (s_jw_{w_\lambda (\lambda-\nu)})+\ell (w_\lambda)-\text{sign}(w^{-1}a_j)
=
\ell (s_jw_{w_\lambda (\lambda-\nu)}w_\lambda)-\text{sign}(w^{-1}a_j) \nonumber\\
&\stackrel{\text{(ii)}}{=}
\ell (s_jw_{w_\lambda (\lambda-\nu)}w_\lambda)-\text{sign}((w_{w_\lambda (\lambda-\nu)}w_\lambda)^{-1}a_j)
\stackrel{\text{(iii)}}{=}\ell (w_{w_\lambda (\lambda-\nu)} w_\lambda). \label{lw-est}
\end{align}
In steps $\text{(i)}$ and $\text{(iii)}$ we employed Eq.~\eqref{ls1}, and in step $\text{(ii)}$
it was used that
$w^{-1}a_j=(w_{w_\lambda (\lambda-\nu)}w_\lambda)^{-1}a_j$. Indeed, one has that
$$w^{-1}V_j=(w_{w_\lambda (\lambda-\nu)}w_\lambda)^{-1}V_j=V_a,$$
where $V_a$, $a\in R^+$ stands for the root hyperplane consisting of all points equidistant to $\lambda$ and $\lambda-\nu$ (which is in fact the unique root hyperplane separating $\lambda$ and $\lambda-\nu$),
and furthermore that $(w^\prime)^{-1}\alpha_j= (w^\prime)^{-1}(\lambda_+ -s_j\lambda_+)=w^{-1}\lambda_+-w^{-1}s_j\lambda_+=(\lambda-\nu)-\lambda=-\nu=
(w_{w_\lambda (\lambda-\nu)}^\prime w_\lambda^\prime)^{-1}\alpha_j.$
By the minimality of $\ell (w)$, one concludes that all inequalities in Eq. \eqref{lw-est} must in fact be equalities. In particular, one has that $\ell (w)=\ell (w_{w_\lambda (\lambda-\nu)} w_\lambda)$
and $\ell (ww_\lambda^{-1})=\ell (w_{w_\lambda (\lambda-\nu)})$. But then  $ww_\lambda^{-1}=w_{w_\lambda (\lambda-\nu)}$
(by the uniqueness of $w_{w_\lambda (\lambda-\nu)}$), i.e. $w=w_{w_\lambda(\lambda-\nu)}w_\lambda$.

The second equality in Eq. \eqref{simple-wall} and the equality in Eq. \eqref{translation-cov} now follow
upon interchanging the roles of $\lambda$ and $\lambda-\nu$ (which amounts to replacing $\lambda$ by $\tilde{\lambda}:=\lambda-\nu$ and $\nu$ by $\tilde{\nu}:=-\nu$).
More specifically, by proceeding as before, application of Lemma \ref{theta:lem} (with $\lambda$ and $\nu$ replaced by
$\tilde{\lambda}_+=\lambda_+$ and $w'_{\tilde{\lambda}}\tilde{\nu}$) entails that
$w^\prime_{w_{\lambda-\nu}(\lambda)}w^\prime_{\lambda-\nu}\nu=
-w^\prime_{w_{\tilde{\lambda}}(\tilde{\lambda}-\tilde{\nu})}w^\prime_{\tilde{\lambda}}\tilde{\nu}=\alpha_k$ for certain $k\in\{0,\ldots ,n\}$, and furthermore
$w_{w_{\lambda-\nu}(\lambda)}w_{\lambda-\nu}=w_{w_{\tilde{\lambda}}(\tilde{\lambda}-\tilde{\nu})}w_{\tilde{\lambda}}$ maps $\tilde{\lambda}-\tilde{\nu}=\lambda$ to $\lambda_+$ and $\tilde{\lambda}=\lambda-\nu$ to $s_k\lambda_+$.
Since both the line segment connecting $\lambda_+$ and $s_j\lambda_+$ and the line segment connecting $\lambda_+$ and $s_k\lambda_+$ belong to the $W_R$-orbit of the line segment
connecting $\lambda$ and $\lambda-\nu$, we have that $k=j$ and the second equality in Eq. \eqref{simple-wall} follows. Here we have used the fact that if $\text{Conv}\{ \lambda_+ ,s_k\lambda_+\}=w\text{Conv}\{ \lambda_+,s_j\lambda_+\}$ for some $w\in W_R$ and some $j,k\in \{0,\ldots ,n\}$ such that $s_j\lambda_+\neq\lambda_+$ and $s_k\lambda_+\neq\lambda_+$---where we assume without loss of generality that $w\in W_{R,\lambda_+}$ (since otherwise we replace $w$ by $ws_j$)---then $w\in W_{R,s_j\lambda_+}$ and thus $k=j$. Indeed $\text{Conv}\{ \lambda_+,(\lambda_+ +s_j\lambda_+)/2\}\subset A_c$ and $w\text{Conv}\{ \lambda_+,(\lambda_+ +s_j\lambda_+)/2\}=\text{Conv}\{ \lambda_+,(\lambda_+ +s_k\lambda_+)/2\}\subset A_c$, so $w\text{Conv}\{ \lambda_+,(\lambda_+ +s_j\lambda_+)/2\}=\text{Conv}\{ \lambda_+,(\lambda_+ +s_j\lambda_+)/2\}$, i.e. $ws_j\lambda_+=s_j\lambda_+$.

The upshot is that both sides of Eq. \eqref{translation-cov} map
$\lambda $ to $\lambda_+$ and $\lambda-\nu$ to $s_j\lambda_+$.
To see that the group elements at issue are in effect the same it therefore only remains to verify that
their lengths are equal (by the uniqueness of the minimal element $w_{w_{\lambda-\nu}(\lambda)}w_{\lambda-\nu}$).
We exploit Eq. \eqref{ls1} and the minimality of $w_{w_\lambda(\lambda-\nu)}w_\lambda$ and $w_{w_{\lambda-\nu}(\lambda)}w_{\lambda-\nu}$ to arrive at the following estimates:
\begin{align}
&\ell (w_{w_{\lambda-\nu}(\lambda)}w_{\lambda-\nu})\leq \ell (s_jw_{w_\lambda(\lambda-\nu)}w_\lambda)
=
\ell(w_{w_\lambda(\lambda-\nu)}w_\lambda)+\text{sign}((w_{w_\lambda(\lambda-\nu)}w_\lambda)^{-1}a_j) \nonumber\\
& \leq
\ell(s_jw_{w_{\lambda-\nu}(\lambda)}w_{\lambda-\nu})+\text{sign}((w_{w_\lambda(\lambda-\nu)}w_\lambda)^{-1}a_j)
\nonumber \\
&=
\ell(w_{w_{\lambda-\nu}(\lambda)}w_{\lambda-\nu})+
\text{sign}((w_{w_{\lambda-\nu}(\lambda)}w_{\lambda-\nu})^{-1}a_j)+
\text{sign}((w_{w_\lambda(\lambda-\nu)}w_\lambda)^{-1}a_j) \nonumber \\
&=\ell (w_{w_{\lambda-\nu}(\lambda)}w_{\lambda-\nu}), \label{lest}
\end{align}
where we used in the last equality that (cf. step $\text{(ii)}$ above):
$(w_{w_{\lambda-\nu}(\lambda)}w_{\lambda-\nu})^{-1}a_j=(s_jw_{w_\lambda(\lambda-\nu)}w_\lambda )^{-1}a_j=-(w_{w_\lambda(\lambda-\nu)}w_\lambda )^{-1}a_j.$ Hence, all inequalities in Eq. \eqref{lest} are again equalities, and in particular $\ell (w_{w_{\lambda-\nu}(\lambda)}w_{\lambda-\nu})=\ell (s_jw_{w_\lambda(\lambda-\nu)}w_\lambda)$, i.e. $w_{w_{\lambda-\nu}(\lambda)}w_{\lambda-\nu}= s_jw_{w_\lambda(\lambda-\nu)}w_\lambda$.

\bibliographystyle{amsplain}

\end{document}